\theoremstyle{cupthm}
\newtheorem{theorem}{Theorem}[section]
\newtheorem{proposition}[theorem]{Proposition}
\newtheorem{lemma}[theorem]{Lemma}
\newtheorem{corollary}[theorem]{Corollary}
\newtheorem{convention}[theorem]{Convention}
\newtheorem{claim}[theorem]{Claim}
\newtheorem*{theorem*}{Theorem}
\theoremstyle{definition}
\newtheorem{definition}[theorem]{Definition}
\newtheorem{example}[theorem]{Example}
\newtheorem{remark}[theorem]{Remark}
\definecolor{darkred}{rgb}{0,0,0} 
\definecolor{darkgreen}{rgb}{0,0,0}
\definecolor{darkblue}{rgb}{0,0,0}
\begin{document}

\title{Irreducible generating tuples of Fuchsian groups}

\author{Ederson Dutra{\thanks{The first author was supported by FAPESP, S\~{a}o Paulo Research Foundation, grants  2018/08187-6 and 2021/12276-7.}} \  and Richard Weidmann}

\date{\today}

\maketitle

\begin{abstract}
L. Louder showed in \cite{Lou} that any  generating tuple of a surface group is Nielsen equivalent to a stabilized standard generating tuple i.e.~$(a_1,\ldots ,a_k,1\ldots, 1)$ where $(a_1,\ldots ,a_k)$ is the standard generating tuple. This implies in particular that irreducible generating tuples, i.e. tuples that  are not Nielsen equivalent to a  tuple of the form $(g_1,\ldots ,g_k,1)$, are minimal. 
In \cite{Dut} the first author generalized Louder's ideas and showed that all irreducible and non-standard generating tuples of sufficiently large Fuchsian groups can be represented by so-called almost orbifold covers endowed with a rigid generating tuple.

In the present paper a variation of the ideas from \cite{W2} is used to show that this almost orbifold cover with a rigid generating tuple is unique up to the appropriate equivalence. It is moreover shown that  any such generating tuple  is  irreducible. This provides a way to exhibit many Nielsen classes of non-minimal irreducible generating tuples for Fuchsian groups.

As an application we show that generating tuples of fundamental groups of  Haken Seifert manifolds corresponding to irreducible horizontal Heegaard splittings are irreducible.
\end{abstract}


\section{Introduction}
Studying Nielsen equivalence classes of generating tuples of surface groups and more generally Fuchsian groups has a long history, starting with the work of Zieschang \cite{Zie} on fundamental groups of orientable surfaces. He showed that any minimal generating tuple of a surface group is Nielsen equivalent to the standard generating tuple. Variations of the cancellation methods developed by Zieschang were then successfully employed by Rosenberger to study Nielsen classes of minimal generating tuples for many Fuchsian groups \cite{R1,R2,R3}.   Nielsen classes of minimal generating tuples were  then studied by Lustig and Moriah using innovative algebraic ideas \cite{L,LM1,LM2}; recently this lead to a classification in all but a few exceptional cases \cite{LM3}.

Non-minimal generating tuples where first studied in the groundbreaking work of Louder \cite{Lou} who proved that any generating tuple of a surface group is Nielsen equivalent to a stabilized standard tuple, i.e.~a tuple of the form $(a_1,\ldots ,a_k,1\ldots, 1)$ where $(a_1,\ldots ,a_k)$ is the standard generating tuple. In particular any two generating tuples of the same size are Nielsen equivalent, thus a true analogue of Nielsen's theorem for free groups holds for surface groups. Louder's proof can be thought of as a folding argument in an appropriate category of square complexes and he shows that any square complex representing a generating tuple can be folded and unfolded onto a square complex representing a stabilized standard generating tuple.

In the case of Fuchsian groups the situation is more subtle and more interesting. In \cite{Dut}  the first author generalized the ideas of Louder to the context of sufficiently large Fuchsian groups, i.e.~Fuchsian groups that are not triangle groups. Dutra proved that any non-standard irreducible generating tuple can be represented by a so called almost orbifold cover with a rigid generating tuple, in the case that all elliptic elements  are of order 2 this implies a direct generalization of Louder's result. Recall that a tuple is reducible if it is equivalent to a tuple of the form $(g_1,\ldots, g_k,1)$ and irreducible otherwise. Clearly minimal generating tuples are irreducible, the converse does not hold in general.

Almost orbifold covers are branched maps that are close to being orbifold covers.

\begin{definition}
Let $\mathcal O$ be a closed cone-type 2-orbifold and $\mathcal O'$ a compact cone-type 2-orbifold with a single boundary component. A map  $\eta:\mathcal O'\to\mathcal O$ is called an \emph{almost orbifold cover}  if exists a  point $x\in \mathcal O$ and a  closed disk $D\subset \mathcal O$ containing $x$  such that $D\setminus\{x\}$ contains no cone points such the following hold:
\begin{enumerate}
\item $\eta^{-1}(D)=D_1\cup\ldots\cup D_m\cup S$ where $D_i$ is a disk for all $1\le i\le m$ and $S=\partial \mathcal O'$.

\item $\eta|_{\mathcal O'\setminus \eta^{-1}(D)}: \mathcal O'\setminus \eta^{-1}(D)\to\mathcal O\setminus D$ is an orbifold cover.

\item $\eta|_{D_1\cup\ldots\cup D_m}:D_1\cup\ldots\cup D_m\to D$ is an orbifold cover.
\item $\eta|_S:S\to \partial D$ is a cover.
\end{enumerate}

The \emph{degree} of $\eta$ is defined as the degree of the cover  $\eta|_{\mathcal O'\setminus \eta^{-1}(D)}$. We say that $\eta$ is a \emph{special  almost orbifold cover}  if $\deg(\eta|_S)<m$ and $\deg(\eta|_S)$ does not divide $m$,  where $m$ is the order of $x$.
\end{definition}

\begin{remark}
Throughout this paper we are mostly interested in the case where the  point $x\in \mathcal O$ is a cone point. This is particular always the case if $\eta$ is  a special almost orbifold covering.
\end{remark}

\begin{remark}
It is easily verified that  $\eta$ can be extended to an orbifold cover by gluing in a disk with cone point of order $\frac{m}{\deg(\eta|_S)}$ iff $\deg(\eta|_S)$ divides $m$.
\end{remark}

\begin{example}{\label{ex:01}}
Let $\mathcal O=T^2(15, 14)$ and  $\mathcal O'=F(15, 14 , 7)$  where $F$ is a once punctured  orientable surface of  genus two.  Consider  the  map $\eta:\mathcal O'\to\mathcal O$ described in Fig.~\ref{ex:almost} where the effect of $\eta$ on the component containing the cone point of order $7$ is described in Fig.~\ref{ex:almost0}.  Let   $x \in \mathcal O$ be the cone point of order $15$ and $D$  be the disk depicted in Fig.~\ref{ex:almost}. Then  $\eta^{-1}(D)=D_1\cup S$ and  $\eta$ defines  an orbifold cover $ \mathcal O'\setminus D_1\cup S\to \mathcal O\setminus D$  of degree three.  Thus $\eta$ is an almost orbifold cover of degree $3$. As  $\eta|_S:S\to \partial D$ is of degree two we  conclude that $\eta$ is special. 
\begin{figure}[h!] 
\begin{center}
\includegraphics[scale=1]{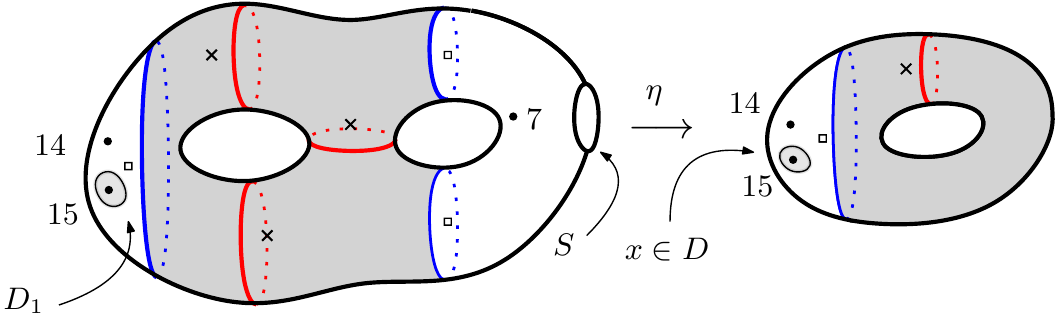}
\caption{$\eta$ is special of degree $3$.}{\label{ex:almost}}
\end{center}
\end{figure}
\begin{figure}[h!] 
\begin{center}
\includegraphics[scale=1]{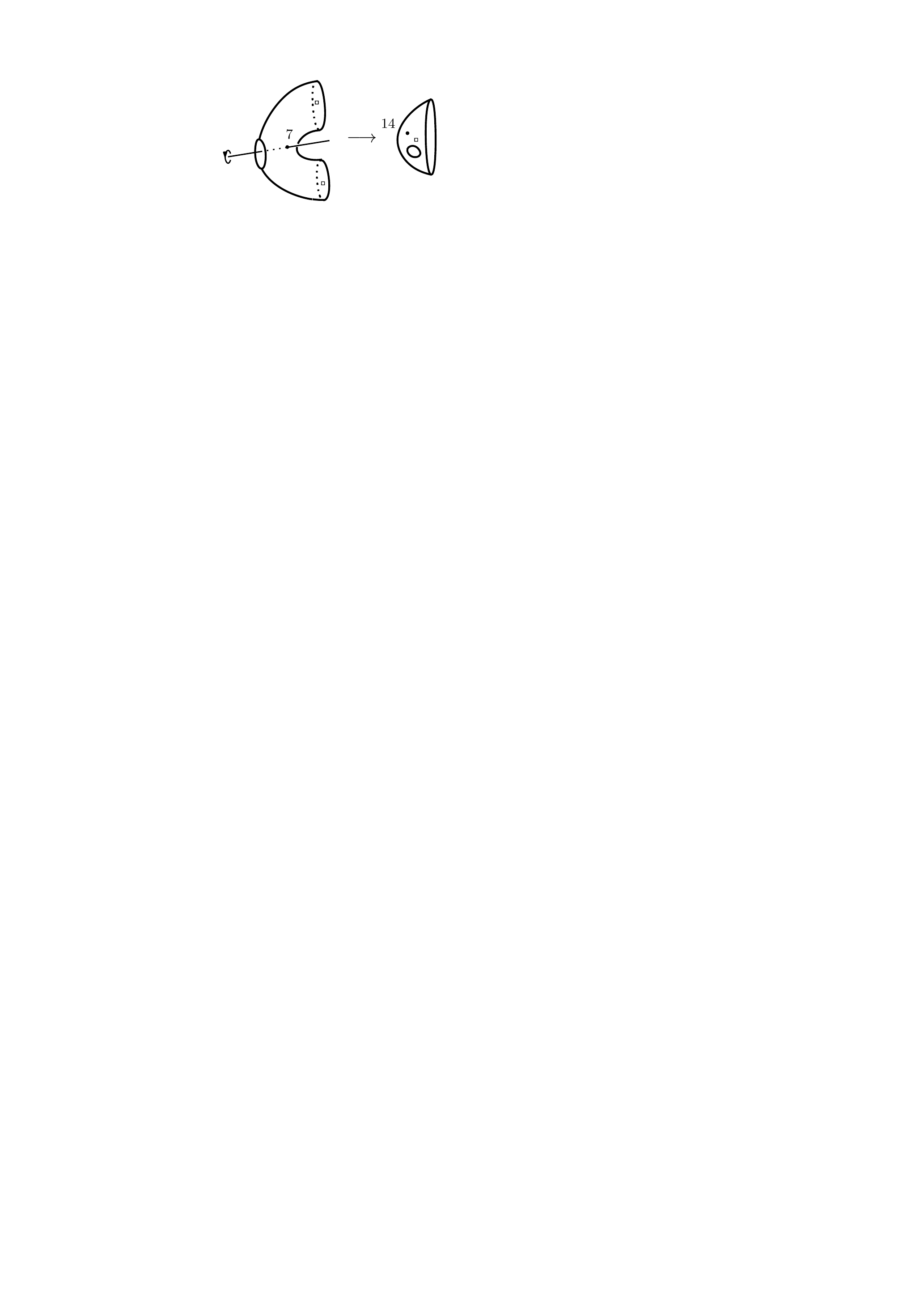}
\caption{A half turn rotation. }{\label{ex:almost0}}
\end{center}
\end{figure}
\end{example}

\begin{definition} 
A marking of $\mathcal O$ is a pair  
 $(\eta :\mathcal O'\rightarrow \mathcal O, [T'])$ 
where $\eta$ is an almost  orbifold cover and $[T']$ is a Nielsen equivalence class of generating tuples of $\pi_1^o(\mathcal O')$. We say that  the Nielsen class $[\eta_{\ast}(T')]$   is represented by  the marking $(\eta:\mathcal O'\rightarrow \mathcal O, [T'])$.
\end{definition}

We say that a generating tuple $T$ of the fundamental group of  a  compact cone-type $2$-orbifold with $q\geq 1$ boundary components is \emph{rigid}  if $T$ is not Nielsen-equivalent to a tuple  $(g_1,\ldots g_l, \gamma_1,\ldots ,\gamma_q)$  where $\gamma_1,\ldots , \gamma_q$ correspond  to the boundary components.

\begin{definition}
Let  $(\eta :\mathcal O'\rightarrow \mathcal O, [T'])$ be a marking of the orbifold $\mathcal O$.
\begin{enumerate}
\item  We say  that  $(\eta :\mathcal O'\rightarrow \mathcal O, [T'])$ is \emph{standard} if the following hold:
\begin{enumerate}
\item $ \eta $ has degree one. 

\item if $\mathcal O$ is not a surface, then the exceptional point of $\eta $ is of order $\ge2$.

\item some  (and therefore any) tuple in $[T']$ is minimal.
\end{enumerate}

\item We say that $(\eta :\mathcal O'\rightarrow \mathcal O, [T'])$  is \emph{special} if $\eta $ is  special and $[T']$ consists of rigid generating tuples of $\pi_1^o(\mathcal O')$.
\end{enumerate}
\end{definition}

\begin{definition}
We say that a generating tuple $T$ of $\pi_1^o(\mathcal O)$ is \emph{standard} if there is a standard marking $(\eta :\mathcal O'\rightarrow \mathcal O, [T'])$ of $\mathcal O$  such  that  $[\eta_\ast (T')]=[T]$.
\end{definition}

In \cite{Dut} it is shown that any non-standard  irreducible generating tuple of a sufficiently large 2-orbifold is represented by a special marking. The main purpose of this paper is to establish the uniqueness of this marking and the fact that generating tuples represented by special markings are irreducible:

\begin{theorem}{\label{thm01}}
Let $\mathcal O$ be a sufficiently large cone type 2-orbifold and let $T$ be a non-standard irreducible generating tuple of $\pi_1^o(\mathcal O)$.  
Then there is a unique special marking  $(\eta:\mathcal O'\rightarrow \mathcal O, [T'])$  such that $[T]=[\eta_{\ast}(T')].$ 
\end{theorem}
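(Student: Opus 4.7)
Existence is provided by the main result of \cite{Dut}, so the content of the theorem is uniqueness. My plan is as follows. Suppose two special markings $(\eta_i:\mathcal O_i\to \mathcal O, [T_i'])$, $i=1,2$, both represent the Nielsen class $[T]$. I would encode each marking as a combinatorial object, namely a finite graph of groups decorated with a generating tuple --- an $\mathcal{A}$-graph in the sense of \cite{W2}. The almost orbifold cover $\eta_i$ translates into a splitting of $\pi_1^o(\mathcal O)$ whose vertex groups are $\pi_1^o(\mathcal O_i)$ and the cyclic group generated by a loop around the exceptional point $x$, amalgamated along the boundary subgroup carried by $S$; the tuple $T_i'$ decorates this splitting in such a way that its image under $\eta_{i,*}$ returns $T$.

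Next, I would apply the folding moves of \cite{W2}, adapted to the orbifold setting developed in \cite{Dut}. These moves preserve the Nielsen class of the image tuple in $\pi_1^o(\mathcal O)$ while simplifying the $\mathcal{A}$-graph. The rigidity condition built into specialness plays a critical role: it forbids any fold which would express a boundary-parallel generator of $\pi_1^o(\mathcal O_i)$ in terms of the remaining generators and thereby prevents the marking from collapsing to a standard one. Likewise, the numerical condition $\deg(\eta_i|_S)<m$ together with $\deg(\eta_i|_S)\nmid m$ prevents extending $\eta_i$ to an honest orbifold cover that would yield a representation of strictly lower complexity, by the second remark following the definition of almost orbifold cover.

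After folding, I would aim to show that both $\mathcal{A}$-graphs reduce to a common folded representative, from which one reads off a homeomorphism $\mathcal O_1\cong \mathcal O_2$ intertwining $\eta_1$ with $\eta_2$ and matching $[T_1']$ with $[T_2']$. The analogous uniqueness statement for folded $\mathcal{A}$-graphs representing a fixed Nielsen class is the key structural lemma from \cite{W2} which must be transplanted to the almost-orbifold-cover setting; the non-standard hypothesis on $T$ is what guarantees that the folded representative is non-trivial and carries the full data of $(\eta_i,[T_i'])$ rather than a degenerate simplification.

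The main obstacle, I expect, will be showing that specialness is preserved along the folding process --- equivalently, that any departure from specialness forces a reducibility of $T$, contradicting the hypothesis. This requires a careful analysis of how the exceptional point, the disk $D$ and its preimages $D_1\cup\ldots\cup D_m\cup S$ behave under each folding move, with a case split according to whether the fold occurs inside $\mathcal O_i\setminus \eta_i^{-1}(D)$ or instead involves the boundary component $S$. In each problematic configuration, the irreducibility of $T$ should allow one to conclude that the putative simplification cannot actually occur, closing the induction on the complexity of the $\mathcal{A}$-graph.
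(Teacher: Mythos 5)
Your proposal correctly isolates the content as a uniqueness statement and correctly identifies the existence part as the main theorem of \cite{Dut}, and your instinct that the proof should be a folding argument in the spirit of \cite{W2} is right. However, there are two significant gaps. First, the architecture you describe is not the one used. You propose encoding each marking as an $\mathbb A$-graph over a splitting of $\pi_1^o(\mathcal O)$ that is \emph{determined by the almost orbifold cover itself} (vertex groups $\pi_1^o(\mathcal O_i)$ and a finite cyclic group). The paper instead fixes, once and for all, a decomposition $\mathbb A$ of $\mathcal O$ along essential simple closed curves into \emph{small orbifolds}, and encodes every marking --- whatever cover it comes from --- as a marked $\mathbb A$-graph over this single fixed splitting (a special almost orbifold covering with a good marking, Definition~\ref{def:almostorb}). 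Your splitting is also not a legitimate graph-of-groups decomposition of $\pi_1^o(\mathcal O)$: $\eta_*$ is a surjection, not an injection, so $\pi_1^o(\mathcal O_i)$ does not embed as a vertex group.

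Second and more seriously, the endgame you sketch --- ``show both $\mathbb A$-graphs reduce to a common folded representative'' --- is precisely the strategy the paper explains (Section~2) \emph{cannot} work here. In this category the folded object is not unique along a folding sequence, so one cannot argue by confluence. The substitute, taken from \cite{W2}, is to build a directed graph $\Omega_{[T]}$ whose vertices are \emph{equivalence classes} of marked $\mathbb A$-graphs representing $[T]$ and whose edges are good tame elementary folds, show it is connected, and then prove the bifurcation lemmas (Propositions~\ref{lemma:fork1} and~\ref{lemma:fork2}) to conclude there is a unique root (Proposition~\ref{propuniqueroot}). Theorem~\ref{thm01} is then a two-line consequence. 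Your proposal does not mention this graph, the height function, or the bifurcation analysis, and these are what make the argument go through. Moreover, the bifurcation lemmas themselves depend essentially on a full ``local'' chapter (the study of partitioned tuples in small orbifold groups, culminating in Proposition~\ref{prop:01} and the critical/non-critical dichotomy), none of which appears in your sketch. So the proposal correctly names the ingredients at a high level but omits both the fixed-decomposition setup and the connectivity-plus-bifurcation mechanism that replaces the naive ``common folded representative'' idea; without these the proof does not close.
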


\begin{theorem}{\label{thm02}}
Let $\mathcal O$ be a sufficiently large cone type 2-orbifold. If $$(\eta :\mathcal O'\rightarrow \mathcal O, [T'])$$ is a special marking  such that  $\eta_{\ast}'$ is  surjective, then $\eta_{\ast}(T')$ is irreducible.
\end{theorem}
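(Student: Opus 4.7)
I would prove this by contradiction, showing that reducibility of $\eta_\ast(T')$ forces $T'$ to fail the rigidity requirement built into the definition of ``special marking.''

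The first move is to lift the putative reduction. Suppose $\eta_\ast(T')$ is reducible; then there is a sequence $\sigma$ of Nielsen transformations taking $\eta_\ast(T')$ to a tuple of the form $(g_1,\ldots,g_{k-1},1)$ in $\pi_1^o(\mathcal O)$. Since Nielsen transformations are purely combinatorial on a $k$-tuple and commute with any group homomorphism, applying $\sigma$ to $T'=(t_1',\ldots,t_k')$ inside $\pi_1^o(\mathcal O')$ produces a tuple $\tilde T=(\tilde t_1,\ldots,\tilde t_k)$ with $[\tilde T]=[T']$ and $\eta_\ast(\tilde t_k)=1$. Thus $\tilde t_k\in\ker\eta_\ast$, so the entire problem is reduced to understanding that kernel inside the rigid Nielsen class $[T']$.

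Next I would pin down $\ker\eta_\ast$ using the partial-cover structure of $\eta$. Let $s\in\pi_1^o(\mathcal O')$ be the class of the boundary loop $S=\partial\mathcal O'$, and let $c\in\pi_1^o(\mathcal O)$ be a loop around the exceptional point $x$. The map $\eta$ is a genuine orbifold cover on $\mathcal O'\setminus\eta^{-1}(D)$ and on each $D_i$, and sends $s$ to $c^{\deg(\eta|_S)}$; a van~Kampen computation across the decomposition $\mathcal O'=(\mathcal O'\setminus\eta^{-1}(D))\cup D_1\cup\cdots\cup D_l\cup S$ shows that $\ker\eta_\ast$ is the normal closure $\langle\!\langle s^{n}\rangle\!\rangle$, where $n=m/\gcd(m,\deg(\eta|_S))$ and $m$ is the order of $x$. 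The specialness of $\eta$ (in particular $\deg(\eta|_S)\nmid m$) guarantees $n\ge 2$ and, more importantly, that $s^n$ is a genuinely boundary-parallel element whose presence in a generating tuple would exhibit non-rigidity.

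The heart of the argument, and the step I expect to be hardest, is translating the membership $\tilde t_k\in\langle\!\langle s^n\rangle\!\rangle$ into a concrete Nielsen equivalence inside $\pi_1^o(\mathcal O')$. This is where I would invoke the variant of the $\mathcal A$-graph / folding technology from \cite{W2} already developed in the present paper: one represents $\tilde T$ by an $\mathcal A$-graph over a splitting of $\pi_1^o(\mathcal O')$ that records $S$ as a distinguished vertex/edge group, and then performs folds that absorb the relation $s^n$ at the cost of replacing the last entry by a conjugate of $s^{\pm 1}$. The specialness of $\eta$ is used at this stage to prevent the fold from collapsing beyond the boundary layer, so that the resulting tuple has an actual boundary-generator entry rather than a higher power that could be hidden inside the bulk.

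Once this is set up, the conclusion is automatic: $\tilde T$, and therefore $T'$, is Nielsen-equivalent to a tuple of the form $(u_1,\ldots,u_{k-1},s^{\pm 1})$ (up to conjugation), contradicting the rigidity of $T'$ and hence the hypothesis that the marking $(\eta:\mathcal O'\to\mathcal O,[T'])$ is special. The main technical obstacle is thus the folding step in paragraph three; everything else is routine bookkeeping once the $\mathcal A$-graph framework of \cite{W2} has been ported to the almost-orbifold-cover setting.
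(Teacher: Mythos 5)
Your first two steps are fine in spirit: lifting the Nielsen reduction through $\eta_\ast$ (using that Nielsen moves commute with homomorphisms) and identifying $\ker\eta_\ast$ as something generated, up to normal closure and conjugates, by a power of the boundary class $s$. But the third step, which you yourself flag as the hard part, is where the proposal fails to be a proof. You assert that from $\tilde t_k\in\ker\eta_\ast$ one can "perform folds that absorb the relation $s^n$ at the cost of replacing the last entry by a conjugate of $s^{\pm1}$," and that specialness of $\eta$ "prevents the fold from collapsing beyond the boundary layer." Neither claim is justified, and neither is obvious. The normal closure $\langle\!\langle s^n\rangle\!\rangle$ is a huge (typically infinitely generated) normal subgroup of a free product of cyclics, and membership of $\tilde t_k$ in it does not in any direct way produce a Nielsen equivalence of $\tilde T$ to a tuple containing a boundary element. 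In fact, the deduction you want is essentially a statement that rigidity of a generating tuple is an invariant that can "see" the kernel of the almost-cover map, and that is precisely the kind of nontrivial statement the whole paper's machinery is designed to establish. As stated, the step is a restatement of the goal rather than a proof of it.

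There is also a smaller gap in step two: the kernel of $\eta_\ast$ need not be exactly $\langle\!\langle s^n\rangle\!\rangle$. Since $\eta|_{\mathcal O'\setminus\eta^{-1}(D)}$ is a degree-$d$ cover (with $d\ge 2$ when $\eta$ is special), $\pi_1^o(\mathcal O'\setminus\eta^{-1}(D))$ sits inside $\pi_1^o(\mathcal O\setminus D)$ as a proper finite-index subgroup, and $\ker\eta_\ast$ is the preimage of $\langle\!\langle c^m\rangle\!\rangle$ under that inclusion composed with the filling of the $D_i$'s. That preimage contains conjugates of powers of $s$ but is generally larger and does not have a clean one-relator description. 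You would at minimum need to justify the Van Kampen computation more carefully, and the normal-closure description of the kernel does not actually simplify the problem.

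For contrast, the paper does not reason inside $\pi_1^o(\mathcal O')$ at all. It works in the directed graph $\Omega_{[T]}$ of equivalence classes of minimal marked $\mathbb A$-graphs representing $[T]$ in $\pi_1^o(\mathcal O)$. The special marking provides a root of $\Omega_{[T]}$ which is a special almost orbifold cover with a good marking, and Proposition~\ref{propuniqueroot} says such a root is the unique root. If $T$ were reducible, $\Omega_{[T]}$ would contain a pre-bad vertex, and by Propositions~\ref{lemma:fork1} and~\ref{lemma:fork2} pre-badness would propagate to every root, including the special-marking one, which is impossible since no tame elementary fold is applicable to it. That argument is short only because it draws on the extensive apparatus (Lemmas~\ref{lemma:bad}, \ref{lemma:bifurcation}, Propositions~\ref{prop:01}, \ref{prop:02}, \ref{lemma:fork1}, \ref{lemma:fork2}, \ref{propuniqueroot}); there is no shortcut around that work of the kind your paragraph three hopes for. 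If you want to pursue your more direct route, the thing to establish concretely is: for a minimal generating tuple $\tilde T$ of $\pi_1^o(\mathcal O')$ with $\tilde t_k\in\ker\eta_\ast$ and $\langle\tilde t_1,\ldots,\tilde t_{k-1}\rangle$ surjecting onto $\pi_1^o(\mathcal O)$ via $\eta_\ast$, show $\tilde T$ is not rigid. That is a genuine lemma that would need its own proof, and nothing in the paper or in your proposal shows it can be done without the global $\Omega_{[T]}$ framework.
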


\begin{example}
Let $\eta:\mathcal O'\to \mathcal O$ be the special almost orbifold cover given in Example~\ref{ex:01}.   A presentation for $\pi_1^o(\mathcal O)$ is given by $$ \langle a_1, b_1, s_1, s_2 \ | \ s_1^{15}, s_2^{14}, s_1s_2=[a_1, b_1]\rangle.$$   
Consider the generating tuple  $$T'=(\sigma_1, \sigma_2 ,   \alpha_1, \alpha_2, \beta_1, \gamma_1, \sigma_3)$$ of $\pi_1^o(\mathcal O')$ as described in Fig.~\ref{ex:almost1}.
 \begin{figure}[h!]
\begin{center}
\includegraphics[scale=1]{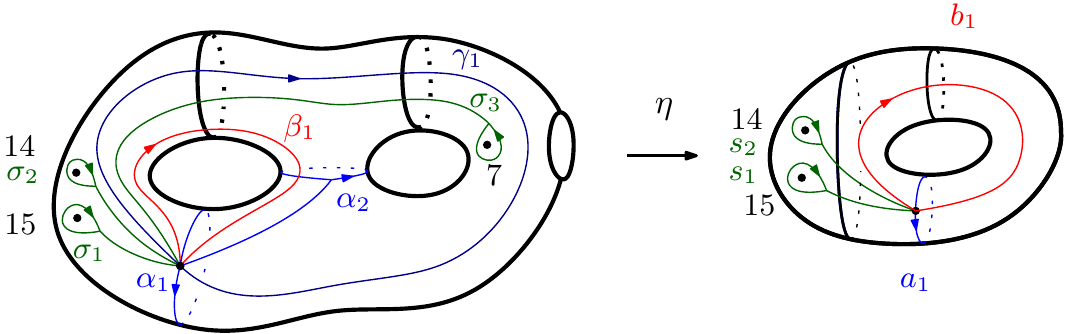}
\caption{The tuple $T'$.}{\label{ex:almost1}}
\end{center} 
\end{figure}  
Then  
$$\eta_{\ast}(T')=(s_1, s_2 , a_1,  b_1^{-1} a_1b_1 , b_1^3,  b_1s_1b_1, b_1 s_2^2 b_1^{-1}).$$ 
Using the relation $s_1=[a_1,b_1]s_2^{-1}$ in $\pi_1^o(\mathcal O)$ we can rewrite $\eta_{\ast}(T') $ as    
$$ (a_1b_1a_1^{-1}b_1^{-1}s_2^{-1}, s_2, a_1, b_1^{-1}a_1b_1, b_1^3, b_1a_1b_1a_1^{-1}b_1^{-1}s_2^{-1}b_1, b_1s_2^2b_1^{-1}).$$
Thus $\eta_{\ast}(T')$ is Nielsen equivalent to
 $$(b_1a_1^{-1}b_1^{-1}, s_2, a_1, b_1^{-1}a_1b_1, b_1^3, b_1, b_1s_2^2b_1^{-1})$$ 
which can be shown to be  Nielsen equivalent to $(a_1, b_1, s_2, 1, 1, 1, 1)$. Therefore,  $\eta_{\ast}(T')$ is reducible. This   shows that the restriction to  rigid generating tuples  of $\pi_1^o(\mathcal O')$ is necessary to guarantee that the corresponding tuple in $\pi_1^o(\mathcal O)$ is irreducible.  On the other hand, according to Lemma~\ref{allprimitive},    
 $$T':=  (\sigma_1^{2}, \sigma_2^{3},   \alpha_1, \alpha_2, \beta_1, \gamma_1, \sigma_3^{2 })$$ 
is  a  rigid generating tuple of $\pi_1^o(\mathcal O')$. The previous Theorem  therefore implies that  
 $$\eta_{\ast}(T')=(s_1^2, s_2^3, a_1,  b_1^{-1} a_1b_1 , b_1^3,  b_1s_1b_1, b_1 s_2^4 b_1^{-1}).$$
is irreducible. This is an example of a non-minimal irreducible generating tuple of $\pi_1^o(\mathcal O)$. 
\end{example}

We expect that our approach can also be used to classify standard generating tuples up to Nielsen equivalence, completing the work of Moriah and Lustig. We plan to address this question in a future paper. While the underlying ideas we employ should also be able to study generating tuples of triangle groups, it is clear that new language needs to be developed to carry out this approach in  these remaining cases.

\smallskip

There is an interesting relationship between almost orbifold covers and horizontal Heegaard splittings of Seifert 3-manifolds. If $M$ is a Seifert 3-manifold and $T$ is generating tuple of $\pi_1(M)$ corresponding to a horizontal Heegaard splitting then the image $T'$ of $T$ in the fundamental group of the base orbifold is naturally represented by an almost orbifold cover which is induced by the Heegaard splitting. Theorem~\ref{thm02} gives us therefore a way to establish the irreducibility of $T'$ and therefore of $T$. We obtain the following:

\begin{theorem}{\label{thm03}} Let $M$ be a Haken orientable Seifert 3-manifold with orientable base space and $T$ be a tuple corresponding to a horizontal Heegaard splitting. Then  $T$ is irreducible if and only if the Heegaard splitting is irreducible.
\end{theorem}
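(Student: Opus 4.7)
The strategy is to reduce Theorem~\ref{thm03} to Theorem~\ref{thm02} by pushing everything forward to the base orbifold. Let $p:M\to\mathcal O$ be the Seifert fibration and $p_\ast:\pi_1(M)\to\pi_1^o(\mathcal O)$ the induced map. A horizontal Heegaard surface is transverse to the Seifert fibers; by the standard description of horizontal splittings of Haken Seifert manifolds (Moriah--Schultens), the two compression bodies are obtained from a branched covering of $\mathcal O$ branched over a single exceptional fiber. Collapsing the Seifert fibers sends one side to a compact cone-type $2$-orbifold $\mathcal O'$ with a single boundary component, and the whole picture assembles into an almost orbifold cover $\eta:\mathcal O'\to\mathcal O$ in which the exceptional point $x\in\mathcal O$ is precisely the image of the branching fiber. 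The generating tuple $T$ of $\pi_1(M)$ read off from the Heegaard splitting projects under $p_\ast$ to $\eta_\ast(T')$, where $T'$ is a natural generating tuple of $\pi_1^o(\mathcal O')$ obtained by lifting a spine of the compression body.

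\textbf{Only if direction.} Suppose the Heegaard splitting is reducible. Since $M$ is Haken it is in particular irreducible as a $3$-manifold, so by Haken's lemma a reducible Heegaard splitting of $M$ is a stabilization of a lower-genus splitting. Consequently $T$ is Nielsen equivalent to a tuple of the form $(g_1,\ldots,g_{k-1},1)$, which is precisely the statement that $T$ is reducible.

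\textbf{If direction.} Assume the splitting is irreducible. I would first verify that $\eta$ is a \emph{special} almost orbifold cover. If instead $\deg(\eta|_S)$ divided the order of $x$, then by the remark following the definition $\eta$ extends to an honest orbifold cover by capping off $S$ with a cone disk; pulling this capping back to $M$ yields an essential vertical annulus meeting the Heegaard surface in a single circle, and this annulus realizes a destabilization or reduction of the splitting, contradicting irreducibility. Next I would show that $T'$ is a rigid generating tuple of $\pi_1^o(\mathcal O')$: a Nielsen reduction of $T'$ to a tuple in which the boundary element $\gamma_1$ appears would lift to a handle slide in $M$ that produces a cancelling pair of $1$-handles, again contradicting irreducibility of the Heegaard splitting. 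Together these two points say that $(\eta:\mathcal O'\to\mathcal O,[T'])$ is a special marking; moreover $\eta_\ast$ is surjective because $p_\ast$ is surjective and $p_\ast(T)=\eta_\ast(T')$ generates $\pi_1^o(\mathcal O)$. Theorem~\ref{thm02} then gives that $\eta_\ast(T')=p_\ast(T)$ is irreducible in $\pi_1^o(\mathcal O)$. Since any Nielsen reduction of $T$ in $\pi_1(M)$ projects under $p_\ast$ to a Nielsen reduction of $p_\ast(T)$, the irreducibility of $p_\ast(T)$ forces $T$ to be irreducible.

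\textbf{Main obstacle.} The technical heart of the argument is the two verifications in the if-direction: that the failure of $\eta$ to be special lifts to a reduction of the splitting, and that a Nielsen move on $T'$ exhibiting a boundary element lifts to a destabilization of the splitting. Both require a careful analysis of how vertical annuli and compressing disks in $M$ correspond to arcs and loops in $\mathcal O'$, and this is where essentially all of the $3$-manifold topology enters. The remaining steps—assembling the almost orbifold cover from the horizontal surface, and propagating irreducibility from $\pi_1^o(\mathcal O)$ back up to $\pi_1(M)$—are comparatively formal once the dictionary is in place.
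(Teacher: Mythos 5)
Your overall plan is the same as the paper's: push $T$ forward along $p_\ast$ to the base orbifold, exhibit the image as $\eta_\ast(T')$ for the almost orbifold cover $\eta=\pi|_\Sigma$ arising from the horizontal surface, verify that the pair $(\eta,[T'])$ is a special marking, and then apply Theorem~\ref{thm02}. The only-if direction is treated as immediate in the paper, and your Waldhausen/Haken argument is consistent with that.

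However, there is a genuine gap in your verification that $\eta$ is special. Being a special almost orbifold cover requires \emph{two} conditions: $\deg(\eta|_S)<m$ \emph{and} $\deg(\eta|_S)\nmid m$, where $m$ is the order of the exceptional cone point. You only address the second condition (arguing that if $\deg(\eta|_S)\mid m$ the cover extends, and proposing a vertical-annulus destabilization). You never verify $\deg(\eta|_S)<m$, i.e.\ $d<m$ where $d=\mathrm{lcm}(m_1,\dots,m_k)$ is the degree. This inequality is not automatic; the paper obtains it by invoking Sedgwick's analysis (\cite{Se}, Chapter~4) of irreducible horizontal splittings, and some such input is needed to close the argument.

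Beyond that gap, the two verifications you do address are carried out by heavier machinery than necessary. For rigidity of $T'$, the paper does not need any 3-manifold topology: since $\Sigma$ has a single boundary component, the corresponding element is trivial in $H_1(\Sigma)$, hence cannot appear in any basis of the free group $\pi_1(\Sigma)$, which is exactly rigidity. Your proposed argument via handle slides and cancelling $1$-handles is plausible but unsubstantiated and unnecessary. For the condition $\deg(\eta|_S)\nmid m$, the paper again uses an algebraic shortcut you actually have available: if $d\mid m$ then $\eta$ factors through a proper finite-sheeted cover of $\mathcal O$, contradicting that $\eta_\ast$ (equivalently $p_\ast\circ i_\ast$) is surjective, which you already established. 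Your geometric pullback-and-destabilize route would require a careful verification that the resulting vertical annulus is essential and realizes a reduction, whereas the generation argument is immediate. I'd recommend replacing both of your geometric verifications with these algebraic ones, and supplying the missing inequality $d<m$ from Sedgwick's work.
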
 

Note that we need to exclude the small Seifert manifolds as their base orbifolds are not sufficiently large. We actually give a new proof of Theorem 8.1 of \cite{Se} in this setting.


\section{Strategy of proof}

In this section we briefly sketch the strategy of the proof of the main theorems. At this point we cannot introduce the subtle notions needed to be precise. Thus this section remains vague, maybe even pointless.

Nielsen's theorem states that any two generating $m$-tuples of the free groups $F_n$ are Nielsen equivalent. The folding proof of Nielsen's theorem (in the case $m=n$) goes as follows): Identify the free group with $\pi_1(R_n)$ where $R_n$ is the rose with $n$ petals. Any generating $n$-tuple $T$ can be represented by  a tuple $(\Gamma,u_0,Y,E,f)$ where $\Gamma$ is a graph with base point $u_0\in V\Gamma$, $Y\subset \Gamma$ is a maximal tree, $E=(e_1,\ldots ,e_n)$ is an orientation of $E\Gamma\setminus EY$  and $f:\Gamma\to R_n$ is a $\pi_1$-surjective morphism. The represented tuple is $(f_*(s_1),\ldots ,f_*(s_n))$ where $s_i\in\pi_1(\Gamma,u_0)$ is represented by the closed path whose only edge not contained in $Y$ is $e_i$. Now the graph can be folded onto $R_n$ using Stallings folds \cite{St} yielding a sequence $$\Gamma=\Gamma_0,\Gamma_1,\ldots ,\Gamma_{k-1},\Gamma_k=R_n$$ such that $\Gamma_{i+1}$ is obtained from $\Gamma_i$ by a single Stalling fold $p_i$. One observes that we can define tuples $(\Gamma_i,u_0^i,Y_i,E_i,f_i)$ with $f_{i+1}\circ p_i=f_i$ for all $i$  such that all tuples represent the same Nielsen class. As the initial tuple defines $T$ and the terminal tuples defines the  standard basis, the theorem follows. 
Note that it is crucial that any folding sequence yields essentially the same (folded) object. Thus there is only one Nielsen-class and therefore there  is no need to distinguish distinct classes.

\smallskip

 A similar argument can be used to prove Grushko's theorem \cite{Gr} which states that any generating tuple of a free product $A*B$ is Nielsen equivalent to a tuple of the form $(a_1,\ldots ,a_k,b_1,\ldots ,b_l)$. This argument relies on folding sequences in the category of (marked) graphs of groups with trivial edge groups, see \cite{Du} and \cite{BF} where folding sequences for graphs of groups were studied. 
In this case one obtains a sequence of graphs of groups with trivial edge groups where each vertex is marked with a generating tuple of its vertex group and where the terminal object is the 1-edge graph of groups corresponding to the free product $A*B$ and the vertices are marked with generating tuples of $(a_1,\ldots ,a_k)$ of $A$ and $(b_1,\ldots ,b_l)$ of $B$, respectively. Unlike in the case of free groups, the terminal object is not unique, however in the case of irreducible generating tuples it was shown in \cite{W2} that any terminal object that occurs is marked by generating tuples $(a_1',\ldots ,a_k')$ and $(b_1',\ldots ,b_l')$ that are Nielsen-equivalent to $(a_1,\ldots ,a_k)$  and $(b_1,\ldots ,b_l)$, respectively. This implies the strongest possible uniqueness result for the output of Grushko's theorem. The simple but somewhat subtle idea underlying the proof in \cite{W2} relies on considering appropriate equivalence classes of marked graphs or groups (or more precisely $\mathbb A$-graphs) to be vertices where vertices are connected by edges if one can be obtained by the other by a  fold. The  result then follows by observing connectivity of the graph and establishing that there is unique vertex of minimal complexity.

\smallskip

The proofs of Louder \cite{Lou} and Dutra \cite{Dut} can be thought of as a variation of the proof of Grushko, but in a different category. It is shown that any Nielsen equivalence class of irreducible generating tuples is represented by a folded object which represents a standard generating tuple in the case of surfaces (Louder) or is a special marking in the case of sufficiently large cone type 2-orbifolds. In the first case the uniqueness is then immediate and in the second case it follows if all cone points are of order 2. The main objective of this paper is to show that in the second case the folded object is unique up to the appropriate equivalence. To do so we broadly follow the strategy of \cite{W2}, however both the language developed and the arguments applied are significantly more involved.

\smallskip

We will use the language of $\mathbb A$-graph as developed in \cite{KMW} and rely on a number of results of \cite{Dut}. Moreover a  certain degree of familiarity with both of these papers is assumed. Following Louder and Dutra we decompose the fundamental group of the orbifold $\mathcal O$ under consideration as the fundamental group of a graph of groups $\mathbb A$ corresponding to a decomposition of $\mathcal O$ along essential simple closed curves. The paper has two main chapters, Chapter~\ref{sectionlocalcase} and Chapter~\ref{section_global_picture}:
\begin{enumerate}
\item In Chapter~\ref{sectionlocalcase} we study generating tuples of  fundamental groups of orbifolds with boundary, those are the groups that occur as vertex groups of $\mathbb A$. In Section~\ref{gen_tuples_orbifold_with_boundary} we collect basic facts about Nielsen classes generating tuples, in particular we characterize rigid generating tuples. In the subsequent sections we study  finitely generated subgroups of the vertex groups of $\mathbb A$ where some of the generators are assumed to be peripheral, i.e. conjugate to elements of the adjacent edge groups. We study these so-called partitioned tuples up to the natural equivalence. As the groups under consideration are fundamental groups of graphs of groups with trivial edge groups we can rely on a variation of the arguments from \cite{W2} to prove Proposition~\ref{prop:01}, i.e. to show that any equivalence class of partitioned tuples corresponds to a unique geometric situation.
\item In Chapter~\ref{section_global_picture} we encode (Nielsen equivalence classes of) irreducible non-standard generating tuples by marked $\mathbb A$-graphs up to some natural equivalence. We then show that any such Nielsen class is encoded by a unique object of minimal complexity, which corresponds to a special almost orbifold cover with a rigid generating tuple. This essentially proves Theorem~\ref{thm01} and Theorem~\ref{thm02}. In the argument the subtle results from Chapter~\ref{sectionlocalcase} are crucial.
\end{enumerate}

In Chapter~\ref{section_heegaard} we give a proof of Theorem~\ref{thm03} which is essentially a corollary of Theorem~\ref{thm02}.


\section{The local case}\label{sectionlocalcase}

In this chapter we discuss  (partitioned) generating tuples  of fundamental groups of compact orbifolds with boundary. These groups are free products of cyclic groups.

\smallskip

The orbifold $\mathcal O$ with underlying surface $F$ and  cone points of orders $m_1, \ldots , m_r$ is denoted  by $F(m_1, \ldots, m_r)$. 
If $\mathcal O$ has $q\geq 1$ boundary components, then a  presentation (also called the standard presentation) for $G:=\pi_1^o(\mathcal O)$ is  given by:
\begin{equation}{\label{present:G}}
\langle a_1,\ldots ,a_h, c_1,\ldots ,c_q , s_1,\ldots , s_r  \ | \   s_1^{m_1},\ldots ,s_r^{m_r},  R\cdot s_1\cdot \ldots \cdot s_r =c_1\cdot \ldots \cdot c_q \rangle \tag{*} 
\end{equation}   
where  $c_1, \ldots, c_q$ correspond to the boundary components and   $s_1, \ldots, s_r$  correspond to the  cone points  of $\mathcal O$. The word $R$ and $h$ are  given by:
\begin{enumerate}
\item $h$ is a non-negative and even integer  and  $R =[a_1,a_2][a_3, a_4]\cdot \ldots \cdot [a_{h-1},a_h]$  if $F$ is orientable of genus $h/2$;

\item   $h\ge   1$ and  $R =a_1^2\cdot\ldots \cdot a_h^2$  
if $F$ is non-orientable of genus $h$.
\end{enumerate}
From   (\ref{present:G}) we readily see that for any $1\leq k\leq q$ we have    
\begin{equation}{\label{split:G}}
G=\langle a_1, \ldots, a_h, c_1, \ldots , c_{k-1}, c_{k+1}, \ldots, c_q \ | \ - \ \rangle \ast \langle s_1 \ | \  s_1^{m_1}\rangle\ast \ldots \ast \langle s_r\ | \  s_r^{m_r}\rangle \tag{**} 
\end{equation}

Let $g\in G$ be   non-trivial element of  finite order.  Thus $g=us_i^ku^{-1}$ for some $u\in G$,   $i\in \{1, \ldots, r\}$ and $k\in \{1, \ldots, m_i-1\}$. We say that $g$  is \emph{angle-minimal} if $k=\pm |\langle s_i\rangle :\langle s_i^k\rangle|$.    
A generating tuple $T$ of a non-trivial subgroup of $u\langle s_i\rangle u^{-1}$ is said to be \emph{angle-minimal}  if $T$ consist of a single angle-minimal element.


\subsection{Generating tuples of fundamental groups of orbifolds with boundary}\label{gen_tuples_orbifold_with_boundary}

The classification of Nielsen equivalence classes of generating tuples of $G$  is therefore   given by the following theorem where the first part is a consequence of Grushko's theorem and the second part is due to Lustig \cite{L}, see also \cite{W2}.

\begin{theorem}[Grushko, Lustig]{\label{grushkolustig}} 
Let $\mathcal Q$ and $G$ as above. Then any irreducible generating tuple $T$ of $G$ is Nielsen equivalent to  
$$(a_1,\ldots ,a_h,\break s_1^{\nu_1},\ldots,s_r^{\nu_r},c_1,\ldots , c_{q-1})$$ 
with $0<\nu_i\le  \frac{m_i}{2}$ and  $gcd(\nu_i,m_i)=1$ for $1\le  i\le  r$. The $\nu_i$ are uniquely determined by the Nielsen equivalence class of $T$.
\end{theorem}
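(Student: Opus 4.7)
The plan is to exploit the free product splitting (\ref{split:G}). Taking $k=q$ in (\ref{split:G}) gives
$$G = F \ast \langle s_1\mid s_1^{m_1}\rangle \ast \cdots \ast \langle s_r\mid s_r^{m_r}\rangle,$$
where $F$ is the free group on the $h+q-1$ letters $a_1,\ldots,a_h,c_1,\ldots,c_{q-1}$, reducing the theorem to a statement about free products of a free group with finite cyclic groups. For existence I would invoke the Nielsen-equivalence form of Grushko's theorem (cf.\ \cite{W2}): every generating tuple of a free product is Nielsen equivalent to a concatenation of tuples, one landing in each free factor (possibly padded with trivial entries). Irreducibility of $T$ rules out trivial entries, so the piece landing in $F$ has exactly $h+q-1$ entries and each cyclic piece consists of a single generator of the corresponding factor. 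Nielsen's theorem for free groups moves the $F$-piece to the standard basis $(a_1,\ldots,a_h,c_1,\ldots,c_{q-1})$, and the $i$-th cyclic piece takes the form $s_i^{\nu_i}$ with $\gcd(\nu_i,m_i)=1$. Inverting $s_i^{\nu_i}$ if necessary (an elementary Nielsen move) arranges $0<\nu_i\le m_i/2$, producing a representative of the required form.

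For uniqueness of the $\nu_i$ I would appeal to Lustig's construction \cite{L}. The idea is to attach to any generating tuple of $G$ a Nielsen-invariant built via Fox-calculus in a suitable abelianized quotient of the integral group ring, which when evaluated on a tuple of the displayed form reads off each $\nu_i$ modulo $\pm 1$ inside $\mathbb Z/m_i$. The cyclic factors are separated in the output because they are pairwise non-conjugate maximal finite subgroups of $G$. Since elementary Nielsen transformations preserve the invariant by construction, two tuples of the displayed form can only be Nielsen equivalent if their exponent tuples coincide up to sign; combined with the normalization $0<\nu_i\le m_i/2$ this forces equality of the $\nu_i$.

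The main obstacle is the uniqueness step. Existence is almost immediate from Grushko plus Nielsen's theorem for free groups, but ruling out a Nielsen move that replaces $s_i^{\nu_i}$ by $s_i^{\lambda\nu_i}$ for some unit $\lambda\not\equiv\pm 1 \pmod{m_i}$ is delicate: Nielsen transformations freely mix entries across factors by conjugation and multiplication, and only a sufficiently robust algebraic invariant of Lustig type survives such scrambling while still detecting the individual cyclic-factor exponents.
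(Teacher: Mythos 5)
Your proposal matches the paper's approach: the paper does not prove this theorem but cites it, attributing the existence part to Grushko's theorem (via the free product decomposition~(\ref{split:G})) and the uniqueness of the $\nu_i$ to Lustig \cite{L} (with a pointer to \cite{W2}), which is exactly what you invoke. Your sketch of how Grushko and Nielsen's theorem combine for existence and why a Lustig-type Fox-calculus invariant is needed for uniqueness is a faithful unpacking of the same argument.
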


The previous theorem  enables us to characterize rigid generating tuples of $G$. Recall that these are generating tuples of $G$   that are not Nielsen equivalent to a generating tuple containing simultaneously one element corresponding to each boundary component. 
\begin{lemma}{\label{allprimitive}}  
Let $\mathcal O$ and $G$ be as above.  Let  
 $T=(a_1,\ldots ,a_h, s_1^{\nu_1},\ldots, s_r^{\nu_r},c_1,\ldots ,c_{q-1})$ 
be an irreducible  generating tuple of $G$ with $0<\nu_i\le   \frac{m_i}{2}$ and $gcd(\nu_i,m_i)=1$ for $1\le  i\le  r$.  
Then the following are equivalent:
\begin{enumerate}
\item $\nu_i =1$ for some $i\in\{1,\ldots,  r\}$.

\item $T$ is Nielsen equivalent to a tuple containing    $c_1,\ldots , c_q$.

\item There exist $g_1,\ldots ,g_q$ in $ G$ such that $T$ is Nielsen equivalent to a tuple containing $g_1c_1g_1^{-1},\ldots ,g_qc_qg_q^{-1}$.
\end{enumerate}
\end{lemma}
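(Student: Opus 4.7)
The plan is to establish the cycle $(1) \Rightarrow (2) \Rightarrow (3) \Rightarrow (1)$; the implication $(2) \Rightarrow (3)$ is immediate on taking $g_j = 1$.

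For $(1) \Rightarrow (2)$, the goal is to produce an explicit sequence of Nielsen moves that replaces the entry $s_i$ of $T$ (available because $\nu_i = 1$) by $c_q$, while leaving every other entry untouched. The key observation is that although the tuple only contains $s_j^{\nu_j}$ rather than $s_j$ for $j \neq i$, performing $\nu_j^{-1} \bmod m_j$ successive Nielsen multiplications by the entry $s_j^{\nu_j}$ is equivalent, as an element of $G$, to a single multiplication by $s_j$: indeed $(s_j^{\nu_j})^{\nu_j^{-1}} = s_j$ in the cyclic factor $\langle s_j \rangle$, and $\nu_j^{-1} \bmod m_j$ exists because $gcd(\nu_j, m_j) = 1$. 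Using this trick together with the $a_j$-entries to build the word $R$, I transform $s_i$ successively into $s_1 s_2 \cdots s_r$, then into $R s_1 \cdots s_r$, which by the defining relation equals $c_1 \cdots c_q$, and finally into $c_q$ by left-multiplying in turn by $c_1^{-1}, c_2^{-1}, \ldots, c_{q-1}^{-1}$; the resulting tuple contains $c_1, \ldots, c_q$.

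For $(3) \Rightarrow (1)$, I first pass from $(3)$ to $(2)$ by conjugation: since the tuple generates $G$, each $g_j$ is a word in its entries, and conjugation of a single entry by $g_j^{-1}$ can be realized by a composition of Nielsen moves of the form $t \mapsto h t h^{-1}$ with $h$ another entry, performed on each boundary entry in turn without disturbing the $c_j$'s already produced. The step $(2) \Rightarrow (1)$ is then obtained by reversing the construction of $(1) \Rightarrow (2)$: given $T''$ Nielsen equivalent to $T$ and containing $c_1, \ldots, c_q$, I rewrite the entry $c_q$ as $R s_1 \cdots s_r$ via the defining relation, peel off the $R$-factor using $a_j$-words implicit in the tuple, and extract a single $s_i$ by iterated Nielsen multiplications with the other cyclic-factor-related entries; this produces a Nielsen equivalent tuple in which $s_i$ appears as an entry, so by Theorem~\ref{grushkolustig} and the uniqueness of its invariants, $\nu_i = 1$.

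The main obstacle is this final step of $(2) \Rightarrow (1)$: in an arbitrary generating tuple of the form $(y_1, \ldots, y_{h+r-1}, c_1, \ldots, c_q)$, the $a_j$-entries and $s_j^{\nu_j}$-entries are not directly visible, but only implicitly as words in the auxiliary entries that render the tuple generating. Making the reduction rigorous thus amounts to running a Grushko--Lustig-type reduction on the modified tuple once $c_q$ has been replaced, with the uniqueness statement in Theorem~\ref{grushkolustig} serving as the organizing principle.
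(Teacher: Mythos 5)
Your proof of $(1)\Rightarrow(2)$ matches the paper's and is fine, as is the trivial $(2)\Rightarrow(3)$, but the direction $(3)\Rightarrow(1)$ --- which carries all the content of the lemma --- has two genuine gaps, and the paper takes an entirely different route there.

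First, your reduction $(3)\Rightarrow(2)$ by ``conjugation of a single entry by $g_j^{-1}$'' is not a valid Nielsen move. Nielsen transformations let you conjugate the $i$-th entry only by products of the \emph{other} entries, so to undo $g_jc_jg_j^{-1}\mapsto c_j$ you would need $g_j$ to be a word in the entries \emph{other than} $g_jc_jg_j^{-1}$, and you would need to do this for each $j$ in turn without spoiling the earlier ones. Neither is automatic. Indeed the elementary claim ``if $(gt_1g^{-1},t_2,\ldots,t_n)$ generates then it is Nielsen equivalent to $(t_1,\ldots,t_n)$'' already fails at the level of generation: in $F_2=\langle a,b\rangle$ the tuple $(a,b)$ generates but $((ab)a(ab)^{-1},b)=(abab^{-1}a^{-1},b)$ does not, since $(abab^{-1}a^{-1},b)$ is Nielsen reduced and $a$ does not lie in the rank-two free subgroup it generates. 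That $(3)$ and $(2)$ are equivalent is a \emph{consequence} of the lemma; it cannot be fed in at the start via an informal conjugation argument. (The only conjugation the paper permits itself is a \emph{global} one, conjugating the entire tuple by a single element, which does preserve Nielsen class because $T$ generates.)

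Second, even granting $(3)\Rightarrow(2)$, your $(2)\Rightarrow(1)$ step is not an argument: you propose to rewrite $c_q$ as $Rs_1\cdots s_r$ and ``peel off the $R$-factor using $a_j$-words implicit in the tuple,'' but as you yourself note, those words are not visible in an arbitrary Nielsen-equivalent tuple $(y_1,\ldots,y_{h+r-1},c_1,\ldots,c_q)$, and there is no mechanism supplied for extracting an honest $s_i$ by Nielsen moves. Appealing to ``a Grushko--Lustig-type reduction'' with Theorem~\ref{grushkolustig} as ``organizing principle'' names the missing machinery without providing it. The paper's actual proof of $(3)\Rightarrow(1)$ is by induction on $r$: the base case $r=0$ is a homology argument for surfaces with boundary, and the inductive step uses the splitting $G=A*\langle s_r\rangle$ together with Lemma~\ref{length2} (a Stallings-folding statement for length-two elements $c_q=as_r$ in free products) to push $T$ into $A\cup B$, after which either $s_r$ appears, giving $\nu_r=1$ by Theorem~\ref{grushkolustig}, or the induction hypothesis applies to the $A$-part. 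Your proposal contains neither the free-product decomposition nor the folding lemma, and these are precisely what make the hard direction go through.
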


Before we give a proof  we need one simple lemma, it might be well-known but the authors are not aware of any reference.

\begin{lemma}\label{length2} Let $G=A*B$ be a free product and $  T=(g_1, \ldots ,g_k)$ be a generating tuple for $G$  such that $g_1=ab$ with $a\in A$ and $b\in B$. Then $ T$ is Nielsen equivalent to  $(g_1',\ldots ,g_k')$  with $g_i'\in A\cup B$ for $1\le  i\le  k$ such that either $g_1'=a$ or $g_1'=b$. We can furthermore assume that $g_j=g_j'$ for all $j$ with $g_j\in A\cup B$.
\end{lemma}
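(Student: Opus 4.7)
The plan is to apply the Nielsen reduction procedure for free products, as in the Nielsen-reduction proof of Grushko's theorem \cite{Gr}, while carefully tracking the length-two element $g_1 = ab$. We may assume $a \ne 1$ and $b \ne 1$, since otherwise $g_1\in A\cup B$ already and nothing has to be done; hence $g_1$ has normal-form syllable length $2$ in $A*B$. Iteratively apply Nielsen moves $g_i \mapsto g_ig_j^{\pm 1}$ (and their left-multiplication analogues, obtainable by composition with inversion) that strictly decrease the total syllable length $\ell(T) := \sum_i |g_i|_{A*B}$. Since $\ell(T)$ is a non-negative integer, this process terminates, and by Grushko's theorem at termination each entry lies in $A\cup B$.

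For the preservation clause, I observe that any $g_j\in A\cup B$ has syllable length $1$, the minimum among non-trivial elements; hence a strictly length-reducing move touching $g_j$ would necessarily send it to the identity, which forces some other $g_i$ to equal $g_j^{\pm 1}$. In that degenerate situation one can instead apply the length-reducing move to $g_i$ rather than to $g_j$, leaving $g_j$ intact. Consequently the reduction can always be organized so that every original factor element of $T$ remains fixed throughout, which gives $g_j = g_j'$ for all $j$ with $g_j\in A\cup B$.

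The main obstacle is arranging that the final $g_1'$ is exactly $a$ or $b$, not merely some element of $A\cup B$. Tracking $g_1$ through the reduction, the first move that strictly reduces $|g_1|$ must take the form $g_1 \mapsto g_1g_j^{\pm 1}$ with $|g_1g_j^{\pm 1}|\le 1$. By the normal-form theorem for $A*B$, this forces the initial syllable of $g_j^{\pm 1}$ to cancel the $b$-syllable of $g_1 = ab$ (or, dually, the terminal syllable to cancel the $a$-syllable). In the first case $g_j^{\pm 1}$ has the form $b^{-1}w$ with $w$ beginning in $A$, and the new $g_1$ equals $a\cdot w$ after reduction. When $w=1$ we already have $g_1' = a$; otherwise the plan is to precede the cancellation by a Nielsen move that strips $w$ off of $g_j$, so that the subsequent cancellation with $g_1$ yields $g_1' = a$ on the nose. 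The dual case yields $g_1' = b$. This stripping step is the technical crux of the argument; I expect to implement it by a secondary induction on $|w|$, exploiting that $w$ lies entirely in the single factor $A$ and can hence be handled by Nielsen moves that involve only elements of that factor, combined with the fact that the $A$-part of the tuple generates $A$ at every stage of the reduction.
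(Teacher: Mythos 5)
Your approach — direct Nielsen reduction on syllable lengths — is genuinely different from the paper's, which encodes $T$ in an $\mathbb A$-graph (a wedge of a degenerate edge $f$ with label $(1,e,1)$, an edge $f'$ with label $(a,e,b)$ representing $g_1=ab$, and loops for the remaining non-factor generators) and then runs the Grushko folding sequence so that $f$ and $f'$ are only merged at the last step, which by construction deposits exactly $a$ or $b$ at a vertex. Your elementary route avoids this machinery, which is appealing, but as written it has a gap in the step you yourself flag as the technical crux.

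The problem is in the stripping argument. You write $g_j^{\pm 1}=b^{-1}w$ and assert that "$w$ lies entirely in the single factor $A$." That is false in general. From the bound $|g_1g_j^{\pm 1}|\le 1$ one only gets that $w$ \emph{begins} in $A$; its full list of possibilities is $w=1$, $w=a'\in A$, or $w=a'b'$ with $a'=a^{-1}$ and $b'\in B$ arbitrary. In the third case $g_j^{\pm 1}=g_1^{-1}b'$ and $g_1g_j^{\pm 1}=b'$, which need not equal $b$, and $w$ has a $B$-syllable, so the proposed secondary induction "by Nielsen moves involving only elements of that factor" does not apply. More fundamentally, turning $g_j^{\pm 1}=b^{-1}w$ into $b^{-1}$ by Nielsen moves requires $w$ to be reachable from the other entries of the current tuple, and you give no argument for this; the generating hypothesis is used nowhere in the stripping step, yet it is essential (without it there is no reason $a$ or $b$ should be expressible). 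A related minor issue: in the preservation clause, your fallback "apply the move to $g_i$ instead" only works when $g_i\notin A\cup B$; if $g_i=g_j^{\mp 1}$ with both originally in $A\cup B$, killing either one violates the preservation claim, and you need to argue that this configuration never forces a reduction.

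So the overall strategy is plausible but the key step is not established, and one of its supporting claims is incorrect. To salvage it you would need either (i) a careful case analysis of the three shapes of $w$ together with an argument that the needed correction elements are available at the relevant stage of the reduction, or (ii) to switch to a bookkeeping device, such as the graph-of-groups folding the paper uses, that makes the last-moment merge of $ab$ automatic.
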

\begin{proof}[Sketch of proof]
Let $\mathbb A$ be the graph of groups with a single edge pair $\{e,e^{-1}\}$ with trivial edge group and vertices $v_0=\alpha(e)$ and $v_1=\omega(e)$ such that 
 $G_{v_0}=A$  and $G_{v_1}=B.$  
Then $\pi_1(\mathbb A, v_0)\cong G$. Clearly there exists a marked $\mathbb A$-graph $(\mathcal B, u_0)$ (see \cite{W2}) such that the following hold (see Figure~\ref{fig:lemma23}):
\begin{enumerate}
\item[(a)] $T_{u_0}$ consists precisely of the elements of $T$ that lie in $A$.

\item[(b)] There exists an edge $f\in EB$ with label $(1,e,1)$ and $\alpha(f)=u_0$ such that the tuple $T_{u_1}$  at   $u_1=\omega(f)$ consists of the elements of $T$ that lie in $B$.

\item[(c)] There exits an edge $f'\in EB$ with   label  $(a, e, b)$ such that $\alpha(f')=u_0$ and $\omega(f')=u_1$.   The  closed loop $f', f^{-1}$ therefore   corresponds to $g_1=ab$.

\item[(d)]  The elements of $\{g_2,\ldots,g_k\}\setminus(A\cup B)$ are represented by  non-degenerate  loops based at $u_0$.
\end{enumerate}
\begin{figure}[h!]
\begin{center}
\includegraphics[scale=1]{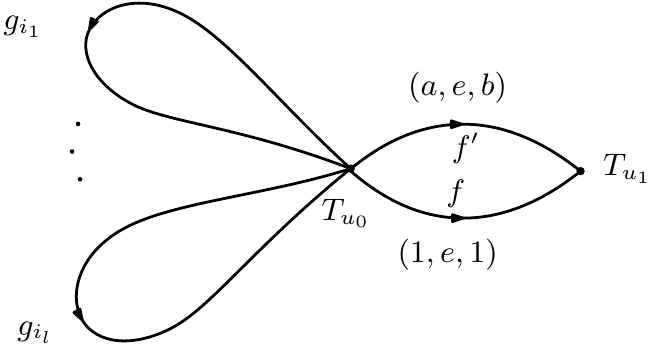}
\end{center}
\caption{\{$g_{i_1},\ldots, g_{i_l}\}=\{g_2,\ldots,g_k\}\setminus(A\cup B)$.}{\label{fig:lemma23}}
\end{figure}
 
The folding proof of Grushko's theorem transforms this wedge of circles into an edge with the elements sitting in the vertex groups. It now suffices to observe that we can choose the folding sequence  in such a way that  the label of $f$ is unchanged and the label of $f'$ is unchanged unless it is modified by an auxiliary move of type A2 just before identifying $f$ and $f'$ with an elementary fold of type  IIIA. After this auxiliary move the label of $f'$ is either $(1,e,b)$ or $(a,e,1)$.  Thus the subsequent fold either adds the element $a$ to $ T_{u_0}$ or the element $b$ to $ T_{u_1}$. The assertion follows. 
\end{proof}

\begin{proof}[Proof of Lemma~\ref{allprimitive}]
The result is  easily verified if $\mathcal O=D^2(m_1)$ as in this case $G\cong \mathbb Z_{m_1}$. Thus we may assume that  $\mathcal O\neq D^2(m_1)$.

\smallskip

\noindent(1)$\implies$(2) Suppose first that $\nu_i =1$ for some $i\in\{1,\ldots , r\}$. After appropriate left and right multiplications of $s_i=s_i^{\nu_i}$ with powers of the other elements of the tuple we  can replace $s_i$ by $c_q$. Thus $T$ is  equivalent to  $(a_1, \ldots, a_p, s_1^{\nu_1}, \ldots, s_{i-1}^{\nu_{i-1}}, s_{i+1}^{\nu_{i+1}}, \ldots, s_r^{\nu_r}, c_1, \ldots, c_q)$  which  proves the assertion.

\smallskip

\noindent(2) $\implies$ (3) is trivial.
 
\smallskip
 
\noindent(3) $\implies$ (1) The proof is by induction on $r$.  The case $r=0$ is trivial as $\mathcal O$  is  in this case   a surface and a simple homology argument shows that for fundamental groups of surfaces with boundary  there is no irreducible generating  tuple  containing elements corresponding to all boundary curves.  

Suppose  that $r\ge 1$ and there exist $g_1,\ldots ,g_q$  in $G$ such that  $T$ is Nielsen equivalent to a tuple $T'$ containing $g_1c_1g_1^{-1},\ldots ,g_qc_qg_q^{-1}$. We have to show that $\nu_i=1$ for some $i\in\{1,\ldots ,r\}$. After a global conjugation (which preserves the Nielsen class of $T$ since it generates $G$) we may assume that $g_q=1$. As we have excluded the case $\mathcal O=D^2(m_1)$, we can write 
$$G=A*B=\langle a_1,\ldots ,a_h,c_1,\ldots ,c_{q-1},s_1, \ldots , s_{r-1}\rangle \ast \langle s_r \rangle.$$ 
Recall that $c_q=as_r$ with $a\in A$ corresponding to a boundary component of the orbifold $\mathcal O'$ with the cone point of order $m_r$ turned into an ordinary point. It now follows from Lemma~\ref{length2} that $T'$ (and therefore $  T$) is Nielsen equivalent to a tuple $ T''$ consisting entirely of  elements from $A\cup B$ such that one of the following holds:
\begin{enumerate}
\item $T''$ contains $s_r$.
\item $T''$ contains $a$.
\end{enumerate}
In the first case the conclusion is immediate as the remaining element of $T''$ must be equivalent to a standard generating tuple of $A$ by Theorem~\ref{grushkolustig}.

In the second case $T''$ contains a generating set of $A$ containing conjugates of all boundary components (as the folding proof of Grushko's theorem preserves the conjugacy class of elliptic elements and therefore of the conjugates of $c_1,\ldots ,c_{q-1}$). The conclusion now follows by induction. 
\end{proof}

Lemma~\ref{addinggeneratormakesreducible} below is a variation of the well-known fact that $rank(U_1)\le rank(U_2)$  if  $U_1$ and $U_2$ are  finite index subgroups of a finitely generated free groups $F$ such that $U_2\le U_1$. This almost remains true in  a free product of cyclic groups. As in the free group case this follows from a simple Euler characteristic argument. We first record the following well-known (trivial) fact.

\begin{lemma}{\label{lemma:euler}} 
Let $G=\mathbb Z_{m_1}*\ldots \ast \mathbb Z_{m_r}$ with $2 \le  m_i \le   \infty$. Then  
$$1-r\le  \chi(G):=1-r+\sum_{i=1}^r\limits 1/m_i\le  1-r/2,$$ 
and therefore $$1-\chi(G)\le  rank(G) \le  2-2\chi(G).$$ 
Furthermore 
$$rank(G)=1-\chi(G) \ \text{ iff } \ m_i=\infty \  \text { for all } \   1\le  i \le  r$$ 
and 
$$rank(G)=2-2\chi(G) \     \text{ iff  } \  \ m_i=2 \ \text{  for all   }  \  1\le  i \le  r.$$
\end{lemma}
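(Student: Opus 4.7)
The plan is to reduce everything to elementary arithmetic together with one application of Grushko's theorem. First, I would verify the bounds on $\chi(G)$: since each $m_i\in\{2,3,\ldots\}\cup\{\infty\}$, the term $1/m_i$ (with the convention $1/\infty := 0$) satisfies $0 \le 1/m_i \le 1/2$. Summing these inequalities gives $0 \le \sum_{i=1}^r 1/m_i \le r/2$, and substituting into $\chi(G)=1-r+\sum_{i=1}^r 1/m_i$ yields the two displayed inequalities for $\chi(G)$. Equality on the left occurs precisely when every summand equals $0$, i.e.\ every $m_i=\infty$; equality on the right occurs precisely when every summand equals $1/2$, i.e.\ every $m_i=2$.

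Second, I would invoke Grushko's theorem: since each $\mathbb Z_{m_i}$ is a non-trivial cyclic group (hence of rank $1$), we have
\[
rank(G) \;=\; \sum_{i=1}^r rank(\mathbb Z_{m_i}) \;=\; r.
\]
Substituting $r=rank(G)$ into the chain $1-r \le \chi(G) \le 1-r/2$ gives $1-\chi(G)\le rank(G)$ on the one hand and $rank(G)\le 2-2\chi(G)$ on the other, which are the remaining inequalities. The equality characterizations transfer directly: $rank(G)=1-\chi(G)$ iff $\chi(G)=1-r$ iff every $m_i=\infty$, and $rank(G)=2-2\chi(G)$ iff $\chi(G)=1-r/2$ iff every $m_i=2$.

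Since everything reduces to termwise arithmetic bounds on $1/m_i$ together with a single rank computation via Grushko, there is no genuine obstacle here. The only point worth flagging is the convention $1/\infty:=0$, which is the standard one for orbifold Euler characteristic and is implicit in the statement; once this is fixed the proof is purely a matter of bookkeeping and should occupy only a few lines.
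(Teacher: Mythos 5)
Your proof is correct. The paper states this lemma as a ``well-known (trivial) fact'' and gives no proof, so there is nothing to compare against; your argument---termwise bounds $0\le 1/m_i\le 1/2$ for the Euler characteristic estimates, Grushko's theorem to identify $\operatorname{rank}(G)=r$, and then direct substitution with the equality cases read off from when each $1/m_i$ attains its extreme value---is exactly the expected one and fills the gap cleanly.
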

\begin{lemma}\label{addinggeneratormakesreducible} 
Let $G=\mathbb Z_{m_1}*\ldots  \ast \mathbb Z_{m_r}$ with $2 \le  m_i \le \infty$ and $ T=(g_1,\ldots ,g_k)$ be a minimal generating tuple of a finite index subgroup $H\le G$. Let $g\in G$. Then one of the following holds:
\begin{enumerate}
\item $(g_1,\ldots ,g_k,g)$ is reducible.

\item $H\cong F_k$, $\langle H,g\rangle\cong \mathbb Z_2*\ldots \ast \mathbb Z_2$ ($k+1$ free factors) and $|\langle H,g\rangle:H|=2$.
\end{enumerate}
\end{lemma}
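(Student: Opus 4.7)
The approach is to compare the Euler characteristics of $H$ and $H':=\langle H,g\rangle$ via Lemma~\ref{lemma:euler}. Set $n:=[H':H]$ and assume $(g_1,\ldots,g_k,g)$ is irreducible; we must show we are in case~(2). If $g\in H$, expressing $g$ as a word in the $g_i$'s reduces the tuple to $(g_1,\ldots,g_k,1)$ by Nielsen moves, contradicting irreducibility, so $n\geq 2$. Since $H$ has finite index in $G$, it has finite index $n$ in $H'$; by Kurosh both $H$ and $H'$ are themselves free products of cyclic groups, and multiplicativity of the rational Euler characteristic under finite-index extensions gives $\chi(H)=n\chi(H')$.

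Grushko's theorem applied to $H'$, combined with the Euclidean algorithm inside each cyclic factor, shows that any generating tuple of $H'$ of size strictly exceeding $\text{rank}(H')$ is reducible; hence irreducibility forces $\text{rank}(H')=k+1$. Now invoke Lemma~\ref{lemma:euler}: $\text{rank}(H)=k$ gives $\chi(H)\geq 1-k$, with equality iff $H\cong F_k$, while $\text{rank}(H')=k+1$ gives $\chi(H')\leq (1-k)/2$, with equality iff every factor of $H'$ is $\mathbb Z_2$. Substituting $\chi(H)=n\chi(H')$ yields $1-k\leq n(1-k)/2$. For $k\geq 2$ the factor $1-k$ is strictly negative, which forces $n\leq 2$, hence $n=2$; both inequalities must then be equalities, yielding $H\cong F_k$ and $H'\cong \mathbb Z_2\ast\cdots\ast\mathbb Z_2$ with $k+1$ factors---precisely case~(2).

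The case $k=1$ needs a brief separate step because $1-k=0$ makes the master inequality vacuous. Here $\chi(H)\geq 0$ and $\chi(H')\leq 0$, combined with $\chi(H)=n\chi(H')$ and $n\geq 2$, force $\chi(H)=\chi(H')=0$; the equality clauses of Lemma~\ref{lemma:euler} then give $H\cong\mathbb Z$ and $H'\cong\mathbb Z_2\ast\mathbb Z_2$. Writing $H'=\langle a,b\mid a^2,b^2\rangle$, every infinite cyclic subgroup of $H'$ lies in the unique index-$2$ subgroup $\langle ab\rangle$, so $H=\langle (ab)^j\rangle$ for some $j\geq 1$; the condition $H'=\langle H,g\rangle$ forces $H'/H$ to be cyclic. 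But $H'/H$ is the dihedral group of order $2j$, so $j=1$ and $n=2$. The main obstacle throughout is the equality analysis in Lemma~\ref{lemma:euler}: it is what pins down the precise isomorphism types needed for case~(2). The numerical constraints on $n$ follow almost for free once the rank equation $\text{rank}(H')=k+1$ has been established via Grushko.
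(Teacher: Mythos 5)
Your proof is correct and uses the same underlying strategy as the paper's: compare $\chi(H)$ with $\chi(H')$ via the index--multiplicativity of the rational Euler characteristic, use Kurosh to know both $H$ and $H'$ are free products of cyclics, bound ranks via Lemma~\ref{lemma:euler}, and invoke Grushko/Lustig (Theorem~\ref{grushkolustig}) to conclude reducibility when the tuple size exceeds the rank. You arrange the inequalities somewhat differently --- you assume irreducibility and derive $\mathrm{rank}(H')=k+1$, then squeeze $n$, whereas the paper bounds $\mathrm{rank}(H')\le 1+\mathrm{rank}(H)$ directly and case-splits --- but the content is the same. The one substantive improvement in your version is the explicit treatment of $k=1$. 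In that case all the Euler characteristics vanish, so the middle equality $\chi(H)=2\chi(H')$ in the paper's chain holds for \emph{any} index and does not, on its own, give $|H':H|=2$; the paper's proof as written does not address this. Your analysis inside $H'\cong\mathbb Z_2*\mathbb Z_2\cong D_\infty$ (identifying $H$ with the translation subgroup and observing that the cyclic quotient $H'/H$ generated by the image of $g$ forces $j=1$) correctly closes that gap. You also avoid a small slip in the paper's proof, where the equality-case conclusion is stated with the roles of $U=\langle H,g\rangle$ and $H$ interchanged.
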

\begin{proof} 
We can assume that $g $ does not lie in  $H=\langle g_1,\ldots ,  g_k\rangle$ as $(g_1,\ldots ,g_k,g)$ is otherwise Nielsen equivalent to $(g_1,\ldots ,g_k,1)$ and therefore reducible. We can further assume that $G$ is not cyclic as the assertion is trivial in this case. Thus   $\chi(G)\le  0$ and therefore $\chi(H)\le  0$.

Put $U:=\langle H,g\rangle$. Clearly $|G:U|\le  \frac{1}{2}|G:H|$ and therefore $\chi(H)\le  2\chi(U)$. By Kurosh's  subgroup theorem both $H$ and $U$ are free products of cyclics themselves, thus by Lemma~\ref{lemma:euler} we have 
$$rank(U)\le  2-2\chi(U)\le  2-\chi (H)=1+(1-\chi(H))\le  1+ rank(H)$$
 with $rank(U)=1+ rank(H)$ iff  $U\cong F_k$ and $H\cong\mathbb Z_2*\ldots*\mathbb Z_2$ (with $k+1$ free factors) and $|U:H|=2$. This last case puts us into case $(2)$, while the case $ rank(U)\le  rank(H)$ clearly puts us into case $(1)$ by Lemma~\ref{grushkolustig}.
\end{proof}

\begin{corollary}\label{adding2generatorsmakesreducible} 
Let $G$, $T$ and $H$ be as in Lemma~\ref{addinggeneratormakesreducible} and $g,h\in G$.  Then  $(g_1,\ldots,g_k,g,h)$  is reducible.
\end{corollary}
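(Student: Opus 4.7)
The plan is to apply Lemma~\ref{addinggeneratormakesreducible} twice in succession. First, apply it to $T=(g_1,\ldots,g_k)$ and the element $g$. In case (1) the tuple $(g_1,\ldots,g_k,g)$ is already reducible, i.e.\ Nielsen equivalent to some $(h_1,\ldots,h_k,1)$; appending $h$ and swapping the last two entries then exhibits $(g_1,\ldots,g_k,g,h)$ as Nielsen equivalent to $(h_1,\ldots,h_k,h,1)$, which is reducible. So we may assume case (2): $H\cong F_k$, $U:=\langle H,g\rangle\cong \mathbb Z_2\ast\cdots\ast \mathbb Z_2$ ($k+1$ free factors), and $|U:H|=2$.

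In this situation, Lemma~\ref{lemma:euler} gives $\mathrm{rank}(U)=k+1$. Since the tuple $(g_1,\ldots,g_k,g)$ generates $U$ and has exactly $k+1$ entries, it is a minimal generating tuple of $U$. We are therefore in a position to apply Lemma~\ref{addinggeneratormakesreducible} a second time, now with $U$ in place of $G$, with $(g_1,\ldots,g_k,g)$ in place of $T$, and with $h$ in place of the added element. If conclusion (1) of this second application holds, we obtain exactly what we want: $(g_1,\ldots,g_k,g,h)$ is reducible.

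The only remaining possibility is that conclusion (2) of the second application holds, which would in particular force $U\cong F_{k+1}$. But $U\cong \mathbb Z_2\ast\cdots\ast\mathbb Z_2$ (with $k+1\ge 1$ nontrivial factors) has 2-torsion and therefore cannot be free; this is a contradiction. Hence case (2) cannot recur, case (1) must hold in the second application, and $(g_1,\ldots,g_k,g,h)$ is reducible.

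Essentially no step here is hard: the argument is a clean two-step reduction. The only point requiring a small check is that after the first application lands in case (2), the tuple $(g_1,\ldots,g_k,g)$ is actually minimal for $U$ (so that Lemma~\ref{addinggeneratormakesreducible} can be reapplied), which is immediate from the rank formula in Lemma~\ref{lemma:euler}. The torsion-vs-free contradiction that kills the second occurrence of case (2) is the conceptual heart of the proof.
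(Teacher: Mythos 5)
Your overall strategy is the right one: apply Lemma~\ref{addinggeneratormakesreducible}, dispose of case (1) by appending $h$ and transposing, and in case (2) reapply the lemma, eliminating the recursive alternative (2) by the observation that $U\cong \mathbb Z_2\ast\cdots\ast\mathbb Z_2$ (with $k+1\ge 1$ factors) has $2$-torsion and so cannot be free. Checking that $(g_1,\ldots,g_k,g)$ is a minimal generating tuple of $U$ via the rank count from Lemma~\ref{lemma:euler} is exactly what is needed to make the second application legitimate. The paper states this corollary without proof, so there is nothing to compare against, but this is surely the intended argument.

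There is, however, a misstatement in how you set up the second application: you propose to invoke the lemma ``with $U$ in place of $G$''. That substitution is not licensed, because the lemma requires the added element to lie in the ambient free product of cyclics, and $h$ is only assumed to lie in $G$, not in $U$; if $h\notin U$, the application as you have written it does not make sense. The correct second application keeps $G$ as the ambient group, takes $U=\langle H,g\rangle$ as the finite-index subgroup (finite index in $G$ because $H\le U\le G$ and $|G:H|<\infty$), takes $(g_1,\ldots,g_k,g)$ as its minimal generating tuple, and takes $h\in G$ as the added element. With this correction your argument goes through verbatim: conclusion (2) of this application would force $U\cong F_{k+1}$, which contradicts $U$ having $2$-torsion, so conclusion (1) holds and $(g_1,\ldots,g_k,g,h)$ is reducible.
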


 
\subsection{Partitioned tuples}\label{section_local_picture}

In this section we define the notion of partitioned tuples and equivalence of partitioned tuples in the fundamental group of compact orbifolds with boundary. 

If $\mathcal O$ is a compact orbifold with boundary then we call $g\in G=\pi_1(\mathcal O)$ \emph{peripheral} if $g$ is conjugate to an element of a subgroup corresponding to a boundary component. If $G$ is given by the standard presentation
$$
\langle a_1,\ldots ,a_h, c_1,\ldots ,c_q , s_1,\ldots , s_r  \ | \   s_1^{m_1},\ldots ,s_r^{m_r},  R\cdot s_1\cdot \ldots \cdot s_r =c_1\cdot \ldots \cdot c_q \rangle
$$ 
then $g$ is peripheral if and only if $g$ is conjugate into $\langle c_i\rangle$ for some $1\le i\le q$. A \emph{label} of a peripheral element $g$ is a pair $(o, i) \in G\times \{1, \ldots, q\}$   
 such that $g \in o \langle  c_i \rangle  o^{-1}.$  A \emph{labeled peripheral element} is a peripheral element endowed with  a label.

\begin{remark}{\label{rem:uniquelabel}}
If $\mathcal O$ is   hyperbolic then  labels are unique in the following sense: if $(o, i)$ and $(o', i')$ are labels of the same peripheral element $g\in G$, then $i=i'$ and $o'=oc_i^k$ for some integer $k$. This follows from the  following facts:
\begin{enumerate}
\item[(i)]   the subgroups  $\langle c_1\rangle, \ldots,  \langle c_q\rangle$ are malnormal, i.e.~$g\in G$ lies in the peripheral subgroup $\langle c_i\rangle$  if and only if  $g\langle c_i\rangle g^{-1}\cap \langle c_i\rangle \neq 1.$ 

\item[(ii)] the subgroups  $\langle c_1\rangle, \ldots,  \langle c_q\rangle$  are  conjugacy separable, i.e.~when $i\neq j$ then $g\langle c_i\rangle g^{-1} \cap \langle c_j\rangle =1 $ for any $g\in G$.	   
\end{enumerate}
\end{remark}

Let $G$  be a (arbitrary) group. A \emph{partitioned tuple} in $G$ is a pair $\mathcal P=(T, P)$  
consisting of two tuples 
$T=(g_1, \ldots, g_m)$ and $ P=(\gamma_1, \ldots, \gamma_n)$  
of elements of $G$.  We call $P$  the \emph{peripheral tuple}  and $T\oplus P$ the  \emph{underlying tuple}  of $\mathcal P$. 

If $G$ is the fundamental group  of a compact orbifold with boundary, then we assume that   $P$ consists only of labeled peripheral elements.

\smallskip

\emph{An elementary transformation} applied to $\mathcal P=(T, P)$ is a move of one of the following types:
\begin{enumerate}
\item[(1)]  replace $T=(g_1, \ldots, g_m)$ by a Nielsen equivalent tuple  $T'=(g_1', \ldots, g_m').$

\item[(2)]  replace $P=(\gamma_1, \ldots, \gamma_n)$  by  
 $P'=(\gamma_{\sigma(1)}^{\varepsilon_1}, \ldots, \gamma_{\sigma(n)}^{\varepsilon_n})$
for some $\sigma\in S_n$ and $\varepsilon_1, \ldots, \varepsilon_n\in \{\pm 1\}$.

\item[(3)] for some $1\le k\le n$ replace   $\gamma_k$    by $h^{\varepsilon}\gamma_kh^{-\varepsilon}$ where $\varepsilon\in\{-1,1\}$ and $$h\in\{g_1,\ldots ,g_m, \gamma_1,\ldots,\gamma_{k-1},\gamma_{k+1},\ldots  ,\gamma_n\}.$$ 
If $G$ is the fundamental group of a compact orbifold we assume that the label of $\gamma_k$  changes in the obvious way, i.e.~if $\gamma_k$ has label $(o_k, i_k)$, then $h\gamma_kh^{-1}$ has label $(ho_k, i_k)$.

\item[(4)] for some $1\le l\le m$ replace  $g_l$   by $h_1^{\varepsilon_1}g_lh_2^{\varepsilon_2}$ where $\varepsilon_1,\varepsilon_2\in\{-1,0,1\}$ and   $h_1,h_2\in\{g_1,\ldots ,g_{l-1},g_{l+1},\ldots g_m, \gamma_1,\ldots ,\gamma_n\}.$   
\end{enumerate}
We say that two partitioned tuples $\mathcal P$ and $\mathcal P'$ are \emph{equivalent}, and write $\mathcal P\sim \mathcal P'$, if there are partitioned tuples  
 $\mathcal P=\mathcal P_0, \mathcal P_1, \ldots,\mathcal P_{k-1},  \mathcal P_k=\mathcal P'$ 
such that $\mathcal P_i$ is obtained from $\mathcal P_{i-1}$ by an elementary equivalence  for $1\le  i\le  k$.   The   equivalence class of $\mathcal P$ is denoted by $[\mathcal P]$.

\begin{lemma}{\label{lemma:equivalencesimpletype}}
Let $F_n=F(x_1,\ldots ,x_n)$ and $G=H\ast F_n$ for some group $H$. Let $0\le r\le n$.  Suppose that 
$$(T_1\oplus (g_{r+1}x_{r+1}g_{r+1}^{-1} ,\ldots  ,g_nx_ng_n^{-1}) ,   (g_1x_1g_1^{-1},\ldots ,g_rx_rg_r^{-1}))$$ 
and 
$$(T_2\oplus (h_{r+1}x_{r+1}h_{r+1}^{-1},\ldots ,h_{n}x_{n}h_{n}^{-1}) ,   (h_1x_1h_1^{-1},\ldots , h_rx_r h_r^{-1}))$$ are equivalent and minimal partitioned generating tuples.
Then  
$$\mathcal P_1 :=(T_1, (g_1x_1g_1^{-1}, \ldots ,  g_{n}x_{n}g_{n}^{-1})) \ \ \text{ and } \  \ \mathcal P_2 :=(T_2, (h_1x_1 h_1^{-1},  \ldots ,  h_{n}x_{n}h_{n}^{-1}))$$ 
are  equivalent and are (both) equivalent to $(T,(x_1,\ldots, x_r, \ldots ,x_n))$ for some $T$.
\end{lemma}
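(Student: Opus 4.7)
The plan is to reduce each of the four partitioned tuples involved to a canonical form in which the conjugates $g_j^{(i)} x_j g_j^{(i)-1}$ and $h_j^{(i)} x_j h_j^{(i)-1}$ are replaced by the bare generators $x_j$, and then to transport the hypothesized equivalence. First I would argue that $\mathcal P_i \sim (T_i, (x_1, \ldots, x_n))$: since the underlying tuple generates $G$, each conjugator $g_j^{(i)}$ can be written as a word in the tuple entries, and each letter of this word can be used to conjugate the peripheral entry $g_j^{(i)} x_j g_j^{(i)-1}$ via a type-(3) elementary transformation. Because type-(3) moves leave the $T$-part untouched and only modify the peripheral element being conjugated, after all conjugations the peripheral tuple has been reduced to $(x_1, \ldots, x_n)$ while $T_i$ is unchanged. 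Applying the same idea to $\mathcal Q_i$, using type-(3) on the peripheral entries and single type-(4) moves (which permit simultaneous left- and right-multiplication by tuple elements) on the $T$-part entries $g_j^{(i)} x_j g_j^{(i)-1}$ for $j > r$, yields $\mathcal Q_i \sim \mathcal Q_i^{\mathrm{can}} := (T_i \oplus (x_{r+1}, \ldots, x_n), (x_1, \ldots, x_r))$.

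Next, the hypothesis $\mathcal Q_1 \sim \mathcal Q_2$ gives $\mathcal Q_1^{\mathrm{can}} \sim \mathcal Q_2^{\mathrm{can}}$. From this I would deduce the partitioned equivalence $(T_1, (x_1, \ldots, x_n)) \sim (T_2, (x_1, \ldots, x_n))$; setting $T := T_1$ this simultaneously proves both conclusions. The aim is to show that the equivalence in the $\mathcal Q$-partition can be refined so that, at each intermediate stage, each entry playing the role of $x_j$ for $j > r$ has the form $a_j x_j a_j^{-1}$ for some word $a_j$ in the surrounding tuple entries, and the only moves ever applied to it are conjugations by other tuple elements. Once this refinement is in hand the sequence can be reread in the $\mathcal P$-partition: the designated entries $x_j$ (for $j > r$) are now treated as peripheral, conjugations become type-(3) moves, and the rest of the sequence becomes valid moves on $T_i$ and on the existing peripheral entries.

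The hard part is precisely this refinement step: a general sequence of elementary transformations in the $\mathcal Q$-partition may apply type-(1) or type-(4) moves that non-trivially disrupt the structure of $x_{r+1}, \ldots, x_n$, turning them into arbitrary group elements. To handle this I would set up the problem in the $\mathbb A$-graph framework of \cite{KMW}, representing canonical forms by $\mathbb A$-graphs with respect to the Grushko decomposition $G = H \ast \langle x_1 \rangle \ast \cdots \ast \langle x_n \rangle$ (which has trivial edge groups) and treating elementary transformations as the associated folds and auxiliary moves. The uniqueness of the minimal-complexity $\mathbb A$-graph in each equivalence class, established along the lines of \cite{W2}, should force the $\langle x_j \rangle$-vertices to remain separated in the reduced graph; that separation is exactly what enables the transfer between the $\mathcal Q$- and $\mathcal P$-partitions and yields the desired equivalence.
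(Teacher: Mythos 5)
Your first step is wrong: it is not true that each $\mathcal P_i$ is equivalent to $(T_i,(x_1,\ldots,x_n))$ with the $T_i$-part \emph{unchanged}. A type-(3) move on the $j$-th peripheral entry only permits conjugation by the \emph{other} entries of the tuple, and for a minimal generating tuple those remaining entries generate a proper subgroup of rank one less than the rank of $G$, which has no reason to meet the coset $g_j\langle x_j\rangle$. Concretely, take $H=\langle h\rangle$, $n=1$, $r=0$, $G=\langle h\rangle * \langle x_1\rangle\cong F_2$, $T_1=(h^2x_1h^{-1})$ and $g_1=h$, so $\gamma_1=hx_1h^{-1}$. The pair $(h^2x_1h^{-1},\,hx_1h^{-1})$ generates $G$, since $h^2x_1h^{-1}\cdot hx_1^{-1}h^{-1}=h$; thus $\mathcal P_1=((h^2x_1h^{-1}),(hx_1h^{-1}))$ is a legitimate input. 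But $(h^2x_1h^{-1},x_1)$ is a Nielsen-reduced pair of total length five, hence a free basis of a \emph{proper} subgroup of $F_2$, so $((h^2x_1h^{-1}),(x_1))$ is not a partitioned generating tuple and in particular is not equivalent to $\mathcal P_1$ (equivalence preserves the generated subgroup). Normalizing the peripheral entry here forces a type-(4) move that alters the $T$-part, e.g.\ $h^2x_1h^{-1}\mapsto h^2x_1h^{-1}\cdot\gamma_1^{-1}=h$; the correct output of the Grushko folding, which is what the paper uses, is $\mathcal P_i\sim(T_i',(x_1,\ldots,x_n))$ for a \emph{new} tuple $T_i'$ which is a minimal generating tuple of the free factor $H$.

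Once this is repaired you cannot take $T:=T_1$, and you are left with exactly your unproved ``refinement'' step. The paper's route is different and avoids it entirely: having reduced $\mathcal P_1,\mathcal P_2$ to $(T_3,(x_1,\ldots,x_n))$ and $(T_4,(x_1,\ldots,x_n))$ with $T_3,T_4$ minimal generating tuples of $H$, it uses the hypothesis $\mathcal Q_1\sim\mathcal Q_2$ only for the coarse consequence that the \emph{underlying} tuples are Nielsen equivalent and irreducible, and then invokes the uniqueness theorem of \cite{W2} for irreducible generating tuples of free products to conclude that $T_3$ and $T_4$ are Nielsen equivalent in $H$, whence $\mathcal P_1\sim\mathcal P_2\sim(T_3,(x_1,\ldots,x_n))$. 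The main result of \cite{W2} is the ingredient your plan is missing; it accomplishes what you hoped to obtain from a move-by-move transport of the partitioned equivalence between the two partition types.
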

\begin{proof} Consider the free product decomposition $ G=H\ast \langle x_1\rangle \ast \ldots \ast  \langle x_{n}\rangle  .$ 
The folding proof of Grushko's theorem (which preserves the element $g_ix_ig_i^{-1}$ and $h_ix_ih_i^{-1}$ up to conjugacy as these elements are elliptic in the above splitting of $G$) shows that $\mathcal P_1$ is  equivalent to 
$(T_3, (x_1,\ldots, x_{n}))$   
and that $\mathcal P_2$ is equivalent to 
 $(T_4,(x_1,\ldots, x_{n}))$  
where $T_3$ and $T_4$ are generating tuples of $H$. As the underlying tuples are moreover Nielsen equivalent and irreducible it follows from the main result of \cite{W2} that $T_3$ and $T_4$ are Nielsen equivalent which implies that  the partitioned tuples  $(T_3, (x_1,\ldots, x_{n}))$ and $(T_4, (x_1,\ldots ,  x_{n}))$ are equivalent. Thus $\mathcal P_1$ and $\mathcal P_2$ are   equivalent and equivalent to $(T, (x_1, \ldots, x_n))$.  
\end{proof}
 
 
From now  on we only consider partitioned tuples of element of the fundamental group of a compact orbifold   with boundary. Most of the following definitions are motivated by Louder's notion of coverlike morphisms between square complexes, see Definition~7.1 from \cite{Lou}. The only  exception is the notion of partitioned tuples of almost   covering type as they cannot occur in the surface case.

\smallskip

\noindent{\textbf{(1)} {\em Partitioned tuples of simple type.}  We say that a partitioned tuple $\mathcal P$ \emph{is of simple type} if  $\mathcal P$ is equivalent to $ (T , (\gamma_1,\ldots , \gamma_n ))$ such that   $U=\langle T \rangle \ast \langle \gamma_1\rangle \ast \ldots \ast \langle \gamma_n\rangle$ and $  rank(U)=size(T)+n$    where $U\le G$ is generated by the underlying tuple of  $\mathcal P$.

\smallskip

Partitioned tuples of orbifold covering type where defined in \cite{Dut} using almost orbifold coverings  induced by orbifold covers. Here we give a slightly different (but equivalent) definition.  
 
\smallskip

\noindent{\em\textbf{(2)} Partitioned tuples of orbifold covering type.}  A marking  of \emph{orbifold covering type} is a pair  $(\eta':\mathcal O'\rightarrow \mathcal O, [ \mathcal P'])$  where the following hold:
\begin{enumerate}
\item[(a)] $\eta'$ is an orbifold covering  of finite degree.

\item[(b)] $[\mathcal P']$ is the equivalence class of a partitioned tuple  $\mathcal P'=(T', P')$  in $G':=\pi_1^o(\mathcal O')$  such that  the following hold:
\begin{enumerate}
\item[(i)]  there is a one-to-one correspondence between the elements of $P'$ and the boundary components of $\mathcal O'$.
 
\item[(ii)] if $\mathcal O'$  is a surface (that is, $\mathcal O'$  has no cone points) then  $(T'\oplus P')\setminus\{\gamma'\}$  is a  minimal generating tuple of $G'$ for any $\gamma'\in P'$.

\item[(ii')] if $\mathcal O'$ is not a surface (that is, $\mathcal O'$ has at least one cone point), then $T'\oplus P'$   is a  minimal generating tuple of $G'$.
\end{enumerate}
\end{enumerate}
We say that a partitioned tuple $\mathcal P$ is of \emph{orbifold covering type} if there is a marking  $$(\eta':\mathcal O'\rightarrow \mathcal O , [\mathcal P'])$$  of orbifold covering type such that $\mathcal P$ is equivalent to $ \eta_{\ast}'(\mathcal P'):=(\eta_{\ast}'(T'), \eta_{\ast}'(P')).$  When $\mathcal O'$  has no cone points we will also say that $\mathcal P$  is of \emph{surface  covering type}.  
 
\smallskip

\noindent{\textbf{(3)} {\em Partitioned tuples of almost orbifold  covering type.}  A marking  of \emph{almost orbifold covering type}    is a pair  $(\eta':\mathcal O'\rightarrow \mathcal O , [ \mathcal P'])$  
where the following hold: 
\begin{enumerate}
\item[(a)] $\eta'$ is a special almost orbifold covering.

\item [(b)] $[\mathcal P']$ is the equivalence class of a partitioned tuple $\mathcal P' = (T', P')$ in  $G':=\pi_1^o(\mathcal O')$ such that the following hold:
\begin{enumerate} \item[(i)] there is a one-to-one correspondence between the elements of $P'$ and the  non-exceptional boundary components of $\mathcal O'$.
 \item[(ii)] $T'\oplus P'$  is a rigid (and therefore minimal) generating tuple of $G'$. 
\end{enumerate}
\end{enumerate}
We say that a partitioned tuple $\mathcal P$ is of \emph{almost  orbifold covering type} if there is a marking   
$ (\eta':\mathcal O'\rightarrow \mathcal O ,  [\mathcal P'])$    
of almost orbifold covering type  such that $\mathcal P$ is equivalent to $\eta_{\ast}'(\mathcal P')$.

\smallskip

\noindent{\textbf{(R)} {\em Reducible partitioned tuples.}  We say that a partitioned tuple   $\mathcal P$ is \emph{reducible} if $\mathcal P$ is equivalent to $ (T'\oplus (1), P')$.

\smallskip

\noindent{\textbf{(FPE)}  {\em Partitioned tuples that fold peripheral elements.}  We say that a partitioned tuple  $\mathcal P$ \emph{folds peripheral elements} if $\mathcal P$ is equivalent to $(T,(\gamma_1, \gamma_2)\oplus P)$ such that the labels of $\gamma_1$ and $\gamma_2$ satisfy $i:=i'=i''$ and $o''=o'c_i^z$ for some $z\in \mathbb Z$.

\begin{remark}{\label{remark:foldperipheral}}
It follows from  the equations $i:=i' =i''$ and   $o''=o' c_i^z$  that $\langle\gamma', \gamma''\rangle\leq G$ is cyclic.  The converse does not hold. For instance, assume that $\mathcal O=D^2(2, 2)$. Hence 
$$G= \langle s_1, s_2, c_1 \  | \ s_1^2,  s_2^2 , c_1^{-1}s_1s_2\rangle.$$ 
Consider the partitioned tuple  $\mathcal P=(\emptyset, (\gamma_1, \gamma_2))$  where  $\gamma_1=c_1=s_1s_2$ is  labeled  $(1, 1)$ and $\gamma_2=s_1c_1s_1^{-1}$ is labeled  $(s_1, 1)$. Then $\langle \gamma_1, \gamma_2\rangle$ is cyclic  but $\mathcal P$ does not fold peripheral elements. One can easily check that $\mathcal P$ is of surface covering type with corresponding orbifold an annulus. 
\end{remark}

\noindent{\textbf{(OR)} {\em Partitioned tuples with an obvious relation.} We say that a partitioned tuple $\mathcal P$ \emph{has an obvious relation} if  $\mathcal P$ is equivalent to $(T, (\gamma)\oplus P)$ such that 
 $$|o\langle c_i\rangle o^{-1}: U \cap o \langle c_i\rangle o^{-1}|< | o \langle c_i\rangle o^{-1}:  \langle \gamma \rangle|$$ 
where $U:=\langle T\oplus P \rangle\leq G$ and   $(o, i)$ is the label of $\gamma$. 
 
\smallskip

Observe that a partitioned tuple   can be simultaneously of type (R), (FPE) and (OR). For example, let $\mathcal P=( (\gamma_1) , (\gamma_1, \gamma_1^2))$   where $G$ and $\gamma_1$ are as in  remark~\ref{remark:foldperipheral}. {\it 

\smallskip

\noindent \textbf{(4)} We say that a partitioned tuple is of type \textbf{(4)} if it is  of one of the types {(R)}, {(FPE)} or  {(OR)}. 
}
 
\smallskip

Small orbifolds were defined in \cite{Dut}. In the present work we use a slightly more restrictive definition. In particular all results from \cite{Dut} hold in the current setting.

\begin{definition}
We say that an orbifold is \emph{small} if it is isomorphic to one of the following types of  orbifolds:
\begin{enumerate}
\item[(1)] A Möbius band  with no cone points. 
 
\item[(2)] A disk with two cone points, i.e.~$D^2(m_1, m_2)$.
 
\item[(3)] A annulus with one cone point, i.e.~$A(m_1)$.

\item[(4)] A pair of pants  without cone points.
\end{enumerate}
\begin{figure}[h!]
\begin{center}
\includegraphics[scale=0.9]{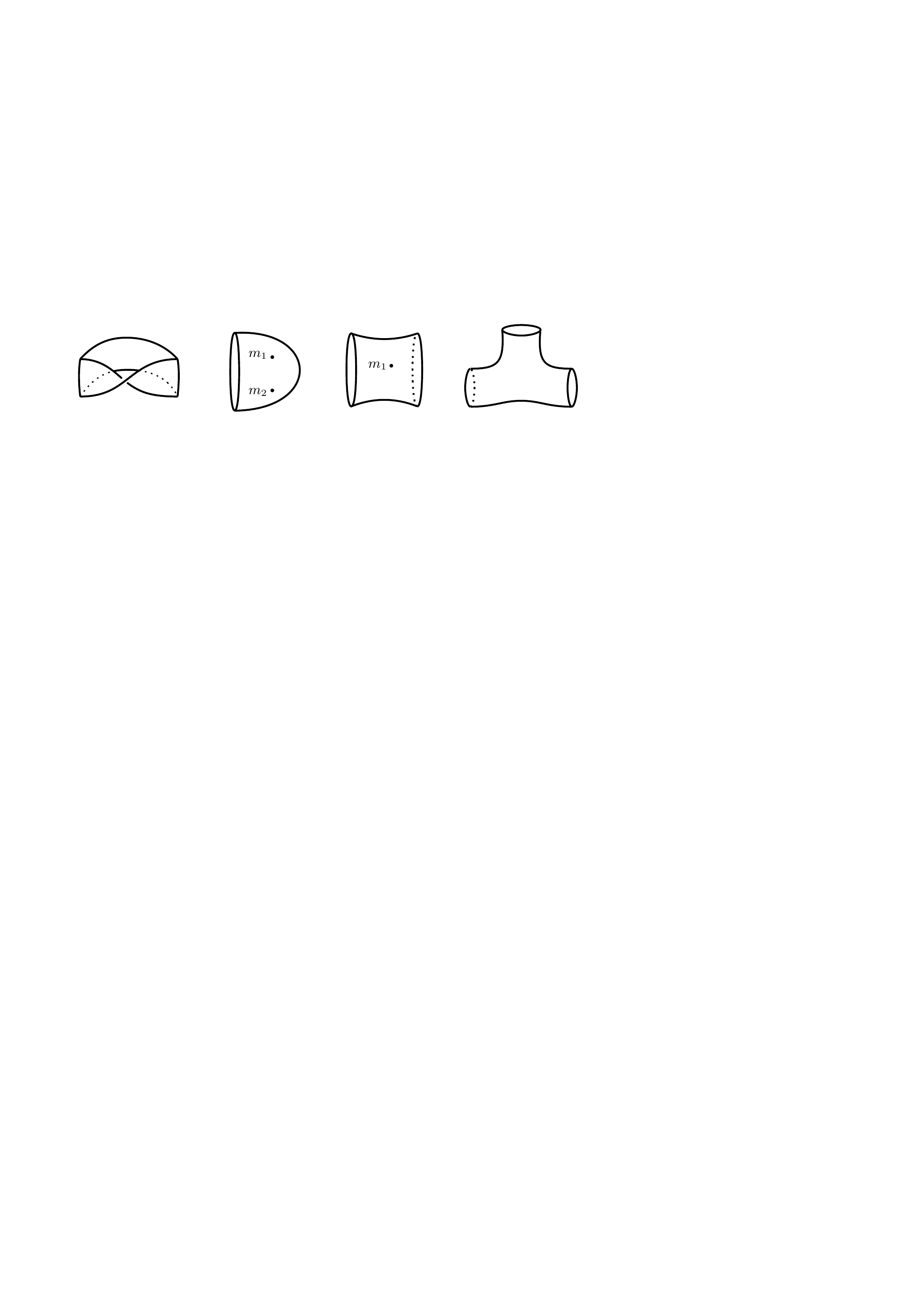}
\end{center}
\caption{Small orbifolds.}
\end{figure}
\end{definition}
 
The main purpose of Chapter~\ref{sectionlocalcase}  is to establish Proposition~\ref{prop:01} below which is a  strengthening of Proposition~2.9 of \cite{Dut}.

\begin{proposition}{\label{prop:01}}
Let $\mathcal O$ be a small orbifold,  and let $\mathcal P$ be an arbitrary partitioned tuple in  $G=\pi_1^o(\mathcal O)$. Then $\mathcal P$ is of one of the type (\textbf{1})-(\textbf{4}).

Moreover  \textbf{(1)},  \textbf{(2)},  \textbf{(3)}  and  \textbf{(4)}    are mutually exclusive and if case   \textbf{(2)}  or  \textbf{(3)}  occurs, then the marking $(\eta':\mathcal O'\rightarrow \mathcal O, [\mathcal P'])$ representing $[\mathcal P]$ is unique. 
\end{proposition}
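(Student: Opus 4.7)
The plan is to proceed by case analysis on the four types of small orbifolds, whose fundamental groups are $\mathbb Z$, $\mathbb Z_{m_1}\ast\mathbb Z_{m_2}$, $\mathbb Z\ast\mathbb Z_{m_1}$, and $F_2$ respectively. Existence — that every partitioned tuple $\mathcal P$ falls into one of the four types — should follow from Proposition~2.9 of \cite{Dut}, once that classification is extended to incorporate type~\textbf{(3)}, which arises precisely when the covering orbifold $\mathcal O'$ carries an exceptional boundary component whose image wraps $\partial D$ with degree not dividing the order of the exceptional cone point.

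For mutual exclusivity I would extract numerical invariants from $[\mathcal P]$ that distinguish the four types. The most useful are the rank of $U=\langle T\oplus P\rangle$, the indices $[o\langle c_i\rangle o^{-1}:U\cap o\langle c_i\rangle o^{-1}]$ at each label $(o,i)$, and the Euler characteristic (available via Lemma~\ref{lemma:euler}). Type~\textbf{(1)} is characterized by the identity $\mathrm{rank}(U)=\mathrm{size}(T)+n$, while the covering types force relations coming from either a cone point or the boundary equation of $\mathcal O'$. Type~\textbf{(4)} is detected directly: (R) drops the rank, (FPE) collapses two peripheral elements to a cyclic subgroup with matching labels in the sense of Remark~\ref{rem:uniquelabel}, and (OR) produces a strict inequality in the index comparison. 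Finally, \textbf{(2)} and \textbf{(3)} are separated by a divisibility test: every peripheral index divides the degree of a genuine orbifold cover, whereas the special boundary circle $S$ of an almost cover creates an indivisibility at the exceptional point by the remark following the definition of special almost orbifold covers.

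For uniqueness of the marking in cases \textbf{(2)} and \textbf{(3)} I would follow the $\mathbb A$-graph folding strategy of \cite{W2}. One represents $\mathcal P$ by a marked $\mathbb A$-graph built from the free product decomposition (\ref{split:G}), and observes that the four elementary equivalences of partitioned tuples correspond to folds and auxiliary moves on this $\mathbb A$-graph. Since folds preserve the combinatorial data encoding a cover, a minimal folded representative will be unique up to isomorphism, and from it one reconstructs $(\eta':\mathcal O'\to\mathcal O,[\mathcal P'])$ directly: the cone points of $\mathcal O'$ come from the non-trivial vertex groups of the folded $\mathbb A$-graph, the peripheral tuple $P'$ records the non-exceptional boundary components, and the edge labels reassemble $\eta'$. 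The labels of peripheral elements, which are essentially unique by Remark~\ref{rem:uniquelabel}, pin down the boundary correspondence.

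The main obstacle is the almost orbifold covering case \textbf{(3)}. The exceptional boundary component $S$ of $\mathcal O'$ does not appear in $P'$, so it is invisible at the level of labeled peripheral data; it must be reconstructed indirectly from the failure of the divisibility condition $\deg(\eta'|_S)\mid m$ together with rigidity of $T'\oplus P'$, the latter (via Lemma~\ref{allprimitive}) being precisely what prevents $S$ from being absorbed into the tuple by replacing some generator with the boundary generator $c_q$. Once $m$, $\deg(\eta'|_S)$, and the rigid tuple on $\mathcal O'$ are all pinned down from $[\mathcal P]$ via these invariants, the remainder of the reconstruction proceeds exactly as in case~\textbf{(2)}, using Lemma~\ref{lemma:equivalencesimpletype} to transport local data through folds.
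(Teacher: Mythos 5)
Your overall orientation — cite Proposition~2.9 of \cite{Dut} for existence and then argue uniqueness via folding of $\mathbb A$-graphs in the spirit of \cite{W2} — is aligned with the paper, but the heart of the argument is missing in a way that would make the proof fail.

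The central issue is the sentence ``Since folds preserve the combinatorial data encoding a cover, a minimal folded representative will be unique up to isomorphism.'' This is exactly the non-trivial claim being proved, not a fact that can be invoked. Folding sequences in the category of decorated $\mathbb A$-graphs (graphs of groups with possibly non-trivial vertex groups and peripheral path decorations) are \emph{not} confluent in the way that Stallings folds on plain graphs are. Starting from the same tame decorated $\mathbb A$-graph, one can perform genuinely different folds leading to \emph{a priori} inequivalent terminal objects, and there is no canonical ``minimal folded representative'' lying around whose uniqueness is automatic. The paper handles this with a dedicated diamond/bifurcation argument: it builds the directed graph $\Omega_{[\mathcal P]}$ of equivalence classes of minimal tame decorated $\mathbb A$-graphs representing $[\mathcal P]$, proves it is connected (Lemma~\ref{lemma:connectedness}), and proves a bifurcation lemma (Lemma~\ref{lemma:bifurcation}) showing that when one vertex has two distinct out-edges the two children either are both pre-bad, or both project onto equivalent (almost) orbifold covers, or share a common projection; together with the propagation of pre-badness (Lemma~\ref{lemma:bad}) and a minimal-height argument, this forces all roots to be of one type and, when of type~(2) or~(3), to give equivalent markings. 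Without an argument of this kind, the proposal has no mechanism to rule out two distinct foldings landing in inequivalent almost orbifold covers.

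Your mutual-exclusivity plan via numerical invariants (rank of $U$, peripheral indices, Euler characteristic, a divisibility test) is a genuinely different route from the paper's, but it is not carried far enough to be convincing and in fact the paper avoids it precisely because mutual exclusivity and uniqueness are derived \emph{simultaneously} from the structure of $\Omega_{[\mathcal P]}$ rather than case by case. For instance, the proposed divisibility test separating~(2) from~(3) is not obviously computable from $[\mathcal P]$: the exceptional boundary circle of the almost orbifold cover is deliberately absent from the peripheral tuple $P'$, so the indices you propose to examine do not see it directly. More to the point, the conditions defining types (1)--(4) are conditions on some representative in the equivalence class, and you must check that they cannot hold simultaneously for two different representatives of the same class; tracking an isolated numerical invariant does not obviously accomplish this for the simple-vs-covering distinction, where both sides can realize the same underlying group $U$. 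Two smaller inaccuracies: Proposition~2.9 of \cite{Dut} already covers all four types including the almost orbifold covering type, so no extension is needed; and the paper's argument is uniform over all small orbifolds rather than proceeding by case analysis on the four small orbifold shapes.
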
 

That any partitioned tuples falls  into one of the cases  \textbf{(1)} - \textbf{(4)}  is precisely Proposition~2.9 of \cite{Dut}, thus we need to prove the second part. 

\begin{remark} 
The uniqueness of $\eta'$ in Proposition~\ref{prop:01} is trivial if    \textbf{(2)}  occurs  as $\eta'$ is just the covering map corresponding to the subgroup generated by the underlying tuple of $\mathcal P$ and $\mathcal P'$ is any lifting of $\mathcal P$. 
\end{remark}


\subsection{Decorated $\mathbb A$-graphs} 

In the following we represent partitioned tuples in the fundamental group of an orientable small orbifold   by  so-called decorated $\mathbb A$-graphs.   Here $\mathbb A$ is the splitting of the fundamental group of the (orientable small) orbifold $\mathcal O$ given by:
\begin{enumerate}
\item The underlying graph $A$ of $\mathbb A$ has vertex set $VA=\{v_1, v_2\}$   and edge  set $$EA=\{e_i^{\pm1} \ | \ 1\le  i\le q\}.$$  The edges $e_1, \ldots, e_q$ have initial vertex $ v_1$ and terminal vertex  $  v_2$.

\item All edge groups in  $\mathbb A$ are trivial. The vertex  groups are defined as $$A_{v_i}=\langle s_{v_i} \ | \ s_{v_i}^{m_i}\rangle\cong \mathbb{Z}_{m_i} \ \text{ for } \ i=1,2.$$
\end{enumerate}   
The base vertex of $\mathbb A$  is set to be $v_1$.    Observe that  the valence of $v_1$ and $v_2$  equals  the number of boundary components of $\mathcal O$, that is, $q=val(v_1, A)=val(v_2, A)$.

Some definitions  from Section~3 of  \cite{Dut} will also be given here, but even so we assume that the reader is familiar with the language developed there.

For each $1\le   i\le   q$,  let $c_i:=s_{v_1}^{\varepsilon_i},e_i,s_{v_2}^{\varepsilon_i}, e_{i+1}^{-1},1$   where $\varepsilon_1=1$ and $\varepsilon_i= 0$ for $i\ge 2$.  The element $[c_i]\in \pi_1(\mathbb A, v_1)\cong G$ represented by $c_i$  will also be denoted by $c_i$. Note that   $c_1, \ldots , c_n$ correspond to the boundary components of $\mathcal O$.   
  
We need to extend the notion of  peripheral elements to the fundamental group of $\mathbb A$-graphs. Let $\mathcal B$  be an $\mathbb A$-graph with associated graph of groups $\mathbb B$.  A closed path $p$  in   $\mathbb B$   is called a  \emph{peripheral path of $\mathcal B$} if 
 $\mu_\mathcal B(p)=a \cdot c_i^z \cdot a^{-1}$ for some $i\in \{1,\ldots, q\}$, some $a\in  A_{v_1}$, and some positive integer $z$.  Note that  $a$ and $ i$ are uniquely determined by $p$.  Assume  that $u_1$ is a vertex of $\mathcal B$ of type $v_1$.  For any  $\mathbb B$-path $q$ from $u_1$  to  $\alpha(p)$,  we call     
$$h_{p,q}:=[q\cdot p\cdot q^{-1}]\in \pi_1(\mathbb B, u_1)$$ 
a \emph{peripheral element of $\pi_1(\mathbb B, u_1)$ associated to $p$}, or simply \emph{a peripheral element}. The corresponding peripheral element 
 $$\gamma:=\phi_{\mathcal{B}}(h_{p,q})=[\mu_{\mathcal B}(q\cdot p\cdot q^{-1})]$$  
of $\pi_1(\mathbb A, v_1)$ has a natural label defined  in terms of $p$ and $q$, namely   
 $$(o_{\gamma}, i_{\gamma}):= ([\mu_\mathcal B(q)\cdot a], i)\in \pi_1(\mathbb A, v_1)\times \{1, \ldots, q\}.$$

\begin{definition}
A \emph{decorated $\mathbb A$-graph} is a tuple
 $\mathfrak B=(\mathcal B, u_1, (T_u)_{u\in VB}, (p_j)_{1\le  j\le  n})$ 
where the following hold: 
\begin{enumerate}
\item $\mathcal B$ is a finite $\mathbb A$-graph. 

\item $u_1$ is a vertex of $\mathcal B$ such that $[u_1]=v_1$, the base vertex of $\mathbb A$.

\item  for each $u\in VB$,  $T_u$ is a generating tuple of   $B_u\le A_{[u]}$.

\item $(p_j)_{1\le  j\le  n}$ is a tuple of   peripheral paths of $\mathcal B$.
\end{enumerate}  
The  $\mathbb A$-graph $\mathcal B$ (resp.~the underlying graph  $B$ of  $\mathcal B$) is called the underlying  $\mathbb A$-graph (resp.~underlying graph) of $\mathfrak B$.  
\end{definition}

\begin{remark}
Observe that a decorated $\mathbb A$-graph  in which the tuple of peripheral paths is empty can also  be seen as a marked $\mathbb A$-graph as defined in  \cite{W2}.  
\end{remark}

\noindent\textbf{Induced decoration.} We will need the notion of  decorated sub-$\mathbb A$-graphs. Let $$\mathfrak B=(\mathcal B, u_1, (T_u)_{u\in VB}, (p_j)_{1\le j\le  n})$$   be a decorated $\mathbb A$-graph.  For any  sub-$\mathbb A$-graph $\mathcal B'\subseteq \mathcal B$  and any vertex $u_1'\in VB'$  such that $[u_1']=v_1$ we define 
$$\mathfrak B'=(\mathcal B', u_1' ,  (T_{u'})_{u'\in VB} , (p_j')_{1\le j\le  n'})$$    where $T_u'=T_u$ for all $u\in VB'$ and where $(p_j')_{1\le  j\le  n'}$ consists of those peripheral paths in  $(p_j)_{1\le  j\le  n }$  that lie in the graph of groups $\mathbb B'$ associated to $\mathcal B'$. We say that  $\mathfrak B'$ is a decorated sub-$\mathbb A$-graph of $\mathfrak B$ carried by $\mathcal B'$.

\begin{convention}
Whenever $u_1$ belongs to $\mathcal B'\subseteq \mathcal B$, for instance, if $\mathcal B'=core(\mathcal B, u_1)$,   we will assume $u_1'=u_1$ so that $\mathfrak B'$ is completely determined by  $\mathcal B'$.  

Moreover, any decorated sub-$\mathbb A$-graph of $\mathfrak B$  carried by $core(\mathcal B)$ will be denoted by $core(\mathfrak B)$ and \emph{the} decorated sub-$\mathbb A$-graph carried by   $core(\mathcal B, u_1)$ will be denoted by $core(\mathfrak B, u_1)$.     
\end{convention}

\begin{remark}{\label{remark:subdecorated}}
Observe that the peripheral paths $p_1, \ldots, p_n$ in the defining data of $\mathfrak B$  are reducible  closed  $\mathbb B$-paths and   are therefore contained in $core(\mathbb B)$ and in $core(\mathbb B, u_1')$ for any $u_1'\in VB$.  Thus   $(p_j)_{1\le j\leq n} $ is the tuple of peripheral paths in $core(\mathfrak B)$, regardless of the  base  vertex,  and in $core(\mathfrak B , u_1)$.
\end{remark}


\noindent{\textbf{Associated partitioned tuple.}} Let   $\mathfrak B =(\mathcal B , u_1 , (T_u )_{u\in VB}, (p_j)_{1\le j\le  n})$     be a  decorated $\mathbb A$-graph.  We say that a set $\{f_1, \ldots, f_n\}\subseteq EB $  containing $n$ distinct edges of $B$  is a  \emph{set of collapsing edges}  of $\mathfrak B$  if   (1) $f_i\neq f_j^{-1}$ for all $1\le  i\neq j\le  n$ and (2) there is   $\sigma \in S_n$ such that  for any  $k\in \{1, \ldots, n\}$  the path $p_{\sigma(k)}$ decomposes as   
$$p_{\sigma(k)}=p_{\sigma(k)}' \cdot (1,f_k,1)\cdot  p_{\sigma(k)}''$$ 
with $p_{\sigma(k)}'$ and $p_{\sigma(k)}''$ contained in  $\mathbb B_{f_1, \ldots, f_k}$. We say that $\mathfrak B$ is \emph{collapsible} if $\mathfrak B$ admits  a set of collapsing edges.

\smallskip

Any  collapsible decorated  $\mathbb A$-graph   $\mathfrak B=(\mathcal B, u_1, (T_u)_{u\in VB}, (p_j)_{1\le  j\le n})$ 
defines an equivalence class of partitioned tuples in $\pi_1(\mathbb B, u_1)$ as follows.   Assume
\begin{enumerate}
\item[(i)] $\{f_1, \ldots, f_n \}\in EB$ is  a set of collapsing edges of $\mathfrak B$, see Definition~3.9 of \cite{Dut}, 

\item[(ii)]  $Y$ is a maximal subtree of $B':=B_{f_1, \ldots, f_n}$, 

\item[(iii)]  $E$ is an orientation of  $EB'-EY$. 
\end{enumerate}
For any $u\in VB$ let $[u_1, u]_Y:=e_{u,1},\ldots, e_{u,l_u}$  be the unique reduced path in $Y$ from $u_1$ to $u$ and let   $q_u:=1,e_{u,1}, 1 ,\ldots, e_{u,l_u}, 1$     
be the corresponding $\mathbb B$-path. We define  $(T_\mathfrak B , P_\mathfrak B)$ as  follows:
\begin{enumerate}
\item   $T_\mathfrak B$ consists of the following elements:  
 $g_e:=[ q_{\alpha(e)} \cdot (1,e,1) \cdot  q_{\omega(e)}^{-1}]  $  for each   $e\in E $ and 
 $g_{u, b}:=[q_u\cdot  b\cdot q_u^{-1}] $  for each $ b\in T_u. $

\item   $P_\mathfrak B$ consists of the  peripheral elements $\gamma_j =[q_{\alpha(p_j)} \cdot p_j \cdot q_{\alpha(p_j)}^{-1}]$  for each $j=1 , \ldots,  n.$  
\end{enumerate}

Note that $(T_\mathfrak B , P_\mathfrak B)$ depends on various choices. However,   Lemma~3.12 from \cite{Dut}   shows that  the equivalence class  of  $(T_{\mathfrak B}, P_{\mathfrak B})$ does not depend on the choice of the collapsing edges $f_1,\ldots ,f_n$ and the choice of the maximal subtree $Y\subseteq B'$. It therefore makes sense to  define $[\mathcal P_\mathfrak B]$ as the equivalence class determined by 
$\phi_\mathcal B(T_\mathfrak B, P_\mathfrak B)=(\phi_\mathcal B(T_\mathfrak B), \phi_\mathcal B(P_\mathfrak B)) $  
where $(T_\mathfrak B, P_\mathfrak B)$  corresponds to an arbitrary set of collapsing edges and arbitrary maximal subtree. We say that the collapsible  decorated $\mathbb A$-graph $ \mathfrak B$ \emph{represents} the class $[\mathcal P_\mathfrak B]$.
 
\smallskip

 
\noindent\textbf{Equivalence of decorated $\mathbb A$-graphs.} We define equivalence of decorated $\mathbb A$-graphs  in terms of auxiliary moves and Nielsen equivalences of the vertex tuples. The effect of auxiliary moves  and elementary folds on a decorated $\mathbb A$-graph is described in  Section 3.2 of \cite{Dut} which in turn is strongly based on the definitions given in \cite{W2}. 

We say that two decorated $\mathbb A$-graphs  $\mathfrak B$ and $\mathfrak B'$ are \emph{equivalent} if there exists a finite sequence 
$\mathfrak B= \mathfrak B_0,  \mathfrak B_1,  \ldots, \mathfrak B_k= \mathfrak B'$ of decorated $\mathbb A$-graphs such that for each $1\le i \le k$ one of the following holds:
\begin{enumerate}
\item[(1)]  $\mathfrak B_i$ is obtained from $\mathfrak B_{i-1}$ by an auxiliary move (in the case of an A0 move we assume it is admissible with respect to the base vertex $u_1^{i-1}$ of $\mathfrak B_{i-1}$).

\item[(2)]   $\mathfrak B_i$ is obtained from $ \mathfrak B_{i-1}$ by replacing  the generating tuple of some vertex tuple by a Nielsen equivalent  generating  tuple.

\item[(3)]   $\mathfrak B_i$ is obtained from $ \mathfrak B_{i-1}$ by removing  a trivial element from a vertex tuple and adding it to another vertex tuple.
\end{enumerate}
We denote the equivalence class of a decorated $\mathbb A$-graph  $\mathfrak B$ by  $\mathfrak b=[\mathfrak B]$.

\smallskip
  
In \cite[Lemma~3.14]{Dut} it is shown  that  two collapsible decorated $\mathbb A$-graphs that are related by auxiliary moves determine   equivalent partitioned tuples in $\pi_1(\mathbb A, v_1)$. For elementary equivalences that change the vertex tuples this is easily verified.  Therefore,  for any equivalence class $\mathfrak b=[\mathfrak B]$ of collapsible decorated $\mathbb A$-graphs we define $[\mathcal P_{\mathfrak b}]:=[\mathcal P_\mathfrak B]$ 
for some(and therefore any) representative $\mathfrak B$ of $\mathfrak b=[\mathfrak B]$.

\begin{lemma}{\label{lemma:foldsameedges}}
Let $\mathfrak B_1$ and $\mathfrak B_2$ be equivalent decorated $\mathbb A$-graphs with underlying graph $B$. Suppose that both admit an  elementary fold that identifies the  edges $f_1$ and $f_2$  based on $x:=\alpha(f_1)=\alpha(f_2)$.  Let $\mathfrak B_1'$ and $\mathfrak B_2'$ be the resulting decorated $\mathbb A$-graphs.  Then $\mathfrak B_1'$ and $\mathfrak B_2'$ are equivalent.
\end{lemma}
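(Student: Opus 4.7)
The plan is to reduce to the single-step case and then perform a case analysis on the three types of elementary equivalence permitted between decorated $\mathbb A$-graphs. By definition there is a sequence $\mathfrak B_1=\mathfrak B^{(0)},\mathfrak B^{(1)},\ldots,\mathfrak B^{(k)}=\mathfrak B_2$ in which consecutive terms differ by a single elementary move. A straightforward induction on $k$ shows that it suffices to establish the following: whenever $\mathfrak C'$ is obtained from $\mathfrak C$ by a single elementary equivalence and $\mathfrak C$ admits the fold identifying $f_1$ and $f_2$ at $x$, then after possibly inserting a short finite sequence of auxiliary moves and Nielsen equivalences, $\mathfrak C'$ admits a fold of the same pair and the resulting decorated $\mathbb A$-graphs are equivalent. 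Since the underlying graph $B$ is preserved by the moves of type (2) and (3) and by the auxiliary moves of \cite{Dut}/\cite{W2} (those moves only relabel edges or modify vertex data), the ``same pair'' makes sense throughout the argument.

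The plan for the case analysis is then as follows. If the move is of type (2) or (3), it affects only vertex tuples, so it commutes trivially with the fold unless it modifies the tuple at $x$ or at $y:=\omega(f_1)=\omega(f_2)$. In that case, applying the very same Nielsen transformation (or trivial-element transfer) to the folded object yields an equivalent decorated $\mathbb A$-graph, because the fold identifies the two edges and combines the vertex data at $y$ in a way compatible with Nielsen equivalence. For auxiliary moves of type A0 (subdivision/conjugation on an edge label) and A1 (moving elements through an edge), the effect is localized to a single edge, and the only delicate situation is when that edge is $f_1$ or $f_2$. In that case the auxiliary move changes the labels $(a_i,e,b_i)$ of $f_i$ but not the cyclic subgroup they define, so the fold is still admissible; and performing the fold first and then applying the analogous auxiliary move to the single resulting edge (possibly together with an A2-type adjustment at $x$ or $y$) yields an equivalent decorated $\mathbb A$-graph. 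Moves of type A2, which modify vertex-group labels via the vertex data, behave the same way: if they touch $x$ or $y$ we absorb the modification into a Nielsen equivalence on the folded vertex tuple, and otherwise they commute with the fold.

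Finally, the peripheral data $(p_j)_{1\le j\le n}$ must be tracked through this process. Since the fold at $(f_1,f_2)$ replaces each occurrence of $f_1$ or $f_2$ in a peripheral path by the unique new edge, this substitution is natural with respect to auxiliary moves and vertex-tuple equivalences in the sense that the commutation analyses above extend verbatim to the decorated peripheral tuple. The equivalence class of peripheral paths is thus carried along in both parallel computations.

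The main obstacle is the bookkeeping in the case that the auxiliary move relating $\mathfrak C$ and $\mathfrak C'$ is of type A2 performed at $x$, because it can alter the element of the vertex group used to describe the fold; handling this requires combining the A2 move with one or two additional A2/A1 moves on the folded object plus a Nielsen equivalence of the new vertex tuple at $x$. Once that subcase is settled, the remaining cases are routine verifications using the explicit definitions of auxiliary moves from Section~3.2 of \cite{Dut} and \cite{W2}, so concatenating the per-step equivalences yields the desired equivalence between $\mathfrak B_1'$ and $\mathfrak B_2'$.
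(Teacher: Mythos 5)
Your proposal takes essentially the same route as the paper, which simply cites the proof of Lemma~6 in \cite{W2} and notes that the decoration is modified compatibly; you are in effect unwinding that citation into a direct inductive case analysis, which is the right thing to do.

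One wrinkle in your sketch deserves a warning. You claim that a ``straightforward induction on $k$'' reduces to the single-step case, but after one elementary move the edges $f_1$ and $f_2$ need not have matching labels, so the intermediate decorated $\mathbb A$-graph $\mathfrak C'$ may simply not admit an \emph{elementary} fold of that pair. You acknowledge this by allowing yourself to insert auxiliary moves to restore foldability, but then the next move in the original sequence no longer applies directly to the adjusted $\mathfrak C'$, so the naive chaining of your single-step claim does not close. The standard fix — and the one implicit in \cite{W2} — is either to normalize the sequence of elementary moves in advance so that $f_1$, $f_2$ keep a common initial segment of their labels throughout, or to prove the slightly stronger statement that the conclusion holds for any (not necessarily elementary) fold identifying $f_1$ and $f_2$, which is stable under auxiliary moves. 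Your treatment of the A0 move is also terser than it should be: A0 is a global conjugation/change-of-base move, not a local edge relabeling, and its interaction with the decoration (in particular with the peripheral paths) needs to be spelled out rather than folded into the ``localized to a single edge'' description. Neither gap changes the conclusion, but both need to be handled explicitly if this sketch is to be upgraded to a proof.
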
 
\begin{proof}
This is a consequence of the proof of Lemma~6 in \cite{W2} and the way the decoration is modified when auxiliary moves are applied to decorated $\mathbb A$-graphs.
\end{proof}

 
\noindent{\textbf{Special types of decorated $\mathbb A$-graphs.}} We will be mainly interested in tame decorated $\mathbb A$-graphs and decorated $\mathbb A$-graphs of (almost) orbifold covering type.  We observe that the definition of decorated $\mathbb A$-graphs  of (almost) orbifold covering type  we are going to give here is slightly different from that given  in \cite[Section~3.7]{Dut} in the sense that we do not require the underlying $\mathbb A$-graph to be minimal.  Before giving the definitions we  recall some notions introduced in Subsection~3.6 from \cite{Dut}.

\smallskip

Let $\mathfrak B=(\mathcal B , u_1, (T_u)_{u\in VB} , (p_j)_{1\le j\le n})$ be a decorated $\mathbb A$-graph.   We say that $\mathfrak B$   \emph{self-folds} if  there are $f\in EB$ and  $k\in \{1, \ldots, n\}$ such that $p_k=p_k'\cdot  (1,f,1)\cdot   p_k'' \cdot (1,f,1)\cdot  p_k'''.$ 
  
We say that $\mathfrak B$ \emph{folds peripheral paths}  if   if there are $f\in EB$ and   $j\neq k \in \{1,\ldots, n\}$ such that    $i_j=i_k\in \{1,\ldots, q\}$  and that  $p_{j}=p_{j}' \cdot (1,f,1) \cdot  p_j'' $ and  $p_k=p_k' \cdot (1,f,1)\cdot  p_{k}''.$  
  
We  say that $\mathfrak B$ \emph{folds squares} if  either  $\mathfrak B$ folds peripheral paths or $\mathfrak B$ self-folds. 

\smallskip

A \emph{directed graph} $\Gamma$ is a pair $(V\Gamma, E\Gamma)$ consisting of a vertex set $V\Gamma$ and an edge set  $E\Gamma \subseteq V\Gamma\times V\Gamma$. An edge   $(v,w)$   is usually denoted by   $v\mapsto w$. A \emph{circuit}  is  a directed graph isomorphic to  
$$\mathcal{C}_n=(\{1,\ldots, n\}, \{(1,2), \ldots, (n-1, n), (n, 1)\})$$    
for some $n\ge  1$. An \emph{interval} is a directed graph isomorphic  to 
$$\mathcal{I}_n=(\{1,\ldots, n\}, \{(1,2), \ldots, (n-1, n) \})$$  
for some $n\ge1$. Note that  $\mathcal{I}_1$  is a  degenerate interval as it consists of a single vertex. 

The length of  $\Gamma=(V\Gamma, E\Gamma)$ is defined as  $length( \Gamma) := |E\Gamma|$. 

\smallskip

Assume that $\mathfrak B$ does not fold squares. The definition of peripheral paths says that, for each $1\le j\le n$,  there are $a_j\in A_v$ and  $i_j\in \{1,\ldots, q\}$  such that  $\mu_\mathcal B(p_j)=a_j \cdot   c_{i_j}^{z_j}\cdot    a_j^{-1}$ for some positive integer $z_j$.  
 
Let $u\in VB$. The  local graph $\Gamma_{ \mathfrak B}( u)$   of $\mathfrak B$  at $u$ is defined as the directed graph   having vertex set  $V\Gamma_{\mathfrak{B}}( u)= Star (u,B):=\{f\in EB \ | \ \alpha(f)=u\}$   and  edge set   consisting  of all pairs of vertices  $(f, g)$ for which  there is $j\in \{1, \ldots, n\}$ such that,  up to a cyclic permutation, $p_j= p_j' \cdot   (1,f^{-1}, b, g, 1)\cdot   p_j''.$   The label of  the edge  $f\mapsto g$ is   defined as 
\begin{equation} {\label{eq:label}}
label(f\mapsto g):= (j, b)\in \{1, \ldots, n\}\times B_u\tag{*}.
\end{equation}

\begin{remark} 
The restriction to decorated $\mathbb A$-graphs that do not fold  squares guarantees that (\ref{eq:label}) is well-defined.  This follows from the fact  that, up to a cyclic permutation,  the edges $f^{-1}$ and $g$ occur  in the underlying path of a unique peripheral path  of $\mathfrak B$.  This also implies that   the components of $\Gamma_{\mathfrak B}(u)$ are   (possibly degenerate) intervals and  circuits. 
\end{remark}

\begin{remark}{\label{rem:local}}
 Let $\mathfrak B$ be a decorated $\mathbb A$-graph that does not fold squares and let $w\in VB$. Put $v:=[w]\in VA$.   Assume that  $\Gamma_{\mathfrak B}(w)$ is a circuit. 
It is shown in \cite[Section~3.6]{Dut} that  the following hold:
\begin{enumerate}
\item $val( v , A)$ divides $length (\Gamma_{\mathfrak B}(w))$. 

\item  $length (\Gamma_{\mathfrak B}( w))\ge val(v, A)\cdot |A_{[w]}:B_w|$.

\item $\mathcal B$ is locally surjective at $w$.

\item $\mathcal B$ is folded  at   $w\in VB$ iff $
length ( \Gamma_{\mathfrak B}(w)) = val(v, A)\cdot |A_v :B_w|.$
\end{enumerate}
\end{remark}

\begin{definition}{\label{def:types}}
Let $\mathfrak B=(\mathcal{B}, u_1, (T_u)_{u\in VB}, (p_j)_{1\le j\le n}) $  be a decorated $\mathbb A$-graph.   
 
\noindent(1) We say that $\mathfrak B$ is \emph{tame} if $\mathfrak B$ is collapsible, does not fold squares and  all vertex tuples are minimal.
 
\noindent(2) We say that  $\mathfrak B$  is an \emph{orbifold cover} if  the following hold:
\begin{enumerate}
\item[(a)]  if $u\notin core( \mathcal B)$ then $T_u=\emptyset$.  

\item[(b)] the decorated sub-$\mathbb A$-graph $core(\mathfrak B)$ is a decorated $\mathbb A$-graph  of  orbifold covering type as defined in  \cite[Definition~3.34]{Dut}, that is,   $core(\mathfrak B)$ satisfies the following conditions:   
\begin{enumerate}
\item[\textbf{(C1)}] $core(\mathfrak B)$  is folded and does not fold squares.

\item[\textbf{(C2)}]  for each   $w\in VB_{core}$ the local graph $\Gamma_{core(\mathfrak B )}( w)$ is a circuit such that  
$$length ( \Gamma_{core(\mathfrak B )} (w)) \le  val(v, A)\cdot |A_{v}|.$$ 
where $v=[w]\in VA$.

\item[\textbf{(C3)}] for each $w\in VB_{core}$ the vertex tuple $T_w$  has size $\le  1$. 

\item[\textbf{(C4)}]  there is a vertex $y \in VB_{core}$  such that one of the following holds:
\begin{enumerate}
\item[(i)] if $B_y=1$, then $T_y=(1)$ and  $T_x=\emptyset$ for all $x\neq y$.  In this case we also say that $\mathfrak B$ is of surface covering type.

\item[(ii)] if $B_y\neq 1$, then $T_y$ is angle-minimal  and $T_x$ is minimal for all $x\neq y$.
\end{enumerate}
\end{enumerate} 
\end{enumerate}

\noindent(3)  We say that  $\mathfrak B$  is an \emph{almost orbifold cover} if the following hold: 
\begin{enumerate}
\item[(a)]  if $u\notin core( \mathcal B)$ then $T_u=\emptyset$. 

\item[(b)] the decorated sub-$\mathbb A$-graph   $core(\mathfrak B)\subset \mathfrak B$ is a decorated $\mathbb A$-graph  of  almost orbifold covering type as defined in  \cite[Definition~3.35]{Dut}, that is, $core(\mathfrak B)$ satisfies the following conditions:
\begin{enumerate}
\item[\textbf{(C1')}] $core(\mathfrak B)$  does not fold squares.

\item[\textbf{(C2')}] for each $w\in VB_{core}$ the local graph $\Gamma_{core(\mathfrak B)}(w)$ is a circuit such that  
$$length(\Gamma_{core(\mathfrak B)}( w)) \le  val(v, A)\cdot |A_{v}| $$
where $v:=[w]\in VA$.

\item[\textbf{(C3')}] for each $w\in VB_{core}$ the vertex tuple $T_w$  has size $\le  1$. 

\item[\textbf{(C4')}]  there is a vertex $y\in VB_{core}$ such that  the following hold: 
\begin{enumerate}
\item[(i)] if $x\in VB_{core}$ is distinct from $y$, then 
{\begin{enumerate}
\item[(a)]  $core(\mathcal B)$ is folded at $x$.

\item[(b)] $T_x$ is minimal and if $B_x\neq 1$, then $T_x$ is  not  angle-minimal.
\end{enumerate}}
\item[(ii)] at $y$ the following hold: 
{\begin{enumerate}
\item[(a)] the vertex group $B_y$ is non-trivial.

\item[(b)]  $core(\mathcal B)$ is not folded at $y$.

\item[(c)] $T_y=(s_{v}^{d_y})$ where    $d_y=\frac{ length ( \Gamma_{core(\mathfrak B)}( y))}{val(v, A)}$ and $v=[y] \in VA$.    
\end{enumerate}}
\end{enumerate}
\end{enumerate}
\end{enumerate}

\noindent(4)  We say that $\mathfrak B$ is \emph{bad} if $\mathfrak B$  is neither tame nor an (almost) orbifold cover. 
\end{definition}

\begin{remark}
In an (almost) orbifold cover  $\mathfrak B$  for each $x\in VB_{core}$ we have 
$$val(v, A)\cdot |A_v:B_x|\le lenght(\Gamma_{core(\mathfrak B)} (x)) \le val(v, A)\cdot |A_v|$$ 
where $v=[x]\in VA$. 
\end{remark}

\begin{lemma}{\label{remark:prealmost}}
Let $\mathfrak B$ be minimal  tame decorated $\mathbb A$-graph. Assume that a fold $F$  based on $f_1$ and $f_2\in EB$ with $x=\alpha(f_1)=\alpha(f_2)\in VB$  yields a  minimal (almost) orbifold cover $\mathfrak C$.  Then the following hold:
\begin{enumerate}
\item $\omega(f_1)=\omega(f_2)$, that is, $F$ is of type IIIA.

\item if $f\in EB\setminus\{f_1^{\pm 1}, f_2^{\pm 1}\}$  then $f$   occurs in the underlying path  of exactly one  peripheral path  of $\mathfrak B$. In other words, the edge pair $\{f, f^{-1}\}$ is crossed  twice (in opposite directions) by   the  peripheral paths of $\mathfrak B$. 

\item there is $\varepsilon \in \{\pm 1\}$ such that  $f_1^{\varepsilon}$ and $f_2^{-\varepsilon}$ occur in  the underlying path of exactly  one   peripheral path of $\mathfrak B$ and $f_1^{-\varepsilon}$ and $f_2^{\varepsilon}$ do not occur at all.

\item  If $\mathfrak C$ is of surface covering type, then a fold that identifies $f_1^{-1}$ and $f_2^{-1}$ can also be applied to $\mathfrak B$. In any case no other  fold can be applied to $\mathfrak B$.  
\end{enumerate}
\end{lemma}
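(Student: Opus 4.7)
The plan is to analyse how the fold $F$ transforms the local graphs of $\mathfrak B$ into those of $\mathfrak C$, and to exploit the strong constraint that every local graph of an (almost) orbifold cover is a single circuit (conditions (C2)/(C2') of Definition~\ref{def:types}). Since $\mathfrak B$ is tame, in particular it does not fold squares, so each of its local graphs is already a disjoint union of (possibly degenerate) intervals and circuits. The fold $F$ alters the local graphs only at $x$ and at $\omega(f_1),\omega(f_2)$, by identifying the vertices $f_1,f_2$ at $x$ and the vertices $f_1^{-1},f_2^{-1}$ at the terminal vertices. All four conclusions will follow from tracking these local identifications and enforcing the circuit requirement at the images in $\mathfrak C$.

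For part~(1) I would argue by contradiction: suppose $\omega(f_1)\neq\omega(f_2)$, so that $F$ is of type IIIB and additionally identifies $y_1:=\omega(f_1)$ with $y_2:=\omega(f_2)$ into a single vertex $y'$ of $\mathfrak C$. Then $\Gamma_{\mathfrak C}(y')$ is obtained from $\Gamma_{\mathfrak B}(y_1)\sqcup\Gamma_{\mathfrak B}(y_2)$ by identifying the two vertices $f_1^{-1}$ and $f_2^{-1}$. Since each $\Gamma_{\mathfrak B}(y_i)$ is a disjoint union of intervals and circuits, wedging them at a single point gives either a disconnected graph, a graph with a vertex of valence exceeding $2$, or a longer interval -- in no case a single circuit. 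The borderline scenario in which one of the two local graphs consists only of the isolated vertex $f_i^{-1}$ is ruled out because then $y_i$ would leave $B_{core}$ after the fold, contradicting the minimality of $\mathfrak C$ at $y'$. Hence $\omega(f_1)=\omega(f_2)$ and $F$ is of type IIIA.

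For parts~(2) and (3), write $y:=\omega(f_1)=\omega(f_2)$ and let $x',y'\in VC$ denote the images of $x,y$. Now $\Gamma_{\mathfrak C}(x')$ is $\Gamma_{\mathfrak B}(x)$ with $f_1$ and $f_2$ identified, and similarly at $y$. Because $\Gamma_{\mathfrak C}(x')$ is a circuit, every vertex there has in-degree and out-degree exactly $1$. For any edge $h\notin\{f_1^{\pm 1},f_2^{\pm 1}\}$, the vertex corresponding to $h$ in $\Gamma_{\mathfrak B}(\alpha(h))$ is untouched by the fold and already has in-degree and out-degree $1$; translating these degrees into the number of positive/negative occurrences of $h$ in the peripheral paths yields (2). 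At the merged vertex of $\Gamma_{\mathfrak C}(x')$, the degree count forces $(\text{in-deg of }f_1)+(\text{in-deg of }f_2)=1$ and the analogous equality for out-degrees; combined with the mirror constraints at the merged vertex of $\Gamma_{\mathfrak C}(y')$ and the duality between the in-degree of $f_i$ at $x$ and the out-degree of $f_i^{-1}$ at $y$, this leaves exactly the two symmetric configurations recorded by the sign $\varepsilon\in\{\pm 1\}$ in (3). The degenerate possibility that one of $f_1,f_2$ meets no peripheral path is excluded because that edge would then be a free edge at a pendant vertex, contradicting either the minimality of $\mathfrak C$ (the merged vertex would fall out of the core) or the requirement that $F$ be a genuine fold of two peripherally active edges.

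For part~(4), the same degree accounting at the merged vertices shows that the only pair of edges at $x$ that could still be identified by an elementary fold is $\{f_1^{-1},f_2^{-1}\}$. If $\mathfrak C$ is of surface covering type, the symmetric decoration at $x'$ makes that fold admissible in $\mathfrak B$, whereas the asymmetry imposed by (C4')(ii) in the genuine almost-orbifold-cover case obstructs it. I expect the main difficulty to lie in the combinatorial bookkeeping of the middle step: one must carefully translate between the directed local-graph language (in/out-degrees and edge labels $(j,b)$) and the phrasing of the lemma in terms of ``occurrences in the underlying path of a peripheral path'', while keeping track of which peripheral-path indices $j$ and which sign conventions are matched up after the fold -- especially when establishing that the positive occurrence of $f_1^\varepsilon$ and the negative occurrence of $f_2^{-\varepsilon}$ both belong to the same peripheral path.
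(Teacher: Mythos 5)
Your high-level strategy—tracking how $F$ transforms the local graphs and forcing the circuit condition from Definition~\ref{def:types}—is the same tool the paper uses, and your reduction of parts~(2) and~(3) to an in/out-degree count is a correct reformulation of the interval structure the paper derives. But there is a real gap in how you exclude the degenerate configurations, which is exactly the point where the paper's argument hinges on \emph{collapsibility}.

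For part~(1) the paper observes that $\mathfrak C$ is not collapsible while $\mathfrak B$, being tame, is collapsible; hence $F$ destroys collapsibility, and Lemma~3.20 of \cite{Dut} then forces $F$ to be of type IIIA. For parts~(2) and~(3) the paper splits into two cases at $x$ and $y$: either (a) some $f_i^{\pm 1}$ is crossed by no peripheral path and the local graphs each contain a circuit, or (b) both local graphs are intervals with the stated endpoints. Case~(a) is then ruled out because it forces $\mathfrak B$ to be non-collapsible, contradicting tameness. Your degree count at the merged vertices of $\mathfrak C$ correctly restricts the possibilities to exactly these two shapes, but your stated reason for excluding the degenerate one does not work. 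You write that an edge meeting no peripheral path would be ``a free edge at a pendant vertex,'' contradicting minimality of $\mathfrak C$. That inference is false: if $f_1^{\pm 1}$ is crossed by no peripheral path then $f_1$ (resp.\ $f_1^{-1}$) is an \emph{isolated vertex of the local directed graph}, but the underlying vertices $\alpha(f_1)$, $\omega(f_1)$ can still have arbitrary valence in $B$; the local graph simply has the isolated vertex $f_1$ as one of several components. Nothing about the core of $\mathcal B$ or $\mathcal C$ is violated, so minimality does not rescue you. The same problem undercuts the borderline case you flag in part~(1): one of $\Gamma_{\mathfrak B}(\omega(f_i))$ being the single vertex $\{f_i^{-1}\}$ is \emph{not} the only way the wedge can become a circuit; the actual obstruction the paper uses is collapsibility, not minimality, and your sketch never establishes the needed link between ``$f_i$ uncrossed'' and ``$\mathfrak B$ not collapsible.'' Until that implication is supplied (or Lemma~3.20 of \cite{Dut} is invoked), cases (1)--(3) are not closed.
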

\begin{proof}
Since $\mathfrak C$ is not collapsible $F$ does not preserve collapsibility. Lemma~3.20 of \cite{Dut} implies  that $F$ is of type IIIA. This shows that (1) holds. 

Put $y:=\omega(f_1)=\omega(f_2)$. Since  $F$ only affects $\Gamma_{\mathfrak B}(x)$ and $\Gamma_{\mathfrak B}(y)$  it follows that $\Gamma_{\mathfrak B}(z)$ is a circuit for all $z$ distinct from $x$ and $y$. At $x$ and $y$ one of the following hold: 
\begin{enumerate}
\item[(a)] both $\Gamma_{\mathfrak B}(x)$ and $\Gamma_{\mathfrak B}(y)$ contain a circuit  and there is $i$ such that $f_i$ and $f_i^{-1}$ are not crossed by the peripheral paths in $\mathfrak B$. 

\item[(b)] both $\Gamma_{\mathfrak B}(x)$ and $\Gamma_{\mathfrak B}(y)$ are intervals with initial vertex $f_1 $ and $f_2 $  and $f_1^{-1}$ and $f_2^{-1}$ respectively.
\end{enumerate}
Observe that (a) cannot occur since this  would imply that  $\mathfrak B$ is not collapsible. Therefore (b)  occurs  which is exactly what (2) and (3) claim.

If $\mathfrak C$ is a surface cover then $F$ adds the trivial element to  $y$. This meas that a fold that identifies $f_1^{-1}$ and $f_2^{-1}$ (and adds the trivial element to $x$) can also be applied to $\mathfrak B$.
\end{proof}

\begin{lemma}{\label{lemma:foldalmost}}
Let $\mathfrak B$ be a  decorated  $\mathbb A$-graph  such that the following hold:
\begin{enumerate}
\item $\mathfrak B$ is not collapsible and does not fold squares. 

\item  there is a vertex $u $ in $\mathcal B$   such that $T_u\neq\emptyset$. 
   
\item $\mathfrak B$ folds onto an (almost) orbifold cover $\mathfrak C$ with a single elementary fold.
   
\end{enumerate} 
Then $\mathfrak B$  is an (almost) orbifold cover.
\end{lemma}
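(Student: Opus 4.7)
The plan is to pull back the defining conditions of an (almost) orbifold cover from $\mathfrak C$ to $\mathfrak B$ through the single elementary fold $F$. Let $F$ be based on edges $f_1, f_2 \in EB$ with common initial vertex $x := \alpha(f_1) = \alpha(f_2)$, and set $y_i := \omega(f_i)$. Since $\mathfrak C$ is an (almost) orbifold cover it is itself not collapsible and does not fold squares (by (C1') / (C1)), which is consistent with hypothesis (1) on $\mathfrak B$. Using the behaviour of collapsibility under elementary folds governed by Lemma~3.20 of \cite{Dut} and the fact that $\mathfrak B$ does not fold squares, I would first pin down the possible types of $F$; the argument should force $F$ to be of type IIIA (so that $\omega(f_1) = \omega(f_2) =: y$) or IIIB, merely identifying two edges at their common initial vertex (and possibly their terminal vertices).

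Second, I would verify all the local conditions of Definition~\ref{def:types}(2) or (3) at the vertices of $\mathfrak B$ not involved in the fold. For each such vertex $w$, the local graph $\Gamma_{\mathfrak B}(w)$ is canonically identified with $\Gamma_{\mathfrak C}(F(w))$ and the vertex tuple $T_w$ is unchanged by $F$. Thus the circuit and length conditions (C1')--(C3') (resp.\ (C1)--(C3)) at these vertices transfer directly from $\mathfrak C$. Likewise, any vertex $u$ with $T_u \neq \emptyset$ --- which exists by hypothesis (2) --- maps under $F$ to a vertex of $\mathfrak C$ with non-empty tuple, which by (a) in Definition~\ref{def:types}(3) must lie in $core(\mathcal C)$; this forces $u \in core(\mathcal B)$ and anchors the decoration inside the core, ruling out the purely tame alternative.

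The main step is the local analysis at the fold vertices $x$ and $y$. The fold merges the vertices $f_1, f_2$ of $\Gamma_{\mathfrak B}(x)$ into one vertex of $\Gamma_{\mathfrak C}(F(x))$, and analogously at $y$. Since $\Gamma_{\mathfrak C}(F(x))$ is a circuit of controlled length satisfying (C2')/(C2), I would argue that $\Gamma_{\mathfrak B}(x)$ must already be a circuit: the degenerate alternative in which $\Gamma_{\mathfrak B}(x)$ were a union of intervals would either produce a disjoint union of circuits in $\mathfrak C$ (violating (C2')/(C2)) or, using Remark~\ref{rem:local}, exhibit a collapsing edge set of $\mathfrak B$ contradicting hypothesis (1). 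The same argument applies at $y$. Since no edges are added by $F$, the length of $\Gamma_{\mathfrak B}(x)$ is inherited from that of $\Gamma_{\mathfrak C}(F(x))$, identifying $x$ either as a folded vertex matching (C4')(i) or as the special unfolded vertex of (C4')(ii).

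The main obstacle will be the final step: tracking how $F$ modifies vertex tuples to verify (C3') and (C4') for $\mathfrak B$. Since $\mathfrak C$ is an (almost) orbifold cover it has at most one core vertex of $\mathfrak C$ carrying a non-trivial, non-angle-minimal tuple, with the prescribed form $T_y = (s_v^{d_y})$. Transferring this back through $F$ requires a case analysis based on whether the distinguished vertex of $\mathfrak C$ is the image of $x$, the image of $y$, or a vertex untouched by the fold; in each case the effect of $F$ on the affected tuples, as described in Section~3.2 of \cite{Dut}, is rigid enough to pin down the tuples of $\mathfrak B$ to the required form. Together with Steps 1--3 this will yield that $\mathfrak B$ satisfies all conditions of Definition~\ref{def:types}(2) or (3), completing the proof.
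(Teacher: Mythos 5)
Your first structural step goes the wrong way: you claim $F$ should be forced to be of type IIIA (or IIIB), apparently by analogy with Lemma~\ref{remark:prealmost}, but the hypotheses there are genuinely different. In Lemma~\ref{remark:prealmost} the source is tame, hence \emph{collapsible}, and the fold must destroy collapsibility, which is what forces type IIIA. Here hypothesis (1) says $\mathfrak B$ is already non-collapsible, so that constraint is absent --- and in fact the paper's proof shows the \emph{opposite}: $F$ cannot be of type IIIA. A IIIA fold would add the element $b_1^{-1}b_2$ to the tuple of a vertex that maps into $C_{core}$, and the paper establishes that no vertex tuple in the peripheral-path-determined core of $\mathfrak B$ is affected by $F$. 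Since the rest of your argument identifies $y_1=y_2$ and reads off conditions at a single merged vertex $y$, this error propagates through everything after the second paragraph.

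There is also a more basic missing ingredient. You never single out the subgraph $B'\subseteq B$ of edges crossed twice (in opposite directions) by the peripheral paths, and you never prove the two claims that do the real work: (i) $F$ maps $B'$ isomorphically onto $C_{core}$, and (ii) $F$ does not affect any vertex tuple carried by $B'$. Claim (i) uses non-collapsibility and ``does not fold squares'' to show each $\Gamma_{\mathfrak B'}(u')$ is a union of circuits, so $F(u')\in VC_{core}$; then ``$\mathfrak C$ does not fold squares'' prevents $F$ from identifying edges of $B'$, so $F|_{B'}$ is a degree-one covering, i.e.\ an isomorphism. Claim (ii) is where hypothesis (2) enters: if some vertex tuple of $\mathfrak B'$ were affected, one deduces (via the size-$\le 1$ constraint on tuples of $\mathfrak C$ and Remark~\ref{rem:local}(4)) that $\mathfrak C$ would have to be a surface cover, forcing all tuples of $\mathfrak B$ to be empty, a contradiction. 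Without (i) and (ii), your ``main step'' at the fold vertices has no footing: you do not know a priori that $F(x)$ or $F(y_i)$ lies in $VC_{core}$, so you cannot invoke (C2)/(C2') there, and your proposed dichotomy (disjoint circuits vs.\ collapsing edge set) is not actually established.
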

\begin{proof}
Assume that $$\mathfrak B= (\mathcal B, u_1, (T_u)_{u\in VB}, (p_j)_{1\le  j \le  n}) \  \ \text{ and } \ \ \mathfrak C=(\mathcal C, w_1, (S_w)_{w\in VC}, (q_j)_{1\le j \le  n}).$$ 
 Let $F$ be the fold   that turns $\mathfrak B$ into $\mathfrak C$. By definition, $F(u_1)=w_1$,  $q_j=F(p_j) $ for all $1\le j\le n$ and $T_{u}$ is a subtuple of $S_{F(u)}$  for all $u\in VB$.

 As $\mathfrak B$ does not fold squares and is not collapsible,   there is a decorated  sub-$\mathbb A$-graph  
 $$\mathfrak B'=(\mathcal B', u_1', (T_u)_{u \in VB'}, (p_{j_k})_{1\le  k\le  n'})$$  of $\mathfrak B$ such that any  pair  of edges  in  $B'$ (the  underlying graph of $\mathcal B'$) is crossed twice  (in opposite directions)  by the peripheral paths $p_{j_1}, \ldots, p_{j_{n'}}$ and where $u_1'\in VB'$ such that  $[u_1']=v_1\in VA$.

\begin{claim}
$F$  maps $B'$  isomorphically onto $C_{core}$. In particular, $\mathfrak B'$ contains all peripheral paths  $p_1, \ldots, p_n$.
\end{claim} 
\begin{proof}[proof of claim]
We  compare the local graphs  in $\mathfrak B'$ with the corresponding local graphs  in $\mathfrak C$.   We observe the following:
\begin{enumerate}
\item[(a)]  As $\mathfrak B'$ does not fold squares and every edge is crossed twice by $p_{j_1}, \ldots, p_{j_k}$ it follows that,   for each $u'\in VB'$,  the local  graph $\Gamma_{\mathfrak B'}( u')$ is a non-empty union  of circuits.

\item[(b)] The definition of (almost) orbifold covers  implies that  $w\in VC_{core}$ iff $\Gamma_{\mathfrak C}(w)$  contains a circuit.  Moreover,  for any $w\in VC_{core}$,  the   components of  $\Gamma_{\mathfrak C}(w) \setminus\Gamma_{core(\mathfrak C)}(w)$  are degenerate segments (i.e.~consist of single vertices).
\end{enumerate}

Let  $u'\in VB'$. As $\Gamma_{\mathfrak B'}(u')$ contains a circuit  we see that the corresponding local graph  $\Gamma_{\mathfrak C}( F(u'))$ contains  a circuit.  It follows from (b) that    $F( u')\in VC_{core}$. As $\mathfrak C$ does not fold squares, $F$ cannot identify edges in $Star(u', B')$. Thus $F$ induces a bijection 
$$ V\Gamma_{\mathfrak B'}( u') =Star(u ', B') \to Star(F(u'), C_{core}) =V\Gamma_{core(\mathfrak C)}(F(u')).$$ 
Thus $F|_{B'}:B'\to C_{core}$ is a covering.  As edges from $ B'$ cannot be identified (since otherwise $\mathfrak C$ would fold squares) we see that $F|_{B'}$ has degree one. Therefore 
$F|_{B'}$ is an isomorphism. 
\end{proof}


\begin{claim}
$F$ does not affect any vertex  tuple of $\mathfrak B'$.   
\end{claim}
\begin{proof}
Suppose that a vertex  tuple of $\mathfrak B'$ is affected by $F$, that is, $T_{x'} \neq S_{F(x')}$ for some $x'\in VB'$.  The vertex tuple $T_{x'}$  must be empty as all vertex tuples in $\mathfrak C$ are of size at most one. In particular,  $B_{x'}=1$. Condition (C2),  or condition  (C2') in the almost orbifold cover case,  combined with remark~\ref{rem:local}(4) imply  that  $\mathfrak B'$ is folded at $x'$. Since $F|_{B'}$ is an isomorphism it follows that $core(\mathfrak C)$ is also folded at $x$. It follows from condition (C2) or (C2') depending on $\mathfrak C$  combined with remark~\ref{rem:local}(4)   that $C_{F(x')}=1$. Thus $S_{F(x)}=(1)$ and so $\mathfrak C$ is a surface cover. This implies that all vertex tuples in $\mathfrak B $ are empty, a contradiction.    
\end{proof}

The previous claim implies  that the fold cannot be of type IIIA because the affected vertex would lie in $C_{core}$ and therefore in $B'$.  It also implies that if a vertex does not lie in $B'$ then the associated tuple is empty.   In fact, if $y$ has non-empty tuple then $F(y)$ lies necessarily in $C_{core}$.  Thus there is $y'$ in $B'$ such that $F(y')=F(y)$ and so $T_y=\emptyset$ and $S_{F(y')}=S_{F(y)}=T_{y} \neq T_{y'}$.   

Therefore $B'= B_{core}$ and all vertices  not in $B_{core}$ are equipped with the empty tuple. This completes the proof that $\mathfrak B$ is an (almost) orbifold cover.  
\end{proof}


\subsection{The directed graph $\Omega_{[\mathcal P]}$}
In \cite{W2} a graph of equivalence classes of marked $\mathbb A$-graphs representing a fixed Nielsen equivalence class of generating tuples of a free product is defined. In this section we define a similar graph for a fixed   equivalence class of partitioned tuples in $\pi_1(\mathbb A, v_1)\cong \pi_1^o(\mathcal O)$.  
Let $\mathcal P$ be a partitioned tuple in $\pi_1(\mathbb A , v_1)$. We define  $\Omega_{[\mathcal P]}$ as follows:
\begin{enumerate}
\item The vertices of $\Omega_{[\mathcal P]}$ are the equivalence classes of minimal (with respect to the base vertex) tame decorated $\mathbb A$-graphs  that represent $[\mathcal P]$. 

\item Two vertices $\mathfrak b$  and $\mathfrak b'$ of $\Omega_{[\mathcal P]}$ are connected by a directed edge $\mathfrak b\mapsto  \mathfrak b'$ if there are representatives $\mathfrak B$ of $\mathfrak b$ and $\mathfrak B'$ of $\mathfrak b'$ such that:
\begin{enumerate}
\item There exists a tame decorated $\mathbb A$-graph $\bar{\mathfrak B}$ that is  obtained from $\mathfrak B$ by an elementary fold.
 
\item  $\mathfrak B'=  core(\bar{\mathfrak B}, \bar{u}_1)$ where $\bar{u}_1$ corresponds to the base vertex $u_1$ of $\mathfrak B$. 
\end{enumerate}
\end{enumerate}
 
We say that a vertex $\mathfrak b$ \emph{projects} onto a vertex $\mathfrak b'$ if there is an oriented path in  $\Omega_{[\mathcal P]}$  from $\mathfrak b$ to $ \mathfrak b'$. A vertex that does not project onto any vertex is called a \emph{root}.

The \emph{height} of a vertex $\mathfrak b$ of $\Omega_{[\mathcal P]}$ is defined by  $h(\mathfrak b):=\frac{1}{2}|EB|$  where $B$ is the underlying graph of some and therefore any  representative of $\mathfrak b$. It follows immediately from the definition   that  $h(\mathfrak b')\le  h(\mathfrak b)-1$ if $\mathfrak b\mapsto\mathfrak b'$.

\begin{lemma}{\label{lemma:connectedness}}
The graph $\Omega_{[\mathcal P]}$ is connected.
\end{lemma}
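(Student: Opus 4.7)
The plan is to adapt the connectedness argument of \cite[Lemma~6]{W2} from the setting of marked $\mathbb A$-graphs for free products to the present setting of decorated $\mathbb A$-graphs, proving connectedness in the undirected sense. Given any representative $(T,P)$ of the class $[\mathcal P]$, I would first associate a canonical \emph{wedge} decorated $\mathbb A$-graph $\mathfrak W(T,P)$ representing $[\mathcal P]$. It is built by choosing $\mathbb B$-path spellings for each element of $T\oplus P$, wedging the corresponding loops at the base vertex $u_1$, and recording the $A_{v_1}$-components of the chosen spellings in the vertex tuple at $u_1$. After passing to the core and minimizing the vertex tuple, this wedge is tame by construction: it is collapsible (each peripheral loop provides its own collapsing edge), does not fold squares (the loops are combinatorially disjoint away from $u_1$), and all vertex tuples are minimal.

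The core technical step is to show that any vertex $\mathfrak b$ of $\Omega_{[\mathcal P]}$ with representative $\mathfrak B$ is reached from some wedge $[\mathfrak W(\mathcal P_{\mathfrak B})]$ by a directed path in $\Omega_{[\mathcal P]}$. This is a Stallings-style folding argument: starting from the wedge, I would apply elementary folds and, after each fold, pass to the core and minimize, staying within tame representatives. Lemma~\ref{lemma:foldsameedges} ensures that the resulting equivalence classes are well-defined, so the directed edges of $\Omega_{[\mathcal P]}$ do not depend on the specific order of folds that admit a common target. The output of such a folding sequence lies in the equivalence class of $\mathfrak B$, so one obtains the desired directed path from the wedge to $\mathfrak b$.

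Next, I would verify that if $(T,P)$ and $(T',P')$ are related by a single elementary transformation of partitioned tuples, then $[\mathfrak W(T,P)]$ and $[\mathfrak W(T',P')]$ lie in the same component of $\Omega_{[\mathcal P]}$. Transformations of types (1), (2), and (3) are absorbed into the equivalence relation defining vertices of $\Omega_{[\mathcal P]}$: they correspond respectively to Nielsen moves on the vertex tuple at $u_1$, to permutations and inversions of the peripheral paths, and to auxiliary-move relabellings. Transformations of type (4) require a short detour: build a common tame expansion of the two wedges by introducing auxiliary edges and then fold down from both sides to the same child vertex, yielding a zig-zag in $\Omega_{[\mathcal P]}$ between the two wedges. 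Combining these ingredients, any two vertices $\mathfrak b_1, \mathfrak b_2$ of $\Omega_{[\mathcal P]}$ are connected via descending paths to the wedges of their partitioned tuples, and these wedges are joined by the equivalence chain $\mathcal P_{\mathfrak b_1}\sim\mathcal P_{\mathfrak b_2}$. The main obstacle will be to verify in all cases that the intermediate decorated $\mathbb A$-graphs remain tame (collapsible, no square folds, minimal vertex tuples), since a priori an algebraic move on the partitioned tuple could force one through non-collapsible or square-folding configurations; the cleanest way to handle this, following \cite{W2}, is to work with explicit common descendants rather than single edges and to exploit Lemma~\ref{lemma:foldsameedges} repeatedly.
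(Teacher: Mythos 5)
Your overall strategy matches the paper's: build a canonical wedge-type (the paper calls it \emph{normal}) decorated $\mathbb{A}$-graph representing $[\mathcal P]$, show every vertex of $\Omega_{[\mathcal P]}$ is a descendant of such a wedge via a folding sequence, and then connect the wedges of equivalent partitioned tuples by a zig-zag of unfolding and refolding, appealing to Lemma~\ref{lemma:foldsameedges} along the way. This is exactly the plan the paper carries out, adapting Lemma~7 of \cite{W2}.

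However, your claim that transformations of types (1) and (3) are ``absorbed into the equivalence relation defining vertices of $\Omega_{[\mathcal P]}$'' is not right in general, and this is the one place the sketch would break as written. A move of type (1) that replaces $g_i$ by $g_j g_i$ where both $g_i$ and $g_j$ are represented by non-degenerate loops is not a Nielsen move on the vertex tuple $T_{u_1}$ (those only touch elements of $B_{u_1}\le A_{v_1}$), and a move of type (3) that conjugates a peripheral element by such an $h$ does not amount to an auxiliary relabelling of the lollipop. Both require the same unfold-then-refold detour you reserve for type (4): the loop or lollipop for $g_i$ (resp.\ for $\gamma_k$) must be unfolded along the loop or lollipop for $h$, producing a common tame ancestor in $\Omega_{[\mathcal P]}$ of the two wedges. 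The paper is explicit about this: it handles the multiplicative Nielsen moves and the peripheral conjugations with the same unfolding figures it uses for type (4). So the mechanism you already set up for type (4) must be applied to (1) and (3) as well; only the permutation/inversion content of (1), the whole of (2), and the degenerate cases (where the auxiliary element already lies in $B_{u_1}$) genuinely stay inside a single equivalence class.

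Two smaller imprecisions: you wedge ``loops'' for each element of $T\oplus P$, but peripheral elements must be represented by lollipops (possibly with degenerate stick) whose candy paths are the decoration, not by mere loops; and ``recording the $A_{v_1}$-components of the chosen spellings in the vertex tuple at $u_1$'' is not a well-defined recipe --- what goes into $T_{u_1}$ are exactly those entries of $T$ that happen to lie in $A_{v_1}$, represented by degenerate loops. Neither issue changes the strategy, but they would need to be stated carefully.
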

\begin{proof} 
The proof is a variation of the proof of Lemma~7 of \cite{W2}. Let $\mathfrak b$ be a vertex of $\Omega_{[\mathcal P]}$. We first observe that there exists a vertex $\mathfrak b'$ of very specific type that projects onto $\mathfrak b$. 

Let $\mathfrak B$  be an arbitrary representative of $\mathfrak b$. Choose  a set of collapsing edges $\{f_1,\ldots ,f_n\}$ of $\mathfrak B$ and a maximal subtree $Y$ of $B_{f_1,\ldots ,f_n}$.  Let 
$$\mathcal P_{f_1,\ldots ,f_n}^{Y}=(T_{f_1,\ldots ,f_n}^{Y}, P_{f_1,\ldots ,f_n}^{Y})$$ 
be the associated partitioned tuple in $\pi_1(\mathbb A, v_1)$. By definition  $\mathcal P^Y_{f_1,\ldots ,f_n}$ is equivalent to $ \mathcal P$. We  construct a tame decorated marked $\mathbb A$-graph  $\mathfrak B'$, which will be called \emph{normal},   that represents $[\mathcal P]$  and such that the following hold:
\begin{enumerate}
\item the underlying graph $B'$ of $\mathfrak B'$  is topologically the one point union of circles and lollipops, see Figure~\ref{normaldecorated}.

\item  Any element of $T^Y_{f_1,\ldots ,f_n}=(g_1,\ldots ,g_m)$ is represented by a  possibly degenerate  loop based at the base vertex $u_1'$  and any peripheral element of $P^Y_{f_1,\ldots ,f_n}=(\gamma_1,\ldots ,\gamma_n)$ is represented  by a lollipop  with possibly degenerate stick; the decoration is by the simple closed path corresponding to the  candy part of the  lollipops. 

\item $\mathfrak b'$ projects onto $\mathfrak b$, in particular $\mathfrak b'$  and $\mathfrak b$ lie in the same component of $\Omega_{[\mathcal P]}$. 
\end{enumerate}

Note that any normal decorated $\mathbb A$-graph  is tame and minimal (with respect to the base vertex). 
 
The construction has two steps. In the first step  we unfold the peripheral elements into lollipops in successive order using auxiliary moves an inverse folds of type IA. Note that this is possible as $f_1,\ldots ,f_n$ are collapsing edges meaning that we can unfold the $k$-th lollipop starting at both ends of $f_k$ only affecting edges of $B_{f_1,\ldots ,f_k}$. 
\begin{figure}[h!]
\begin{center}
\includegraphics[scale=1]{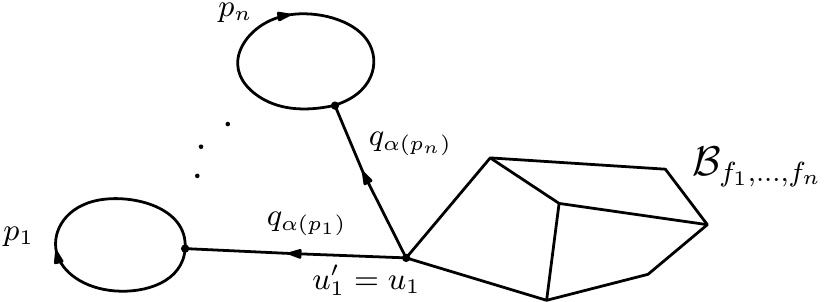}
\end{center}
\caption{The lollipops correspond to peripheral elements.}
\end{figure}
Let $\mathcal B_{f_1,\ldots ,f_n}$ be the sub-$\mathbb A$-graph of $\mathcal B$ corresponding to the sub-graph  $B_{f_1,\ldots ,f_n}$ of $B$. As  in the unfolding process  the edges $f_1, \ldots, f_n$ are removed, no peripheral path from $\mathfrak B$  lies in  $\mathbb B_{f_1, \ldots, f_n}$. Hence the decorated $\mathbb A$-graph $\mathfrak B_{f_1, \ldots, f_n}$ carried by  $\mathcal B_{f_1, \ldots, f_n}$ has no  peripheral paths and therefore  $\mathfrak B_{f_1, \ldots, f_n}$ is a marked $\mathbb A$-graph in the sense of~\cite{W2}.

In the second step we unfold $\mathfrak B_{f_1,\ldots ,f_n}$ into a wedge of circles corresponding to the elements of $T^Y_{f_1,\ldots ,f_n}$ as in the proof of Lemma~7 of \cite{W2}. Note that the elements represented by elements of $T_{u_1}$ are not unfolded, i.e. are represented by degenerate loops.

\begin{figure}[h!]
\begin{center}
\includegraphics[scale=0.9]{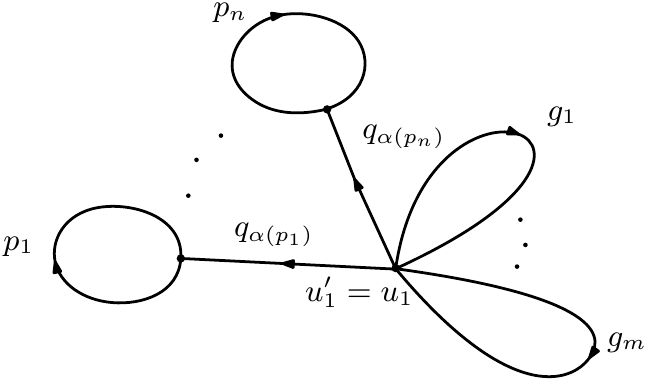}
\end{center}
\caption{The loops correspond to the non-peripheral elements.}{\label{normaldecorated}}
\end{figure}

The obtained normal decorated $\mathbb A$-graph $\mathfrak B'$ is clearly tame and so are all intermediate decorated $\mathbb A$-graphs in the folding sequence  that carries $\mathfrak B'$ into $\mathfrak B$. Thus the vertex  $\mathfrak b'$ represented by $\mathfrak B'$ and the vertex  $\mathfrak b$ represented by  $\mathfrak B$    lie in the same component of $\Omega_{[\mathcal P]}$. 

\smallskip
 
Note that vertices  of $\Omega_{[\mathcal P]}$ that are represented by normal tame decorated  $\mathbb A$-graphs   lie in the same component of $\Omega_{[\mathcal P]}$. Indeed,  this follows from the fact that any  partitioned tuple is represented by a unique equivalence class  $[\bar{\mathfrak B}]$  of normal decorated $\mathbb A$-graphs that have the property that  any loop and any lollipop is mapped to a reduced $\mathbb A$-path. Moreover, any  other normal tame decorated  $\mathbb A$-graph folds onto  a normal decorated $\mathbb A$-graph equivalent to $ \bar{\mathfrak B} $.

\smallskip

It therefore suffices to show that for any normal decorated $\mathbb A$-graph $\mathfrak B$ with corresponding  partitioned tuple $\mathcal P_1$ and any partitioned tuple $\mathcal P_2$  equivalent to $\mathcal P_1$, there exists a normal decorated marked $\mathbb A$ graph $\mathfrak B_2$   with corresponding partitioned tuple  $\mathcal P_2$ such that $\mathfrak b_1$ and $\mathfrak b_2$ lie in the same component of $\Omega_{[\mathcal P]}$. It clearly suffices to consider the case that $\mathcal P_1$ and $\mathcal P_2$ are elementary equivalent. 
 
The argument is now again similar to that in the proof of Lemma~7 in \cite{W2}. If a non-peripheral element $g_i$ is left or right multiplied with some element $h$ represented by some lollipop or some loop distinct from the loop representing $g_i$, then the corresponding  loop is unfolded along this lollipop or loop, see Figure~\ref{unfoldNielsen1}, and if some peripheral element is conjugated by some $h$ then the lollipop is unfolded along the corresponding  lollipop or loop, see Figure~\ref{unfoldNielsen2}. 
\begin{figure}[h!]
\begin{center}
\includegraphics[scale=1]{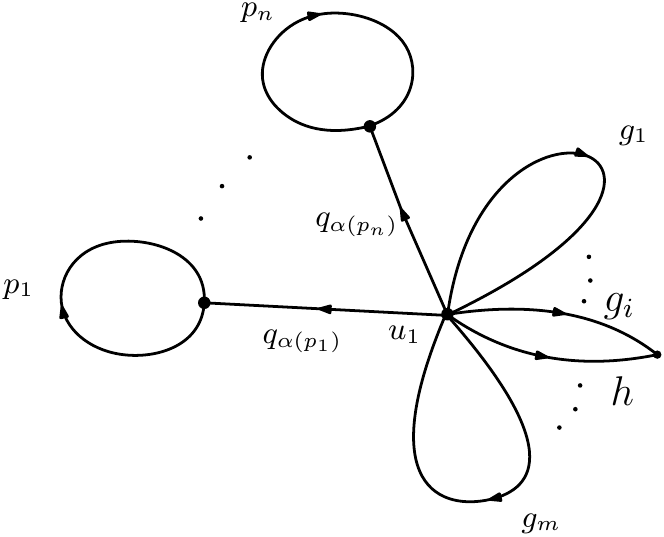}
\end{center}
\caption{Some non-peripheral element $g_i$ is right multiplied with $h$.}\label{unfoldNielsen1}
\end{figure}

\begin{figure}[h!]
\begin{center}
\includegraphics[scale=1]{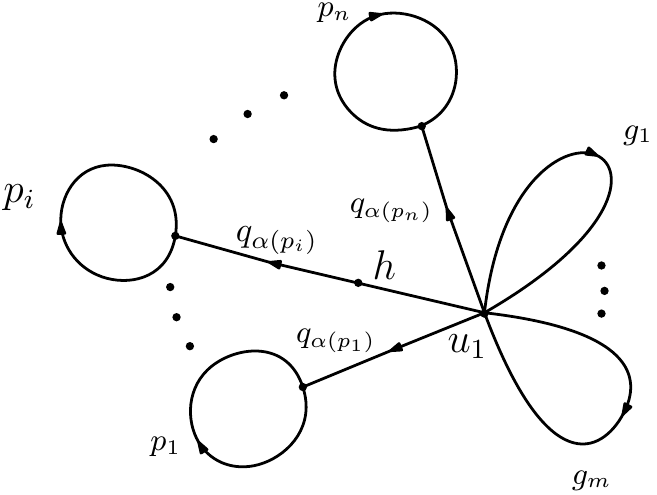}
\end{center}
\caption{Some peripheral element is conjugated by $h$.}\label{unfoldNielsen2}
\end{figure}
\end{proof}

\begin{definition}
Let $\mathfrak b$ be a vertex of $ \Omega_{[\mathcal P]}$.  We say that $\mathfrak b$ is: 
\begin{enumerate}
\item  a pre-(almost) orbifold cover if $\mathfrak b$ has a representative  that folds onto an (almost) orbifold cover with a single fold.

\item  pre-bad if $\mathfrak b$ has a representative that folds  onto a bad decorated $\mathbb A$-graph with a single fold.  
\end{enumerate}
\end{definition}

\begin{lemma}{\label{lemma:orbifoldcov}}
If $\mathfrak b$ is a pre-(almost) orbifold cover, then $\mathfrak b$ is not pre-bad.
\end{lemma}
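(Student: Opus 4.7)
The plan is to argue by contradiction: suppose $\mathfrak b$ admits a representative $\mathfrak B_1$ folding by $F_1$ onto an (almost) orbifold cover $\mathfrak C$ and another representative $\mathfrak B_2\sim \mathfrak B_1$ folding by $F_2$ onto a bad decorated $\mathbb A$-graph $\mathfrak D$. Since both are vertices of $\Omega_{[\mathcal P]}$, the two representatives are minimal tame decorated $\mathbb A$-graphs related by a finite sequence of auxiliary moves, vertex-tuple Nielsen equivalences, and trivial-element transfers. I aim to show that $\mathfrak D$ must in fact be (equivalent to) an (almost) orbifold cover, contradicting its badness.

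The first step is to apply Lemma~\ref{remark:prealmost} to $\mathfrak B_1$ and $F_1$. It simultaneously pins down the structure of $F_1$ (it is of type IIIA, identifying two specific edges $f_1,f_2$ with common initial vertex $x$ and common terminal vertex $y$, and the peripheral paths cross $\{f_1^{\pm 1},f_2^{\pm 1}\}$ in a very controlled way) and the complete list of folds admissible on $\mathfrak B_1$: only $F_1$ itself, with the single exception that when $\mathfrak C$ is of surface covering type the sibling fold identifying $f_1^{-1}$ with $f_2^{-1}$ is also available, and that sibling fold again produces an (almost) orbifold cover (once more of surface covering type). In particular, every fold that can ever be applied to $\mathfrak B_1$ outputs an (almost) orbifold cover.

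The second step is to transport $F_2$ back across the equivalence $\mathfrak B_2\sim \mathfrak B_1$. Inductively applying Lemma~\ref{lemma:foldsameedges} at each elementary step of the equivalence yields a fold $F$ on $\mathfrak B_1$ such that $F(\mathfrak B_1)\sim F_2(\mathfrak B_2)=\mathfrak D$. By the preceding paragraph, $F$ is one of the two folds permitted by Lemma~\ref{remark:prealmost}, so $F(\mathfrak B_1)$ is an (almost) orbifold cover. The defining conditions \textbf{(C1)}--\textbf{(C4)} and \textbf{(C1')}--\textbf{(C4')} refer only to the core, to its local graphs, and to the vertex tuples, all of which are manifestly invariant (up to natural correspondence) under the three types of elementary equivalences; hence being an (almost) orbifold cover descends through equivalence. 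Thus $\mathfrak D$ is itself an (almost) orbifold cover, contradicting the assumption that it is bad.

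The principal obstacle is carrying out the transport of $F_2$ precisely: auxiliary moves of type A0 subdivide edges and those of types A1, A2 modify labels by conjugation in vertex groups, so identifying the pair of edges in $\mathfrak B_1$ that plays the role of the pair identified by $F_2$ in $\mathfrak B_2$ requires careful bookkeeping along the equivalence sequence. What makes this feasible is precisely the rigidity furnished by Lemma~\ref{remark:prealmost}: $\mathfrak B_1$ is so constrained that any foldable pair of edges in any equivalent representative must correspond, under the equivalence, to one of the two explicit foldable pairs $\{f_1,f_2\}$ or $\{f_1^{-1},f_2^{-1}\}$ in $\mathfrak B_1$, leaving essentially no room for ambiguity in the match-up.
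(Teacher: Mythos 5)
Your proof takes the same route as the paper's: the paper disposes of the lemma with the single sentence that it ``follows immediately from Lemma~\ref{remark:prealmost},'' and your argument is precisely a spelling-out of that ``immediately.'' You correctly isolate the two load-bearing facts: (i) Lemma~\ref{remark:prealmost}(4) leaves $\mathfrak B_1$ with only the fold(s) $\{f_1,f_2\}$ (and, in the surface-covering case, $\{f_1^{-1},f_2^{-1}\}$), each producing an (almost) orbifold cover; and (ii) Lemma~\ref{lemma:foldsameedges} propagates that conclusion across the equivalence class.

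One small factual slip in your last paragraph: auxiliary moves of type A0 do not subdivide edges. In the $\mathbb A$-graph formalism used here (following \cite{W2} and \cite{Dut}), an A0 move conjugates the data at a single vertex by an element of $A_{[u]}$, an A1 move modifies an edge label by an edge-group element, and an A2 move modifies an edge label by a vertex-group element; none of the three changes the underlying graph $B$. This is actually good news for your argument: since every representative of $\mathfrak b$ carries the \emph{same} underlying graph $B$ and the \emph{same} underlying combinatorial peripheral paths, the local-graph dichotomy established in the proof of Lemma~\ref{remark:prealmost} (circuits everywhere except at $x$ and $y$, intervals at $x$ and $y$ with prescribed endpoints) transfers verbatim to $\mathfrak B_2$, and the ``careful bookkeeping'' you worry about collapses to observing that the only foldable edge pairs in \emph{any} equivalent representative are $\{f_1,f_2\}$ and $\{f_1^{-1},f_2^{-1}\}$ --- exactly the rigidity you invoke. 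So the correction makes the obstacle you flag disappear rather than introducing one.
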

\begin{proof}
The proof follows immediately    from Lemma~\ref{remark:prealmost}.
\end{proof}

\begin{lemma}{\label{lemma:prebad}}
There is a pre-bad vertex in $\Omega_{[\mathcal P]}$ iff   $\mathcal P$ is of type \textbf{(4)}.
\end{lemma}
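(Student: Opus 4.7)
The plan is to prove both directions by analyzing which structural defects a single elementary fold can introduce into a tame decorated $\mathbb A$-graph. For the forward direction, fix a tame representative $\mathfrak B$ of a pre-bad $\mathfrak b$ and an elementary fold $F$ producing a bad output $\bar{\mathfrak B}$. Tameness requires collapsibility, absence of folded squares, and minimality of all vertex tuples, so at least one of these three conditions must fail in $\bar{\mathfrak B}$; at the same time Lemma~\ref{lemma:foldalmost} prevents $\bar{\mathfrak B}$ from accidentally being an (almost) orbifold cover, so the failure has to be genuine. I split into three cases. If $F$ makes a vertex tuple $T_y$ non-minimal, then $F$ is of type IIIA and the newly inserted element of $B_y$ is redundant in the subgroup generated by the preceding vertex tuples and peripheral elements; Lemma~\ref{addinggeneratormakesreducible} and Corollary~\ref{adding2generatorsmakesreducible} then force $\mathcal P_{\bar{\mathfrak B}}=\mathcal P$ to be reducible, giving type (R), except in the $\mathbb Z_2$-borderline of Lemma~\ref{addinggeneratormakesreducible}, which is handled separately and shown to put us into an (almost) orbifold cover rather than a bad object. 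If $\bar{\mathfrak B}$ folds peripheral paths, the two affected peripheral elements carry labels $(o',i)$ and $(o'',i)$ with $o''=o'c_i^z$, giving type (FPE). If $\bar{\mathfrak B}$ self-folds or fails to be collapsible while avoiding folded squares, the affected peripheral path produces a peripheral element $\gamma$ with the strict index inequality
$|o\langle c_i\rangle o^{-1}:U\cap o\langle c_i\rangle o^{-1}|<|o\langle c_i\rangle o^{-1}:\langle\gamma\rangle|$
defining type (OR).

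For the converse, I start from a representative of $[\mathcal P]$ exhibiting the defect and construct a tame minimal decorated $\mathbb A$-graph whose single elementary fold makes the defect manifest. The normal decorated $\mathbb A$-graphs produced in the proof of Lemma~\ref{lemma:connectedness} provide the natural starting point: loops carry non-peripheral generators and lollipops carry peripheral ones, all based at $u_1$. In case (R), with $\mathcal P\sim(T'\oplus(1),P')$, I unfold the trivial element into a short loop sharing its initial edge with another existing loop based at $u_1$; the forward fold identifying these two initial edges now adds a redundant element to the corresponding vertex tuple, destroying minimality. In case (FPE), given peripheral elements with matching labels $(o,i)$ and $(oc_i^z,i)$, I arrange their lollipops so the two sticks terminate at a common vertex with co-initial candy edges, so that one elementary fold folds peripheral paths. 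In case (OR), I exploit the strict index inequality to construct a lollipop whose candy part winds too many times around the peripheral cycle, forcing a single fold to self-fold. By construction each of these representatives is minimal and tame, hence represents a vertex of $\Omega_{[\mathcal P]}$.

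The main obstacle is the forward case analysis: the three failure modes are not a priori disjoint when several potential defects could coexist, and one must verify that each bad $\bar{\mathfrak B}$ can be unambiguously assigned a type (R), (FPE) or (OR). The most delicate sub-point is the $\mathbb Z_2$-borderline of Lemma~\ref{addinggeneratormakesreducible}, where adding a redundant element need not force global reducibility; in that borderline one must check directly that the output of $F$ is in fact an (almost) orbifold cover, contradicting badness, so the borderline never contributes a genuine pre-bad vertex. On the converse side the technical verification is that the inverse folds used to construct the witnessing $\mathfrak B$ preserve tameness and minimality throughout, which follows by inspection of the auxiliary-move bookkeeping from \cite{W2} and \cite{Dut}.
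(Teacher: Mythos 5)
The high-level shape of your argument (both directions; a case analysis over the three ways tameness can fail) matches the paper, but the case-to-type mapping you assert in the forward direction is substantially stronger than what is actually true, and the (OR) case of the converse is under-specified.

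In the forward direction you assign ``folds peripheral paths'' directly to type (FPE), ``self-folds or fails collapsibility'' directly to type (OR), and ``non-minimal vertex tuple'' directly to type (R). None of these clean one-to-one correspondences is established. The paper's treatment is deliberately weaker: a decorated $\mathbb A$-graph that folds squares (i.e.\ self-folds \emph{or} folds peripheral paths) is shown by \cite[Lemma~3.18]{Dut} to produce a partitioned tuple that folds peripheral elements \emph{or} has an obvious relation, a disjunction; and a non-collapsible $\mathfrak B'$ without folded squares is shown by the proof of \cite[Proposition~3.38]{Dut} to give a partitioned tuple that is reducible \emph{or} folds peripheral elements \emph{or} has an obvious relation. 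That both $p_j$ and $p_k$ cross a common edge $f$ with $i_j=i_k$ does not by itself force the label relation $o''=o'c_i^z$ demanded by (FPE) --- you would need to track the paths back to the base vertex and there is no reason the resulting conjugators differ by a power of $c_i$; similarly, a self-fold need not yield an obvious relation rather than a peripheral fold. Since you only need to conclude that $\mathcal P$ is of type (4), a disjunction suffices, but you have not proven even that without the cited machinery. Your $\mathbb Z_2$-borderline discussion around Lemma~\ref{addinggeneratormakesreducible} is unnecessary and also a sign of the detour: once $\bar{\mathfrak B}$ is given to be bad, collapsible and without folded squares, some vertex tuple of $\bar{\mathfrak B}$ is non-minimal, hence Nielsen equivalent to one containing $1$, and $\mathcal P_{\bar{\mathfrak B}}$ is immediately reducible since it contains a copy of every vertex tuple. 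There is no borderline to exclude, because you are reasoning from the badness of the output, not forward from the fold.

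In the converse direction your sketches for (R) and (FPE) agree with the paper, but the (OR) construction needs more than ``a lollipop whose candy part winds too many times around the peripheral cycle''. Whether the resulting configuration genuinely witnesses a bad fold depends on the type of the truncated tuple $\mathcal P'=(T,(\gamma_1,\ldots,\gamma_{n-1}))$: when $\mathcal P'$ is of simple type one indeed obtains a graph that folds peripheral paths after gluing the circuit; when $\mathcal P'$ is of (almost) orbifold covering type the relevant bad object is the disjoint join of an (almost) orbifold cover with an extra circuit, which is bad for a different reason and does not self-fold; and when $\mathcal P'$ is itself of type (4) one must recurse. Without this case split your construction does not establish that a pre-bad vertex exists for every tuple with an obvious relation.
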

\begin{proof}
Assume that $\mathfrak b$ is a pre-bad vertex of $\Omega_{[\mathcal P]}$. By definition $\mathfrak b$ has   a representative $\mathfrak B$ that folds onto a bad decorated $\mathbb A$-graph $\mathfrak B'$.  Thus we are in one of the following cases: 
\begin{enumerate}
\item $\mathfrak B'$ folds squares.

\item $\mathfrak B'$  does not fold squares  and one of the following occurs:
\begin{enumerate}
\item $\mathfrak B'$ is collapsible.

\item $\mathfrak B'$ is not collapsible. 
\end{enumerate}
\end{enumerate}

If (1) occurs, then   \cite[Lemma 3.18]{Dut}  implies that $\mathcal P$  fols peripheral elements or has an obvious relation; hence $\mathcal P$ is of type \textbf{(4)}.

If  (2.a) occurs, then  some vertex tuple in   $\mathfrak B'$  is non-minimal  (and therefore reducible). In this case,  as the corresponding partitioned tuple  $\mathcal P_{\mathfrak B'}$ contains a copy of each vertex tuple of $\mathfrak B'$,  we conclude that $\mathcal P_{ \mathfrak B'}$ and therefore  $\mathcal P$  is   reducible. Hence $\mathcal P$ is of type \textbf{(4)}.

If (2.b) occurs,  then  the proof of \cite[Proposition 3.38]{Dut}  shows that $\mathcal P$   is reducible, or folds peripheral elements,  or has an obvious relation. Hence $\mathcal P$ is of type \textbf{(4)}.
   
\smallskip  

To see that the converse holds we give a pictorial  description of a tame decorated $\mathbb A$-graph that represents $[\mathcal P]$ and folds onto a bad decorated $\mathbb A$-graph.

Assume that $\mathcal P$ is reducible. By definition  $\mathcal P$ is equivalent to  $\mathcal P'=(T, P)$ such that $T=(1, g_2, \ldots, g_m )$. Consider the decorated $\mathbb A$-graph $ \mathfrak B $ shown in Fig.~\ref{fig:reducible1}  where   $f_1, f_2\in EB$ are both labeled $(1, e, 1)$.  The fold that identifies $f_1$ and $f_2$ yields  a bad decorated $\mathbb A$-graph   as it replaces  $T_{u'}=\emptyset$ by $T_{u'}'=(1)$.  Therefore   $\mathfrak b:=[\mathfrak B] \in V \Omega_{[\mathcal P]}$ is pre-bad.
\begin{figure}[h!]
\begin{center}
\includegraphics[scale=1]{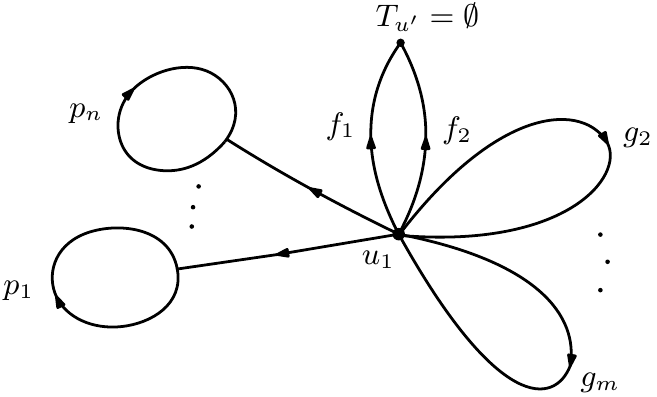}
\end{center}
\caption{The pre-bad decorated $\mathbb A$-graph $\mathfrak B$ in the case $\mathcal P$ is reducible.}{\label{fig:reducible1}}
\end{figure}

Assume that  $\mathcal P$ folds peripheral elements.  By definition,  $\mathcal P$ is equivalent to  $\mathcal P'=((g_1, \ldots, g_m), (  \gamma_1, \ldots, \gamma_n))$ such that  $i:=i_1=i_2$ and $o_2=o_1 c_i^z$ for some integer $z$ where $(o_k, i_k)$ is the label of $\gamma_k$ for $k=1, \ldots , n$. Consider the  normal   decorated $\mathbb A$-graph $\mathfrak B$ shown in Fig.~\ref{fig:reducible2}, where $q'$ and $q''$ are $\mathbb{B}$-paths such that   $\mu_\mathcal B(q')=\mu_\mathcal B(q)$ and $\mu_\mathcal B( q'')=c_i^z.$  After folding $q'q''$ into $qp_1$  we obtain a  tame decorated $\mathbb A$-graph  as shown in Fig.~\ref{fig:reducible2a}. The resulting decorated $\mathbb A$-graph clearly folds onto a   decorated $\mathbb A$-graph that folds squares. Consequently  $\Omega_{[\mathcal P]}$ contains a pre-bad vertex. 
\begin{figure}[h!]
\begin{center}
\includegraphics[scale=1]{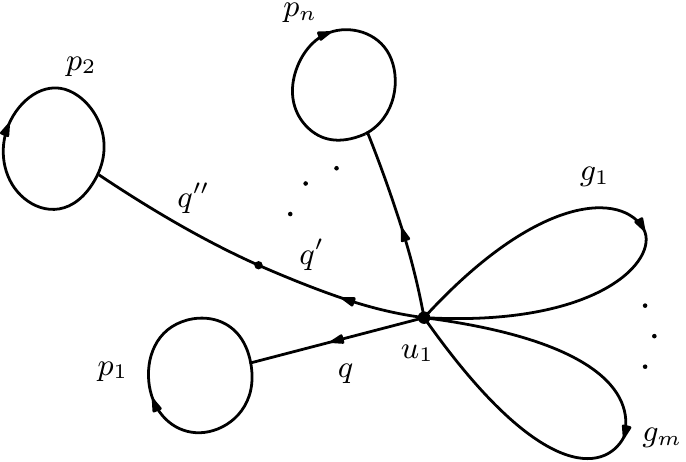}
\end{center}
\caption{The  decorated $\mathbb A$-graph $ \mathfrak B $ in the case $\mathcal P$ folds peripheral elements.}{\label{fig:reducible2}}
\end{figure}

\begin{figure}[h!]
\begin{center}
\includegraphics[scale=1]{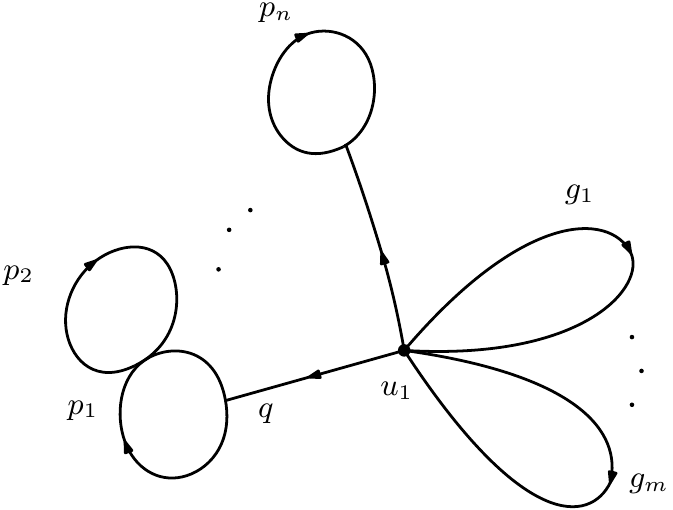}
\end{center}
\caption{Folds onto a bad decorated $\mathbb A$-graph.}{\label{fig:reducible2a}}
\end{figure}
Finally, assume  $\mathcal P$ has an obvious relation.  By definition,  $\mathcal P$ is equivalent to 
$$((g_1, \ldots, g_m), ( \gamma_1,\ldots , \gamma_n))$$  such that 
\begin{equation}{\label{lemma:obviousrelation}}
 w_n:=|o_n\langle c_{i_n}\rangle o_n^{-1}: U \cap o_n\langle c_{i_n}\rangle o_n^{-1}|< | o_n\langle c_{i_n}\rangle o_n^{-1}:  \langle \gamma_n\rangle|=z_n\tag{*}
\end{equation}
where $U\leq \pi_1^o(\mathcal O)$ is generated by $\{g_1, \ldots, g_m, \gamma_1, \ldots, \gamma_{n-1}\}$ and $(o_k, i_k)$ is the label of $\gamma_k$ for $k=1, \ldots , n$. We may assume $o_n=1$,  so that $\gamma_n=c_{i_n}^{z_n}$. 
We consider three cases depending on $\mathcal P':=((g_1, \ldots, g_m),(\gamma_1, \ldots, \gamma_{n-1})).$  

\noindent\textit{Case 1.}  $\mathcal P'$ is of simple type. Thus there is a folded tame decorated $\mathbb A$-graph $\mathfrak B'$ that represents $[\mathcal P']$. Let $\mathfrak B$  be the tame decorated $\mathbb A$-graph  described in Fig.~\ref{fig:reducible3}, that is, a  circuit $p=p'p''$ is glued to the base vertex $u_1'$ of $\mathfrak{B}'$  such that $\mu_{\mathcal B}(p)=c_{i_n}^{z_n}$ and $\mu_\mathcal B(p')=c_{i_n}^{w_n}.$   
It follows from (\ref{lemma:obviousrelation}) that  $p'$  can be folded  with a closed path that lies  entirely in $\mathbb B'$.  The resulting decorated $\mathbb A$-graph   folds onto a decorated $\mathbb A$-graph that folds peripheral paths because the first edge of $p'$  can be folded with the first edge of $p''$, see Fig.~\ref{fig:reducible3}. Hence  the corresponding edge is crossed twice by the peripheral paths in the resulting marked $\mathbb{A}$-graph.   
\begin{figure}[h!]
\begin{center}
\includegraphics[scale=1]{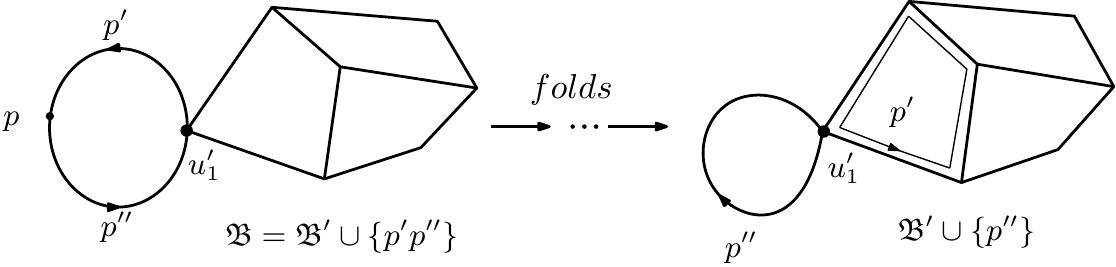}
\end{center}
\caption{The  decorated $\mathbb A$-graph $\mathfrak B=\mathfrak B'\cup \{p'p''\}$.}{\label{fig:reducible3}}
\end{figure}

\noindent{\textit{Case 2.}}  $\mathcal P'$ is of (almost) orbifold covering type. Let $\mathfrak B'$ be a minimal  tame decorated $\mathbb A$-graph  that represents $[\mathcal P']$ such  that  a single elementary fold of type IIIA turns $\mathfrak B'$ into an (almost) orbifold covering type. Let $\mathfrak B$ be the tame  decorated $\mathbb A$-graph that is obtained from $\mathfrak B'$  by gluing a path $p$ to the base vertex $u_1'$ of $\mathfrak{B}'$  such that $\mu_{\mathcal B'}( p)= c_{i_n}^{z_n}$.  Then $\mathfrak B$  is  tame and contains $\mathfrak B'$ as a decorated sub-$\mathbb A$-graph.  Observe that the  fold that turns $\mathfrak B'$ into an  (almost) orbifold cover can also be applied to $\mathfrak B$  and so yields a decorated $\mathbb A$-graph   which is the join of an (almost) orbifold cover and a circuit, and hence is   not a (almost) orbifold cover. Therefore the vertex represented by $\mathfrak B$ is pre-bad.

\noindent{\textit{Case 3.}} $\mathcal P'$ is reducible, or fold peripheral elements, or has an obvious relation. In this case we apply the same argument as in the previous paragraphs  with $\mathcal P'$ playing the role of $\mathcal P$.     
\end{proof}


\subsection{Proof of Proposition~\ref{prop:01}  }
In this subsection  we study bifurcations in  the directed graph $\Omega_{[\mathcal P]}$ motivated by the fact that Proposition~\ref{prop:01} follows easily once we prove that  one of the following happens:
\begin{enumerate}
\item[(R1)] There is a unique root in  $\Omega_{[\mathcal P]}$ represented by a folded decorated $\mathbb A$-graph. In this case, according to    \cite[Lemma~3.23]{Dut}   $\mathcal P$ is of simple type, that is, is of type \textbf{(1)}.

\item[(R2)]   All roots of $\Omega_{[\mathcal P]}$ are pre-orbifold covers and they fold onto  equivalent  orbifold covers. In this case, according to \cite[Lemma~3.37]{Dut},    $\mathcal P$ is of  orbifold covering type, that is, $\mathcal P$  is of type \textbf{(2)}.

\item[(R3)] All roots of $\Omega_{[\mathcal P]}$ are pre-almost orbifold covers that fold onto equivalent almost orbifold covers. In this case, according to  \cite[Lemma~3.37]{Dut},  $\mathcal P$ is of almost orbifold covering type, that is, $\mathcal P$ is of type \textbf{(3)}.

\item[(R4)] All roots of $\Omega_{[\mathcal P]}$ are pre-bad. In this case,   $\mathcal P$ is of type \textbf{(4)}.
\end{enumerate} 
 
This is accomplished by proving the following two lemmas.
\begin{lemma}{\label{lemma:bad}}
Assume that  $\mathfrak b \mapsto \mathfrak b'$. If $\mathfrak b$ is pre-bad, then  $\mathfrak{b}'$ is pre-bad.
\end{lemma}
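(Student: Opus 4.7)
The plan is to show that for any way of folding a representative of $\mathfrak b$ into a bad decorated $\mathbb A$-graph, there is a parallel way of folding a representative of $\mathfrak b'$ into a bad one. First I would fix a representative $\mathfrak B$ of $\mathfrak b$ admitting an elementary fold $F_0$ to a tame $\bar{\mathfrak B}$ with $\mathfrak B'=core(\bar{\mathfrak B},\bar u_1)$ a representative of $\mathfrak b'$. Since $\mathfrak b$ is pre-bad, some equivalent representative of $\mathfrak b$ admits an elementary fold producing a bad decorated $\mathbb A$-graph; using the transport mechanism employed in the proof of Lemma~\ref{lemma:foldsameedges} (auxiliary moves, Nielsen equivalences of vertex tuples, and trivial-element moves), I would arrange that the same $\mathfrak B$ also admits a second elementary fold $F_1$ producing a bad $\mathfrak C$.

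Next I would perform both folds. If $F_0$ and $F_1$ act on disjoint edge pairs they trivially commute; if they share an edge then after applying either one the other descends to an elementary fold on the result (the degenerate case $F_0=F_1$ is excluded since the outputs tame and bad disagree). In every case one obtains a common decorated $\mathbb A$-graph $\mathfrak D$, up to equivalence, fitting into a commuting square in which $\bar{\mathfrak B}\to \mathfrak D$ is an elementary fold $F_1'$ and $\mathfrak C\to\mathfrak D$ is an elementary fold $F_0'$.

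The core of the argument is then to verify that $\mathfrak D$ is bad. I would case-split according to why $\mathfrak C$ is bad, using Definition~\ref{def:types}: (a) if $\mathfrak C$ folds squares (either self-folds or folds peripheral paths), the pattern of an edge being traversed twice by the peripheral paths in the offending configuration is inherited by its image under the further identification $F_0'$, so $\mathfrak D$ also folds squares; (b) if some vertex tuple of $\mathfrak C$ is non-minimal, then $F_0'$ can only combine it with another tuple at an identified vertex, hence the result is still non-minimal; (c) if $\mathfrak C$ is non-collapsible and fails one of (\textbf{C1})--(\textbf{C4}) or (\textbf{C1'})--(\textbf{C4'}), these structural defects persist in $\mathfrak D$ because further folding cannot restore collapsibility (otherwise one could undo the collapse to contradict Lemma~3.14 of \cite{Dut}) nor the remaining conditions on local graphs and vertex tuples.

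Finally I would pass to the core. Since $\mathfrak B$ is minimal with respect to its base vertex, the two edges identified by $F_1$ lie in the active part of $\mathfrak B$ contributing to its loops or peripheral paths; hence the edges of the induced fold $F_1'$ lie in $core(\bar{\mathfrak B},\bar u_1)=\mathfrak B'$. Therefore $F_1'$ restricts to an elementary fold on $\mathfrak B'$ producing a decorated $\mathbb A$-graph equivalent to $core(\mathfrak D,\bar u_1)$, and by the previous paragraph this is bad. This exhibits $\mathfrak b'$ as pre-bad. The main obstacle will be the third step, namely the systematic verification that each bad mode of Definition~\ref{def:types} survives an arbitrary additional elementary fold $F_0'$; in particular, one must rule out that $F_0'$ secretly simplifies $\mathfrak C$ into a tame or (almost) orbifold covering object, which amounts to a careful check against the condition lists using Remark~\ref{rem:local} and the description of the fold's effect on local graphs.
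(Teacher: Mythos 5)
Your high-level strategy (fix a common representative $\mathfrak B$ admitting both the tame fold $F_0$ and a bad fold $F_1$, form a commuting square to get a common $\mathfrak D$, case-split on the reason $\mathfrak C$ is bad, pass to the core) matches the paper's, which proves Lemma~\ref{lemma:bad} and Lemma~\ref{lemma:bifurcation} simultaneously from exactly this commuting-square setup. But there is a genuine gap in your third step, and it lies precisely where you flag the "main obstacle." Your assertion in case (c) that "these structural defects persist in $\mathfrak D$ because further folding cannot restore collapsibility [\dots] nor the remaining conditions on local graphs and vertex tuples" is not correct as stated: non-collapsibility is indeed preserved under folds, but the failure of conditions (C1)--(C4) / (C1')--(C4') can in principle be repaired by an additional fold, so one cannot argue by "persistence of defects." The paper handles this by invoking Lemma~\ref{lemma:foldalmost} contrapositively: that lemma says that if a decorated $\mathbb A$-graph that does not fold squares, is non-collapsible, and has some non-empty vertex tuple folds onto an (almost) orbifold cover in one step, then it was already an (almost) orbifold cover. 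Since $\mathfrak C$ (your notation; $\mathfrak B_2$ in the paper) is bad, $\mathfrak D$ therefore cannot be an (almost) orbifold cover. Your proposed "careful check against the condition lists using Remark~\ref{rem:local}" is not a substitute for this reverse-direction lemma, and your citation of Lemma~3.14 of [Dut] for "undoing the collapse" also does not serve the purpose you intend — that lemma concerns equivalence of partitioned tuples under auxiliary moves, not collapsibility under folds.

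A second, smaller gap appears in your case (b): knowing that the merged vertex tuple in $\mathfrak D$ remains non-minimal rules out tameness but does not by itself rule out $\mathfrak D$ being an (almost) orbifold cover. The paper completes this by splitting on whether $\mathfrak D$ is collapsible (collapsible decorated $\mathbb A$-graphs are never (almost) orbifold covers) and, in the non-collapsible case, observing that the fold must be of type IIIA (via Lemma~3.20 of [Dut]) and so adds an element to a vertex tuple, forcing either a tuple of size $>1$ or two non-empty tuples with one non-minimal — either of which violates conditions (C3)/(C4) or (C3')/(C4'). Your one-line argument omits this verification. In short: the skeleton of your proof is right, but the badness check after the second fold requires the paper's Lemma~\ref{lemma:foldalmost} and the collapsibility dichotomy, not a direct "defects persist" argument.
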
 

\begin{lemma}{\label{lemma:bifurcation}}
Let $\mathfrak b$, $\mathfrak b_1$ and $\mathfrak b_2$ be vertices of $\Omega_{[\mathcal P]}$. Suppose that $\mathfrak b_1\neq\mathfrak b_2$ and $\mathfrak b\mapsto\mathfrak b_i$ for $i=1, 2$.  Then one of the following holds:
\begin{enumerate}
\item[(A)] $\mathfrak b_1$ and $\mathfrak b_2$ are pre-bad vertices.

\item[(B)] $\mathfrak{b}_1$ and $\mathfrak{b}_2$ are pre-(almost) orbifold covers that project onto equivalent  (almost) orbifold covers.

\item[(C)] there is a vertex $\mathfrak b'$ such that $\mathfrak{b}_i\mapsto \mathfrak{b}'$ for $i=1, 2$. In other words, $\mathfrak b_1$ and $\mathfrak b_2$ have a common projection in $\Omega_{[\mathcal P]}$.
\end{enumerate} 
\end{lemma}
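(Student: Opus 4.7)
The plan is to adapt the confluence argument underlying Lemma~7 of \cite{W2} to the setting of decorated $\mathbb A$-graphs, with the extra bookkeeping forced by the peripheral paths. By definition of the edges of $\Omega_{[\mathcal P]}$, for each $i=1,2$ there is a representative $\mathfrak B^{(i)}$ of $\mathfrak b$ together with an elementary fold $F_i$ so that, after taking the core based at the image of the base vertex, one obtains a representative of $\mathfrak b_i$. The representatives $\mathfrak B^{(1)}$ and $\mathfrak B^{(2)}$ are equivalent, and, by Lemma~\ref{lemma:foldsameedges} together with the standard procedure of pushing auxiliary moves past a fold, one can produce a single representative $\mathfrak B$ of $\mathfrak b$ that simultaneously admits two elementary folds $F_1,F_2$ whose respective outcomes are equivalent to representatives of $\mathfrak b_1$ and $\mathfrak b_2$. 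Write $\{f_i,f_i'\}\subseteq EB$ for the edge pair identified by $F_i$, with common initial vertex $x_i$.

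The easy case is when $F_1$ and $F_2$ act on disjoint data, meaning the edges $f_1^{\pm1},f_1'^{\pm1},f_2^{\pm1},f_2'^{\pm1}$ are pairwise distinct and the local graphs $\Gamma_{\mathfrak B}(x_1)$ and $\Gamma_{\mathfrak B}(x_2)$ are not altered by the other fold. In that case $F_1$ and $F_2$ commute: applying $F_2$ to the outcome of $F_1$ and $F_1$ to the outcome of $F_2$ produce equivalent tame decorated $\mathbb A$-graphs, and the cores based at the image of $u_1$ yield a common vertex $\mathfrak b'\in V\Omega_{[\mathcal P]}$ with $\mathfrak b_i\mapsto \mathfrak b'$. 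This is conclusion (C).

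The substantial case is the overlapping one, in which the two fold pairs share an edge or meet at the same vertex. I would enumerate the overlap patterns according to the fold type and according to which of the edges $f_i^{\pm1},f_i'^{\pm 1}$ coincide. In each non-degenerate pattern a ``diagonal fold'' $F_3$ can be exhibited inside $\mathfrak B$, so that post-composing $F_1$ and $F_2$ with a subsequent elementary fold yields equivalent tame decorated $\mathbb A$-graphs whose cores define a common projection $\mathfrak b'$, again giving (C). The exceptional patterns are precisely the ones in which the combined local configuration around $x_1$ and $x_2$ realises the local-graph conditions (C2) or (C2') of Definition~\ref{def:types}. In such a pattern Lemma~\ref{lemma:foldalmost} forces both $\mathfrak B_1$ and $\mathfrak B_2$ to fold onto an (almost) orbifold cover, and the rigidity extracted from Lemma~\ref{remark:prealmost} shows that the two resulting (almost) orbifold covers are determined by the same local data and are therefore equivalent, giving (B). The remaining patterns are those that force the appearance of a square-fold or a non-minimal vertex tuple in any continuation from $\mathfrak B_1$ or $\mathfrak B_2$; these force both $\mathfrak b_1$ and $\mathfrak b_2$ to be pre-bad, which is (A).

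The main obstacle is this overlap analysis. The bookkeeping is significantly more delicate than in the marked $\mathbb A$-graph case treated in \cite{W2}, because the peripheral paths $p_j$ thread through the shared edges: a fold that looks innocuous on the underlying $\mathbb A$-graph can introduce a self-folded peripheral path or two peripheral paths that identify edges (type \textbf{(FPE)}), or destroy a collapsing set, and one must verify that every such degeneracy fits precisely into (A) or (B). Using the local graphs $\Gamma_{\mathfrak B}(\cdot)$ to encode how the peripheral paths cross each vertex, together with the explicit numerical criteria in Definition~\ref{def:types} for (almost) orbifold covers, should reduce the analysis to a finite list of overlap configurations, each of which can be checked directly.
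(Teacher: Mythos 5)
Your overall strategy mirrors the paper's: reduce, via Lemma~\ref{lemma:foldsameedges} and auxiliary moves, to a single representative $\mathfrak B$ of $\mathfrak b$ carrying both elementary folds, then run a diamond/confluence argument. This is the right skeleton, and the paper does in fact show (in the cases corresponding to your ``overlap'' configurations (iii) and (iv)) that applying a second elementary fold $F_1'$ to $\mathfrak B_1$ and $F_2'$ to $\mathfrak B_2$ yields \emph{equivalent} decorated $\mathbb A$-graphs $\mathfrak B_1'\approx\mathfrak B_2'$, by an argument analogous to Lemma~8 of \cite{W2}.

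The genuine gap is in how you determine which of (A), (B), (C) holds. You try to read the trichotomy off the \emph{local overlap pattern} at the fold vertices, asserting that the ``exceptional patterns are precisely the ones in which the combined local configuration around $x_1$ and $x_2$ realises the local-graph conditions (C2) or (C2')'' and that the ``remaining patterns force square-folds or non-minimal vertex tuples.'' That classification does not work: whether the common object $\mathfrak B_1'\approx\mathfrak B_2'$ is bad, an (almost) orbifold cover, or tame is a \emph{global} property of the decorated $\mathbb A$-graph. Conditions (C1)--(C4) and (C1')--(C4') quantify over every vertex of the core, collapsibility is a global condition on the tuple of peripheral paths, and minimality of the vertex tuples is not visible from the stars of $x_1$ and $x_2$. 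The paper's argument sidesteps this completely: after establishing $\mathfrak B_1'\approx\mathfrak B_2'$, it simply reads off the answer from the \emph{type} of this common object --- if it is bad then both $\mathfrak b_i$ are pre-bad (A); if it is an (almost) orbifold cover then both $\mathfrak b_i$ are pre-(almost) orbifold covers that fold onto equivalent ones (B); if it is tame then $[\mathrm{core}(\mathfrak B_1',u_1')]=[\mathrm{core}(\mathfrak B_2',u_2')]$ is a common projection (C). There is no need for, and no correct way of obtaining, a local preclassification of overlap patterns. Also note that you never dispose of the degenerate configurations ($f_3=f_1,f_4=f_2$, or $F_1,F_2$ of type IIIA with $f_3=f_1^{-1}$, $f_4=f_2^{-1}$): the paper shows these give $\mathfrak B_1\approx\mathfrak B_2$, contradicting $\mathfrak b_1\neq\mathfrak b_2$, and you should say so before assuming an overlap pattern to which the diamond applies.
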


\begin{proof}[Proof of Lemma~\ref{lemma:bad} and Lemma~\ref{lemma:bifurcation}]

For both proofs we need to consider the situation that elementary folds $F_1$ and $F_2$ are applicable to decorated $\mathbb A$-graphs equivalent to a minimal (with respect to the base vertex) tame decorated  $\mathbb A$-graphs.  $\mathfrak B$.


Suppose  that $F_1 $ identifies the edges $f_1$ and $f_2$  and is based on   $x:=\alpha(f_1)=\alpha(f_2)\in VB$   and   $F_2 $ identifies the edges $f_3$ and $f_4$ and is based on $x':=\alpha(f_3)=\alpha(f_4)\in VB$.   Let  $ \mathfrak B_1$ (resp.~$\mathfrak B_2$)  denote the  decorated $\mathbb A$-graph that is obtained from $\mathfrak B$  by  $F_1$ (resp.~$F_2$).

Lemma~\ref{lemma:foldsameedges} implies that any elementary  fold that identifies  $f_1$ and $f_2$ (resp.~$f_3$ and $f_4$)  in a decorated $\mathbb A$-graph  equivalent to $\mathfrak B$  yields a decorated $\mathbb A$-graph equivalent to $\mathfrak B_1$ (resp.~$\mathfrak B_2$).  This allows us to replace   $\mathfrak B$  with an equivalent decorated $\mathbb A$-graph, which we still denote by $\mathfrak B$ (observe that we only need auxiliary moves of type A2) in which   the labels of $f_1$ and $f_2$ are $(a, e, b_1)$ and $(a, e, b_2)$, and the labels of $f_3$ and $f_4$ are $(c, \bar e, d_1)$ and $(c, \bar e, d_2)$. After  applying an   auxiliary move of type A0 to  $\mathfrak B$  and possibly after  exchanging $f_3$ and $f_4 $ we can further assume that  one of the following holds:\begin{enumerate}
\item[(i)]  $f_3=f_1$ and $f_4=f_2$, i.e.~$F_1$ and $F_2$ identify the same pair of  edges in $\mathfrak B$.

\item[(ii)] $F_1$ and $F_2$   are elementary of type IIIA and    $f_3=f_1^{-1}$  and $f_4=f_2^{-1}$.
\begin{figure}[h!]
\begin{center}
\includegraphics[scale=1]{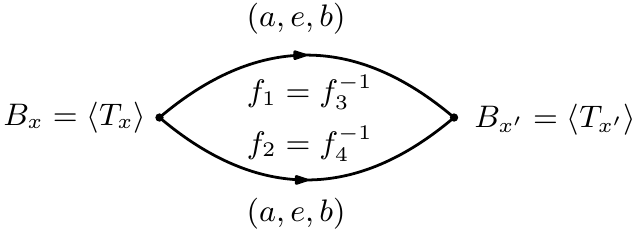}
\end{center}
\caption{$F_1$ and $F_2$ are of type IIIA.}{\label{fig:caseii}}
\end{figure}

\item[(iii)] $F_1$ and $F_2$   are  of type IA and  $\omega(f_3)=\omega(f_1)$ and $\omega(f_4)=\omega(f_2)$.
\begin{figure}[h!]
\begin{center}
\includegraphics[scale=1]{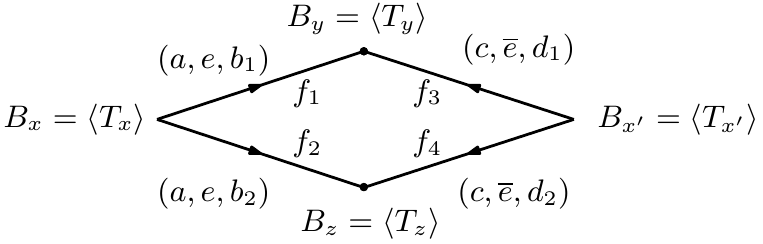}
\end{center}
\caption{$F_1$ and $F_2$ are of type IA.}{\label{fig:caseiii}}
\end{figure}

\item[(iv)]  $F_1$ and $F_2$ are elementary and commute, i.e.~if $i\neq j$ then $F_i$ does not change the topology of the subgraph that carries $F_j$.
\end{enumerate}

We first show that   $\mathfrak B_1$ is equivalent to $\mathfrak B_2$ if (i) or  (ii) occurs. In fact,  if (i) occurs  then the equivalence follows from  Lemma~\ref{lemma:foldsameedges}. If (ii) occurs, then $\mathfrak B_1$ and $\mathfrak B_2$ differ  by a trivial element at the vertex tuples   at $x$ and   $x'$, see Fig.~\ref{fig:caseii1}.  This  follows as $F_1$ adds the trivial element to the vertex tuple $T_{x'}$ and $F_2$ adds the trivial element to the vertex tuple $T_x$.  Thus $\mathfrak B_1$ and $\mathfrak B_2$ are equivalent.
\begin{figure}[h!]
\begin{center}
\includegraphics[scale=1]{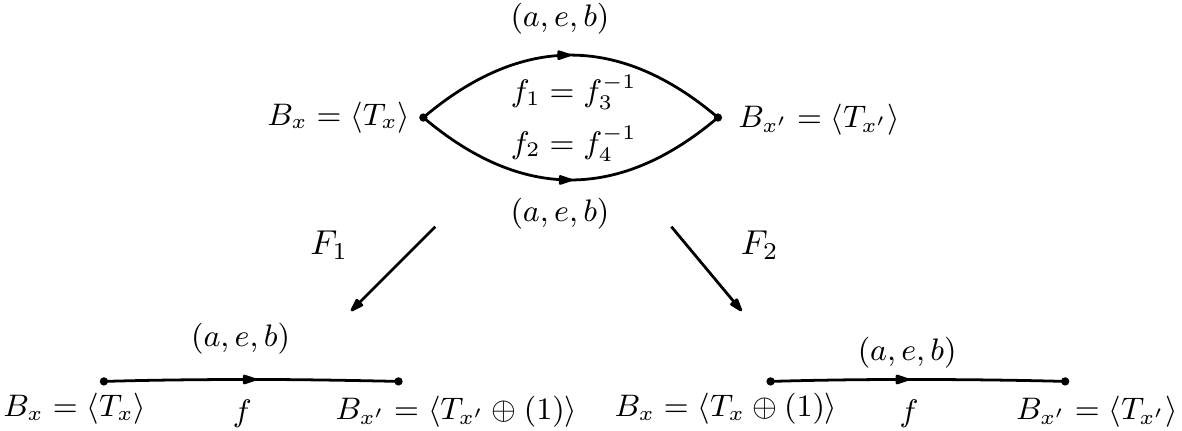}
\end{center}
\caption{$F_1$ and $F_2$ are of type IIIA.}{\label{fig:caseii1}}
\end{figure}

\smallskip

To prove  Lemma~\ref{lemma:bad} we need to consider the situation in which $\mathfrak B_1$ is tame  and $\mathfrak B_2$ is bad (or vice versa) and show that $\mathfrak B_1$ folds onto a bad decorated $\mathbb A$-graph. To prove Lemma~\ref{lemma:bifurcation}  we need to consider the situation in which $\mathfrak B_1$ and $\mathfrak B_2$ are tame but not equivalent and we need to show that one of the following holds:
\begin{enumerate}
\item  $\mathfrak B_1$ and $\mathfrak B_2$ fold onto bad decorated $\mathbb A$-graphs (thus (A) occurs).
\item  $\mathfrak B_1$ and $\mathfrak B_2$ fold onto equivalent (almost) orbifold covers (thus (B) occurs).
\item $\mathfrak B_1$ and $\mathfrak B_2$ fold onto equivalent tame decorated $\mathbb A$-graphs (thus (C) occurs).
\end{enumerate} 
We can therefore assume that  $\mathfrak B_1$  is not equivalent to $\mathfrak B_2$. We observe that configuration   (iii)  or configuration  (iv)  necessarily occurs.

 Let  $F_1'$  denote the fold that  identifies the edges $f_3$ and $f_4$ in $\mathfrak B_1$ and   $F_2'$ denote  the fold that identifies    the edges $f_1$  and $f_2$ in $\mathfrak B_2'$.  Observe that $F_1'$ and $F_2'$ are of type IIIA  if configuration  (iii) holds,    and $F_i$ and $F_i'$ ($i=1,2$)   are of same type if configuration  (iv) holds.  
$$
\begin{tikzcd}
 &  \mathfrak B  \arrow[dr,,   "F_2"]  \arrow[dl,  "F_1"'] &  \\
 \mathfrak B_1   \arrow[dr,  "F_1'"']   &  &  \mathfrak B_2 \arrow[dl,    "F_2'"]\\
   &   \mathfrak B_1' \approx  \mathfrak B_2' &   
\end{tikzcd}
$$
An argument entirely analogous to    \cite[Lemma 8]{W2} shows that the decorated $\mathbb A$-graph $\mathfrak B_1' $   that is  obtained from $\mathfrak B_1$  by  $F_1'$ is equivalent to the    decorate $\mathbb A$-graph $\mathfrak B_2'$  that is obtained from $\mathfrak B_2$  by   $F_2'$. Thus in both configurations   $\mathfrak B_1'$ is equivalent to $\mathfrak B_2'$. 

\smallskip

We consider the case in which $\mathfrak B_1 $ is tame and $\mathfrak B_2$  is bad (and therefore not tame)  and the case  in which  both $\mathfrak B_1$ and $\mathfrak B_2$  are tame.  The first case provides a proof for Lemma~\ref{lemma:bad} and the second case provides a proof for Lemma~\ref{lemma:bifurcation}.

\smallskip

\noindent{\em Case 1.} Assume that $\mathfrak B_1$ is tame and $\mathfrak B_2$  is bad.  We will show that $ \mathfrak B_2'$ is bad which implies that   $ \mathfrak B_1'$  is also bad. There are three subcases.

\noindent{\emph{Case 1(a):}}  $\mathfrak B_2$  folds squares.  By definition,   there is  $f\in EB_2$ such that $f$  is crossed twice by the peripheral paths in $\mathfrak B_2$.  Thus  $F_2'(f)\in EB_2'$ is crossed twice by the  peripheral paths in  $\mathfrak B_2'$. Therefore $\mathfrak B_2'$ (and also $\mathfrak B_1'$) folds squares.  This shows that $ \mathfrak B_1$ folds onto a bad  decorated $\mathbb A$-graph and is therefore pre-bad.

\noindent{\emph{Case 1(b):}}  $\mathfrak B_2$ does not fold squares and is collapsible. As $\mathfrak B_2$ is   not tame there is   $w\in VB_2$ such that  $T_w^2$ is not minimal.   Let  $v:=F_2'(w)\in VB_2'$.  Thus   $T_w^2$ is a sub-tuple of  $T_v^{2'}$ and so $T_v^{2'}$ is not minimal. If $\mathfrak B_2'$  is collapsible, then it is bad (and so is $\mathfrak B_1'$).  If  $\mathfrak B_2'$  is not collapsible,  then it follows from Lemma~3.20 of \cite{Dut} that $F_2'$ is of type IIIA. Thus $F_2'$ adds an element to some vertex tupleof $\mathfrak B_2$. Thus  two vertices have non empty tuple in $\mathfrak B_2'$ and one of them is not minimal   some vertex in $\mathfrak B_2'$ has tuple of size greater than $1$. In both cases   $\mathfrak B_2'$ cannot be an (almost) orbifold cover.  Therefore  $\mathfrak B_2'$ (and therefore also $\mathfrak B_1'$) is bad.
 
\noindent{\em Case 1(c):}  $ \mathfrak B_2$ does not fold squares and is not collapsible.  Lemma~3.20 of \cite{Dut}    implies  that  $F_2$ is of type IIIA, i.e.~$\omega(f_3)=\omega(f_4)$. Thus the tuple associated to   $y_2:=F_2(\omega(f_3))=F_2(\omega(f_4))\in VB_2$ is non-empty.    Since the property of being non-collapsible is preserved under folds we see that   $\mathfrak B_2'$ is not collapsible. Lemma~\ref{lemma:foldalmost} implies that $\mathfrak B_2'$ cannot be an (almost) orbifold cover because $\mathfrak B_2$ is bad. Therefore $\mathfrak B_2'$ (and therefore also  $\mathfrak B_1'$) is bad. This completes the proof of case (1).   

\smallskip

\noindent{\em Case 2.}  Assume now that $\mathfrak B_1$ and $\mathfrak B_2$ are tame. Denote by $\mathfrak b_i$ the  vertex of $\Omega_{[\mathcal P]}$ represented by $ core(\mathfrak B_i, u_i)$. If $\mathfrak B_i'$ is bad, then clearly $\mathfrak b_1$ and $\mathfrak b_2$  are pre-bad. If $\mathfrak B_i'$ is an (almost) orbifold cover, then $\mathfrak b_i$ are  pre-(almost) orbifold covers. Finally,  if $ \mathfrak B_i'$ is  tame, then $\mathfrak b_1$ and $\mathfrak b_2$ have a common projection, namely    $\mathfrak b':=[core(\mathfrak B_1', u_1')]=[core(\mathfrak B_2', u_2')].$ 
\end{proof}

\begin{proof}[{Proof of Proposition~\ref{prop:01}}]
We 
first show that  $\Omega_{[\mathcal P]}$ has only roots of one type meaning that exactly one of cases (R1)-(R4) spelled out in the beginning of this subsection occurs. The proof is by contradiction. Thus we assume that $\Omega_{[\mathcal P]}$ contains distinct types. As shown in $\cite{W2}$ the set 
$$\Omega_{[\mathcal P]}^*:=\{ \mathfrak b \in \Omega_{[\mathcal P]} \ | \ \mathfrak b \text{ projects onto   roots  of distinct types}\}$$ 
is  non-empty. Choose $\mathfrak b \in \Omega_{[\mathcal P]}^*$ such that $h(\mathfrak b)$ is minimal. Let $\mathfrak b_1'$ and $\mathfrak b_2'$  be roots of distinct type that $\mathfrak b$ projects onto. Choose $\mathfrak b_1$ and $\mathfrak b_2$ such that $\mathfrak b\mapsto \mathfrak b_i$ and $\mathfrak b_i$ projects onto $\mathfrak b_i'$ for $i=1, 2$.  The minimality of $h(\mathfrak b)$ implies that there is no common projection of $\mathfrak b_1$ and $\mathfrak b_2$. As $\mathfrak b_1'$ and $\mathfrak b_2'$ are of distinct types,  $\mathfrak b_1$ and $\mathfrak b_2$ are not pre-(almost) orbifold covers  whose representatives  fold onto equivalent  (almost) orbifold covers.  Lemma~\ref{lemma:bifurcation} implies that $\mathfrak b_1$ and $\mathfrak b_2$ are bad vertices. Lemma~\ref{lemma:bad}  implies that  $\mathfrak b_1'$ and $\mathfrak b_2'$ are bad vertices  which means that they are the same type,  a contradiction.

This completes the proof of the proposition as the uniqueness claim in the case of partitioned tuples of (almost) orbifold covering type follows from Lemma~\ref{lemma:bifurcation}. 
\end{proof}


\subsection{Auxiliary results regarding partitioned tuples}

In this section we prove some auxiliary results  regarding     partitioned tuples  in the fundamental group of a small orbifold.  The main ingredient  is the notion of  \emph{foldability} of an $\mathbb A$-graph. 
 
\smallskip

Let $\mathcal C$ be an $\mathbb A$-graph and let  $u$ be a vertex of $\mathcal C$.  We say that two edges $f$ and $g$ starting at $u$  are equivalent if $[f]=[g]\in EA $ and $o_f^\mathcal C=c o_g^\mathcal C\in A_{[u]}$ for some $c\in C_u$ (that is, $f$ and $g$ can be folded by a not necessarily elementary fold). The number of distinct equivalence classes is denoted by $l_\mathcal C(u)$.

\begin{remark}\label{rem:numbereqclasses}
Observe that when  $\mathcal C$ is locally surjective the number of equivalence classes is given by 
$
l_\mathcal C(u)=val(v, A)\cdot |A_v:C_u| $ for any  $u\in VC$, where $v=[u]\in VA$.
\end{remark}

\begin{definition}
The \emph{foldability of $\mathcal C$ at  $u\in VC$}  is defined as
$$fold_\mathcal C(u):= val(u, C)-l_\mathcal C(u).$$
The \emph{foldability} of $\mathcal C$ is defined as
$$fold(\mathcal C):= \sum_{u\in VC} fold_{\mathcal C}(u).$$
\end{definition}

\begin{definition}
The \emph{foldability  of a decorated $\mathbb A$-graph $\mathfrak B$}  is defined as 
$$fold(\mathfrak B):=fold(core(\mathcal B))$$
where $\mathcal B$ is the underlying $\mathbb A$-graph of $\mathfrak B$.
\end{definition}

\begin{lemma}{\label{lemma_foldedness1}}
Let $\mathcal C$  and $\mathcal C'$ be $\mathbb A$-graphs. Assume that $\mathcal C$ is locally surjective and $\mathcal C'$ is obtained from $\mathcal C$ by a single fold of type IIIA affecting the vertex $y$ that preserves the rank of the fundamental group. 

Then the fold replaces the trivial vertex group $C_y$ by a non-trivial vertex group $C_y'$ and the following hold:
\begin{enumerate}
\item[(1)] $fold(\mathcal C')<fold(\mathcal C)$ if   and only if  the underlying surface of $\mathcal O$ is a disk and $C_y'=A_{[y]}\cong \mathbb Z_2$. Hence   $\mathcal O=D^2(2, m)$ for some $m\ge 2$.

\item[(2)] $fold(\mathcal C')= fold(\mathcal C)$  if  and only if   one of the following occurs:
\begin{enumerate}
\item the underlying  surface of $\mathcal O$ is an annulus and  $C_y=A_{[y]}\cong \mathbb Z_2$. Hence  $\mathcal O=A(2)$. 

\item the underlying surface of $\mathcal O$ is a disk, $C_y'\cong \mathbb Z_2$,  and  $A_{[y]}\cong \mathbb Z_4$. Hence   $\mathcal O=D^2(4, m)$ for some $m\ge 2$.
\end{enumerate}
\end{enumerate}
\end{lemma}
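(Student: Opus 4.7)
The strategy is a direct local computation of $\Delta fold := fold(\mathcal C') - fold(\mathcal C)$ followed by a short arithmetic enumeration over the list of small orbifolds. Since the fold is of type IIIA only the vertices $x := \alpha(f_1) = \alpha(f_2)$ and $y := \omega(f_1) = \omega(f_2)$ of the identified edges can be affected. The hypothesis $C_y = 1$ before the fold together with the rank-preservation hypothesis forces the new vertex group $C_y'$ to be a non-trivial cyclic subgroup of $A_{[y]}$: in a graph of groups with trivial edge groups the loss of one edge pair contributes $-1$ to the Grushko-type rank count, and so must be compensated by the appearance of one new cyclic vertex-group factor. Writing $k := |C_y'|$, one has $k \ge 2$ and $k \mid m$, where $m := |A_{[y]}|$.

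We next compute $\Delta fold$. At $x$ the valence drops by one, while the number of equivalence classes $l_\mathcal C(x) = val([x], A) \cdot |A_{[x]} : C_x|$ is unchanged: the edges $f_1$ and $f_2$ were equivalent at $x$ by the very definition of the fold being based there, so identifying them merges two edges inside a single class without modifying $C_x$. At $y$ the valence also drops by one but the class count shrinks because $C_y$ is enlarged; by Remark~\ref{rem:numbereqclasses} applied to both $\mathcal C$ and $\mathcal C'$ one gets $l_\mathcal C(y) = qm$ and $l_{\mathcal C'}(y) = qm/k$, where $q := val([y], A)$, which by construction of $\mathbb A$ equals the number of boundary components of $\mathcal O$. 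Assembling these contributions yields the key identity
$$
\Delta fold \;=\; -2 \;+\; qm\!\left(1 - \tfrac{1}{k}\right).
$$

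What remains is a finite arithmetic enumeration. A non-trivial $A_{[y]}$ requires $\mathcal O$ to carry a cone point, which excludes the Möbius band and the pair of pants and leaves only the disk $D^2(m_1, m_2)$ with $q = 1$ and the annulus $A(m_1)$ with $q = 2$. Since $k \ge 2$ and $k \mid m$ one always has $m(1 - 1/k) \ge m/2 \ge 1$, so any admissible solution necessarily has $q \le 2$. The inequality $\Delta fold < 0$ reads $qm(1 - 1/k) < 2$ and is satisfied only by $(q, m, k) = (1, 2, 2)$; this forces $\mathcal O = D^2(2, m)$ with $C_y' = A_{[y]} \cong \mathbb Z_2$, giving part (1). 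The equation $\Delta fold = 0$ reads $qm(k - 1) = 2k$, whose solutions under the constraints correspond to $(q, m, k) = (2, 2, 2)$, the annulus $A(2)$ with $C_y' = A_{[y]} \cong \mathbb Z_2$, and $(q, m, k) = (1, 4, 2)$, the disk $D^2(4, m)$ with $C_y' \cong \mathbb Z_2$ a proper index-two subgroup of $A_{[y]} \cong \mathbb Z_4$; these are the two sub-cases of part (2).

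The main obstacle I expect is the asymmetric bookkeeping at $x$ versus $y$: one must carefully recognize that the two merged edges sit in a common equivalence class at $x$ (so $l_\mathcal C(x)$ is preserved) but in distinct classes at $y$ (because $C_y$ is trivial there), and that the enlargement $C_y \to C_y'$ is exactly what absorbs the mismatch and collapses the class count at $y$ by a factor of $k$. Once these two local contributions are set up correctly, the remainder is the short arithmetic enumeration displayed above.
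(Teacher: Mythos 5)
Your approach is essentially identical to the paper's: derive the local formula
$$fold(\mathcal C') - fold(\mathcal C) \;=\; -2 + val([y],A)\bigl(|A_{[y]}| - |A_{[y]}:C_y'|\bigr) \;=\; -2 + qm\bigl(1 - \tfrac{1}{k}\bigr),$$
and then enumerate the solutions. The bookkeeping is correct: $fold_{\mathcal C'}(x) = fold_{\mathcal C}(x) - 1$ because $C_x$, and hence $l_{\mathcal C}(x)$, are unchanged while the valence at $x$ drops by one, and $\mathcal C'$ is again locally surjective so Remark~\ref{rem:numbereqclasses} applies on both sides. This is exactly the paper's computation.

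There is, however, a shared gap in the enumeration for $\Delta fold = 0$. Writing $j := m/k$ for the index, the equation $q(m-j)=2$ with $q=1$ has two integer solutions compatible with $k\mid m$ and $k\ge 2$: the one you record, $(m,k)=(4,2)$, but also $(m,k)=(3,3)$, i.e.\ $\mathcal O = D^2(3,m_2)$ with $C_y' = A_{[y]}\cong\mathbb Z_3$, for which $\Delta fold = -2 + 1\cdot(3-1)=0$. This matches neither sub-case (2.a) nor (2.b) of the statement. The paper's justification (``$\mathbb Z_n$ contains a non-trivial subgroup of index $n-2$ iff $n=4$'') misses $n=3$, where the index-one subgroup $\mathbb Z_3$ itself qualifies; and since $C_y' = A_{[y]}$ is permitted in part (1) and in case (2.a), there is no consistent reason to exclude it here. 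The omission appears harmless downstream---in Propositions~\ref{lem:critical1} and~\ref{prop:02} the generator of $\mathbb Z_3$ added by the fold is still angle-minimal, which is all that is actually used---but your claim that $(q,m,k)\in\{(2,2,2),(1,4,2)\}$ exhausts the solutions, as written, is not correct, and neither is the corresponding ``only if'' in the lemma as stated.
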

\begin{proof}
Assume the fold identifies the edges $f_1$ and $f_2$ with $x:=\alpha(f_1)=\alpha(f_2)$. As the fold affects the vertex $y$ we have $\omega(f_1)=\omega(f_2)=y$. 

The first claim is trivial as the fold is assumed to be rank preserving.  Note further that  $\mathcal C'$  is also locally surjective since any vertex  of $\mathcal C'$ is the image of some vertex of $\mathcal C$.  We can therefore use remark~\ref{rem:numbereqclasses} to compute   $fold(\mathcal C)$ and  $fold(\mathcal C')$.

At   $x$ we have   $fold_{\mathcal C'}(x)=fold_{\mathcal C}(x)-1$   
since   $C_x$ does not change and two equivalent edges starting at $x$ get identified. As two edges starting at $y$ also get identified, we obtain  
\begin{eqnarray}
fold_{\mathcal C'}(y)-fold_\mathcal C(y) & = &  val(y, C')  - l_{\mathcal C'}(y)- val(y, C)+l_{\mathcal C}(y) \nonumber\\
& = &  (val(y, C)-1) - val([y], A)\cdot |A_{[y]}:C_y'| -  \nonumber\\
&  &  - (val(y, C)   -val([y], A)\cdot |A_{[y]}:C_y| ) \nonumber\\
& = &  (val(y, C)-1) - val([y], A)\cdot |A_{[y]}:C_y'| -  \nonumber\\
&  &  - (val(y, C)   -val([y], A)\cdot |A_{[y]}| ) \nonumber\\
& =  & -1+val([y], A)\cdot (|A_{[y]}|-|A_{[y]}:C_y'|). \nonumber 
\end{eqnarray}
Therefore
\begin{eqnarray}
fold(\mathcal C')-fold(\mathcal C) & = & fold_{\mathcal C'}(x)-fold_{\mathcal C}(x)+fold_{\mathcal C'}(y)-fold_{\mathcal C}(y) \nonumber \\
 & = & -1 -1 +val([y], A) \cdot (|A_{[y]}| - |A_{[y]}:C_y'|)\nonumber\\
 & = & -2 + val([y], A) \cdot (|A_{[y]}| - |A_{[y]}:C_y'|)\nonumber
\end{eqnarray}

\noindent (1) The previous calculation shows that  $fold(\mathcal C')<fold(\mathcal C)$ if and only if  
$$val([y], A) \cdot (|A_{[y]}| - |A_{[y]}:C_y'|)\le 1,$$ 
which implies that 
$$val([y], A)=1 \ \text{ and }   \ |A_{[y]}:C_y'|=|A_{[y]}|-1.$$  
Note that $$|A_{[y]}:C_y'|=|A_w|-1 \ \text{ iff } \ A_{[y]}=C_y'\cong \mathbb Z_2.$$ Moreover $val([y], A)=1$ implies that the underlying surface of $\mathcal O$  is a disk which proves the claim.

\smallskip 

\noindent (2)  The calculations from the previous paragraphs  show that    $fold(\mathcal C')= fold(\mathcal C)$  if and only if  $val([y], A)\cdot (|A_{[y]}|-|A_{[y]}:C_y'|)=2.$  i.e.~if one of the following occurs:
\begin{enumerate}
\item[(i)] $val([y], A)=2$ and $|A_{[y]}:C_y'|=|A_{[y]}|-1$.

\item[(ii)] $val([y], A)=1$ and $|A_{[y]}:C_y'|=|A_{[y]}|-2$.
\end{enumerate}
Item  (i) occurs  if and only if  $A_{[y]}=C_y\cong \mathbb Z_2$. Hence we are in situation (2.a).   Item (ii) occurs if and only if  $A_{[y]}\cong \mathbb Z_4$ and $C_y'\cong \mathbb Z_2$ since  $\mathbb Z_n$ contains a non-trivial subgroup of index $n-2$  if and only if  $n=4$. As $val([y], A)=1$ the underlying surface  of $\mathcal O$ is a disk  and so we are in situation (2.b). 
\end{proof}

\begin{lemma}{\label{lemma_foldedness2}}
Let $\mathcal C$  and $\mathcal C'$ be $\mathbb A$-graphs. Assume that $\mathcal C$ is locally surjective and that  $\mathcal C'$ is obtained from $\mathcal C$ by a single fold of type IA identifying vertices $y_1$ and $y_2$ that preserves the rank of the fundamental group. 

Then $fold(\mathcal C')\ge  fold(\mathcal C)$ and $fold(\mathcal C')= fold(\mathcal C)$ if and only if one of the following hold:
\begin{enumerate}
\item[(a)] the underlying  surface of $\mathcal O$ is an annulus and    $A_{[y_1]}=A_{[y_2]}=1$.

\item[(b)] the underlying  surface of $\mathcal O$  is a disk and  $A_{[y_1]}=A_{[y_2]}\cong \mathbb Z_2$.
\end{enumerate}
\end{lemma}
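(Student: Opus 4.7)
My plan is to parallel the computation used for type IIIA folds in Lemma~\ref{lemma_foldedness1}, tracking the contribution to $fold$ at each vertex affected by the fold and then running a short case check over the three orientable small orbifolds $D^2(m_1,m_2)$, $A(m_1)$, and the pair of pants.

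Let the fold identify the edges $f_1,f_2$ with common origin $x=\alpha(f_1)=\alpha(f_2)$ and distinct termini $y_1=\omega(f_1)\neq y_2=\omega(f_2)$, producing the merged vertex $y\in VC'$. Set $v=[y_1]=[y_2]\in VA$. Only $x$ and $y$ contribute to $fold(\mathcal{C}')-fold(\mathcal{C})$, since all other stars and vertex groups are untouched. At $x$ the vertex group is unchanged and $f_1,f_2$ already belong to the same equivalence class of $Star(x,C)$; hence $val(x,C')=val(x,C)-1$ while $l_{\mathcal{C}'}(x)=l_\mathcal{C}(x)$, giving $fold_{\mathcal{C}'}(x)=fold_\mathcal{C}(x)-1$.

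At $y$ the valence is $val(y,C')=val(y_1,C)+val(y_2,C)-1$, and the new vertex group is $C_y=\langle C_{y_1},C_{y_2}\rangle\le A_v$. Here the rank-preservation hypothesis is essential: abstractly the pushout with trivial edge group produces $C_{y_1}*C_{y_2}$, which maps onto $C_y$, and since $A_v$ is cyclic this map is an isomorphism if and only if at least one of $C_{y_1},C_{y_2}$ is trivial. Thus, up to relabeling, $C_{y_1}=1$ and $C_y=C_{y_2}$. Local surjectivity is inherited by $\mathcal{C}'$ because the identified edges share the same $A$-label, so Remark~\ref{rem:numbereqclasses} applies and
\begin{align*}
l_\mathcal{C}(y_1)+l_\mathcal{C}(y_2)-l_{\mathcal{C}'}(y) &= val(v,A)\bigl(|A_v:C_{y_1}|+|A_v:C_{y_2}|-|A_v:C_y|\bigr)\\
&= val(v,A)\cdot|A_v|.
\end{align*}
Summing the contributions of $x$ and $y$ yields the master formula
\[
fold(\mathcal{C}')-fold(\mathcal{C})=-2+val(v,A)\cdot|A_v|.
\]

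From here the inequality and equality cases follow by inspection of the three orientable small orbifolds. For the pair of pants, $val(v,A)=3$ and $A_v$ is trivial, so the difference equals $+1$ and equality cannot occur. For $D^2(m_1,m_2)$, $val(v,A)=1$ and $A_v\cong\mathbb{Z}_{m_i}$ with $m_i\ge 2$, so the difference is $m_i-2\ge 0$, vanishing precisely when $m_i=2$, yielding case (b). For $A(m_1)$, $val(v,A)=2$ and $A_v$ is either trivial or $\mathbb{Z}_{m_1}$; the difference is $2|A_v|-2\ge 0$, vanishing exactly when $A_v=1$, yielding case (a). The only genuinely delicate step is the cyclicity argument isolating a trivial $C_{y_i}$ from the rank-preservation hypothesis; the rest is routine accounting based on Remark~\ref{rem:numbereqclasses}, and it mirrors the IIIA computation of Lemma~\ref{lemma_foldedness1} almost verbatim.
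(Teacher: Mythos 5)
Your proof is correct and follows essentially the same route as the paper's: compute $fold_{\mathcal{C}'}(x)-fold_{\mathcal{C}}(x)=-1$, use rank preservation to reduce to $C_{y_1}=1$ and $C_y=C_{y_2}$, apply Remark~\ref{rem:numbereqclasses} to get $fold(\mathcal{C}')-fold(\mathcal{C})=-2+val([y],A)\cdot|A_{[y]}|$, and then read off the cases. The paper deduces the cases directly from the formula (noting in particular that $val([y],A)=1$ forces $|A_{[y]}|\ge 2$ because when $\mathbb A$ has a single edge its vertex groups are nontrivial), whereas you enumerate the three orientable small orbifolds explicitly; these are the same computation. The one place you add content is the justification that rank preservation forces one of $C_{y_1},C_{y_2}$ to be trivial — the paper just asserts this — but be careful with the phrasing: a type~IA fold is not literally a pushout, and the cleaner way to say it is that the fold replaces the free factor $C_{y_1}*C_{y_2}$ of $\pi_1(\mathbb C)$ by the cyclic subgroup $\langle C_{y_1},C_{y_2}\rangle\le A_{[y]}$ without changing the graph's first Betti number, so rank is preserved iff $\mathrm{rank}(C_{y_1})+\mathrm{rank}(C_{y_2})\le 1$, i.e.\ one of them is trivial.
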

\begin{proof}
Assume  that the fold  is based on the edges  $f_1$ and $ f_2$ with $x:=\alpha(f_1)=\alpha(f_2)$. Thus  $y_1=\omega(f_1)$ and  $y_2=\omega(f_2)$. Denote the common image of $y_1$ and $ y_2$ (resp.~$f_1$ and  $f_2$) under the fold by $y$ (resp.~$f$). Thus $[y]=[y_1]=[y_2]\in VA$ and $  [f]=[f_1]=[f_2]\in EA.$ 

As in the previous lemma   we have $fold_{\mathcal C'}(x)=fold_{\mathcal C}(x)-1$.   On the other hand, as the fold is ranking preserving,   we can assume that $C_{y_1}=1$. Consequently    $C_y'=C_{y_2}$. A simple calculation then  yields 
$$ fold_{\mathcal C'}(y)-fold_{\mathcal C}(y_1)-fold_{\mathcal C}(y_2)=  -1 + val([y], A) \cdot |A_{[y]}| $$
and so $$fold(\mathcal C')-fold(\mathcal C)=-2+val([y], A)\cdot |A_{[y]}|.$$ 
Consequently,
$$fold(\mathcal C')<fold(\mathcal C) \ \text{  iff } \   val([y], A)=1   \text{ and }  |A_{[y]}|=1.$$ 
But this cannot occur  because  if $\mathbb A$ has only one edge then its vertex groups are non-trivial.

Moreover,  $fold(\mathcal C')=fold(\mathcal C)$ if and only if one of the following holds:
\begin{enumerate}
\item[(i)]  $val([y], A)=2$ and $|A_{[y]}|=1$.

\item[(ii)] $val([y], A)=1$ and $|A_{[y]}|=2$
\end{enumerate}
Therefore we are in case (a) if (i) occurs and in case (b) if (ii) occurs. 
\end{proof}

\begin{definition} We say that  a partitioned tuple $\mathcal P$ in $\pi_1^o(\mathcal O)$ is \emph{critical} if one of the following holds:
\begin{enumerate} 
\item  $\mathcal P$ is of simple type and is equivalent to $ (T', P')$ such that $T'$ contains an angle-minimal element.

\item  $\mathcal P$ is of type \textbf{(2)}, that is,  $\mathcal P$ is of orbifold covering type.

\item $\mathcal P$ is of type \textbf{(4)}, that is,   $\mathcal P$ is reducible, folds peripheral elements,  or has an obvious relation.
\end{enumerate}
Otherwise we say that $\mathcal P$ is \emph{non-critical}.
\end{definition}

\begin{lemma}{\label{lemma:anglemin}}
Let $\mathcal P$ be a partitioned tuple in $\pi_1^o(\mathcal O)$. If $\mathcal P$ is equivalent to $(T, P)$ such that $T$ contains an angle-minimal element,  then $\mathcal P$ is critical. 
\end{lemma}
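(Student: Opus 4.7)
The plan is to use Proposition~\ref{prop:01} to classify $\mathcal P$ into one of the four types, handle three of them directly, and rule out the remaining case by lifting the angle-minimal element through the almost orbifold cover.

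By Proposition~\ref{prop:01}, the equivalence class $[\mathcal P]$ falls into exactly one of the mutually exclusive types \textbf{(1)}-\textbf{(4)}. If $\mathcal P$ is of simple type \textbf{(1)}, the hypothesis directly supplies a representative $(T,P)$ with $T$ containing an angle-minimal element, placing $\mathcal P$ in the first case of the definition of critical. If $\mathcal P$ is of type \textbf{(2)} (orbifold covering) or of type \textbf{(4)} (reducible, folds peripheral elements, or has an obvious relation), criticality is immediate from the definition. Hence the substantive content of the lemma is to exclude type \textbf{(3)} (almost orbifold covering).

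I would argue by contradiction. Assume $[\mathcal P]$ is represented by a marking $(\eta':\mathcal O'\to\mathcal O, [\mathcal P'])$ of almost orbifold covering type, with $\mathcal P'=(T',P')$ and $T'\oplus P'$ a \emph{rigid} generating tuple of $\pi_1^o(\mathcal O')$. The subgroup $U:=\langle T\oplus P\rangle\le \pi_1^o(\mathcal O)$ is invariant under the elementary transformations defining equivalence of partitioned tuples, so $U=\eta'_\ast(\pi_1^o(\mathcal O'))$. Since $\eta'_\ast$ is injective, the angle-minimal element $g=us_i^ku^{-1}\in T\subseteq U$ has a unique preimage $g'\in\pi_1^o(\mathcal O')$. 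Using the local structure of the special almost orbifold cover at the cone point over which $g$ sits, and the fact that $\eta'_\ast$ preserves orders of torsion elements, one checks that $g'$ is itself an angle-minimal element of $\pi_1^o(\mathcal O')$, of the form $v(s'_j)^{k/e}v^{-1}$ where $e$ is the local ramification index. Lifting the whole partitioned tuple via $\eta'^{-1}_\ast$ gives a partitioned tuple $(\tilde T,\tilde P)$ in $\pi_1^o(\mathcal O')$ equivalent to $(T',P')$ (all Nielsen moves and peripheral conjugations of $(T,P)$ take place inside $U$ and therefore lift), and $\tilde T$ contains the angle-minimal element $g'$.

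The main obstacle, and where the specialness of $\eta'$ must enter, is that having an angle-minimal element in $\tilde T$ does not directly contradict rigidity of $T'\oplus P'$: Lemma~\ref{allprimitive} only turns $\nu_j=1$ (a unit exponent) into non-rigidity, not an arbitrary divisor exponent. To close the gap I would use that $\eta'$ is special, i.e.~$\deg(\eta|_S)<m$ and $\deg(\eta|_S)\nmid m$, which constrains the relationship between $g'$ and the exceptional boundary $S$ of $\mathcal O'$. Explicitly, one manipulates the partitioned tuple $(\tilde T,\tilde P)$ by elementary transformations, using $g'$ in combination with the peripheral tuple $\tilde P$ (which accounts for the non-exceptional boundaries) to produce a tuple containing a conjugate of the element of $\pi_1^o(\mathcal O')$ corresponding to $S$; by Lemma~\ref{allprimitive} this witnesses non-rigidity of $T'\oplus P'$, the desired contradiction. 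A parallel approach would be to realize $g$ as the entry $s_{v_i}^k$ of a vertex tuple in a minimal tame decorated $\mathbb A$-graph $\mathfrak B$ representing $[\mathcal P]$, project $\mathfrak B$ along edges of $\Omega_{[\mathcal P]}$ to a pre-almost orbifold cover folding onto an almost orbifold cover $\mathfrak C$ (possible by the type \textbf{(3)} case of Proposition~\ref{prop:01}), and then use the explicit form of vertex tuples in $\mathfrak C$ dictated by \textbf{(C4')} together with the specialness condition to show that the angle-minimal exponent $k\mid m_i$ cannot survive the folding sequence.
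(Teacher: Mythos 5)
Your reduction to showing that $\mathcal P$ cannot be of almost orbifold covering type is exactly the paper's first step, and your second (``parallel'') approach is close in spirit to the paper's actual argument. But your primary approach (the lifting argument) has a gap that you correctly identify and then do not close, and both of your approaches invoke specialness where the paper needs none.

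The paper's proof is considerably shorter and more local than either of your variants: one simply builds a tame decorated $\mathbb A$-graph $\mathfrak B$ representing $[\mathcal P]$ in which some vertex tuple is angle-minimal (possible precisely because of the hypothesis on $(T,P)$), observes that angle-minimality of a vertex tuple persists under folds --- if $B_w$ contains an angle-minimal element $s^k$ with $k\mid m$, then a generator of any $B'\le A_{[w]}$ with $B'\supseteq B_w$ is again angle-minimal --- and then invokes the definitional constraint on (pre-)almost orbifold covers, condition~(C4')(i)(b) together with the exceptional-vertex analysis in Lemma~\ref{lemma:foldalmostorbifol}, which forces all non-trivial vertex tuples of an almost orbifold cover to be \emph{not} angle-minimal. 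Specialness of $\eta'$ never enters; the obstruction is already encoded at the level of the $\mathbb A$-graph structure.

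Your variant~1 hits a wall that you describe accurately: an angle-minimal element $g'=v(s'_j)^{l}v^{-1}$ in $\widetilde T$ with $l=\gcd(l,m'_j)>1$ is perfectly compatible with rigidity of $T'\oplus P'$, because Lemma~\ref{allprimitive} only rules out $\nu_j=1$, and in the Grushko--Lustig canonical form the exponents $\nu_j$ satisfy $\gcd(\nu_j,m'_j)=1$ regardless of what individual entries of $\widetilde T$ look like. Your proposed fix --- ``manipulate $(\widetilde T,\widetilde P)$ to produce a conjugate of the element corresponding to $S$'' --- will not in general work, because the angle-minimal element $g$ in $T$ need have nothing to do with the exceptional disk $D$: it may sit over a cone point of $\mathcal O$ covered regularly by $\eta'$, in which case $g'$ is an angle-minimal element attached to a cone point of $\mathcal O'$ far from $S$, and specialness of $\eta'$ imposes no constraint on it whatsoever. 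So the gap is genuine and this route does not go through without a different idea. Your variant~2 correctly points to condition (C4') as the source of the contradiction, but the invocation of specialness there is likewise superfluous.
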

\begin{proof}
It suffices to show that $\mathcal P$ cannot be of almost orbifold covering type.   It is not hard to construct a tame decorated $\mathbb A$-graph $\mathfrak B$  that represents $[ \mathcal P]$  with the property that some vertex tuple contains an angle-minimal element.   Any decorated $\mathbb A$-graph (tame or not) that is obtained from $\mathfrak B$  by finitely many folds will have the same property. This shows  that $\mathfrak B$ does not fold onto a pre-almost orbifold cover because in  pre-almost orbifold covers all vertex tuples are angle-minimal. Therefore $\mathcal P$  is not of almost orbifold covering type. 
\end{proof}

\begin{proposition}{\label{lem:critical1}}
Let  $(T,  P)$ be  a  partitioned tuple of simple type in $\pi_1^o(\mathcal O)$ and  let  $b$ be an arbitrary element of $ \pi_1^o(\mathcal O)$. If   $U:=\langle T\oplus  P\rangle\le \pi_1^o(\mathcal O)$  is of finite index, then   $(T\oplus (b),  P)$ is critical.
\end{proposition}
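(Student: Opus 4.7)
The idea is to apply Proposition~\ref{prop:01} to $(T \oplus (b), P)$: it falls into exactly one of the four types \textbf{(1)}--\textbf{(4)}. Types \textbf{(2)} and \textbf{(4)} are critical by definition, and simple type \textbf{(1)} is critical iff some representative contains an angle-minimal element. Hence the plan is (i) to rule out the almost orbifold covering type \textbf{(3)}, and (ii) to show that if simple type \textbf{(1)} occurs then it admits a representative containing an angle-minimal element.

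The key tool is Lemma~\ref{addinggeneratormakesreducible}, applied with $H = U$ (note $T \oplus P$ is a minimal generating tuple of $U$ by the simple-type hypothesis, and $U$ has finite index by assumption) and $g = b$. This yields two cases: either the Nielsen tuple $(T \oplus P \oplus (b))$ is reducible, or we are in the exceptional setting $U \cong F_k$, $\langle U, b\rangle \cong \mathbb Z_2^{\ast(k+1)}$, and $[\langle U, b\rangle : U] = 2$, where $k := |T| + n$.

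In the reducible case, I upgrade Nielsen reducibility of the underlying tuple to partitioned reducibility via the $\Omega$-graph machinery developed in the previous subsections. Concretely, I represent $(T \oplus (b), P)$ by a normal decorated $\mathbb A$-graph obtained by attaching a loop representing $b$ to the base vertex of a normal decorated $\mathbb A$-graph for $(T, P)$, and then fold. The drop in rank of the image subgroup forces the folding to terminate at a pre-bad vertex of $\Omega_{[(T \oplus (b), P)]}$, and by Lemma~\ref{lemma:prebad} the partitioned tuple is of type \textbf{(4)}, hence critical.

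In the exceptional case, every $\gamma_i$ is infinite-order (since $U \cong F_k$ is torsion-free), while the Grushko decomposition of $\mathbb Z_2^{\ast(k+1)}$ has no infinite-cyclic factor. If $(T \oplus (b), P)$ is of simple type with $n \ge 1$, the required internal free-product decomposition $\langle U, b\rangle = \langle T''\rangle \ast \langle \gamma''_1\rangle \ast \ldots \ast \langle \gamma''_n\rangle$ with infinite-cyclic factors $\langle \gamma''_i\rangle$ cannot exist, a contradiction. If $n = 0$ and simple type holds, any simple representative of $(T \oplus (b), \emptyset)$ consists of $k+1$ involutions; each involution in $G = \pi_1^o(\mathcal O)$ is a conjugate of some $s_i^{m_i/2}$ (with $m_i$ necessarily even) and is automatically angle-minimal, so the tuple is critical. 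Finally, for the almost orbifold covering type, combining $\eta'_{\ast}(\pi_1^o(\mathcal O'))=\langle U,b\rangle\cong\mathbb Z_2^{\ast(k+1)}$ with $|P'|=n$ and the rank of the minimal rigid tuple $T' \oplus P'$ forces $\mathcal O'$ to be a disk with $k+1$ cone points of order $2$; but by Lemma~\ref{allprimitive} no generating tuple of such an orbifold is rigid, contradicting the definition of the almost orbifold covering type. The main obstacle is the almost orbifold cover case, where one must carefully combine Grushko-type rank analyses with the rigidity requirement to pin down the topology of $\mathcal O'$.
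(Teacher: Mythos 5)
Your proof takes a genuinely different route from the paper. The paper does not invoke Lemma~\ref{addinggeneratormakesreducible} at all: it constructs a decorated $\mathbb A$-graph $\mathfrak B'$ representing $(T\oplus(b),P)$ whose foldability is exactly $2$ (by attaching a loop for $b$ to a folded minimal representative of $(T,P)$ and folding all but one edge of that loop into the graph, which works because local surjectivity follows from $|G:U|<\infty$). If $(T\oplus(b),P)$ were of simple or almost orbifold covering type, $\mathfrak B'$ would have to fold onto a terminal object of foldability $0$ or $1$, hence some rank-preserving fold strictly decreases foldability; Lemmas~\ref{lemma_foldedness1} and~\ref{lemma_foldedness2} then force a $\mathbb Z_2$ factor to appear at some vertex, i.e.\ an angle-minimal element, and Lemma~\ref{lemma:anglemin} finishes. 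This handles all four types of Proposition~\ref{prop:01} uniformly and avoids any direct analysis of the almost orbifold covering type case.

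Your Euler-characteristic approach via Lemma~\ref{addinggeneratormakesreducible} is an attractive alternative, but it has two concrete gaps, both in the almost orbifold covering type. First, in the ``reducible'' branch you assert that the rank drop forces folding to a pre-bad vertex; but the rank count on the underlying tuple does not by itself rule out type \textbf{(3)}, because the homomorphism $\eta'_*:\pi_1^o(\mathcal O')\to\pi_1^o(\mathcal O)$ need not be injective for a (special) almost orbifold cover, so a size-$(|T|+1+n)$ tuple $T'\oplus P'$ that is minimal in $\pi_1^o(\mathcal O')$ can still map to a generating tuple of a strictly lower-rank subgroup $\langle U,b\rangle$. This is not a mere technicality: the non-injectivity of $\eta'_*$ is exactly what makes almost orbifold covers different from genuine covers. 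Second, in the exceptional branch you claim that $\eta'_*(\pi_1^o(\mathcal O'))\cong\mathbb Z_2^{*(k+1)}$ together with $rank(\pi_1^o(\mathcal O'))=k+1$ forces $\mathcal O'$ to be $D^2(2,\dots,2)$; again this uses the non-injective $\eta'_*$ as if it were an isomorphism. For instance a once-punctured genus-$g$ surface has free fundamental group of rank $2g$, admits a rigid basis, and maps onto $\mathbb Z_2^{*2g}$ -- the rank count alone does not pin down the topology of $\mathcal O'$. You would need an additional argument exploiting the specific structure of special almost orbifold covers (e.g.\ that $\eta'$ restricted to the complement of $\eta'^{-1}(D)$ is an honest cover) to control the kernel, or reorganize the argument so that type \textbf{(3)} is dispatched without ever estimating $rank(\pi_1^o(\mathcal O'))$ against $rank(\langle U,b\rangle)$ -- which is precisely what the paper's foldability argument accomplishes.
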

\begin{proof} 
Choose a  folded  and  minimal (i.e.~$\mathcal B=core(\mathcal B, u_1)$)  tame decorated $\mathbb A$-graph    $$\mathfrak B=(\mathcal B,u_1, (T_u)_{u\in VB}, (p_j)_{1\le  j\le  n})$$ that represents the equivalence class of $(T, P)$. We construct a tame decorated $\mathbb A$-graph $\mathfrak B'$  such that   (i)  $\mathfrak B'$   represents  the equivalence class of $(T\oplus(b), P)$  and (ii)  $fold(\mathfrak B')= 2$.

We first   glue a circuit $p$ to the base vertex $u_1\in VB$ such that the corresponding $\mathbb A$-path $\mu_{\mathcal B'}(p)$ represents $b\in\pi_1(\mathbb A, v_1)\cong \pi_1^o(\mathcal O)$.  Denote the resulting $\mathbb A$-graph by $\bar{\mathcal B}$. 
\begin{figure}[h!]
\begin{center}
\includegraphics[scale=1]{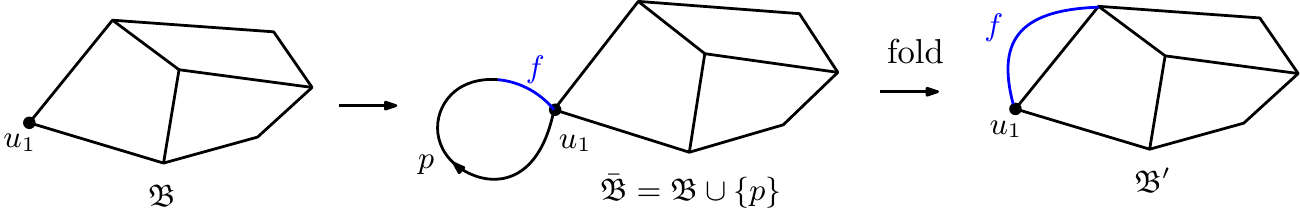}
\end{center}
\caption{Folding the loop $p$ into $\mathfrak{B}$.}{\label{fig:circuitsimple}}
\end{figure}
 Observe that $\mathcal B$ is a sub-$\mathbb A$-graph of $\bar{\mathcal B}$. Therefore we can define $\bar{\mathfrak B}:=(\bar{\mathcal B}, u_1, (T_u)_{u\in VB}, (p_j)_{1\leq  j\le  n}).$   
The hypothesis that $|G:U|<\infty$ implies that $\mathcal B$  is locally surjective. Therefore,  all but one edge of $p$, say $f$, can be folded into $\mathcal B$.  Let $\mathfrak B'$ denote the resulting decorated $\mathbb A$-graph. The construction of $\mathfrak B'$ is described in Fig.~\ref{fig:circuitsimple}.   Note that $\mathfrak B'$ is  tame since $\mathfrak B$ is not affected in this folding sequence.  As Fig.~\ref{fig:circuitsimple} suggests,   one can also obtain   $\mathfrak B'$  from ${\mathfrak B}$ by gluing a  non-loop edge  to $\mathcal B$. 
Since $VB'=VB$ and $B_u= B_u'$ for all $u\in VB'$ it follows that the $\mathbb A$-graph  underlying $\mathfrak B'$ is locally surjective. Moreover, the above description of $\mathfrak B'$  implies that  $fold(\mathfrak B')=2$.

Any fold that  preserves tameness necessarily   preserves  the rank of the fundamental group. Therefore, if $(T\oplus (b), P)$ is of almost orbifold covering type (resp.~of simple type) then $\mathfrak B'$ folds onto a pre-almost orbifold cover  (resp.~onto a folded decorated $\mathbb A$-graph) $\mathfrak B''$. In both cases $fold(\mathfrak B'')<2$  and therefore  some fold decreases foldability. Lemmas~\ref{lemma_foldedness1} and~\ref{lemma_foldedness2}  imply that there is a tame decorated $\mathbb A$-graph  $\mathfrak B'''$ that represents  $( T\oplus (b), P)$  such that some vertex tuple contains an angle-minimal element. Therefore $(T\oplus (b), P)$ is equivalent to $(T', P')$ such that $T'$ contains an angle-minimal element.  Lemma~\ref{lemma:anglemin} implies that $(T\oplus (b), P)$ is critical. 
\end{proof}

\begin{proposition}\label{lemma:IIAbad}
Let $\mathcal P=(T,  P)$  be a partitioned tuple of simple type and let $c\in \pi_1^o(\mathcal O)$ correspond to a boundary component of $\mathcal O$, that is, $c$ is a maximal peripheral element of $\pi_1^o(\mathcal O)$.
Assume that  $\langle c \rangle  \cap \langle T\oplus  P\rangle =\langle c ^z\rangle$ for some $z\neq 0$ and  let $w\in\mathbb Z\setminus\{0\}$.  Then  $(T, (c^w)\oplus   P )$ is critical. 
\end{proposition}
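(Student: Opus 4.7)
The proof follows the pattern of Proposition~\ref{lem:critical1}, with the peripheral power $c^w$ playing the role of the auxiliary element $b$ and the local finite-index hypothesis $\langle c\rangle\cap U=\langle c^z\rangle$ playing the role of the global hypothesis $|G:U|<\infty$. I first choose a folded, minimal tame decorated $\mathbb A$-graph $\mathfrak B=(\mathcal B,u_1,(T_u)_{u\in VB},(p_j)_{1\le j\le n})$ representing $[(T,P)]$. Because $\mathfrak B$ is folded and $(T,P)$ is of simple type, the subgroup of $\pi_1(\mathbb A,v_1)$ read off from $\mathcal B$ is precisely $U=\langle T\oplus P\rangle$. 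The hypothesis $\langle c\rangle\cap U=\langle c^z\rangle$ with $z\ne 0$ then guarantees a reduced closed path $\pi$ at $u_1$ in $\mathcal B$ whose $\mu_{\mathcal B}$-label is $c^z$, and no shorter reduced closed path at $u_1$ whose label is a positive power of $c$.

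I next glue a new circuit $p$ to $u_1$ with $\mu(p)=c^w$ and declare $p$ to be a new peripheral path, producing a collapsible decorated $\mathbb A$-graph $\bar{\mathfrak B}$ that represents $[(T,(c^w)\oplus P)]$. Then I fold $p$ against $\pi$ as much as possible: successive segments of $p$ match successive traversals of $\pi$, so every edge of $p$ except those in a terminal residual arc of length at most $2(|w|\bmod z)$ can be absorbed into $\mathcal B$ via type~IIA and type~IA folds without disturbing the decoration of $\mathfrak B$. Writing $\mathfrak B'$ for the resulting tame decorated $\mathbb A$-graph, the same count as in Proposition~\ref{lem:critical1} gives $fold(\mathfrak B')\le 2$, concentrated at the two endpoints of the residual arc.

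Now suppose for contradiction that $(T,(c^w)\oplus P)$ is non-critical, so it is of simple type without any angle-minimal element, or of almost orbifold covering type. Then $\mathfrak B'$ admits a tameness-preserving (and therefore rank-preserving) folding sequence whose terminal decorated $\mathbb A$-graph has foldability strictly less than $fold(\mathfrak B')$, by exactly the reasoning used in the corresponding step of Proposition~\ref{lem:critical1}. Hence some fold in the sequence strictly decreases foldability. But the only rank-preserving folds that decrease foldability are classified by Lemmas~\ref{lemma_foldedness1} and~\ref{lemma_foldedness2}: $\mathcal O$ must be one of $D^2(2,m)$, $D^2(4,m)$, $A(2)$, an annulus without cone points, or a disk with $\mathbb Z_2$ cone points, and in each such case the decreasing fold either installs an angle-minimal generator (such as $s_v$ or $s_v^2$ generating a maximal cyclic subgroup of $A_v$) in a vertex tuple, or produces a surface cover. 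The first alternative forces $(T,(c^w)\oplus P)$ to be critical via Lemma~\ref{lemma:anglemin}, and the second places it in type \textbf{(2)}, also critical. This contradicts non-criticality.

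The principal obstacle is the careful bookkeeping in the second paragraph: one must verify that folding $p$ against $\pi$ preserves collapsibility and tameness at every step, creates no square fold, and yields $fold(\mathfrak B')\le 2$. A separate degenerate sub-case is $z\mid w$, when $p$ folds entirely into $\mathcal B$; here the foldability count degenerates and one argues instead that the resulting data exhibit an obvious relation (when $|w|>z$), or else fold peripheral elements or self-fold (when $|w|=z$, or when repeated wrapping of $\pi$ by $p$ forces multiple crossings of the same edge pair), placing the tuple in type \textbf{(4)} via Lemma~\ref{lemma:prebad}. In every branch the tuple is critical.
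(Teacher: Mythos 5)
Your proposal transplants the foldability argument of Proposition~\ref{lem:critical1} essentially verbatim, but there is a genuine gap: that argument is only valid when the underlying $\mathbb A$-graph is \emph{locally surjective}, which in Proposition~\ref{lem:critical1} is supplied by the hypothesis $|G:U|<\infty$. Here the only given is $\langle c\rangle\cap U=\langle c^z\rangle$ with $z\neq 0$; this does \emph{not} force $U$ to have finite index in $\pi_1^o(\mathcal O)$. When $U$ has infinite index, the folded graph $\mathcal B$ representing $[(T,P)]$ is not locally surjective, so after gluing the circuit $p$ you cannot apply Lemmas~\ref{lemma_foldedness1} and~\ref{lemma_foldedness2} (both of which require local surjectivity of the pre-fold graph) to the subsequent tameness-preserving folding sequence, and the conclusion ``some fold installs an angle-minimal generator or produces a surface cover'' is unjustified. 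Your ``degenerate sub-case'' $z\mid w$ is orthogonal to this issue: $z\mid w$ decides whether $c^w$ already lies in $U$, not whether $U$ has finite index.

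The paper's proof handles exactly this point by a preliminary reduction. Consider the cover $\eta:\mathcal O'\to\mathcal O$ corresponding to $U$. Either (i) some $\gamma\in P$ and $c^z$ correspond to the same boundary component of $\mathcal O'$, in which case $(T,(c^w)\oplus P)$ folds peripheral elements; or (ii) some compact boundary component of $\mathcal O'$ is not hit by any element of $(c^z)\oplus P$, in which case $(T,P)\sim(T'\oplus(c^z),P')$ and $(T,(c^w)\oplus P)$ is reducible or has an obvious relation. Both put the tuple in type \textbf{(4)}, hence critical, without any foldability count. Only after ruling out (i) and (ii) does the paper conclude $|G:U|<\infty$, and \emph{then} the Proposition~\ref{lem:critical1}-style argument applies. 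Your proof would work once this reduction is inserted in front of it; as written, the infinite-index case is not addressed.
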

\begin{proof}
Put $U:=\langle T\oplus P\rangle$ and let $\eta:\mathcal O'\to \mathcal O$ be the covering corresponding to $U\leq \pi_1^o(\mathcal O)$. We may  assume that there is a one-to-one correspondence between the elements in $(c^z)\oplus P$  and the compact boundary components of  $\mathcal O'$. In fact, if there is $\gamma\in P$ such that $\gamma$ and $c^z$ correspond to the same boundary component of $\mathcal O'$ then $(T, (c^w)\oplus P)$  folds peripheral elements; hence it is critical. If  there is a boundary component of $\mathcal O'$ that does not correspond to any element in  $(c^z)\oplus P$ then  $(T , P)$ is equivalent to  $(T'\oplus (c^z), P')$ and so $(T , (c^w)\oplus P)$ is equivalent to $(T'\oplus (c^z), (c^w)\oplus P')$ which  is either reducible or has an obvious relation; again  it is  critical.  Therefore $U$ is of finite index in $\pi_1^o(\mathcal O)$. Now an argument similar to the argument given in the previous lemma shows that $(T, (c^w)\oplus P)$ is critical.  
\end{proof}

\begin{lemma}{\label{lemma:foldalmostorbifol}}
Let  $\mathfrak B=(\mathcal B, u_1, (T_u)_{u\in VB}, (p_j)_{1\le j\leq n})$  be an almost orbifold cover with exceptional vertex $y\in VB$. Then    $fold(\mathfrak B)\ge 2$  unless the following hold: 
\begin{enumerate}
\item[(i)] the underlying surface of $\mathcal O$ is a disk  and $A_{[y]} \cong \mathbb Z_{2l+1}$ for some $l\ge 1$.

\item[(ii)] $val(y, B)=2$. In particular, $T_y =(s_{[y]}^2)$ and hence  $B_y=A_{[y]}$.   
\end{enumerate} 
\end{lemma}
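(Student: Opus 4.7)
The plan is a direct computation of $fold(\mathfrak B)$ using the structural conditions (C1')--(C4') and then reading off when the answer drops below $2$.

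First I would reduce to a single vertex. By definition $fold(\mathfrak B)=fold(core(\mathcal B))=\sum_{u\in VB_{core}} fold_{core(\mathcal B)}(u)$. Condition (C4'.i.a) says that $core(\mathcal B)$ is folded at every $x\in VB_{core}$ with $x\neq y$, and remark~\ref{rem:local}(4) combined with Definition~\ref{def:types}(3) implies $core(\mathcal B)$ is locally surjective (every local graph is a circuit). Hence $fold_{core(\mathcal B)}(x)=0$ for every such $x$, and so
$$fold(\mathfrak B)=fold_{core(\mathcal B)}(y)=val(y,core(B))-l_{core(\mathcal B)}(y).$$

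Next I would express the right-hand side in terms of $d_y$ and $m_v:=|A_v|$, where $v:=[y]\in VA$. Since the local graph $\Gamma_{core(\mathfrak B)}(y)$ is a circuit of length $val(y,core(B))$ and $d_y=length(\Gamma_{core(\mathfrak B)}(y))/val(v,A)$ by (C4'.ii.c), we have $val(y,core(B))=val(v,A)\cdot d_y$. Because $T_y=(s_v^{d_y})$ and $A_v\cong\mathbb Z_{m_v}$ is cyclic, $B_y=\langle s_v^{d_y}\rangle$ satisfies $|A_v:B_y|=\gcd(d_y,m_v)$, and remark~\ref{rem:numbereqclasses} gives $l_{core(\mathcal B)}(y)=val(v,A)\cdot\gcd(d_y,m_v)$. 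Combining these,
$$fold(\mathfrak B)=val(v,A)\cdot\bigl(d_y-\gcd(d_y,m_v)\bigr).$$
By (C4'.ii.b) and remark~\ref{rem:local}(4), $core(\mathcal B)$ is not folded at $y$, which is equivalent to $d_y\nmid m_v$, i.e.\ $\gcd(d_y,m_v)<d_y$. Hence each factor above is $\ge1$ and $fold(\mathfrak B)\ge val(v,A)$.

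Finally I would analyze when $fold(\mathfrak B)<2$. The inequality forces $val(v,A)=1$, and since $val(v,A)$ equals the number of boundary components $q$ of $\mathcal O$, this means the underlying surface of $\mathcal O$ is a disk. It also forces $d_y-\gcd(d_y,m_v)=1$. Writing $g:=\gcd(d_y,m_v)$, we have $d_y=g+1$; since $g\mid d_y$, we obtain $g\mid 1$, so $g=1$ and $d_y=2$. Then $\gcd(2,m_v)=1$ means $m_v$ is odd, and since $B_y$ is non-trivial by (C4'.ii.a) we must have $m_v\ge3$, i.e.\ $m_v=2l+1$ for some $l\ge1$. This is exactly (i). For (ii), $val(y,B)=val(y,core(B))=val(v,A)\cdot d_y=2$, $T_y=(s_v^2)$, and since $m_v$ is odd $\langle s_v^2\rangle=\langle s_v\rangle=A_{[y]}$, so $B_y=A_{[y]}$.

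There is no genuine obstacle here — the argument is entirely bookkeeping against the definitions, the only subtle point being the correct identification $|A_v:B_y|=\gcd(d_y,m_v)$ in a finite cyclic group, and the conversion of the non-foldedness of $core(\mathcal B)$ at $y$ into the divisibility statement $d_y\nmid m_v$ via remark~\ref{rem:local}(4).
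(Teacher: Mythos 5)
Your proof is correct and takes essentially the same route as the paper: reduce $fold(\mathfrak B)$ to $fold_{core(\mathcal B)}(y)$, express it via the circuit length and Remark~\ref{rem:numbereqclasses}, use non-foldedness at $y$ to get a lower bound, and then extract the exceptional configuration. The only cosmetic difference is that the paper parametrizes by $r:=d_y/|A_{[y]}:B_y|\ge2$ rather than writing $|A_{[y]}:B_y|=\gcd(d_y,m_v)$ explicitly; the two formulations of the final expression $val(v,A)\cdot(d_y-\gcd(d_y,m_v))=(r-1)\,val(v,A)\,|A_{[y]}:B_y|$ agree, and the case analysis is identical.
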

\begin{proof} 
By definition,  $\mathfrak B$ is folded  at all vertices except  at $y$. Moreover, $B_y=\langle s_{[y]}^d\rangle$ where
$$
val([y], A)\cdot |A_{[y]}|> val([y],A) \cdot d=length \ \Gamma_{\mathfrak B}( y) > val([y], A) \cdot |A_{[y]}:B_y|.$$ 
The previous inequalities imply that the generator   $s_{[y]}^d$ of $B_y$ is not angle-minimal. Thus  there is $r\geq 2$ such that $d= r\cdot |A_{[y]}:B_y|$.  Therefore
\begin{eqnarray}
fold(\mathfrak B)& = & fold_{\mathcal B}(y)\nonumber\\
 & = & val(y, B)-l_{\mathcal B}(y) \nonumber \\
                     & = &  length \ \Gamma_{\mathfrak B}( y) - val([y], A) \cdot |A_{[y]}:B_y|\nonumber\\
                     & =& r\cdot  val([y] , A)\cdot  |A_{[y]}:B_y|-val([y], A) \cdot |A_{[y]}:B_y|\nonumber \\
                     & = & (r-1)\cdot  val_A([y]) \cdot |A_{[y]}:B_y | \nonumber 
\end{eqnarray}
Therefore $fold(\mathfrak B)\le 1$ if  and only if   $val([y], A)=1$, $r=2$ and  $|A_{[y]}:B_y|=1$.   The equality  $|A_{[y]}:B_y|=1$ implies that $d=r$ and the  equality  $val([y], A)=1$ implies that  underlying surface of $\mathcal O$  is a disk  since $val([y], A)$ coincides with the number of boundary components of $\mathcal O$. Moreover  
$$d-1 =  val([y], A)  d-val([y], A)  |A_{[y]}:B_y|=   length \ \Gamma_{\mathfrak B}( y) - val([y], A)   |A_{[y]}:B_y| \le 1 $$                    
implies that $d\le 2$ and therefore $d=2$. As $\langle s_{[y]}^2\rangle =B_y=A_{[y]}$ it follow that $A_{[y]}\cong \mathbb Z_{2l+1}$. 
\end{proof}

\begin{proposition}{\label{prop:00}}
Let $(T, P)$ be  a partitioned tuple of (almost) orbifold covering type  in  $\pi_1^o(\mathcal O)$. Then the partitioned tuples $(T\oplus (b), P) $ and  $(T, (\gamma)\oplus  P)$  
are critical  for any $b\in  \pi_1^o(\mathcal O)$ and for any labeled peripheral element $\gamma\in  \pi_1^o(\mathcal O)$.
\end{proposition}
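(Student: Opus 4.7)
The plan is to extend the strategy of Proposition~\ref{lem:critical1} (which handled the simple-type case) to the (almost) orbifold covering setting. The key tools are the foldability function, together with Lemmas~\ref{lemma_foldedness1} and~\ref{lemma_foldedness2} which control how foldability changes under tame folds, and Lemma~\ref{lemma:foldalmostorbifol} which bounds the foldability of almost orbifold covers from below.

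First I choose a minimal tame decorated $\mathbb A$-graph
$$\mathfrak B = (\mathcal B, u_1, (T_u)_{u\in VB}, (p_j)_{1\le j\le n})$$
representing $[(T,P)]$; by hypothesis this is an (almost) orbifold cover, and Remark~\ref{rem:local}(3) ensures that $\mathcal B$ is locally surjective. For the non-peripheral element $b$, I glue a circuit $p$ at the base vertex whose associated $\mathbb A$-path represents $b$ and fold all but one edge of $p$ into $\mathcal B$, exactly as in the proof of Proposition~\ref{lem:critical1}. The outcome is a tame decorated $\mathbb A$-graph $\mathfrak B'$ representing $[(T\oplus (b),P)]$ whose foldability is $fold(\mathfrak B)+2$. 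For the peripheral element $\gamma$ with label $(o,i)$, I proceed analogously, gluing a lollipop whose stick represents $o$ and whose candy represents a closed $\mathbb B$-path with underlying $\mathbb A$-path $c_i^z$, folding the stick into $\mathcal B$ and declaring the candy a new peripheral path; after a small auxiliary adjustment ensuring that the result does not fold squares (cf.~the proof of Lemma~\ref{lemma:connectedness}), the resulting tame $\mathfrak B'$ again satisfies $fold(\mathfrak B')\le fold(\mathfrak B)+2$.

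Next, assume towards a contradiction that the enlarged partitioned tuple is non-critical. By Proposition~\ref{prop:01} together with Lemma~\ref{lemma:anglemin} it must then be either of simple type with no angle-minimal representative, or of almost orbifold covering type. In the first case there is a tame folding sequence from $\mathfrak B'$ to a folded representative $\mathfrak B''$ with $fold(\mathfrak B'')=0$. Lemmas~\ref{lemma_foldedness1} and~\ref{lemma_foldedness2} say that foldability can only strictly decrease through a type IIIA fold in the disk-with-$\mathbb Z_2$ case, and such a fold produces an angle-minimal element in the output. Since no angle-minimal element is present in $\mathfrak B''$, no such decrease can occur, contradicting $fold(\mathfrak B')\ge fold(\mathfrak B)+2 \ge 2 > 0$. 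In the second case, $\mathfrak B'$ folds through tame decorated $\mathbb A$-graphs to an almost orbifold cover $\mathfrak B''$. The same foldability control forces the exceptional vertex of $\mathfrak B''$ to be the image of the exceptional vertex of $\mathfrak B$, and by comparing the local graphs at these exceptional vertices (using Remark~\ref{rem:local}(2) and the specific shape of an almost orbifold cover—exceptional vertex carrying a non-angle-minimal power of the cone generator, all others carrying minimal non-angle-minimal tuples) one concludes that $b$, respectively $\gamma$, either lies in the subgroup already generated by $T\oplus P$, or forces an obvious relation at a boundary curve, or makes the tuple fold peripheral elements; each of these outcomes places the enlarged tuple in type~\textbf{(4)}, contradicting non-criticality.

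The main obstacle is the exceptional configuration of Lemma~\ref{lemma:foldalmostorbifol} in which the underlying surface of $\mathcal O$ is a disk with an odd-order cone point and $fold(\mathfrak B)=1$, so that the foldability inequality is almost tight and the extra edge introduced by $b$ or $\gamma$ does not automatically push us outside the almost orbifold cover regime. Here a direct case analysis of the local graph at the exceptional vertex of $\mathfrak B$ versus the candidate exceptional vertex of $\mathfrak B''$, combined with the uniqueness part of Proposition~\ref{prop:01} to align the two almost orbifold cover markings, is the technical crux and the part that must be carried out most carefully.
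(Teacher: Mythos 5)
Your proposal has a fundamental gap already in the construction of $\mathfrak B'$. A minimal tame decorated $\mathbb A$-graph representing $[(T,P)]$ is \emph{never} an (almost) orbifold cover in the sense of Definition~\ref{def:types}: an (almost) orbifold cover has circuit local graphs at every core vertex, so every core edge is crossed twice by the peripheral paths $p_1,\ldots,p_n$, hence there is no set of collapsing edges and the decorated $\mathbb A$-graph is not collapsible, hence not tame. (This is exactly why the paper introduces \emph{pre}-(almost) orbifold covers as the tame, minimal roots that fold in one step onto the non-tame (almost) orbifold cover.) Gluing a circuit or lollipop onto $\mathfrak B$ and folding all but one new edge into $\mathcal B$ does nothing to rectify this: the original peripheral paths $p_1,\ldots,p_n$ still form circuits at every old vertex, so the resulting $\mathfrak B'$ is still not collapsible, hence not tame, and the foldability calculus of Lemmas~\ref{lemma_foldedness1}--\ref{lemma_foldedness2} (which is framed for locally surjective $\mathbb A$-graphs appearing as tame representatives in $\Omega_{[\mathcal P]}$) never gets a foothold. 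To rescue the idea you would have to start from a tame pre-(almost) orbifold cover $\hat{\mathfrak B}$, relate $fold(\hat{\mathfrak B})$ to $fold(\mathfrak B)$ through Lemma~\ref{lemma_foldedness1}, and then glue the circuit onto $\hat{\mathfrak B}$; none of this bookkeeping is in the proposal.

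The second gap is that Case~2 (the enlarged tuple of almost orbifold covering type) is asserted rather than argued: the sentence that $b$ or $\gamma$ ``either lies in the subgroup already generated\ldots, or forces an obvious relation\ldots, or makes the tuple fold peripheral elements'' is precisely the conclusion to be established, and the ``comparison of local graphs'' is not carried out. The paper's actual proof takes a completely different, explicit algebraic route here: it realizes $U=\langle T\oplus P\rangle$ as the fundamental group of the covering orbifold $\mathcal O'$, arranges $P=P_0\oplus(\gamma_1,\ldots,\gamma_s)$ with $\gamma_i=a_i c^{z_i}a_i^{-1}$ over a fixed boundary curve $c$, uses malnormality/conjugacy separability to write $b=ua_jc^z$ with $u\in U$, and then exhibits explicit elementary equivalences showing the enlarged partitioned tuple folds peripheral elements (when $j\ge 2$) or is reducible/has an obvious relation (when $j=1$); the almost-orbifold case is reduced to the finite-index covering $\bar{\mathcal O}$ corresponding to $U$. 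That algebraic computation, rather than foldability, is what actually does the work in the paper.
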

\begin{proof}
Assume first that $(T, P)$ is of orbifold covering type.  We will show that $(T\oplus (b), P)$  is critical.  The argument to show that $(T, (\gamma)\oplus P)$ is critical is   analogous. 
 
Let $\eta:\mathcal O'\rightarrow \mathcal O$ be the covering corresponding to $U:=\langle  T \oplus  P\rangle\leq \pi_1^o(\mathcal O)$. Choose a boundary component $C\subseteq \partial \mathcal O$ and let $c\in \pi_1^o(\mathcal O)$ correspond to $C$.  We  may arrange $(T, P)$  so that   $P=P_0\oplus (\gamma_1, \ldots, \gamma_s)$  
where $\gamma_1, \ldots,   \gamma_s \in \pi_1^o(\mathcal O')$ correspond to the boundary components  of $\mathcal O'$ that cover $C$.  By definition,  there is  are  $a_i\in \pi_1^o(\mathcal O)$ and $z_i\in \mathbb Z\setminus\{0\}$  such that  $\gamma_i= a_i c^{z_i} a_i^{-1}$ for all $1\le i\le  s$. After conjugating $(T, P)$ by $a_1^{-1}$ we may assume that   $\gamma_1=c^{z_1}$.

Since maximal peripheral subgroups   of $U$  have the form $u\langle \gamma\rangle u^{-1}$   with $u\in  U$ and $\gamma\in P$, there is $u\in U$ and $j\in \{1, \ldots , s\}$ such that
$u \langle \gamma_j\rangle u^{-1}= b \langle c\rangle  b^{-1}\cap U$. Thus $b=u a_j c^z$ for some $u\in U$ and some $z\in \mathbb Z$. If $j\ge 2$ then  
\begin{eqnarray}
(T \oplus (b), P_0\oplus (\gamma_1, \ldots, \gamma_s))& \sim &   (T\oplus (a_jc^z),  P_0\oplus (\gamma_1, \ldots , \gamma_j,  \ldots, \gamma_s)) \nonumber \\
                   & \sim &    (T\oplus (a_jc^z), P_0\oplus (a_jc^z \gamma_1 c^{-z}a_j^{-1}, \ldots, \gamma_j,  \ldots, \gamma_s))\nonumber \\
                   & = &  (T\oplus (a_j c^z) , P_0\oplus (a_j c^{z_1}a_j^{-1}, \ldots a_j c^{z_j} a_j^{-1},  \ldots, \gamma_s)). \nonumber 
\end{eqnarray}
which shows that $\mathcal P$ folds peripheral elements. If $j=1$ then 
\begin{eqnarray}
(T \oplus (b), P_0\oplus (\gamma_1, \ldots, \gamma_s))& =  &   (T\oplus (u c^z), P_0\oplus (c^{z_1}, \gamma_2 \ldots ,   \ldots, \gamma_s)) \nonumber \\
& \sim & (T\oplus (c^z), P_0\oplus (c^{z_1}  , \gamma_2, \ldots, \gamma_s))\nonumber \\
& = & (T\oplus (c^{w}) , P_0\oplus ( c^{z_1} , \gamma_2,  \ldots, \gamma_s)). \nonumber 
\end{eqnarray}
where $z=qz_1+w$ with  $0\le w< z_1$. Therefore $\mathcal P$ is reducible (if $w=0$) or has an obvious relation (if $w\neq 0)$. 

Assume now that $(T, P)$ is of almost orbifold covering type. Let  $(\eta:\mathcal O'\to \mathcal O, [\mathcal P'])$ be a marking of almost orbifold covering type that represents the equivalence class of $(T, P)$. By definition, there is a cone point $x$ of $\mathcal O$ such that $$\eta|_{\mathcal O'\setminus \eta^{-1}(x)}:\mathcal O'\setminus\{\eta^{-1}(x)\}\to \mathcal O\setminus\{x\}$$ is an orbifold cover of finite degree. Therefore $U:=\langle T\oplus P\rangle$ is of finite index in $\pi_1^o(\mathcal O)$. Let $\lambda:\bar{\mathcal O}\rightarrow \mathcal O$ be the covering corresponding to  $U:=\langle T \oplus P\rangle\le \pi_1^o(\mathcal O)$ and let $\delta_1, \ldots , \delta_m\in  U$  correspond to the boundary components of $\bar{\mathcal O}$. Since $U$ is of finite index it follows that any peripheral element of $U$ is (in $U$) conjugate into $\langle\delta_i\rangle$ for some $1\le i\le m$. 
 From the fact that $\eta|_{\mathcal O'\setminus\{\eta^{-1}(x)\}}$ is of finite degree we conclude that for any  $i\in \{1, \ldots , m\}$ there is (not unique) $\gamma\in P$ such that   $\gamma$ is conjugate (in $U$) into $\langle\delta_i\rangle$.  The argument is now similar to the argument given in the previous case.
\end{proof}

\begin{proposition}{\label{prop:02}}
Let $(T, P)$ be an arbitrary partitioned tuple in $G:=\pi_1^o(\mathcal O)$ and let  $\gamma$ a labeled peripheral element of $G$. Then one of the following holds:
\begin{enumerate}
\item[(1)] $(T\oplus (\gamma), P)$ is critical.

\item[(2)] $(T\oplus (\gamma), P)$ and  $(T, (\gamma)\oplus  P)$ are non-critical and of simple type.
\end{enumerate}  
\end{proposition}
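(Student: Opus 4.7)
My plan is to apply Proposition~\ref{prop:01} to $\mathcal P_1 := (T\oplus(\gamma), P)$, which places $\mathcal P_1$ into exactly one of the four mutually exclusive types (\textbf{1})--(\textbf{4}), and to handle each case. If $\mathcal P_1$ is of type (\textbf{2}) (orbifold covering) or of type (\textbf{4}) (reducible, folds peripheral elements, or has an obvious relation), then $\mathcal P_1$ is critical by definition and conclusion (1) holds.

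The case where $\mathcal P_1$ is of type (\textbf{3}), almost orbifold covering, I plan to rule out using rigidity. From a marking $(\eta:\mathcal O'\to\mathcal O,[(T',P')])$ of AOC type representing $\mathcal P_1$, the tuple $T'\oplus P'$ is a rigid, hence minimal, generating set of $G':=\pi_1^o(\mathcal O')$, with $P'$ in bijection with the non-exceptional boundary components of $\mathcal O'$. Since $\gamma\in G$ is peripheral and $\eta$ restricts to an orbifold cover away from the exceptional disk, any lift $\tilde\gamma\in T'\oplus P'$ of $\gamma$ is conjugate into a boundary cycle of $\mathcal O'$. A case analysis on whether $\tilde\gamma$ corresponds to the exceptional boundary $S=\partial\mathcal O'$ or to a non-exceptional boundary already represented in $P'$ will give, via Lemma~\ref{allprimitive}, either a contradiction with rigidity of $T'\oplus P'$ or the conclusion that $\mathcal P_1$ folds peripheral elements, has an obvious relation, or is reducible, i.e.\ is already in type (\textbf{4}) and therefore critical.

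In the remaining case $\mathcal P_1$ is of simple type (\textbf{1}). If $\mathcal P_1$ is equivalent to some $(\widehat T,\widehat P)$ with $\widehat T$ containing an angle-minimal element, then Lemma~\ref{lemma:anglemin} makes $\mathcal P_1$ critical and conclusion (1) holds. Otherwise $\mathcal P_1$ is non-critical of simple type, and I must establish the same for $\mathcal P_2 := (T,(\gamma)\oplus P)$. I plan to represent $\mathcal P_1$ by a folded minimal tame decorated $\mathbb A$-graph $\mathfrak B_1$, which exists by Lemma~3.23 of~\cite{Dut}; in $\mathfrak B_1$ the peripheral element $\gamma$ is encoded by a closed loop at the base vertex whose underlying $\mathbb A$-path is a conjugate of $c_{i_\gamma}^{z}$. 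Reinterpreting this loop as a peripheral path and correspondingly adjusting the set of collapsing edges yields a decorated $\mathbb A$-graph $\mathfrak B_2$ representing $\mathcal P_2$ whose underlying $\mathbb A$-graph, vertex tuples, foldedness, and absence of square folds match those of $\mathfrak B_1$. Hence $\mathfrak B_2$ is folded and tame, so $\mathcal P_2$ is of simple type, and since the vertex tuples remain free of angle-minimal elements, $\mathcal P_2$ is non-critical.

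The main obstacle is the AOC step: combining malnormality of peripheral subgroups (Remark~\ref{rem:uniquelabel}) with the rigidity and minimality constraints on $T'\oplus P'$ to force the lift $\tilde\gamma$ into a position simultaneously incompatible with $\mathcal P_1$ being non-critical and with the exceptional structure of $\eta$. If this rigid case-check goes through, the proposition follows directly from Proposition~\ref{prop:01} and the definition of criticality.
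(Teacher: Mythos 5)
Your plan to run the case analysis on $\mathcal P_1 := (T\oplus(\gamma), P)$ is a genuinely different route from the paper, which instead case-splits on $\mathcal P_2 := (T,(\gamma)\oplus P)$, and the difference is not cosmetic. The key fact the paper records at the outset is the one-directional transfer: if $\mathcal P_2\sim(T',(\gamma')\oplus P')$ then $\mathcal P_1\sim(T'\oplus(\gamma'),P')$, but not conversely, because $\gamma$ in the peripheral tuple of $\mathcal P_2$ can only be permuted, inverted or conjugated (moves of type (2) and (3)), whereas $\gamma$ in the main tuple of $\mathcal P_1$ is exposed to arbitrary Nielsen moves and two-sided multiplications (moves of type (1) and (4)). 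Your almost-orbifold-covering step founders precisely on this: from $\mathcal P_1\sim\eta_\ast'(T',P')$ you infer that $\gamma$ has a lift $\tilde\gamma\in T'\oplus P'$ conjugate into a boundary cycle of $\mathcal O'$, but because $\gamma$ sits in the main tuple, the element of $T'$ occupying the slot of $\gamma$ after those equivalences need not be peripheral in $G'$, nor even conjugate to $\gamma$, so there is nothing for Lemma~\ref{allprimitive} or Remark~\ref{rem:uniquelabel} to act on. The paper handles this case by starting from $\mathcal P_2$ of AOC type (where the label of $\gamma$ is tracked), dropping the peripheral path that encodes $\gamma$ to obtain a tame decorated $\mathbb A$-graph representing $\mathcal P_1$, and then using the foldability estimates (Lemmas~\ref{lemma_foldedness1}, \ref{lemma_foldedness2} and \ref{lemma:foldalmostorbifol}) plus Proposition~\ref{lem:critical1} to force an angle-minimal element or a reduction, hence criticality of $\mathcal P_1$; that $\mathcal P_1$ is never of AOC type is then a byproduct of mutual exclusivity rather than something proved directly.

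The simple-type case also has a gap: your ``reinterpret the loop as a peripheral path'' step assumes that in a folded tame representative of $\mathcal P_1$ the generator standing in for $\gamma$ is a loop whose underlying $\mathbb A$-path is visibly of the form $a\cdot c_{i_\gamma}^z\cdot a^{-1}$, but it may instead be a vertex-tuple element or a loop whose path is reduced without exhibiting that conjugate form, and verifying the compatibility of the decoration with the label of $\gamma$ is essentially what you would be trying to establish. The paper avoids this by running the easy direction of the transfer observation ($\mathcal P_2$ simple $\Rightarrow$ $\mathcal P_1$ simple), after which both have the same underlying tuple $T\oplus(\gamma)\oplus P$ and rigidity, hence non-criticality, is decided for both simultaneously. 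In short, the intuition to reduce to Proposition~\ref{prop:01} and the types is right, and your use of types (2) and (4) being automatically critical is correct, but doing the case split on $\mathcal P_1$ requires precisely the harder implications; I would redo the proof by classifying $\mathcal P_2$ instead.
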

\begin{proof}
Put $\mathcal P:=(T\oplus (\gamma), P)$ and $\mathcal P':=(T, (\gamma)\oplus  P)$. Note first that the definition of equivalence implies that  if $\mathcal P'$   is equivalent to $(T',(\gamma')\oplus  P')$ then $\mathcal P$ is equivalent to $(T'\oplus (\gamma'), P')$. 

We need to consider  all possibilities for $\mathcal P'$ spelled out in Proposition~\ref{prop:01} 
 
\smallskip

\noindent \emph{(1)  $\mathcal P'$ is of simple type.}  By definition,  $\mathcal P'$ is equivalent to $(T' , (\gamma')\oplus (\gamma_1', \ldots , \gamma_n'))$  such that  
$U=\langle T'\rangle \ast   \langle \gamma'\rangle \ast \langle \gamma_1'\rangle \ast \ldots \ast \langle \gamma_n'\rangle$ and $rank(U)=size(T)+size(P)+1$
where $U:=\langle T\oplus(\gamma)\oplus P\rangle$   and $\gamma'$ correspond to $\gamma$.  We observed in the  previous paragraph   that   $\mathcal P$ is equivalent to $(T'\oplus (\gamma'), (\gamma_1', \ldots , \gamma_n'))$. Therefore $\mathcal P'$ of simple type implies that $\mathcal P$ is also of simple type.

Let $\eta:\mathcal O'\to \mathcal O$ be the cover corresponding to $U\leq G$.  Observe that $\mathcal P'$ is non-critical iff $T\oplus (\gamma)\oplus P$ is a rigid generating tuple of $\pi_1^o(core(\mathcal O'))$ which is equivalent to $\mathcal P$ be non-critical.   Therefore $\mathcal P$ and $\mathcal P'$ are either non-critical of simple type (which puts us exclusively in case (2)) or both are critical (which puts us exclusively into case (1)).

\smallskip

\noindent\emph{(2) $\mathcal P'$ is of  orbifold covering type}. Let $\eta:\mathcal O'\to \mathcal O$ be the orbifold covering corresponding to $U:=\langle T\oplus (\gamma)\oplus P\rangle \le \pi_1^o(\mathcal O)$.  If $\mathcal O'$ is a surface, then $\mathcal P$ is clearly reducible since $\gamma$ is a consequence of the remaining generators.   If $\mathcal O'$ has at least one cone point then  we the standard presentation of  $U=\pi_1^o(\mathcal O')$ reveals that (up to equivalence) the elements in $P$ determine free factors of $U$. Since $ T \oplus (\gamma)\oplus  P $ is a minimal non-rigid generating tuple of $U$ we conclude that $\mathcal P$  is equivalent to $(T', P')$ with $T'$ containing an angle-minimal element. Therefore $\mathcal P$  is critical and of simple type.

\smallskip
 
\noindent  \emph{(3) $\mathcal P'$ is of almost orbifold covering type}. Let  $\mathfrak B'=(\mathcal B', u_1', (T_{u'}')_{u'\in VB'}, (p_j')_{1\le j\le n+1})$ 
be a minimal (i.e.~$\mathcal B'= core(\mathcal B') $) almost orbifold cover that corresponds to $[\mathcal P']$ and let $y \in VB'$ be the exceptional vertex of $\mathfrak B'$. We may assume that the peripheral path  $p_{n+1}'$ corresponds to~$\gamma$. Observe that  $\mathcal P=(T\oplus (\gamma), P)$ is   represented by the tame and minimal decorated $\mathbb A$-graph 
 $\mathfrak B:=(\mathcal B, u_1 , (T_u)_{u\in VB}, (p_j)_{1\leq j\le n})$ where $(\mathcal B, u_1)=(\mathcal B', u_1')$, $T_u=T_u'$ for all $u\in VB=VB'$, and    $p_j=p_j'$ for $1\le j\le 
n$, that is, we obtain $\mathfrak B$ by   simply removing the peripheral path $p_{n+1}' $   from the defining data of $\mathfrak B'$. 

In what follows  we can assume that  all folds preserve the  rank of the fundamental groups  since otherwise $\mathcal{P}$ is either of type \textbf{(4)} or of surface covering type,  and therefore critical. There are two cases to consider depending on the foldability of $\mathfrak B$.

\noindent\emph{Case 1.}  $ fold(\mathfrak B ) =fold(\mathfrak B') \ge  2$. Since the foldability of  pre-(almost) orbifold covers is equal to one   and the foldability of   folded decorated $\mathbb A$-graphs is  equal to zero,  we conclude that,  if $\mathfrak B$ folds onto  such a decorated $\mathbb A$-graph, there must be some fold that decreases foldability. According to Lemma~\ref{lemma_foldedness1} and Lemma~\ref{lemma_foldedness2}, an element of order two must be added to some vertex tuple, which implies that $\mathcal P$ is equivalent to $(\bar{T}, \bar{P})$ with  $\bar{T}$ containing an element of order two and therefore  angle-minimal. This shows that  $\mathcal P$  is  critical.

\noindent\emph{Case 2.}  $ fold(\mathfrak B ) =fold(\mathfrak B')  =1$.   Lemma~\ref{lemma:foldalmostorbifol} applied to the almost orbifold cover  $\mathfrak B'$   implies that the following hold:
\begin{enumerate}
\item[(i)] the underlying surface of $\mathcal O$ is a disk and $A_{[y]}=\langle s_{[y ]} \ | \ s_{[y ]}^{2l+1}\rangle$.

\item[(ii)] $val(y, B)=val(y , B' )=2$ and $T_y=T_{y }' =(s_{[y ]}^2)$.
\end{enumerate}
Observe that (i) and  (ii) imply that  $B_y= B_y'= A_{[y]}$.  Put  $Star(y , B ):=\{f_1, f_2\}$. As $fold(\mathfrak B)=fold(\mathfrak B')= 1$  we conclude that  the only fold  that  can be applied to  $\mathfrak B$  is the fold, which we denote by $F$,  that identifies $f_1$ and $f_2$. 
Let $\mathfrak B''$ be the decorated $\mathbb A$-graph  that is obtained from $\mathfrak B$ by $F$.

If the foldability   increases then the argument given in case 1 shows that  $\mathcal P$ is critical.

If  $fold(\mathfrak B'')<fold(\mathfrak B)=1$ then  $fold(\mathfrak B'')=0$ and so  $\mathfrak B''$ is folded.  in particular $\mathfrak B''$ cannot be an almost orbifold cover.  Lemma~\ref{lemma_foldedness2} tells us that folds of type IA do not decrease foldability. Thus $F$    it of type IIIA; hence $z:=\omega(f_1)=\omega(f_2)\in VB$.   Lemma~\ref{lemma_foldedness1}(1) implies that the following hold:
\begin{enumerate}

\item[(i')] $A_{[z]}\cong \mathbb Z_2$; hence $\mathcal O=D^2(2, 2l+1)$.

\item[(ii')] $F$ adds $s_{[z]}^{\pm1}$ to $B_z=B_z'=1$; hence $B_z''=A_{[z]}$.
\end{enumerate}
From this we conclude  that $\mathcal P$ if $\mathcal P$ is of simple type then it is critical since $\mathfrak B''$  contains a  tuple with an angle-minimal element. 

Finally we  deal with the case that $F$ preserves foldability.  Lemma~\ref{lemma_foldedness2}  combined with the fact that   $\mathcal O=D^2(m_1, m_2)$ with $m_2=2l+1$,   implies that   $F$ is of type IIIA, that is,  $z:=\omega(f_1)=\omega(f_2)\in VB=VB'$.  Lemma~\ref{lemma_foldedness1} implies that  $A_{[z]}\cong \mathbb Z_4 $ and $F$ adds $s_{[z]}^2$ to  $B_z''$ which is angle-minimal.   From this we conclude,    that $\mathcal P$ cannot be of almost orbifold cover and if $\mathcal P$ is of simple type then it is critical.   Both claims follow  from the fact that  any decorated $\mathbb A$-graph that is obtained from $\mathfrak B''$ by a fold will contain a  vertex tuple with an angle-minimal element.

\smallskip

\noindent\emph{(4) $\mathcal P'$ is reducible.} Then $\mathcal P$ is reducible and therefore critical.

\medskip

\noindent\emph{(5) $\mathcal P'$  has an obvious relation}. Thus $\mathcal P'=(T, (\gamma)\oplus P)$ is equivalent to a partitioned tuple  $(T'', (\gamma'')\oplus P'' )$ such that 
$$z'':=|o''\langle c_{i''}\rangle (o'')^{-1}: U''\cap o''\langle c_{i''}\rangle (o'')^{-1}|< | o''\langle c_{i''}\rangle (o'')^{-1}:  \langle \gamma''\rangle|$$
where $U''= \langle T''\oplus P''\rangle$ and $(o'', i'')$ is the label of $\gamma''$. Let $\gamma'\in (\gamma'')\oplus P''$  correspond to $\gamma$, that is, $\gamma$ is carried by the elementary equivalences onto  $\gamma'$

If   $\gamma'\in P''$  then the previous inequality shows that $\mathcal P$ has an obvious relation. Thus assume $\gamma''=\gamma'$, that is, $\gamma''$ corresponds to $\gamma$. We can also assume that  $\mathcal P'':=(T'', P'')$  is of simple type since all other possibilities for $\mathcal P''$  imply that $\mathcal P$ is critical. We can  further assume that the elements of $P''$  correspond to all but one boundary component of the orbifold   corresponding to $U''$ as otherwise we can assume that $T''$  contains    $o''c_{i''}^{z''}(o'')^{-1}$     and so  $\mathcal P$ is  reducible.  These assumptions on $\mathcal P''$  imply that $ |\pi_1^o(\mathcal O):U''|<\infty.$   It now follows from Lemma~\ref{lem:critical1} that $(T\oplus (\gamma), P)$   is critical.

\medskip

\noindent\emph{(6) $\mathcal P'$ folds peripheral elements}.  By definition $\mathcal P'$ is equivalent to $$(T'', (\gamma_1'', \gamma_2'')\oplus P'')$$  such that the labels of $\gamma_{1}''$ and $\gamma_2''$ satisfy  $i:=i_{\gamma_1''}=i_{\gamma_2''}  $ and $o_{\gamma_1''}=o_{\gamma_2''}c_{i}^z$ for some $z\in \mathbb Z$. If none of $\gamma_1'', \gamma_2''$ correspond to $\gamma$,  then $P''=(\gamma'')\oplus P_0''$ with $\gamma''$ corresponding to $\gamma$. Thus $\mathcal P$ is equivalent to 
$$(T''\oplus (\gamma''), (\gamma_1'', \gamma_2'')\oplus P_0'')$$
 which shows that $\mathcal P$ folds peripheral elements. Thus assume that $\gamma_1''$ corresponds to $\gamma$. Thus  $\mathcal P$   is equivalent to 
$$( T''\oplus (\gamma_1''), (\gamma_2'')\oplus P'')$$
which in turn is equivalent to
$$(T''\oplus (\gamma''),  (\gamma_2'')\oplus P'')$$
with $\gamma''\in \langle \gamma_1'', \gamma_2''\rangle$ such that  $\gamma''=1$ or 
$$|o_{\gamma_1''} \langle c_{i_{\gamma_1''}} \rangle o_{\gamma_1''}^{-1}: \langle \gamma'' \rangle |< |o_{\gamma_1''} \langle c_{i_{\gamma_1''}}\rangle o_{\gamma_1''}^{-1}: \langle \gamma_1'' \rangle |.$$
Therefore $\mathcal P$ is either reducible or has an obvious relation. 
\end{proof}

\begin{corollary}{\label{lemma:sumtuples}}
Let $(T, P)$  and $(T_0, P_0)$  be a partitioned tuples in $\pi_1^o(\mathcal O)$.  If $(T, P)$ is critical  or of (almost) orbifold covering type,  then  $(T\oplus T_0, P\oplus P_0)$ is critical.
\end{corollary}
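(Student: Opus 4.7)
The plan is to prove the corollary by induction on $n := size(T_0) + size(P_0)$, reducing everything to the case of adjoining a single element — either a non-peripheral $b$ to $T$ or a labeled peripheral $\gamma$ to $P$. The base case $n=0$ is trivial when $(T,P)$ is critical; when $(T,P)$ is of (almost) orbifold covering type we need $n\geq 1$ (almost orbifold covering type is not itself critical), and the first inductive step is provided by Proposition~\ref{prop:00}. After this first step we are always in the situation where the current partitioned tuple is critical, so the whole argument reduces to showing that criticality is preserved when we adjoin one element.

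To establish this, I run through the three subcases of the definition of \emph{critical}. If $(T,P)$ is of simple type with an angle-minimal element in the non-peripheral part, say $(T,P)\sim (T', P')$ with $T'$ containing an angle-minimal element, then the same chain of elementary transformations, which does not involve the freshly adjoined element, yields $(T\oplus(b), P)\sim (T'\oplus(b), P')$ and $(T,(\gamma)\oplus P)\sim (T', (\gamma)\oplus P')$. In both enlarged tuples the non-peripheral part still contains the angle-minimal element, so Lemma~\ref{lemma:anglemin} gives criticality. If $(T,P)$ is of orbifold covering type, Proposition~\ref{prop:00} directly delivers criticality of the enlarged tuple. If $(T,P)$ is of type \textbf{(4)}, each of the three defining properties is manifestly stable under enlargement: reducibility is preserved because the trivial entry in the non-peripheral tuple persists; folding peripheral elements is preserved because the two peripheral elements with matching labels remain in the peripheral tuple after permutation; and having an obvious relation is preserved because, writing $U=\langle T\oplus P\rangle$ and passing to a larger $U'\supseteq U$, the index $|o\langle c_i\rangle o^{-1}: U\cap o\langle c_i\rangle o^{-1}|$ can only decrease while the right-hand side $|o\langle c_i\rangle o^{-1}:\langle\gamma\rangle|$ of the defining strict inequality is unaffected.

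The only genuinely substantial input is Proposition~\ref{prop:00}, which itself is powered by the foldability computations of Lemmas~\ref{lemma_foldedness1}, \ref{lemma_foldedness2} and \ref{lemma:foldalmostorbifol}; once that is in hand, the argument above is essentially bookkeeping. The mildly delicate point is the simple-type-with-angle-minimal subcase, where one has to check that an elementary equivalence witnessing the angle-minimal element in a subtuple of $(T,P)$ lifts verbatim to an elementary equivalence of the enlarged partitioned tuple — this is immediate from the fact that such transformations only manipulate the original generators and ignore the freshly adjoined one. The induction then propagates criticality from $(T,P)$ through every intermediate enlargement up to $(T\oplus T_0, P\oplus P_0)$, completing the proof.
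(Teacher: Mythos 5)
Your proof is correct and follows essentially the same route as the paper: handle the three subcases of ``critical'' (angle-minimal, orbifold covering, type~\textbf{(4)}) and use Proposition~\ref{prop:00} to bootstrap the (almost) orbifold covering case into the critical ones. The paper's proof is terser and invokes Proposition~\ref{prop:00} without spelling out that it only handles a single adjoined element, so your explicit induction on $size(T_0)+size(P_0)$ actually makes the required chaining cleaner; you also correctly flag that the degenerate case $(T_0,P_0)=(\emptyset,\emptyset)$ with $(T,P)$ of almost orbifold covering type must be excluded for the statement to be literally true, a point the paper's statement and proof both leave implicit.
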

\begin{proof}
If $(T, P)$ is of (almost) orbifold covering type then the result follows from Proposition~\ref{prop:00}.   

If $(T, P)$ is of type \textbf{(4)} then the result is easily verified. 

Finally if $(T, P)$ is of simple type then $(T, P)$ is equivalent to $(\bar{T}, \bar{P})$  such that $\bar{T}$ contains an angle-minimal element.  The result now follows as  $(T\oplus T_0, P\oplus P_0)$ is equivalent to $(\bar{T}\oplus T_0 , \bar{P}\oplus P_0)$.
\end{proof}


\section{The global picture}{\label{section_global_picture}}

In this section we will recall some definitions contained in Section~4 of \cite{Dut}. We  nevertheless assume that the reader is  familiar with the language developed in \cite{Dut}.

Let $\mathcal O$ be a sufficiently large orbifold. Then there  is  a non-empty (not unique) collection  $\gamma_1,\ldots, \gamma_t$  of  pairwise disjoint simple closed curves on $\mathcal O$ such that the closure of  each component of  $\mathcal O- \gamma_1\cup \ldots \cup \gamma_t$  is a small orbifold. Let $\mathbb A':=\mathbb A'(\mathcal O, \gamma_1,  \ldots,  \gamma_t)$  be the corresponding graph of groups  and let  $v_0\in VA'$. The orbifold version of the Seifert-Van Kampen theorem implies that  $\pi_1(\mathbb A',v_0)\cong \pi_1^o(\mathcal O)$.    For technical reasons we replace $\mathbb A'$ by  its barrycentric subdivision $\mathbb A$. Thus for each edge $e$ of $\mathbb A'$ there is a vertex $v_e$ with infinite cyclic group corresponding to $\gamma_i$ for some $i$. The vertices of $\mathbb A$ arising from edges of $\mathbb A'$ are called \emph{peripheral} and the remaining vertices will be called \emph{non-peripheral}.

We use the  same terminology for  $\mathbb A$-graphs, i.e.~we say that a vertex $u$ of an $\mathbb A$-graph  $\mathcal B$ is \emph{peripheral} (resp.~\emph{non-peripheral}) if  the corresponding vertex  $[u]\in VA$ is  peripheral (resp. non-peripheral).  The set of peripheral (resp.~non-peripheral) vertices   is denoted by $V_pB$ (resp.~$V_{np}B$). 

\smallskip

Let $\mathcal B$ be an $\mathbb A$-graph  and  $f\in EB$   labeled $(a,e,b)$ such that $x:=\alpha(f)$ is non-peripheral and  $B_f\neq 1$.  Then any generator of  the cyclic group $\alpha_f(B_f)=a\alpha_e(B_f)a^{-1}\leq B_x$ 
is called a  \emph{peripheral element associated to $f$}. Any peripheral  element associated to $f$ will be  denoted by $\gamma_f$.
\emph{Throughout we will always assume that any such  $\gamma_f$  is labeled  $(a,e)\in A_{[x]}\times Star([x], A)$.} For more details see   \cite[Subsection~4.1]{Dut}.


\subsection{Marked $\mathbb A$-graphs}
In this section we define marked $\mathbb A$-graphs.  Before we need to recall the notion of $\mathbb A$-graph of orbifold type defined in Subsection~4.2 of \cite{Dut}.

\begin{definition}{\label{def:orbtype}} We say that a finite and  folded $\mathbb A$-graph $\mathcal C$ is of \emph{orbifold type} if:
\begin{enumerate}
\item  $\mathcal C$ is non-empty (possibly consisting of a single   vertex).

\item  $|A_{[x]}:C_x|<\infty$ for all $x\in VC\cup EC$.

\item $\mathcal C$ is locally surjective at non-peripheral vertices.

\item  there is at least one vertex at which  $\mathcal{C}$ is not locally surjective.
\end{enumerate}
An $\mathbb A$-graph of orbifold type consisting of a single vertex is called \emph{degenerate}. Otherwise we say that  it is \emph{non-degenerate}.
\end{definition}

\begin{definition}
Let $\mathcal C$ be an $\mathbb A$-graph of orbifold   type.   A \emph{boundary vertex  of $\mathcal C$}  is a vertex at which  $\mathcal{C}$ is not locally surjective.
\end{definition}

To any $\mathbb A$-graph of orbifold type $\mathcal C$ there corresponds a unique  compact orbifold $\mathcal O_\mathcal C$  with non-empty boundary which is  defined as the union of the finite covers of   the vertex orbifolds $\mathcal O_{[u]}$ corresponding to the subgroups $C_u\leq A_{[u]}=\pi_1^o(\mathcal O_{[u]})$ as $u$ ranges over the vertex set of $\mathcal C$.   The fundamental group of the graph of groups associated to $\mathcal C$ is canonically isomorphic (via Seifert-van-Kampen Theorem) to the fundamental group of $\mathcal O_\mathcal C$.   We  say that  $\mathcal O_\mathcal C$  is \emph{the orbifold associated to  $\mathcal C$}. 

Observe that $\mathcal O_\mathcal C$ is  a simple closed curve  iff $\mathcal C$ is degenerate. If $\mathcal C$ is non-degenerate, then there is a one-to-one correspondence between the boundary vertices of $\mathcal C$ and the boundary components of  $\mathcal O_\mathcal C$.

\begin{definition}
A \emph{marked $\mathbb A$-graph  of orbifold type}  is a triple $(\mathcal C, u_{\mathcal C}, T_{\mathcal C})$   where $\mathcal C$ is an $\mathbb A$-graph of orbifold type, $u_{\mathcal C}$ is a vertex of $\mathcal C$ and $T_{\mathcal C}$ is  a rigid generating tuple of $\pi_1(\mathbb C, u_{\mathcal C})$ meaning that   the corresponding generating tuple of $\pi_1^o(\mathcal O_\mathcal C)\cong \pi_1(\mathbb C, u_{\mathcal C})$  is rigid. 
\end{definition}

Let  $\mathcal B$ be  an $\mathbb A$-graph and let $\mathscr C$ be  a collection of marked $\mathbb A$-graphs  of orbifold type such that the following hold:
\begin{enumerate}
\item[\textbf{(A.1)}] for each  $(\mathcal C, u_{\mathcal C}, T_{\mathcal C})\in \mathscr C$, the $\mathbb A$-graph  $\mathcal C$  is a sub-$\mathbb A$-graph of $\mathcal B$.

\item[\textbf{(A.2)}]  the members of $\mathscr C$ are pairwise disjoint. 
\end{enumerate}
The set of \emph{vertices of orbifold type   with respect to $\mathscr C$} and  the set of \emph{edges of orbifold type    with respect to $\mathscr C$} are defined as
$$V_{orb}^{\mathscr{C}}B:=\bigcup_{(\mathcal C, u_{\mathcal C},  T_{\mathcal C})\in \mathscr{C}} VC  \ \ \text{ and } \  \  E_{orb}^{\mathscr C}B:=\bigcup_{(\mathcal C, u_{\mathcal C},  T_{\mathcal C})\in \mathscr{C}} EC.$$
The set of \emph{exceptional vertices  with respect to $\mathscr C$}, and the set of \emph{exceptional edges with respect to $\mathscr C$}  are  defined   as 
$$V_{exc}^{\mathscr C}B =V_{np}B\setminus  V_{orb}^{\mathscr C} B\  \  \text{ and } \  \  E_{exc}^\mathscr CB:= EB \setminus E_{orb}^{\mathscr C}B.$$

\begin{definition}{\label{def:marked}} Let $\mathcal B$ be an $\mathbb A$-graph.  A \emph{marking} of $\mathcal B$  consists of the following:
\begin{enumerate}
\item[(I)] a finite collection $\mathscr C$   of   marked $\mathbb A$-graphs  of orbifold type  satisfying conditions  (A.1) and (A.2).

\item[(II)]  a partitioned tuple $\mathcal P_u=(T_u, P_u)$ in  $A_{[u]}=\pi_1^o(\mathcal O_{[u]})$ for each $u\in V B$  that is exceptional with respect to $\mathscr C$, that is,~for each $u\in V_{exc}^{\mathscr C}B$.   
\end{enumerate}
such that the following hold:
\begin{enumerate}
\item  for any  $u\in V_pB$ with    $B_u\neq 1$  there is  $(\mathcal C, u_{\mathcal C}, T_{\mathcal C})\in \mathscr C$ such that $u\in VC$.

\item $P_u=(\gamma_{f_1}, \ldots, \gamma_{f_n})$ where $\{f_1, \ldots, f_n\}=Star(u, B)\cap E(\mathcal B)$ where $  E(\mathcal B)$ denotes the set $\{f\in EB  \ |  \ B_f\neq 1\}.$   

\item $T_u\oplus P_u$ generates $B_u\leq A_{[u]}$.

\item for any $f\in E_{exc}^{\mathscr C}B\cap E(\mathcal B)$  with  $\alpha(f)\in V_pB$  the following hold:\begin{enumerate}

\item  there is $(\mathcal C, u_{\mathcal C}, T_{\mathcal C})\in \mathscr C$ such that  $\alpha(f)$ is a boundary vertex of $\mathcal C$.

\item  $\omega(f)$ is an  exceptional vertex  with respect to $\mathscr C$, i.e.~$\omega(f)\in V_{exc}^{\mathscr C}B$.

\item  $B_f = \alpha_{[f]}^{-1}(o_f^{-1}B_{\alpha(f)}o_f)= \alpha_{[f]}^{-1}(B_{\alpha(f)}).$
\end{enumerate}
\item for any  $f\neq g\in E(\mathcal B)$ such that   $\alpha(f)=\alpha(g)\in V_pB$  it holds $[f]\neq [g]$. 
\end{enumerate}
\end{definition}

\begin{definition}{\label{def:marked1}}
A \emph{marked $\mathbb A$-graph} is a tuple  
 $\mathbf B=(\mathcal B,   \mathscr C,(\mathcal P_u)_{u\in V_{exc}^{\mathscr C}B})$ 
where:
\begin{enumerate}
\item $\mathcal B$ is $\pi_1$-surjective, i.e.~the homomorphism  $\phi_{\mathcal B}:\pi_1(\mathbb B, u_0)\rightarrow \pi_1(\mathbb A, v_0)$ 
associated to $\mathcal B$ is surjecvitve for some (and therefore any) $u_0\in VB$ such that $[u_0]=v_0$.

\item $(\mathscr C,(\mathcal P_u)_{u\in V_{exc}^{\mathscr C}B})$  is a marking of $\mathcal B$. 
\end{enumerate} 
The $\mathbb A$-graph  $\mathcal B$ (resp.~the underlying  graph $B$ of $\mathcal B$) is called the underlying $\mathbb A$-graph (resp.~underlying graph) of $\mathbf B$. 
\end{definition}

\begin{definition}{\label{def:almostorb}}
A marked $\mathbb A$-graph $\mathbf B=(\mathcal B,  \mathscr C,(\mathcal P_u)_{u\in V_{exc}^{\mathscr C}B})$ 
is a \emph{special almost  orbifold covering with a good marking} if the following hold:
\begin{enumerate}
\item  $V_{exc}^\mathscr CB  =\{u\}$ and $\mathcal P_u$ is of almost orbifold covering type.

\item  $E_{exc}^\mathscr C B =Star(u, B)^{\pm 1}$   and $B_{f}\neq 1$ for all  $f\in  Star(u, B)$.

\item  For each   $(\mathcal C, u_{\mathcal C}, T_{\mathcal C})\in \mathscr C$   with $\mathcal C$ is non-degenerate and  each  boundary vertex of $u'$  of $\mathcal C$   there is a  unique $f\in Star(u, B)$ such that $\omega(f)=u'$.

\item For each   $(\mathcal C, u_{\mathcal C}, T_{\mathcal C})\in \mathscr C$     such that $\mathcal C$  is degenerate  there are  unique $f\neq f'\in Star(u, B)$  such that   $\omega(f)=\omega(f')=u'$.
\end{enumerate}  
\end{definition}

\begin{definition} We say that a  marked $\mathbb A$-graph $\mathbf B=(\mathcal B,  \mathscr C,(T_u)_{u\in V_{exc}^{\mathscr C}B})$  is \emph{tame} if $\mathcal P_u$ is non-critical and of simple  type for any $ u\in V_{exc}^\mathscr C B$.
\end{definition}

\begin{remark} 
Observe that a special almost orbifold covering with a good marking  is not tame since it has an exceptional vertex   such that the corresponding partitioned tuple is not of simple type. 
\end{remark}


\noindent{\textbf{The tuple associated to a marked $\mathbb A$-graph}.} In  \cite[Section 4.4]{Dut}  we explained  how  a marked $\mathbb A$-graph yields a Nielsen equivalence class of generating tuples of   $\pi_1^o(\mathcal O)\cong \pi_1(\mathbb A, v_0)$. We recall the construction.  Assume that 
 $\mathbf B=(\mathcal B,  \mathscr C,(\mathcal P_u)_{u\in V_{exc}^{\mathscr C}B})$ 
is a (not necessarily tame) marked $\mathbb A$-graph. Let $Y\subseteq B$ be a maximal $\mathscr C$-subtree (that is,  $Y$ is a maximal subtree of $B$  such that $Y\cap C$ is a maximal subtree of $C$ for each   $(\mathcal C, u_{\mathcal C}, T_{\mathcal C})\in  \mathscr C$). Let $E$ be a subset of $EB$ such that $E$ contains either $e$ or $e^{-1}$ for every pair $\{e, e^{-1}\}\subseteq E_{exc}^\mathscr C B\setminus EY$.  Let $u_0\in VB$ with $[u_0]=v_0\in VA$ (the base vertex of $\mathbb A$).  Finally,  for any $u\in VB$ let 
$$p_u:=1, e_{u,1}, 1, \ldots, 1, e_{u, d_u}, 1$$
where $e_{u,1}, \ldots,e_{u, d_u}$ is the unique reduced path in $Y$ from  $u_0$ to $u$.  
We define $T_{Y, u_0}^\mathbf B$ as the tuple consisting of the following elements:
\begin{enumerate}
\item  for every $e\in E $, the element $g_e:=[p_{\alpha(e)}\cdot 1, e, 1\cdot  p_{\omega(e)}^{-1}]$. 

\item for each   $u\in V_{exc}^{\mathscr C}B$ and each $b$ in  $T_u$, the element $[p_{u}\cdot b\cdot p_{u}^{-1} ]$.

\item  for each  $(\mathcal C, u_{\mathcal C}, T_{\mathcal C})\in \mathscr C$ and   $b$ in  $T_\mathcal C$,   the element $[p_{u_{\mathcal{C}}}\cdot b \cdot  p_{u_{\mathcal C}}^{-1}]$. 
\end{enumerate}

\begin{remark}
Note that for each   $u\in V_{exc}^\mathscr C B$ there is a subtuple $T_{Y, u_0}^u$  of $T_{Y, u_0}^{\mathbf B}$ that corresponds to   $T_u$ such that  $\langle T_{Y, u_0}^u\rangle \le \pi_1(\mathbb B, u_0)$  is isomorphic to  $\langle T_{u}\rangle \leq B_u$. Similarly, for each $(\mathcal C, u_{\mathcal C}, T_{\mathcal C})  \in \mathscr C$ there is a subtuple $T_{Y, u_0}^\mathcal C$ of $T_{Y, u_0}^{ \mathbf B }$ that corresponds to $T_\mathcal C$ such that  $\langle T_{Y, u_0}^{\mathcal C}\rangle \le \pi_1(\mathbb B, u_0)$  is isomorphic to $\pi_1(\mathbb C, u_{\mathcal C})$.
\end{remark}

It is a consequence of  \cite[Proposition 2.4]{KMW} that $T_{Y, u_0}^\mathbf B$ is a generating tuple of  $\pi_1(\mathbb B, u_0)$. It further follows from  \cite[Lemma~4.16]{Dut} that  for a fixed base vertex $u_0$ the Nielsen equivalence class determined by $T_{Y, u_0}^\mathbf B$ does not depend on the choice of the maximal $\mathscr C$-subtree $Y$. When we replace $u_0$ by $u_0'\in VB$ with $[u_0']=v_0$  we obtain a tuple that is  conjugate to $\phi_{\mathcal B}(T_{Y, u_0}^{\mathbf B})$. Since    conjugate generating tuples are Nielsen equivalent and since     
$$\pi_1(\mathbb A, v_0)=\phi_{\mathcal B}(\pi_1(\mathbb B, u_0))= \phi_{\mathcal B}(\langle  T_{Y, u_0}^{\mathbf B} \rangle )= \langle\phi_{\mathcal B}(T_{Y, u_0}^{\mathbf B})\rangle$$ we conclude that the Nielsen equivalence class of generating tuples of $\pi_1(\mathbb A, v_0)$  determined by $\phi_{\mathcal B}(T_{Y, u_0}^\mathbf B)$ does not depend on $Y$ and on $u_0$. Thus the following definition makes sense.

\begin{definition}
Let $\mathbf B$ be a marked $\mathbb A$-graph. We define $[T_\mathbf B]$ as the Nielsen equivalence class  determined by  $\phi_\mathcal B(T_{Y, u_0}^\mathbf B)$   where $Y$ is an arbitrary maximal $\mathscr C$-subtree of $B$ and $u_0$ is an arbitrary vertex of type $v_0$.
\end{definition}
 

\noindent\textbf{Equivalence of marked $\mathbb A$-graphs.} In this section we define equivalence of marked $\mathbb A$-graphs.  Let  $\mathbf B=(\mathcal B,   \mathscr C,(\mathcal P_u)_{u\in V_{exc}^{\mathscr C}B})$  be a (not necessarily tame) marked $\mathbb A$-graph. An elementary move on $\mathbf B$ (see \cite[Section~4.5]{Dut})  consists of one of the following modifications: 
\begin{enumerate}
\item[(1)] A \emph{Nielsen move}, i.e. 
\begin{enumerate}
\item[(i)] for some   $(\mathcal C, u_\mathcal C, T_\mathcal C)\in\mathscr C$  replace  $T_{\mathcal C}$   by a  Nielsen equivalent tuple $T_{\mathcal C}'$. 

\item[(ii)] for some  vertex $u\in V_{exc}^{\mathscr C}B$  replace $T_u$ in  $\mathcal P_u=(T_u, P_u)$ by a Nielsen equivalent tuple $T_u'$.
\end{enumerate}

\item[(2)] A \emph{peripheral move of type}  (i), i.e.~for some $u\in V_{exc}^{\mathscr C}B$ replace   $P_u=(\gamma_1,\ldots, \gamma_n)$  in $\mathcal P_u=(T_u, P_u)$ by      
 $P_u' =(\gamma_{\sigma(1)}^{\varepsilon_1}, \ldots, \gamma_{\sigma(n)}^{\varepsilon_n})$  where  $\sigma \in S_n$ and $\varepsilon_1, \ldots, \varepsilon_n\in \{\pm 1\}$.

\item[(3)] A tame auxiliary move of type A2.

\item[(4)]  A \emph{peripheral move of type} (ii):~for some $u\in V_{exc}^{\mathscr C}B$ replace  $T_u=(g_1,\ldots, g_m)$  
in $\mathcal P_u=(T_u, P_u)$  by 
 $T_u'=(g_{1}, \ldots, g_{i-1} , \gamma g_{i}\gamma', g_{i+1},  \ldots, g_{m})$ 
where  $1\le i\le m$ and $\gamma, \gamma'\in P_{u}^{\pm 1}$.

\item[(5)] An auxiliary move of type A0 or of type A1.
\end{enumerate}
\begin{definition}
We say that two marked $\mathbb A$-graphs  $\mathbf B$ and $\mathbf B'$ are \emph{equivalent}, and write $\mathbf B\approx \mathbf B'$, if $\mathbf B'$ is obtained from $\mathbf B$ by a finite sequence of elementary moves. The equivalence class of  $\mathbf B$ is denoted by $\mathbf b$.
\end{definition}

It follows from \cite[Lemma~4.18]{Dut} that  equivalent marked $\mathbb A$-graphs   define Nielsen equivalent generating tuples of $\pi_1(\mathbb A, v_0)$. Thus it makes sense to  define
 $[T_\mathbf b]:=[T_\mathbf B]$  where $\mathbf B$  is an arbitrary  representative  of $\mathbf b$.


\subsection{Tame elementary folds}{\label{sec:descriptionfolds}}
In this section we discuss tame elementary folds that can be applied to a \emph{tame  marked $\mathbb A$-graph} $\mathbf B=(\mathcal B,  \mathscr C,(\mathcal P_u)_{u\in V_{exc}^{\mathscr C}B})$. Such a fold yields an $\mathbb A$-graph $\mathcal B'$ (as any fold does) that does not always inherit a marking from $\mathbf B$. If it does, then the obtained marked $\mathbb A$-graph  $\mathbf B'=(\mathcal B',   \mathscr C',(\mathcal P_u')_{u\in {V}_{exc}^{\mathscr C'}B'})$ 
represents the same Nielsen class as $\mathbf B$, i.e.~$[T_{\mathbf B'}]=[T_{\mathbf B}]$,  and has lower complexity. We do not allow all folds that are applicable to the underlying $\mathbb A$-graph~$\mathcal B$. 
 
We first discuss folds of type IIA. They can be applied to an edge $f$ if it satisfies conditions (II.1)-(II.3) introduced below. These  conditions imply in particular that we do not apply such a fold to some edge if it  can be folded by a fold of type IA or IIIA onto an edge with non-trivial edge group. The following lemma clarifies when such an edge $f$ exists.

\begin{lemma}\label{lemmatypesIIA}
Let 
 $\mathbf B=(\mathcal B,   \mathscr C,(\mathcal P_u)_{u\in V_{exc}^{\mathscr C}B})$ 
be a tame marked $\mathbb A$-graph.   Let $f$ be an edge of $\mathcal B$   labeled $(a, e, b)$ and  initial vertex $x:=\alpha(f)$. Suppose that the  following hold:
\begin{enumerate}
\item[(II.1)] $B_f=1$.

\item[(II.2)] $B_x\cap a\alpha_e(A_e)a^{-1}\leq A_{[x]}$  is non-trivial. 

\item[(II.3)] $f$ cannot be folded with any edge in  $Star(x, B)\cap E(\mathcal B)$.
\end{enumerate}
Then one of the following holds:
\begin{enumerate}
\item[(El.0)] $x$ is a boundary vertex of $\mathcal C$ for some $(\mathcal C, u_\mathcal C, T_\mathcal C) \in\mathscr C$. 

\item[(El.1)] $x$ is exceptional  (and hence $\mathcal P_x$ is non-critical of simple type) and  $\mathcal P_x$ is equivalent to 
 $(T' \oplus (\gamma'), (\gamma_1', \ldots, \gamma_n'))$  such that $$B_x=\langle T'\rangle \ast  \langle \gamma' \rangle\ast \langle\gamma_1'\rangle \ast \ldots \ast \langle \gamma_n'\rangle$$ 
where  $\gamma'$ is conjugate to a generator of  $B_x \cap a\alpha_e(A_e) a^{-1}$.  In particular, 
 $$(T, (\gamma')\oplus (\gamma_1', \ldots, \gamma_n'))$$  
is non-critical and  of simple type. 

\item[(El.2)] $x$ is exceptional, 
 $B_x=\langle T_x\oplus P_x\rangle\le A_{[x]}$  
is of finite index, and the  elements in $P_x$ correspond to all but one boundary component of the orbifold corresponding to $B_x$. 
\end{enumerate}
\end{lemma}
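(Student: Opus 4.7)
The plan is to analyze the vertex $x$ based on its position in the marking $\mathbf B$, driven by the element $\gamma$ generating the non-trivial cyclic group $B_x \cap a\alpha_e(A_e)a^{-1}$ provided by (II.2). Since the edge groups of $\mathbb A$ correspond to boundary curves of the small orbifolds $\mathcal O_{[x]}$, the element $\gamma$ is peripheral in $A_{[x]} = \pi_1^o(\mathcal O_{[x]})$ with natural label $(a, [e])$.

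First I would handle the case where $x \in VC$ for some $(\mathcal C, u_\mathcal C, T_\mathcal C) \in \mathscr C$. I claim $x$ must be a boundary vertex of $\mathcal C$, giving (El.0). Otherwise $\mathcal C$ is locally surjective at $x$; since $\gamma \in C_x = B_x$ represents the direction $[e]$ via the coset $B_x \cdot a \cdot \alpha_e(A_e)$, local surjectivity produces an edge $g \in Star(x, C)$ with $[g] = e$ whose label $(a', e, b')$ satisfies $a' \in B_x \cdot a \cdot \alpha_e(A_e)$, putting $g$ in the same fold-equivalence class at $x$ as $f$. Because $\mathcal C$ is of orbifold type, $|A_e : C_g|$ is finite and $A_e \cong \mathbb Z$ is infinite, so $B_g = C_g \neq 1$ and $g \in E(\mathcal B)$. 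This contradicts (II.3).

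Next, assume $x$ is exceptional. By tameness of $\mathbf B$, $\mathcal P_x = (T_x, P_x)$ is non-critical and of simple type, so $rank(B_x) = |T_x| + |P_x|$ and $B_x$ decomposes as a free product $\langle T_x\rangle \ast \langle\gamma_1\rangle \ast \cdots \ast \langle\gamma_{|P_x|}\rangle$ up to equivalence. I form the augmented partitioned tuple $\mathcal P_x^+ := (T_x, (\gamma) \oplus P_x)$ and apply Proposition~\ref{prop:01}, classifying it into one of the four types. A size-versus-rank comparison---the underlying tuple of $\mathcal P_x^+$ has $|T_x|+1+|P_x| = rank(B_x)+1$ elements---rules out simple type, the non-surface orbifold covering subcase, and almost orbifold covering type, since the last two demand minimality of the underlying tuple. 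Hence $\mathcal P_x^+$ is either of orbifold covering type in the surface subcase or of type \textbf{(4)}. In the first situation, the associated covering $\mathcal O' \to \mathcal O_{[x]}$ realizes $B_x$ as a finite-index subgroup and identifies $(\gamma) \oplus P_x$ with the full set of boundary components of $\mathcal O'$; so $P_x$ corresponds to all but the one boundary represented by $\gamma$, which is precisely (El.2). In the type \textbf{(4)} situation, I would split into the three sub-cases (reducibility, folding peripheral elements, obvious relation) and, using Proposition~\ref{prop:02} together with Propositions~\ref{lem:critical1} and \ref{lemma:IIAbad}, either derive a contradiction with the non-criticality of $\mathcal P_x$ or extract an equivalent form $\mathcal P_x \sim (T' \oplus (\gamma'), P')$ in which $\gamma'$ is an $A_{[x]}$-conjugate of $\gamma$; a rank balance then shows that $(T', (\gamma')\oplus P')$ is of simple type and non-critical, yielding (El.1), while the remaining configurations reduce to the finite-index scenario (El.2).

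The main obstacle is the exceptional case---specifically the type \textbf{(4)} branch of $\mathcal P_x^+$, where each sub-case (reducibility, fold of peripheral elements, or obvious relation) must be carefully reconciled with the non-criticality of $\mathcal P_x$ via Proposition~\ref{prop:02} and the local-case machinery, in order to decide precisely when $\gamma$ is realizable as a non-peripheral generator of an equivalent form of $\mathcal P_x$ (giving (El.1)) as opposed to only appearing as the generator of a boundary component missing from $P_x$ (giving (El.2)).
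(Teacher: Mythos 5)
Your treatment of the orbifold-type case is correct and matches the paper's argument (with a bit more unpacked detail about how local surjectivity plus (II.3) yields a contradiction). For the exceptional case, however, your route---forming $\mathcal P_x^+=(T_x,(\gamma)\oplus P_x)$ and classifying it via Proposition~\ref{prop:01}---diverges from the paper, which works directly with the covering orbifold $\eta_x:\mathcal O_{B_x}\to\mathcal O_{[x]}$ and counts boundary components. Your rank observation is a genuine insight, but the execution has three gaps.

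First, the rank comparison does \emph{not} rule out $\mathcal P_x^+$ being of almost orbifold covering type. Your argument implicitly assumes $\eta'_\ast$ is injective, which is true for genuine orbifold covers but fails for special almost orbifold covers; there one only gets $B_x=\eta'_\ast(G')$, hence $rank(G')\ge rank(B_x)$, and the equality $rank(G')=|T'\oplus P'|=rank(B_x)+1$ produces no contradiction. So a fourth case survives that you have not accounted for. Second, hypothesis (II.3) is never invoked once $x$ is exceptional, yet it is exactly what the paper uses at this point: if $\gamma$ and some $\gamma_{f_i}$ lay on the same boundary component of $\mathcal O_{B_x}$, then $f$ and $f_i$ would be foldable, contradicting (II.3). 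This is precisely what excludes the ``folds peripheral elements'' subcase of type \textbf{(4)} and makes $\gamma$ a genuinely new boundary component; without it, nothing in your classification scheme forbids $\mathcal P_x^+$ from folding peripheral elements, and in that event neither (El.1) nor (El.2) follows. Third, the plan for the remaining type \textbf{(4)} subcases via Propositions~\ref{prop:02}, \ref{lem:critical1} and \ref{lemma:IIAbad} is not workable as stated: those results are used to \emph{establish} that certain tuples are critical, and none of them directly yields the contradiction with the non-criticality of $\mathcal P_x$ that you are hoping to extract, nor the free-product decomposition required for (El.1). The paper instead obtains (El.1)/(El.2) from the count of boundary components of $\mathcal O_{B_x}$ (once (II.3) guarantees $\gamma$ lies on a new one), and it is hard to see how to reach the same dichotomy through the classification of $\mathcal P_x^+$ without effectively reproducing that geometric count.
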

\begin{proof}
Assume first that $x$ is of orbifold type (which is the case  if $x$ is peripheral since (II.2) implies  that $B_x\neq 1$). Thus there is $(\mathcal C , u_{\mathcal C}, T_{\mathcal C})\in \mathscr C$ such that $x\in VC$. By definition  $\mathcal C$ is locally surjective at all interior vertices. It therefore  follows from (II.3) that $x$ is a boundary vertex of $\mathcal C$, and so (El.0) holds.

\smallskip 
 
Assume now that $x$ is exceptional. The tameness of $\mathbf B$ implies that   $\mathcal P_x$ is non-critical and of  simple type. Let $\eta_x:\mathcal O_{B_x}\rightarrow \mathcal O_{[x]}$ be the orbifold covering  corresponding to  $B_x\le A_{[x]}=\pi_1^o(\mathcal O_{[x]})$ and let   $\gamma_f'\in A_{[x]}$ such that  $\langle \gamma_f'\rangle=B_x\cap a\alpha_e(A_e)a^{-1}\leq A_{[x]}.$   Assume that  $\{f_1, \ldots, f_n\}=Star(x, B)\cap E(\mathcal B)$. Thus  $P_x=(\gamma_{f_1},\ldots ,\gamma_{f_n})$.  

If there is $1\le i\le n$ such that $\gamma_f'$ and $\gamma_{f_i}$ correspond  to  the same  boundary component of $\mathcal O_{B_x}$,  then $f$ can be folded with $f_i$ and so we obtain a contradiction to (II.3). Thus we may assume that this is not the case. If $\mathcal O_{B_x}$ has at least $k+2$ (not necessarily compact) boundary components then 
$$B_x=B_0\ast   \langle \gamma'\rangle\ast \langle \gamma_1'\rangle \ast \ldots \ast \langle \gamma_k'\rangle $$
where $\gamma_i'$ (resp.~$\gamma'$) correspond to the same boundary component of $\mathcal O_{B_x}$ as $\gamma_{f_i}$ (resp.~$\gamma_f'$). Now an  argument similar as the one given in the proof of Lemma~\ref{lemma:equivalencesimpletype} shows that (El.1) holds. If $\mathcal O_{B_x}$ has $k+1$ boundary components  then  clearly (El.2) holds.
\end{proof}

\begin{remark}{\label{remark:noncritical}} Observe that when (El.2) holds, then Lemma~\ref{allprimitive} together with the fact that $\mathcal P_x$ is non-critical of simple type implies that  $T_x\oplus P_x$ is a rigid generating tuple of $B_x\cong \pi_1^o(\mathcal O_{B_x})$.
\end{remark}
 
\begin{remark}\label{remark_preprocessing} When  (El.1) holds, then possibly  after applying auxiliary moves to $\mathbf B$ that only affect the vertex $x$ and the edges staring at $x$,  we  may assume that $$\mathcal P_x=(T' \oplus ( \gamma'), (\gamma_1', \ldots, \gamma_n')).$$ This follows from~\cite[Lemma~4.17]{Dut}.
\end{remark}

The previous remark motivates the following definition.
\begin{definition}\label{Def_normalized}
Let $\mathbf B$, $f$ and $x$ be as in Lemma~\ref{lemmatypesIIA}. 
We say that $f$ is \emph{normalized} if (El.0) or (El.2) holds or if (El.1) holds and
$$\mathcal P_x=(T_x'\oplus(\gamma_f'), (\gamma_1', \ldots, \gamma_n')) \  \ \ \text{ and } \  \ \ B_x=\langle T_x'\rangle \ast \langle \gamma_f'\rangle \ast \langle \gamma_1'\rangle \ast \ldots \ast \langle\gamma_n'\rangle$$ where $\gamma'_f$ is a generator of $B_x \cap a\alpha_e(A_e) a^{-1}$.
\end{definition} 

\smallskip

\noindent{\bf\em{Tame elementary folds of type IIA.}} Let $f$ be a \emph{normalized} edge of $\mathbf B$ labeled $(a, e, b)$ with $x:=\alpha(f)$ and $y:=\omega(f)$ and let $\gamma'_f$ be a generator of  $B_x \cap a\alpha_e(A_e) a^{-1}\le A_{[x]}.$ Let $\mathcal B'$ be the $\mathbb A$-graph that is obtained from $\mathcal B$ in the following way:
\begin{enumerate}
\item Replace $B_f=1$ by  $B_f'\le A_{e}$  such that $\langle \gamma_f'\rangle=a\alpha_e(B_f')a^{-1}.$ 

\item Replace  $B_y$ by  $B_y'=\langle B_y, \gamma'\rangle\le A_{[y]}$ where  $\langle \gamma'\rangle =b^{ -1}\omega_e(B_f')b.$ 
\end{enumerate} 
\begin{figure}[h!]
\begin{center}
\includegraphics[scale=1]{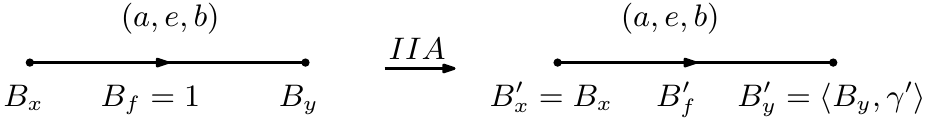}
\caption{A  tame elementary fold of type IIA based on $f\in EB$.}{\label{fig:tamefoldIIA}}
\end{center}
\end{figure}
We say that $\mathcal B'$ is obtained from $\mathcal B$ by a \emph{tame elementary fold of type IIA based on $f$}. Observe that $\mathcal B'$ contains $\mathcal C$ for each  $(\mathcal C, u_{\mathcal C}, T_{\mathcal C})\in \mathscr C$. 

\smallskip

Next we discuss when $\mathcal B'$  inherits a marking from  $\mathbf B$ yielding a marked $\mathbb A$-graph $\mathbf B'$. We also define the notion of \emph{good} and \emph{bad} tame elementary folds. Roughly speaking bad tame elementary folds will be those that reveal reducibility of the tuple associated to $\mathbf B$.

We discuss the each of the  cases (El.0), (El.1) and (El.2) separately.

\smallskip

\noindent {\emph{Fold IIA(El.0)}:} Suppose that (El.0) holds, i.e.~there is $(\mathcal C, u_\mathcal C, T_\mathcal C) \in\mathscr C$ such that  $x$ is a boundary vertex of $\mathcal C$. The fact that $f$ is normalized (more precisely, condition (II.3) of normalized edges) and Definition~\ref{def:marked}  imply that 
 $B_g=1$ for all    $g\in Star(x, B)\setminus Star(x, C).$   
We distinguish two cases depending on the type of $y=\omega(f)$.
\begin{enumerate}
\item If $y$ is of orbifold type,   then   we say that the tame elementary fold  based on $f$ is \emph{bad} and we define no marking of $\mathcal B'$. 

\item If  $y$ is exceptional,  then the tameness of $\mathbf B$ implies that $\mathcal P_y=(T_y, P_y)$ is non-critical and   of simple type.   We define 
 $\mathbf B'=(\mathcal B', \mathscr C, (\mathcal P_u')_{u\in V_{exc}^{\mathscr C}B'}) $
where $\mathcal P_u'=\mathcal P_u$ if $u\neq y$ and 
 $\mathcal P_y'=(T_y, (\gamma')\oplus P_y).$   
To decide if the  fold is good or bad we  consider all  possibilities for $\mathcal P_y'$.
\begin{enumerate}
\item If  $\mathcal P_y'$  is non-critical and of simple type, then $\mathbf B'$ is  tame. In this case we say that the fold  is \emph{good}.  

\item If $\mathcal P_y'$ is of almost orbifold covering type then let $\mathbf B''$ be the marked $\mathbb A$-graph that is obtained from $\mathbf B'$ by successively removing valence one vertices with trivial group and their adjacent edges.   One of the following occurs:
\begin{enumerate}
\item  $\mathbf B''$ is  a special almost orbifold covering with a good marking. Then  we say that the  fold is \emph{good}.

\item $\mathbf B'' $ is not a special almost orbifold covering with a good marking. Then we  say that the fold is \emph{bad}.
\end{enumerate}
\item If $\mathcal P_y'$  is critical   then we say that the fold is \emph{bad}.
\end{enumerate}
\end{enumerate}

\smallskip 

\noindent {\emph{Fold IIA(El.1):}} Suppose that  (El.1) holds, i.e.~$\mathcal P_x= (T_x'\oplus (\gamma_f'),  (\gamma_1', \ldots, \gamma_n'))$ and  $$B_x=\langle T_x'\rangle  \ast \langle \gamma_f'\rangle \ast \langle \gamma_1'\rangle \ast \ldots\ast \langle \gamma_n'\rangle$$
where $\gamma_f'$ denotes a generator of  $B_x\cap a\alpha_e(A_e)a^{-1}= a\alpha_e(B_f')a^{-1} \leq B_x$. We consider two cases depending on $B_y\leq A_{[y]}$. 
\begin{enumerate}
\item If $B_y\neq 1$, then we say that the  fold is \emph{bad} and we define no marking of $\mathcal B'$.
 
\item  If  $B_y=1$, then we define a marking of $\mathcal B'$ as follows. First define 
$$\mathscr C':=\mathscr C\cup \{(\mathcal C', u_{\mathcal C'}, T_{\mathcal C'})\}$$ 
where  $\mathcal C'\subseteq \mathcal B'$ is the degenerate  $\mathbb A$-graph of  orbifold type consisting of the single vertex $y$ with rigid generating tuple $T_{\mathcal C'} = (\gamma')$. Note that   
 $V_{exc}^{\mathscr C'}B'=  V_{exc}^\mathscr C B.$  
We then define
$$\mathbf B'=(\mathcal B',   \mathscr C', (\mathcal P_u')_{u\in V_{exc}^{\mathscr C'}B})$$
where $\mathcal P_u'=\mathcal P_u$ if $u\neq x$ and $\mathcal P_x'=(T_x', (\gamma_f') \oplus  P_x')$,  see Figure~\ref{fig:foldIIA2b}.
\begin{figure}[h!]
\begin{center}
\includegraphics[scale=1]{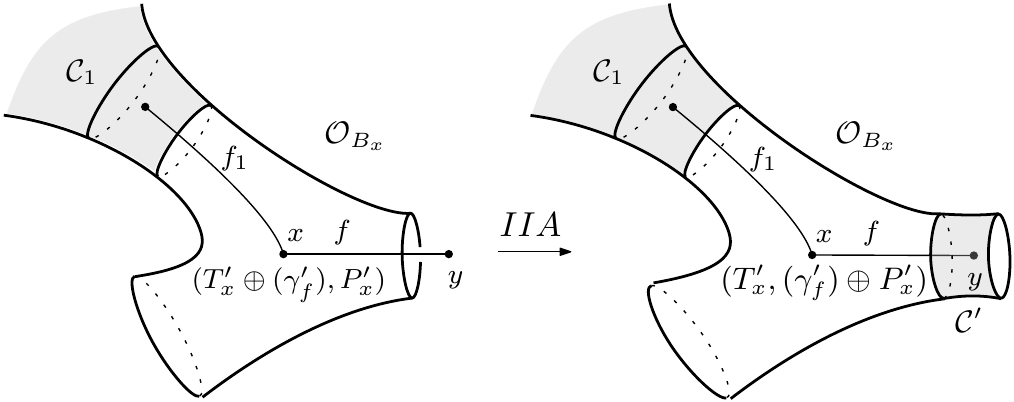}
\caption{The marking of $\mathcal B'$ when (El.1) holds.}{\label{fig:foldIIA2b}}
\end{center}
\end{figure} 
As observed in Remark~\ref{remark:noncritical},  the partitioned tuple $\mathcal P_x'$ is non-critical and of simple type. In this case we say that the fold is \emph{good}.  
\end{enumerate} 

\smallskip 

\noindent{\emph{Fold IIA(El.2):}} Suppose that (El.2) holds, i.e.~$B_x=\langle T_x\oplus P_x\rangle\le A_{[x]}=\pi_1^o(\mathcal O_{[x]})$  is of finite index and the elements in $P_x$ correspond to all but one boundary component of the orbifold $\mathcal O_{B_x}$ corresponding to $B_x$. The  geometry of this case is depicted in Figure~\ref{fig:foldIIA2b}. We consider two cases depending on $B_y$. \begin{enumerate}
\item If $B_y\neq 1$, then we say that the tame elementary  fold  based on the edge $f$ is \emph{bad} and we define no marking of $\mathcal B'$.
 
\item If $B_y=1$,  then we define a marking on $\mathcal B'$ in the following way.  We  first define the collection $\mathscr C'$ satisfying (A.1) and (A.2). Assume that 
 $Star(x, B)\cap E(\mathcal B)= \{f_1, \ldots, f_r\}.$  
For each $1\le  j\le  r$ let $(\mathcal C_j, u_{\mathcal C_j}, T_{\mathcal C_j})\in \mathscr C$   such that  $\omega(f_j)$ is a boundary vertex of  $\mathcal C_j$.  Let $C$  be the  sub-graph of $B=B'$ that has vertex set
$$ VC=  VC_1\cup \ldots \cup VC_r \cup \{x, y\}$$   
and edge set
$$EC= EC_{1}\cup \ldots \cup EC_r \cup \{f_1, \ldots, f_r, f\}^{\pm1}.$$
Denote by $\mathcal C$ (resp.~$\mathcal C'$) the sub-$\mathbb A$-graph of $\mathcal B$ (resp.~of $\mathcal B'$) carried by $C$. It follows from  condition (II.3) that $\mathcal C'$ is an $\mathbb A$-graph of orbifold type contained in  $\mathcal B'$.

Next we  provide a rigid generating tuple of $\pi_1(\mathbb C', x)$. The fold along $f$ carries  $\mathcal C$ into $\mathcal C'$. As $B_y=1$ epimorphism $\nu: \pi_1(\mathbb C, x)\rightarrow \pi_1(\mathbb C', x)$ induced by the fold is an isomorphism. Chose a maximal subtree  $Y_C\subseteq C$ and for each $1\le i\le r$  let 
$$p_i:=1, e_{i,1}, 1, \ldots, 1 , e_{i, l_i}, 1$$ 
where $e_{i, 1}, \ldots, e_{i, l_i}$ is the unique reduced path in $Y_C$ from $x$ to $u_{\mathcal C_i}$. Put  
$$ T_x':=\nu(T_x) \ \ \text{ and  } \ \ T_i':=\nu(p_i T_{\mathcal C_i} p_i^{-1})$$
 for $1\le i\le  r$.  Since $T_x\oplus P_x$ is a rigid generating tuple of $B_x$ it follows that 
$$T_{\mathcal C'}:=T_{x}'\oplus T_{\mathcal C_1}'\oplus \ldots \oplus T_{\mathcal C_r}'$$
 is a rigid generating tuple of $\pi_1(\mathbb C', x)$.   Thus $(\mathcal C', u_{\mathcal C'},  T_{\mathcal C'})$ is a marked $\mathbb A$-graph of orbifold type  such that  $\mathcal C'\subseteq \mathcal B'$, and hence  $$\mathscr C':=(\mathscr C \setminus \{(\mathcal C_1, u_{\mathcal C_1}, T_{\mathcal C_1}), \ldots, (\mathcal C_r, u_{\mathcal C_r}, T_{\mathcal C_r})\}) \cup \{(\mathcal C', x, T_{\mathcal C'})\}$$ is a collection of marked $\mathbb A$-graphs of orbifold type  satisfying condition  (A.1).    Condition (A.2)  follows as  no member of $\mathscr C$ contains $x$ or  $y$. Observe also  that the $V_{exc}^{\mathscr C'}B=V_{exc}^{\mathscr C}B\setminus\{x\}.$ 
\begin{figure}[h!]
\begin{center}
\includegraphics[scale=1]{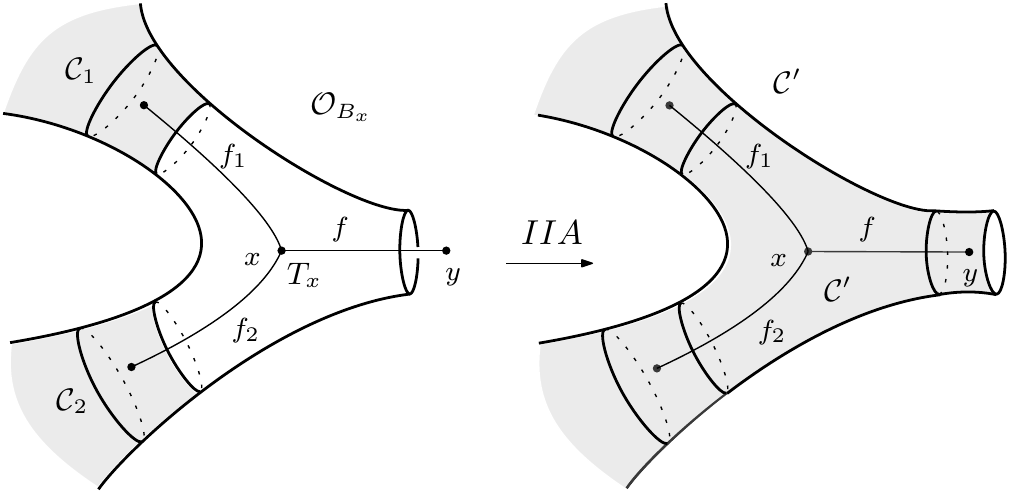}
\caption{The marking of $\mathcal{B}'$ when  (El.2) holds.}{\label{fig:foldIIA2b}}
\end{center}
\end{figure}
We then  define
$$\mathbf B'=(\mathcal B', u_0, \mathscr C', (\mathcal P_u')_{u\in V_{exc}^{\mathscr C'}B})$$ 
where  $\mathcal P_u'=\mathcal P_u$ for all $u\neq x$.  
 In this case we say that the fold is \emph{good}. Note that $x$ turn into a vertex of orbifold type in $\mathbf B'$. 
\end{enumerate}

\medskip

We now discuss tame elementary folds of type IA and IIIA. Those are elementary folds of type IA or IIIA such that the edge group of at least one of the edges involved in the fold is trivial. These types of fold are less subtle than the case of folds of type IIA as no preprocessing/normalization is required,  see Remark~\ref{remark_preprocessing} and Defintion~\ref{Def_normalized}.  Let 
$$\mathbf B=(\mathcal B,   \mathscr C, (\mathcal P_u)_{u\in V_{exc}^{\mathscr C}B})$$ 
be a \emph{tame  marked $\mathbb A$-graph} and suppose that $\mathcal B'$ is obtained from $\mathcal B$ by a tame elementary fold of type IA or IIIA. We proceed as before by considering the various cases by defining a marking of $\mathcal B'$ if appropriate and calling the folds \emph{good} or \emph{bad}. 
 
\smallskip

\noindent{\bf \em Tame elementary folds of type IA}. In this case  the fold identifies a pair of distinct  edges $f_1$ and $ f_2$ in $B$ with same label $(a, e, b)$, same initial vertex  $x:=\alpha(f_1)=\alpha(f_2)$, and distinct terminal vertices   $y_1:=\omega(f_1)$ and $y_2:=\omega(f_2)$. 

The common image of $f_1$ and $f_2$ (resp.~$y_1$ and $y_2$)  in $B'$ under the fold will be denoted by $f$ (resp.~by $y$), see Figure~\ref{fig:tamefoldIA}. As the fold is tame we may assume that $B_{f_2}$ is trivial. Thus 
 $B_f'=\langle B_{f_1}, B_{f_2}\rangle =B_{f_1}$  and $ B_{y }'=\langle B_{y_1}, B_{y_2}\rangle.$  
\begin{figure}[h!]
\begin{center}
\includegraphics[scale=1]{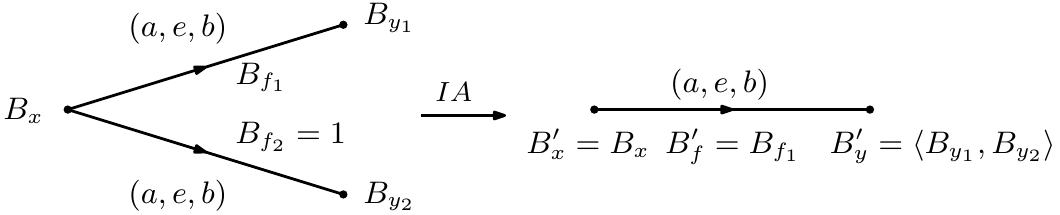}
\caption{A tame elementary fold of type IA}{\label{fig:tamefoldIA}}
\end{center}
\end{figure}
We say that $\mathcal B'$ is obtained from $\mathcal B$ by a \emph{tame elementary fold of type IA}. We distinguish the cases that $x$ is non-peripheral and that $x$ is peripheral.

\smallskip

\noindent{\emph{Fold IA(1)}}. Suppose that $x$ is non-peripheral. There are two subcases.
\begin{enumerate}
\item If $B_{y_1}=1$ or $B_{y_2}=1$, then the image of the marking of  $\mathcal B$  under the fold  is a marking for $\mathcal B'$ defining a tame marked $\mathbb A$-graph $\mathbf B'$. In this case we say that the tame elementary fold is \emph{good}. 

\item If $B_{y_1}\neq 1$ and $B_{y_2}\neq 1$  then we say that the tame elementary  fold is \emph{bad} and we define no marking of $\mathcal B'$. 
\end{enumerate}

\smallskip

\noindent{\emph{Fold IA(2)}}. Suppose that $x$ is  peripheral (and hence,  $y_1$ and $y_2$ are non-peripheral). We distinguish the cases that  at least one of the  vertices $y_1$ and $y_2$  is of orbifold type and the case that none of them is of orbifold type.

\begin{enumerate}
\item Let $i\neq j\in \{1,2\}$ and suppose that $y_i$ is of orbifold type. 

\begin{enumerate}
\item  If $B_{y_j}$ is trivial,  then  the image of the marking $(\mathscr C, (\mathcal P_u)_{u\in V_{exc}^\mathscr CB})$ of $\mathcal B$ under the fold  is a marking of $\mathcal B'$ defining a tame marked $\mathbb A$-graph $\mathbf B'$. In this case we say that  the  tame elementary fold is \emph{good}.

\item If the vertex group $B_{y_j}$ is non-trivial,  then we say that the  tame elementary fold is \emph{bad} and we define no marking of $\mathcal B'$.
\end{enumerate}

\item Suppose that $y_1$ and $y_2$ are not of orbifold type.   Thus $y_1$ and $y_2$ are exceptional, and so   $\mathcal P_{y_1} =(T_{y_1}, P_{y_1})$  and  $ \mathcal P_{y_2} =(T_{y_2}, P_{y_2})$ are non-critical and  of simple type. Observe that $\mathcal B'$ contains $\mathcal C$ for each  $(\mathcal C, u_{\mathcal C}, T_{\mathcal C})\in \mathscr C$.  We define
$$\mathbf B'=(\mathcal B',   \mathscr C , (\mathcal P_u')_{u\in V_{exc}^{\mathscr C}B'}).$$
Since $y_1$ and $y_2$ are exceptional  it follows that  
$$V_{exc}^{\mathscr C}B'=(V_{exc}^{\mathscr C}B-\{y_1, y_2\})\cup \{y\}.$$ 
The  partitioned tuples  are given by $\mathcal P_{w}'=\mathcal P_w$ if $w\neq y$  and 
 $\mathcal P_{y}'=(T_{y_1}\oplus T_{y_2}, P_{y_1}\oplus P_{y_2}).$   
 Again we consider all possible cases for $\mathcal P_y'$.  
\begin{enumerate}
\item If   $\mathcal P_y'$ is non-critical of simple type then $\mathbf B'$ is tame and we call the  fold \emph{good}.  
  
\item If $\mathcal P_{y}'$ is of almost orbifold covering type, then  $\mathbf B'$ is not tame.  

Let $\mathbf B''$ be the marked $\mathbb A$-graph that is obtained from $\mathbf B'$ by successively removing valence one  vertices  with trivial group and their adjacent edges.    Then   one of the following occurs:  
\begin{enumerate}
\item $\mathbf B''$ is  a special almost orbifold covering with a good marking. Then we say that the tame elementary  fold is \emph{good}.

\item The previous case  does not occur. Then we say that  fold is \emph{bad}. 
\end{enumerate}
 
\item If $\mathcal P_{y}'$ is not of the two types above, then again we say that the tame elementary fold is \emph{bad}.
\end{enumerate}
\end{enumerate}

\smallskip

\noindent{\bf \em Tame elementary  folds of type IIIA}.  In this case  two edges $f_1$ and $f_2$  labeled $(a, e, b_1)$ and $(a, e, b_2)$ with   $x:=\alpha(f_1)=\alpha(f_2)$ and  $y:=\omega(f_1)=\omega(f_2)$ are identified into a single edge $f$ labeled $(a, e, b_1)$, see Figure~\ref{fig:tamefoldIIIA}.
\begin{figure}[h!]
\begin{center}
\includegraphics[scale=1]{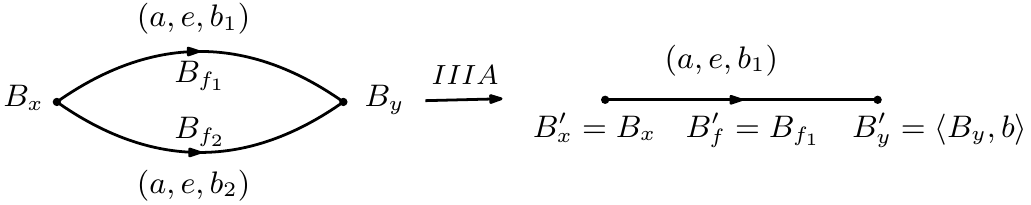}
\caption{A tame elementary fold  of type IIIA}{\label{fig:tamefoldIIIA}}
\end{center}
\end{figure}

To simplify notation we identify the vertex set of the graph $B'$ (underlying $\mathcal B'$) with the vertex set of the graph $B$ (underlying $\mathcal B$).  As the fold is tame we may assume that $B_{f_2}$ is trivial. Thus  $B_f'=\langle B_{f_1}, B_{f_2}\rangle =B_{f_1} $ and $B_y'=\langle B_y, b\rangle$  where  $b:=b_1^{-1}b_2 \in A_{[y]}$. We say that $\mathcal B'$ is obtained from $\mathcal B$ by a \emph{tame elementary fold of type IIIA}. As in the previous case  we distinguish the cases that $x$ is non-peripheral and that $x$ is peripheral. We distinguish two cases depending on the type of $x$.

\smallskip
 
\noindent{\emph{Fold IIIA(1):}}  Suppose that $x$ is non-peripheral (hence,  $y$ is peripheral).
\begin{enumerate}
\item If $B_y\neq 1$,  then we say that the fold is \emph{bad}  and we define no marking of $\mathcal B'$. 

\item  If  $B_y=1$  and $b_1=b_2$,   then we say that the  fold is \emph{bad}   and we define no marking of $\mathcal B'$.

\item  If  $B_y\neq 1$ and $b_1\neq b_2$,  then we call the fold \emph{good} and define a marking for       $\mathcal B'$  as follows.  Observe that $\mathcal B'$ contains $\mathcal C$ for each  $(\mathcal C, u_{\mathcal C}, T_{\mathcal C})\in \mathscr C$. We define   
$$\mathscr C'=\mathscr C \cup \{(\mathcal C'', u_{\mathcal C''}, T_{\mathcal C''})\}$$  
where $(\mathcal C'', u_{\mathcal C''}, T_{\mathcal C''})$ is the degenerate marked $\mathbb A$-graph of orbifold type consisting  of the peripheral vertex $y$ and rigid generating tuple  $T_{\mathcal C''}=(b)$.  Thus  $V_{exc}^{\mathscr C'}B=V_{exc}^{\mathscr C}B.$ We then  define 
$$\mathbf B'=(\mathcal B',   \mathscr C', (\mathcal P_u')_{u\in V_{exc}^{\mathscr C'}B'})$$
where   $\mathcal P_w'=\mathcal P_w$ for all $w\in V_{exc}^{\mathscr C'}B'$. 
\end{enumerate}

\smallskip

\noindent {\emph{Fold IIIA(2)}}. Suppose that $x$ is   peripheral (hence $y$ is non-peripheral). We need to distinguish the cases that $y$ is of orbifold type and the case that $y$ is  exceptional. 
\begin{enumerate}
\item If $y$ is of orbifold type,   then   the fold is \emph{bad} and we define no marking of $\mathcal B'$.

\item If $y$ is  exceptional  then $\mathcal P_y=(T_y, P_y)$ is  non-critical  of simple type.   Observe that $\mathcal B'$ contains $\mathcal C$ for each  $(\mathcal C, u_{\mathcal C}, T_{\mathcal C})\in \mathscr C$.  Thus
 $V_{exc}^{\mathscr C }B'=V_{exc}^\mathscr CB.$ 
We define 
$$\mathbf B'=(\mathcal B',  \mathscr C , (\mathcal P_u')_{u\in V_{exc}^{\mathscr C }B'})$$ 
 where $\mathcal P_u'=\mathcal P_u$ if $u\neq y$ and 
 $$\mathcal P_{y}'=(T_y\oplus (b), P_y).$$
To decide  if the fold is good or bad  we consider all possibilities for the partitioned tuple $\mathcal P_y'$ as in  FoldIA(2)(2).
\end{enumerate}

\begin{lemma} 
If a marked $\mathbb A$-graph $\mathbf B'$  is obtained from a tame marked $\mathbb A$-graph $\mathbf B$ by a tame elementary fold, then  $[T_{\mathbf B'}]=[T_\mathbf B].$ 
In particular, $[T_\mathbf b]=[T_{\mathbf b'}]$. 
\end{lemma}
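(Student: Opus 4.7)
The plan is to verify the equality $[T_{\mathbf B'}] = [T_{\mathbf B}]$ case by case, one case for each tame elementary fold type listed in Section~\ref{sec:descriptionfolds}. The common strategy is to exploit the freedom in choosing the maximal $\mathscr C$-subtree $Y$ and the orientation $E$ entering the definition of $T^{\mathbf B}_{Y,u_0}$ so that the fold manifests as an explicit sequence of Nielsen moves. Since $[T_{\mathbf B}]$ is independent of $Y$ and of $u_0$ (by the discussion preceding the definition of $[T_{\mathbf B}]$, which rests on \cite[Lemma~4.16]{Dut}), it suffices to exhibit one good choice of $Y$ and $Y'$, compatible in an obvious sense, and compare the resulting tuples.

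For tame folds of type IA identifying $f_1,f_2$ with common initial vertex $x$ and distinct terminal vertices $y_1,y_2$, choose $Y$ to contain $f_1$ but not $f_2$, so that $g_{f_2}$ appears as an element of $T^{\mathbf B}_{Y,u_0}$. Straightforward bookkeeping shows $g_{f_2}$ differs from $g_{f_1}$ by a conjugate of an element of $B_{y_2}$; after absorbing the corresponding contribution at $y_2$ into the merged vertex $y$ via a Nielsen move, the tuple matches $T^{\mathbf B'}_{Y',u_0}$ where $Y'$ is the image of $Y\cup\{f_2\}$. The subcases IA(1) and IA(2) differ only in where the vertex data is absorbed (into an exceptional tuple, as $T_{y_1}\oplus T_{y_2}$, or into the rigid tuple of an orbifold subgraph), but both are routine. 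For tame folds of type IIIA identifying $f_1,f_2$ with common endpoints and producing $b = b_1^{-1}b_2\in A_{[y]}$, choose $Y$ so that $f_1\in Y$ while $f_2$ is in the orientation $E$. Then $g_{f_2}$ equals $g_{f_1}$ composed with a conjugate of $b$, and the Nielsen move $g_{f_2}\mapsto g_{f_1}^{-1}g_{f_2}$ yields a conjugate of $b$, which is exactly either added to $T_y$ (case IIIA(2)) or realized as the rigid generator $T_{\mathcal C''}=(b)$ of the new degenerate orbifold subgraph at $y$ (case IIIA(1)).

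For tame folds of type IIA, which change an edge group from trivial to non-trivial, the argument is more delicate because a relation is effectively being added. In case IIA(El.0), where $x$ is a boundary vertex of some $(\mathcal C,u_{\mathcal C},T_{\mathcal C})\in\mathscr C$, the new peripheral element $\gamma'$ added to $P_y$ is already expressible via $T^{\mathcal C}_{Y,u_0}$ (because the generator of $B_f'$ at the $x$-side is a peripheral element of $\pi_1^o(\mathcal O_{\mathcal C})$); rewriting this expression using Nielsen moves in $T^{\mathbf B}_{Y,u_0}$ produces $T^{\mathbf B'}_{Y',u_0}$ after a suitable choice of $Y'$. In case IIA(El.1), the preprocessing described in Remark~\ref{remark_preprocessing} places $\gamma_f'$ as an explicit free-factor generator of $\mathcal P_x$, and the fold simply transports this generator through the (new) edge $f$ to become the rigid generator of the degenerate orbifold subgraph at $y$; this transportation is visibly a Nielsen move via $f$. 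In case IIA(El.2), the vertex $x$ is absorbed into an expanded orbifold subgraph $\mathcal C'$, and the new rigid tuple $T_{\mathcal C'} = T_x'\oplus T_{\mathcal C_1}'\oplus\cdots\oplus T_{\mathcal C_r}'$ is built by conjugating each $T_{\mathcal C_i}$ along the tree-path $p_i$ inside $\mathcal C'$; choosing $Y'$ to contain $Y_C$ plus the non-$\mathscr C$ edges of $Y$, one checks that $T^{\mathbf B'}_{Y',u_0}$ reassembles, in the same order and with the same conjugating prefixes, exactly the elements of $T^{\mathbf B}_{Y,u_0}$.

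The main obstacle is case IIA(El.2): since the fold both enlarges an existing orbifold subgraph and absorbs the partitioned tuple $\mathcal P_x$ into the resulting rigid tuple, one has to coordinate three different choices (the maximal $\mathscr C$-subtree $Y$ on the $\mathbf B$-side, the maximal $\mathscr C'$-subtree $Y'$ on the $\mathbf B'$-side, and the maximal subtree $Y_C$ of $C$ used in building $T_{\mathcal C'}$) so that all conjugating paths agree up to Nielsen-type rewriting. The comparison of the two tuples then reduces to verifying that the two natural presentations of $\pi_1(\mathbb C',x)$ — the one obtained by Seifert--van Kampen in $\mathcal O_{\mathcal C'}$ and the one assembled from $B_x=\langle T_x\oplus P_x\rangle$ together with the $\pi_1(\mathbb C_i,u_{\mathcal C_i})$ — produce Nielsen-equivalent tuples, which is ensured by rigidity of $T_x\oplus P_x$ (Remark~\ref{remark:noncritical}) together with \cite[Lemma~4.16]{Dut}. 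Once this is done, the final sentence $[T_{\mathbf b}]=[T_{\mathbf b'}]$ follows immediately because $[T_{\mathbf b}]$ is defined using an arbitrary representative.
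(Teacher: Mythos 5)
The overall strategy is the same as the paper's --- fix a convenient maximal $\mathscr C$-subtree $Y$, produce a compatible maximal $\mathscr C'$-subtree $Y'$, and compare $\phi_{\mathcal B}(T^{\mathbf B}_{Y,u_0})$ with $\phi_{\mathcal B'}(T^{\mathbf B'}_{Y',u_0'})$ --- but your specific treatment of folds of type IA has a real gap, and this is precisely the case the paper singles out as the one that needs care.

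You propose to always pick $Y$ with $f_1 \in Y$ and $f_2 \notin Y$. This is not always available: if there is a marked $\mathbb A$-graph of orbifold type $(\mathcal C, u_{\mathcal C}, T_{\mathcal C}) \in \mathscr C$ with $x, y_1 \in VC$ but $f_1 \notin EC$ (this is exactly the configuration ``$\mathcal C \cap (f_1 \cup f_2) = \{x, y_1\}$''), then $Y \cap C$ already connects $x$ to $y_1$ inside $C$, so adding $f_1$ to $Y$ would create a cycle; hence $f_1$ can never lie in a maximal $\mathscr C$-subtree. The paper resolves this by a two-case analysis: when $\mathcal C \cap (f_1 \cup f_2)$ is connected for all $\mathcal C$, it takes $Y \supseteq f_1 \cup f_2$, in which case $Y'$ (the image of $Y$) is already a maximal $\mathscr C'$-subtree and the two tuples map to \emph{literally the same} tuple in $\pi_1(\mathbb A, v_0)$; in the complementary case it takes $Y$ containing $f_2$ but not $f_1$, so the exceptional generator $g_Y$ corresponding to $f_1$ in $T^{\mathbf B}_{Y,u_0}$ and the exceptional generator $g'_{Y'}$ corresponding to $f$ in $T^{\mathbf B'}_{Y',u_0'}$ map to the same element $h$, and all remaining entries differ only by left/right multiplication or conjugation by $h$. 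Your choice of $Y$ forces the image of $Y$ under the fold to contain the cycle created by identifying $y_1$ with $y_2$, so $Y'$ is not simply ``the image of $Y \cup \{f_2\}$''; you would still have to remove an edge from that cycle and then re-conjugate every generator along the new tree paths, which is exactly the bookkeeping you are waving at but not doing. Moreover the claim that ``$g_{f_2}$ differs from $g_{f_1}$ by a conjugate of an element of $B_{y_2}$'' is not meaningful as stated, since $g_{f_1}$ is trivial when $f_1 \in Y$.

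A secondary point: your argument for IIA(El.0) is overcomplicated. In that case the fold only adds $\gamma'$ to the \emph{peripheral} tuple $P_y$, and peripheral tuples do not contribute to $T^{\mathbf B}_{Y,u_0}$ at all (only $T_u$, $T_{\mathcal C}$, and the edge generators $g_e$ do); the generating tuple is literally unchanged, so no rewriting or Nielsen moves are needed. Your sketches for IIIA, IIA(El.1), and IIA(El.2) are in the right spirit and essentially match what the paper gestures at when it says the remaining cases are ``slightly simpler,'' but the IA case needs the paper's case split to be correct.
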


\begin{proof}
 We will give a detailed  proof  for the case of folds of type IA. The remaining cases are slightly  simpler  and follows from inspecting the various cases. We follow the notation  from the previous paragraphs.
 
As the fold yields a marked $\mathbb A$-graph  at least one of the vertices $y_1$ and $y_2$, say $y_2$, is not of orbifold type. This implies that   the intersection of $\mathcal C$ (with $(\mathcal C, u_{\mathcal C}, T_{\mathcal C})$  in $ \mathscr C$) with $f_1\cup f_2$ is contained in the segment $f_1$. We distinguish two cases.  

\smallskip

\noindent (1) $\mathcal C\cap (f_1\cup f_2)$ is connected for all $(\mathcal C, u_{\mathcal C}, T_{\mathcal C})\in \mathscr C$ (it might be a single vertex or the entire edge $f_1$). In this case  we can choose  a maximal  $\mathscr C$-subtree $Y$  that contains $f_1\cup  f_2$. The image $Y'$ of $Y$ under the fold is clearly a maximal subtree of $B'$ that contains  $f$.  The fold  maps $C$  isomorphically onto $C'$  for each  $(\mathcal C, u_{\mathcal C}, T_{\mathcal C})$ in $\mathscr C$. Thus   $Y'\cap C'$   is  a maximal subtree of $C'$  for each  $(\mathcal C', u_{\mathcal C'}, T_{\mathcal C'})\in \mathscr C'$, that is, $Y'$ is a maximal $\mathscr C'$-subtree. To complete the argument observe that 
 $\phi_\mathcal B(T_{Y, u_0}^{\mathbf B})=\phi_{\mathcal B'}(T_{Y', u_0'}^{\mathbf B'}).$

\noindent (2) There is $(\mathcal C, u_{\mathcal C}, T_{\mathcal C})\in \mathscr C$ such that  $\mathcal C\cap (f_1\cup f_2)=\{x,y_1\}$. In this case 
 $\bar{C} \cap (f_1\cup f_2)=\emptyset$  for all $(\bar{\mathcal C}, u_{\bar{\mathcal C}}, T_{\bar{\mathcal C}})\in \mathscr C\setminus\{(\mathcal C, u_{\mathcal C}, T_{\mathcal C})\}$. 
\begin{figure}[h!]
\begin{center}
\includegraphics[scale=1]{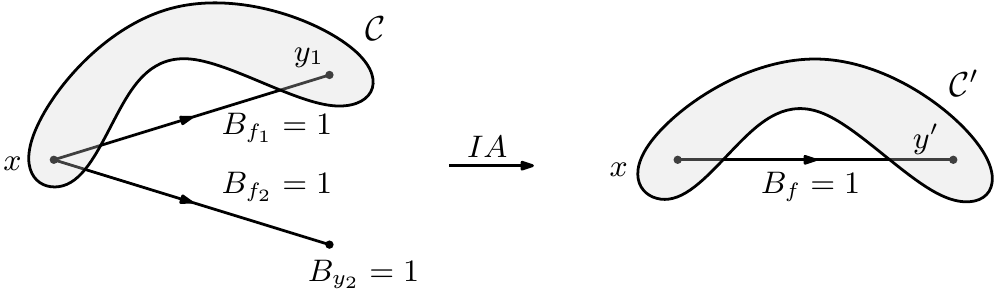}
\caption{The intersection $C\cap (f_1\cup f_2)$ is not a tree.}{\label{fig:tamefoldIAlemma}}
\end{center}
\end{figure} 
Moreover, the assumption that the fold yields a  marked $\mathbb A$-graph implies that $B_{y_2}=1$. We can therefore  choose a  maximal $\mathscr C$-subtree $Y$  that contains  $f_2$. The image of $Y\setminus\{f_1,f_2\}^{\pm1}$ under the fold is a maximal $\mathscr C'$-subtree $Y'$ of $B'$.  Observe that  $f_1\in E_{exc}^{\mathscr C}B\setminus EY$  and $  f\in E_{exc}^{\mathscr C'}B'\setminus EY'$. It is not hard to see that 
$$T_{Y, u_0}^{\mathbf B}=T\oplus (g_Y) \  \ \text{ and  } \  \ T_{Y, u_0'}^{\mathbf B'}=T'\oplus (g_{Y'}')$$ 
where  $g_Y\in \pi_1(\mathbb B, u_0)$ corresponds to $f_1$ and   $g_{Y'}'\in \pi_1(\mathbb B', u_0')$ corresponds to $f$ such that $$h:=\phi_\mathcal B(g_Y)=\phi_{\mathcal B'}(g_{Y'}').$$  
Observe now  that  each element of $\phi_{\mathcal B'}(T')$ is obtained from an element of $\phi_\mathcal B(T)$ by doing nothing,  or by left or right multiplication with $h$ or by conjugation with $h^{\pm1}$.  Therefore  $\phi_\mathcal B(T_{Y, u_0}^{\mathbf B})$ is Nielsen equivalent to $\phi_{\mathcal B'}(T_{Y', u_0'}^{\mathbf B'})$. This completes the proof in the case of a tame elementary fold of type IA.  
\end{proof}

\noindent\textbf{The core of a marked $\mathbb A$-graph}. In this section we explain how the core of the underlying $\mathbb A$-graph of  a marked $\mathbb A$-graph  inherits a  marking. Let $\mathbf B=(\mathcal B,\mathscr C,(\mathcal P_u)_{u\in V_{exc}^{\mathscr C}B})$ 
be a (not necessarily tame) marked $\mathbb A$-graph. 

If $\mathcal B=core(\mathcal B)$  then there is nothing to do. Otherwise  there is $f\in EB$ such that  $val(\omega(f), B)=1 $ and $ B_{\omega(f)}=t_f^{-1}\omega_{[f]}(B_f)t_f$.  
There are two cases depending on the vertex group at $\omega(f)$:

\begin{enumerate}
\item   $B_{\omega(f)}=1$. Let $\mathcal B'$ be the $\mathbb A$-graph  that is obtained from $\mathcal B$ by removing the edge pair $\{f, f^{-1}\}$ and the vertex $\omega(f)$.  Note that $\mathcal C$ is contained in $\mathcal B'$ for all $(\mathcal C, u_{\mathcal C}, T_{\mathcal C})$ in $\mathscr C$. Moreover,  $V_{exc}^{\mathscr C}B'=V_{exc}^{\mathscr C}B\setminus \{\omega(f)\}.$  
 
We  define 
 $\mathbf B'=(\mathcal B', \mathscr C,(\mathcal P_u')_{u\in V_{exc}^{\mathscr C}B'}) $
where $\mathcal P_u'=\mathcal P_u$ for all $u\in V_{exc}^{\mathscr C}B'$.

\item   $B_{\omega(f)}\neq 1$. There are two subcases depending on the tameness of $\mathbf B$.  
\begin{enumerate}
\item $\mathbf B$ is tame. In this case there is a unique tame marked $\mathbb A$-graph $\mathbf B'$  such that $\mathbf B$ is obtained from $\mathbf B'$ defined as in case (1) by a tame elementary fold of type IIA based on $f$.

\item $\mathbf B$ is not tame.  It is not hard to see that in this case $\alpha(f)$ is exceptional. As $B_f\neq 1$  the partitioned tuple at $\alpha(f)$ has the form  $$\mathcal P_{\alpha(f)}=(T_{\alpha(f)}, (\gamma_f)\oplus P_{\alpha(f)})$$ where $\gamma_f\in B_{\alpha(f)}$ is the peripheral element  associated to $f$. 
On the other hand   $\omega(f)$ is peripheral and of orbifold type as $B_{\omega(f)}\neq 1$.  

Let $\mathcal B'$ be the $\mathbb A$-graph that is obtained from $\mathcal B$ by replacing   $B_f$ and  $B_{\omega(f)}$  by $B_f'=1$ and $B_{\omega(f)}=1$ respectively.   
Note that $\mathcal C$ is contained in  $\mathcal B'$  for all $(\mathcal C, u_{\mathcal C}, T_{\mathcal C})$ in $\mathscr C$. 

We  define  $\mathbf B'=(\mathcal B',\mathscr C, (\mathcal P_{u}')_{u\in V_{exc}^{\mathscr C}B'})$ 
where $\mathcal P_u'=\mathcal P_u$ if $u\neq \alpha(f)$ and 
 $$\mathcal P_{\alpha(f)}'=(T_{\alpha(f)}\oplus (\gamma_f), P_{\alpha(f)}).$$   
\end{enumerate}
\end{enumerate}
In both cases the new marked $\mathbb A$-graph $\mathbf B'$ such that (i) $core(\mathcal B)\subseteq \mathcal B'$ and (ii) $|EB'|<|EB|$ or $|EB'|=|EB|$ but  $|E(\mathcal B')|<|E(\mathcal B)|$.  
Now we repeat the argument with $\mathbf B'$ playing the role of $\mathbf B$. After finitely many steps we obtain a  marking for   $core(\mathcal B)$. We denote this marked $\mathbb A$-graph by $core(\mathbf B)$. \emph{We will always assume that $core(\mathcal B)$  is equipped with this marking also called the induced marking.}

\begin{lemma}{\label{lemma:equivcore}}
Let $\mathbf B$  and $\mathbf B'$ be   tame marked $\mathbb A$-graphs. If  $\mathbf B$ is equivalent to $\mathbf B'$, then $core(\mathbf B)$ is equivalent to $core(\mathbf B')$.  
\end{lemma}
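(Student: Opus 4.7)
By induction on the length of an equivalence sequence $\mathbf B=\mathbf B_0,\mathbf B_1,\ldots,\mathbf B_k=\mathbf B'$, it suffices to treat the case where $\mathbf B'$ is obtained from $\mathbf B$ by a single elementary move $M$. Intermediate marked $\mathbb A$-graphs in such a sequence need not be tame, but the core operation is defined for an arbitrary marked $\mathbb A$-graph (using case (2.b) of the coring definition when tameness fails), so $core(\mathbf B_i)$ makes sense throughout and we can compare consecutive cores one step at a time.

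The plan is a case analysis on the type of $M$ and on its location relative to the subgraph $core(\mathcal B)\subseteq \mathcal B$. The key point is that every vertex belonging to some $\mathcal C$ with $(\mathcal C,u_{\mathcal C},T_{\mathcal C})\in\mathscr C$, as well as every vertex of $V_{exc}^{\mathscr C}B$, is preserved by the coring procedure. Consequently, a Nielsen move, a peripheral move of either type, an A2 move on a core edge, an A0 subdivision of a core edge, or an A1 conjugation at a core vertex descends directly to the corresponding elementary move on $core(\mathbf B)$ and produces $core(\mathbf B')$ on the nose. Conversely, if $M$ takes place entirely within the pendant subgraph removed by the coring procedure, a straightforward inspection of the iterated reduction shows that $core(\mathbf B)=core(\mathbf B')$ as marked $\mathbb A$-graphs.

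The delicate case is when $M$ modifies an edge $f$ or the vertex $\omega(f)$ that is absorbed by a coring step of form (2.a) or (2.b). Here the data absorbed into the adjacent core vertex $\alpha(f)$ — either the conjugating element $b\in A_{[\omega(f)]}$ produced when undoing a tame fold of type IIA, or the peripheral generator $\gamma_f$ transferred into the non-peripheral part of $\mathcal P_{\alpha(f)}$ — is itself altered by $M$. The idea is to translate this alteration directly into an explicit finite sequence of Nielsen moves, peripheral moves of type (ii), and auxiliary moves A1/A2 on $core(\mathbf B)$ that transforms $core(\mathbf B)$ into $core(\mathbf B')$. The verification is purely local at $\alpha(f)$ and follows from the explicit formulas describing the coring step, together with the behaviour of $b$ and $\gamma_f$ under each type of elementary move.

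The main obstacle is the bookkeeping at this interface: one must check that $M$ does not change which branch of the coring definition applies to $f$ in a way that would alter the underlying $\mathbb A$-graph of the core. This is handled by observing that the property distinguishing case (2.a) from case (2.b) for the terminal coring step at $f$ — namely whether $\mathcal P_{\alpha(f)}$ is of simple type — is preserved along the elementary move (both before and after absorbing the peripheral data, by Proposition~\ref{prop:01} applied to $\mathcal P_{\alpha(f)}$), so the two candidate cores have the same underlying $\mathbb A$-graph and their partitioned tuples at $\alpha(f)$ differ only by data realized by the elementary moves already included in the equivalence relation on marked $\mathbb A$-graphs.
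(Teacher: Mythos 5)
Your proof takes essentially the same approach as the paper's very terse one-liner — the observation that the coring procedure is deterministic and involves no elementary moves, hence commutes up to equivalence with the elementary moves defining equivalence of marked $\mathbb A$-graphs — while usefully filling in the case analysis at the core/pendant interface that the paper leaves implicit. One small slip in your last paragraph: cases (2.a) and (2.b) of the coring procedure are distinguished by tameness of the ambient marked $\mathbb A$-graph, not by whether $\mathcal P_{\alpha(f)}$ is of simple type; fortuitously both branches produce the same underlying $\mathbb A$-graph for the core and differ only in how the marking at $\alpha(f)$ is assembled, so your conclusion is unaffected and the worry you flag about the underlying graph changing does not in fact arise.
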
 
\begin{proof}
Note that in going from $\mathbf B$ to  $core(\mathbf B)$  we do not need to apply elementary moves. This implies that we can carry $core(\mathbf B)$ onto $core(\mathbf B')$ by the same elementary moves that carry $\mathbf B$ onto $core(\mathbf B')$.   
\end{proof}


\subsection{The graph associated to a generating tuple of $\pi_1^o(\mathcal O)$}
We now proceed as in  the local case,  that is, to any  generating tuple  of $\pi_1^o(\mathcal O)$ we associate a  directed graph. We then show that in the case of a non-standard irreducible generating tuple  this graph has  a unique root which corresponds to the special almost orbifold covering with a good marking that represents the Nielsen class of the generating tuple.  

\smallskip

Let $T=(g_1, \ldots, g_m)$ be a generating tuple of $\pi_1^o(\mathcal O)=\pi_1(\mathbb A, v_0)$.  The directed graph $\Omega_{[T]}$ associated to the Nielsen class of $T$ is defined as follows. 
\begin{enumerate}
\item[•] The vertex set  $V\Omega_{[T]}$  of $\Omega_{[T]}$ is the set of all equivalence classes of minimal marked $\mathbb A$-graphs $\mathbf b$   such that  $[T_\mathbf b]=[T]$ and  some (and therefore any) representative  of $\mathbf b$   is either tame  or is a special almost orbifold cover  with a good marking.  

\item[•] There is a directed edge $\mathbf b_1\mapsto \mathbf b_2$  from  $\mathbf b_1$ to   $\mathbf b_2$ if there are representatives $ \mathbf{B}_1$ and  $\mathbf{B}_2$ of $\mathbf b_1$  and $\mathbf b_2$    such that $\mathbf{B}_2$ is the core  (with the induced marking) of the \emph{marked $\mathbb A$-graph} that is obtained from $\mathbf B_1$ by a \emph{good tame  elementary fold}.
\end{enumerate}
We define a height function  $h:V \Omega_{[T]}\to \mathbb N_0\times \mathbb N_0$ by  $h(\mathbf b)= (|EB|, |EB|-|E(\mathcal B)|)$   
where $\mathcal B$ and $B$ are the underlying $\mathbb A$-graph and underlying graph respectively of an arbitrary representative of $\mathbf b$.  Throughout this section we assume that  $\mathbb N_0\times \mathbb N_0$ is endowed with the lexicographic order.  Recall that  $E(\mathcal B)=\{f\in EB \ | \ B_f\neq 1\}.$  
Observe that $h$ is well defined as  equivalent tame marked $\mathbb A$-graphs have the same underlying graph and all edges have isomorphic edge groups.

\smallskip

Let $\mathbf b$ and $\mathbf b'$ be vertices of $\Omega_{[T]}$.  We say that $\mathbf b$ \emph{projects} onto  $\mathbf b'$, and write $\mathbf b\rightsquigarrow \mathbf b'$,  if there is a directed path in $\Omega_{[T]}$ from $\mathbf{b}$ to $\mathbf b'$.  It is clear that   $h(\mathbf b')<h(\mathbf b)$ if $\mathbf b\rightsquigarrow \mathbf b'$.

\smallskip

We say that $\mathbf b\in V\Omega_{[T]}$ is a \emph{root} if there is no $\mathbf{b}'\in \Omega_{[T]}$  such that $\mathbf b\mapsto \mathbf b'$. The following lemma characterizes the roots of $\Omega_{[T]}$. 
\begin{lemma}{\label{lemma:roots}}
A vertex $\mathbf{b}$ of $\Omega_{[T]}$ is a root if and only if one of the following occurs:
\begin{enumerate}
\item[(1)] some (and therefore any) representative of $\mathbf b$ is  folded. 

\item[(2)] some (and therefore any) representative of $\mathbf b$ is a  special almost orbifold cover with a good marking. 

\item[(3)] if a tame elementary fold is applicable to a representative  of $\mathbf b$, then the fold is bad.  
\end{enumerate}
\end{lemma}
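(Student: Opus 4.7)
The plan is to dispatch both implications after establishing that each of the three conditions is invariant under equivalence of marked $\mathbb A$-graphs. For (1), being folded is preserved by the elementary moves (auxiliary moves A0, A1, A2, Nielsen moves on vertex and marking tuples, and peripheral moves), since none of these introduce new pairs of identifiable edges. For (2), one inspects Definition~\ref{def:almostorb} and checks that the combinatorial structure $V_{exc}^{\mathscr C}B=\{u\}$ together with the almost-orbifold-covering type of $\mathcal P_u$ and the constraints on $E_{exc}^{\mathscr C}B$ are preserved by elementary moves. For (3), one appeals to Lemma~\ref{lemma:foldsameedges} and the parallel statements implicit in Subsection~\ref{sec:descriptionfolds} to transfer applicability and the good/bad classification of a tame elementary fold across equivalent representatives.

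For ($\Leftarrow$), I would argue as follows. In case (1), no elementary fold whatsoever is applicable to a folded representative, so in particular no good tame elementary fold exists and $\mathbf b$ is a root. In case (2), the Remark following Definition~\ref{def:almostorb} asserts that a special almost orbifold cover with a good marking is not tame; since tame elementary folds are only defined on tame marked $\mathbb A$-graphs, none are applicable and $\mathbf b$ is a root. In case (3), the edge relation in $\Omega_{[T]}$ requires a \emph{good} tame elementary fold, so the hypothesis directly excludes all outgoing edges.

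For ($\Rightarrow$), suppose $\mathbf b$ is a root and pick a representative $\mathbf B$. The vertex set of $\Omega_{[T]}$ consists of classes whose representatives are either tame or special almost orbifold covers with good marking, so either (2) already holds or $\mathbf B$ is tame. If $\mathcal B$ is folded, (1) holds. Otherwise every tame elementary fold applicable to any equivalent representative must be bad, since if a good fold existed, the core of the resulting marked $\mathbb A$-graph would be the target of an outgoing edge, contradicting the root assumption; this yields (3). The main delicate point is verifying the invariance of the good/bad dichotomy in (3) for folds of type IIA, where the normalization step in Definition~\ref{Def_normalized} and Remark~\ref{remark_preprocessing} intervenes. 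The tameness hypothesis (specifically that each exceptional partitioned tuple is non-critical of simple type) is exactly what makes the classification (El.0)--(El.2) of Lemma~\ref{lemmatypesIIA} applicable and allows coherent normalization across equivalent representatives.
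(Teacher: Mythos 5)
Your proof is correct and takes the same route as the paper, which compresses the entire argument into the single observation that the lemma ``follows immediately from the definition of the edge set of $\Omega_{[T]}$ and the fact that no tame elementary fold is applicable to a special almost orbifold cover with a good marking.'' Your fleshed-out version is fine; the only quibble is the appeal to Lemma~\ref{lemma:foldsameedges} in the invariance discussion for~(3) --- that lemma lives in the local (decorated $\mathbb A$-graph) setting, while the relevant global analogue is Lemma~\ref{lemma_edges1}; but since condition~(3) is already universally quantified over representatives, no transfer lemma is actually needed there, so the citation slip is harmless.
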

\begin{proof}
The lemma  follows immediately from the definition of the edge set of $\Omega_{[T]}$  and  the fact that no tame elementary fold is applicable to a special almost orbifold cover with a good marking.  
\end{proof}

\begin{remark}
According to \cite[Lemma~4.20]{Dut}  a generating tuple of $\pi_1^o(\mathcal O)\cong \pi_1(\mathbb A, v_0)$ that is represented   by a folded marked $\mathbb A$-graph  is either reducible or standard. This follows from the fact that a folded $\pi_1$-surjective $\mathbb A$-graph recovers the splitting of $\pi_1^o(\mathcal O)$.  We conclude that $T$ is standard  or reducible whenever $\Omega_{[T]} $ has a root represented by a folded marked $\mathbb A$-graph. 
\end{remark}

\begin{definition}
We  say that  $\mathbf b\in V\Omega_{[T]}$ is \emph{pre-bad} if a bad tame elementary fold is applicable to some representative of $\mathbf b$. 
\end{definition}

\begin{remark}
Note that roots of type (3)  are pre-bad vertices.  However,  pre-bad vertices are not necessarily roots  as a good tame elementary fold can still be applicable to some representative of the vertex.
\end{remark}

\begin{lemma}{\label{lemma:corerevealsbad}}
Let $\mathbf B$ be a tame marked $\mathbb A$-graph. Then $\mathbf B$ admits a bad tame elementary fold if, and only if, $core(\mathbf B)$ admits a bad tame elementary fold.
\end{lemma}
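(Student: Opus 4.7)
The plan is to establish both implications by showing that bad tame elementary folds cannot involve a \emph{hair edge}, i.e., an edge with trivial edge group whose outer vertex has trivial vertex group and valence one; these are precisely the edges iteratively removed, along with their outer vertex, in the construction of $core(\mathbf B)$. Once this is established, any tame elementary fold applicable to one of $\mathbf B$ or $core(\mathbf B)$ whose edges avoid hair is automatically applicable to the other, and the good/bad classification transfers since it depends only on local data.

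For the direction ``$core(\mathbf B)$ admits a bad fold $\Rightarrow$ $\mathbf B$ does'', I would note that $core(\mathcal B)$ is a sub-$\mathbb A$-graph of $\mathcal B$ and that the induced marking on $core(\mathbf B)$ coincides with that of $\mathbf B$ at every vertex and edge not adjacent to hair; at vertices adjacent to hair, the possible type IIA un-folding invoked in case (2a) of the core construction modifies the partitioned tuple in a way that preserves non-criticality and simple type. Since the classification of a tame elementary fold as good or bad in Subsection~\ref{sec:descriptionfolds} depends only on the edge groups, vertex groups, partitioned tuples, and marked sub-$\mathbb A$-graphs of orbifold type adjacent to the edges involved in the fold, any bad fold on $core(\mathbf B)$ remains bad when regarded as a fold on $\mathbf B$.

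For the converse, I would take a bad tame elementary fold $F$ on $\mathbf B$ with initial vertex $x$ and first argue that $x$ is not a hair vertex. Indeed, the initial vertex of a type IIA fold satisfies $B_x \neq 1$ by condition (II.2), while initial vertices of type IA and IIIA folds must support two distinct edges, giving $\mathrm{val}(x, B) \ge 2$; hair vertices violate both. A case-by-case check of the list of bad folds in Subsection~\ref{sec:descriptionfolds} then shows that the terminal vertex (or vertices) of $F$ is also non-hair. For type IIIA folds this is automatic, since both edges share endpoints. For type IA and IIA bad folds whose badness is witnessed by $B_y \neq 1$ or by $y$ being of orbifold type (cases IA(1)(2), IA(2)(1)(b), IIA(El.0)(1), IIA(El.1)(1), IIA(El.2)(1), IIIA(1)(1), IIIA(2)(1)), the conclusion is immediate. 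The remaining cases are those where the merged partitioned tuple $\mathcal P_y'$ is critical or fails to yield a special almost orbifold cover with a good marking (IIA(El.0)(2)(b.ii,c), IA(2)(2)(b.ii,c) and their IIIA analogue). If some terminal vertex were a hair vertex, its partitioned tuple would be trivial, and $\mathcal P_y'$ would coincide, up to appending a trivial element, with the partitioned tuple at the other terminal vertex; tameness of $\mathbf B$ makes the latter non-critical of simple type, contradicting the badness of $F$.

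Hence all vertices and edges involved in $F$ lie in $core(\mathcal B)$, and the same locality argument used in the first direction shows that $F$ remains bad on $core(\mathbf B)$. The main obstacle I expect is the case analysis in the converse direction, particularly the cases where the badness of $F$ is diagnosed indirectly through a critical $\mathcal P_y'$ or through failure to be a special almost orbifold cover with good marking; here one must use the structural results on partitioned tuples from Subsection~\ref{section_local_picture} (in particular the classification in Proposition~\ref{prop:01}) to rule out that the contribution of a hypothetical hair terminal vertex could tip $\mathcal P_y'$ out of the non-critical simple-type regime imposed by the tameness of $\mathbf B$.
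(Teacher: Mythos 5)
Your proof plan rests on the claim that the edges removed in the construction of $core(\mathbf B)$ are precisely the ``hair edges'' (trivial edge group, trivial outer vertex group, valence one), and that bad tame elementary folds cannot involve such edges. The second assertion is correct, but the first is not, and this is where the argument breaks down. The construction of $core(\mathbf B)$ in the paper iteratively removes any edge $f$ with $val(\omega(f), B)=1$ and $B_{\omega(f)}=t_f^{-1}\omega_{[f]}(B_f)t_f$; case (2) of that construction explicitly allows $B_{\omega(f)}\neq 1$, hence $B_f\neq 1$. Such an edge is not a hair edge in your sense (its edge group and terminal vertex group are non-trivial), so showing that the vertices of a bad fold $F$ are ``non-hair'' does not imply they lie in $core(\mathcal B)$. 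The inference ``non-hair $\Rightarrow$ in the core'' is exactly the leap that fails.

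This is not a corner case: it is the case the paper's proof is actually about. There the removed edge $f$ has $B_f\neq 1$, and the bad fold $F$ is a type IA fold identifying $f$ (with $B_{\omega(f)}\neq 1$) with some $g$ (with $B_g=1$ and $B_{\omega(g)}\neq 1$), bad precisely because both terminal vertex groups are non-trivial. Since $val(\omega(f),B)=1$, this is essentially the only way $F$ can involve $f$. That fold disappears when $f$ is removed from the core, so your plan leaves nothing to transfer. The paper recovers badness by producing a \emph{different} fold on $core(\mathbf B)$: because $F$ is elementary, $f$ and $g$ carry the same label, so
$B_{\alpha(g)} \cap o_g\alpha_{[g]}(A_{[g]})o_g^{-1} = B_{\alpha(f)} \cap o_f\alpha_{[f]}(A_{[f]})o_f^{-1}\neq 1$,
and hence a type IIA fold based on $g$ is applicable to $core(\mathcal B)$; it is tame (the edges in $Star(\alpha(f),B)\cap E(\mathcal B)$ meet distinct boundary components of the orbifold $\mathcal O_{B_{\alpha(f)}}$, or else $\alpha(f)$ lies in a marked sub-$\mathbb A$-graph of orbifold type), and it is bad because $B_{\omega(g)}\neq 1$. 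Your plan has no mechanism for producing a replacement fold when $F$ itself does not survive passage to the core. The same misconception also weakens your argument for the other direction: in case (2) the marking at $\alpha(f)$ is modified even though $f$ is not a hair edge, so ``the induced marking coincides at every vertex not adjacent to hair'' is not accurate as stated.
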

\begin{proof}
Assume that $core(\mathbf B)$ is obtained from $\mathbf B$ by removing a single edge $f$. We can further assume that the bad tame elementary  fold  $F$ identifies $f$ with some edge  $g$  in $Star(\alpha(f), B)$ as all other folds can also be applied to $core(\mathbf B)$.

We give a complete argument for the case $\omega(f)$ is peripheral (the case $\omega(f)$ non-peripheral  can be proven similarly).  In this case the badness of $F$   means that $B_{\omega(f)}\neq 1$ (which implies that $B_f\neq 1$ since $f$ is not in the core of $\mathcal B$) and  $B_{\omega(g)}\neq 1$.  The tameness of $F$ implies that $B_g=1$. The fact that $F$ is elementary means that $f$ and $g$ have the same label. Thus  
 $$B_{\alpha(g)} \cap o_g\alpha_{[f]}(A_{[g]})o_g^{-1} = B_{\alpha(f)} \cap o_f\alpha_{[f]}(A_{[f]})o_f^{-1}\neq 1,$$
and so a fold of type IIA based on $g$ can be applied to $core(\mathcal B)$.  This fold is tame since  $\alpha(f)$ and $f$ lie in some sub-$\mathbb A$-graph of orbifold type  or  $\alpha(f)$ is exceptional and $\mathcal P_{\alpha(f)}$ is of simple type (and so the edges in $Star(\alpha(f), B)\cap E(\mathcal B)$ correspond to distinct boundary components of the orbifold corresponding to $B_{\alpha(f)}$).   This fold is bad since   $B_{\omega(g)}\neq 1$.
\end{proof}

\begin{lemma}\label{lemma:corealsot}
Let $\mathbf B$ be a marked $\mathbb A$-graph. Then $core(\mathbf B)$ is a special almost orbifold covering with a good marking iff  $B_f=1$ for all  $f\notin core(\mathcal B)$.
\end{lemma}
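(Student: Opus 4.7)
The plan is to analyze the two directions of the iff separately, both by a direct analysis of the iterative construction of $core(\mathbf B)$ from the preceding subsection. Recall that at each step of that construction one picks a leaf edge $f$ with $val(\omega(f),B)=1$ and $B_{\omega(f)}=t_f^{-1}\omega_{[f]}(B_f)t_f$, and then falls into case~(1) (when $B_{\omega(f)}=1$, simply deleting the edge pair and terminal vertex) or one of the subcases of case~(2) (when $B_{\omega(f)}\neq 1$, modifying some partitioned tuple or undoing a tame IIA fold).

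For the backward implication, assume $B_f=1$ for every $f\notin core(\mathcal B)$. Then at each stage of the iterative construction, the distinguished leaf edge has $B_f=1$, hence $B_{\omega(f)}=t_f^{-1}\omega_{[f]}(B_f)t_f=1$, so only case~(1) is ever invoked. No partitioned tuple and no member of $\mathscr C$ is altered, and $core(\mathbf B)$ is literally the restriction of $\mathbf B$ to $core(\mathcal B)$ with the restricted marking. The four axioms of Definition~\ref{def:almostorb} then transfer term-by-term from $\mathbf B$ to this restriction, so $core(\mathbf B)$ is a special almost orbifold covering with a good marking.

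For the forward implication we argue by contraposition: suppose some $f\notin core(\mathcal B)$ has $B_f\neq 1$, and choose $f$ outermost so that at the stage where it is processed, $\omega(f)$ has valence one. Then case~(2) is entered. In subcase~(2)(a), $\mathbf B$ is tame, so every exceptional partitioned tuple is non-critical of simple type; case~(2)(a) merely undoes a tame IIA fold and the subsequent steps preserve this property, so the exceptional partitioned tuple produced at the end of the core computation remains of simple type. But a special almost orbifold covering with a good marking requires its unique exceptional partitioned tuple to be of almost orbifold covering type (Definition~\ref{def:almostorb}(1)), and these two types are incompatible by Proposition~\ref{prop:01}, a contradiction. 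In subcase~(2)(b), the partitioned tuple at $\alpha(f)$ has the form $\mathcal P_{\alpha(f)}=(T_{\alpha(f)},(\gamma_f)\oplus P_{\alpha(f)})$ and is replaced by $\mathcal P_{\alpha(f)}'=(T_{\alpha(f)}\oplus(\gamma_f),P_{\alpha(f)})$ at the image vertex in $core(\mathbf B)$. If $\alpha(f)$ is the unique exceptional vertex $u^*$ of $core(\mathbf B)$, then $\mathcal P_{\alpha(f)}'$ must be of almost orbifold covering type; but Proposition~\ref{prop:02} applied to the swap of $\gamma_f$ between the peripheral and non-peripheral parts forces $\mathcal P_{\alpha(f)}$ to then be critical, contradicting the marked $\mathbb A$-graph being an acceptable input (since $\mathcal P_{\alpha(f)}$ would either not be of almost orbifold covering type or would force $\mathcal P_{\alpha(f)}'$ to be of simple type). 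If $\alpha(f)\neq u^*$, then by Definition~\ref{def:almostorb} the vertex $\alpha(f)$ must be of orbifold type in $core(\mathbf B)$, contradicting the fact that case~(2)(b) leaves $\alpha(f)$ exceptional with a genuinely nontrivial partitioned tuple.

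The main obstacle is the analysis of subcase~(2)(b): one must carefully use Proposition~\ref{prop:02}, together with the rigidity of the axioms defining a special almost orbifold covering with a good marking, to rule out that moving $\gamma_f$ from the peripheral part of $\mathcal P_{\alpha(f)}$ into its non-peripheral part can produce a partitioned tuple of almost orbifold covering type compatible with the rest of the marking on $core(\mathcal B)$. Once this obstruction is cleared, both implications follow from tracking which cases of the core construction are invoked.
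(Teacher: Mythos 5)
Your proof follows the right overall structure and identifies the crucial tool — Proposition~\ref{prop:02}, applied to the swap of $\gamma_f$ between the peripheral and non-peripheral parts of $\mathcal P_{\alpha(f)}$ — which is exactly what the paper uses in its key case. But the paper's case analysis is organized differently, and the difference matters. The paper splits on whether $\omega(f)$ is peripheral. When $\omega(f)$ is non-peripheral (so $\alpha(f)$ is a peripheral vertex of orbifold type), the paper dispatches the case by a direct edge-counting argument: condition~(5) of Definition~\ref{def:marked} forces all other edges at $\alpha(f)$ of the same type as $f$ to have trivial group, so after removing $f$ the vertex $\alpha(f)$ has at most one incident edge with non-trivial group in $core(\mathbf B)$, which contradicts conditions~(3) and~(4) of Definition~\ref{def:almostorb}. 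You instead fold this case into your subcase~(2)(a) via a tameness-preservation argument, concluding that $core(\mathbf B)$ is tame hence not a special almost orbifold cover. That argument is sound \emph{when} $\mathbf B$ is tame, and it is in fact cleaner than the paper's in that regime. However, it silently relies on the core-construction's unproved assertion that in subcase~(2)(b) the vertex $\alpha(f)$ is necessarily exceptional; if $\mathbf B$ is not tame and $\alpha(f)$ is peripheral of orbifold type, neither of your subcases applies, whereas the paper's proof still covers this by its condition~(5) argument. You should either include the edge-counting argument for the peripheral-$\alpha(f)$ case outright (which the paper does and which holds without any tameness assumption) or explicitly justify why that configuration cannot arise. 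A smaller issue: for the backward implication you argue that the axioms of Definition~\ref{def:almostorb} ``transfer term-by-term,'' but that direction is only meaningful under the standing hypotheses of the section (the paper does not even bother to write it out); stating the implicit assumption would make the argument honest.
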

\begin{proof}
Assume that $core(\mathbf B)$ is obtained from $\mathbf B$ by removing a single edge $f$ with non-trivial group.  We will show that $core(\mathbf B)$ cannot be an almost orbifold cover with a good marking.

Assume fist that  $\omega(f)$ is non-peripheral. Then $\alpha(f)$ is peripheral and  of orbifold type since $B_{\alpha(f)}\neq 1$.    Condition (5) of Definition~\ref{def:marked}   implies that $B_h=1$ for all $h\in Star(\alpha(f), B)\setminus\{f\}$ with $[h]=[f]$.  Therefore  $\alpha(f)$ is a vertex of orbifold type  in $core(\mathbf B)$  for  which $Star(\alpha(f), B_{core})\cap E(core(\mathcal B))$ contains at most one  edge.  However, it follows from the definition   that in  a special almost orbifold cover with a good marking  all peripheral vertices have  exactly two incident edges with non-trivial group.  

Assume now that $\omega(f)$ is peripheral. Then $\alpha(f)$ is exceptional. In going from $\mathbf B$ to $core(\mathbf B)$ we turn $\alpha(f)$ into a exceptional vertex  of simple type if $\alpha(f)$ is of orbifold type  or we replace $\mathcal P_{\alpha(f)}=(T_{\alpha(f)}, (\gamma_f)\oplus P_{\alpha(f)})$ by $(T_{\alpha(f)}\oplus (\gamma_f), P_{\alpha(f)})$. In the second case, Proposition~\ref{prop:02} implies that  $(T_{\alpha(f)}\oplus (\gamma_f), P_{\alpha(f)})$ is not of almost orbifold covering type. This shows that in both cases $core(\mathbf B)$ cannot be an almost orbifold cover with a good marking since it violates condition (1) of Definition~\ref{def:almostorb}.  
\end{proof}

\begin{lemma}
The graph $\Omega_{[T]}$ is connected.
\end{lemma}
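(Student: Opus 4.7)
The strategy I would use follows closely the local case (Lemma~\ref{lemma:connectedness}) and ultimately the scheme from Lemma~7 of \cite{W2}. I would single out a class of distinguished representatives — call them \emph{normal} marked $\mathbb A$-graphs — and show (a) every vertex of $\Omega_{[T]}$ is in the same component as some normal vertex, and (b) any two normal vertices representing the same Nielsen class lie in the same component.

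A normal marked $\mathbb A$-graph is one whose underlying graph is obtained by wedging, at a single exceptional base vertex $u_0$, the orbifold-type sub-$\mathbb A$-graphs $(\mathcal C,u_\mathcal C,T_\mathcal C)\in\mathscr C$ together with a collection of loops and lollipops. The loops correspond to the non-peripheral generators of $T_{u_0}$ and the lollipops correspond to the peripheral elements of $P_{u_0}$, all realized by reduced $\mathbb B$-paths. At $u_0$ the partitioned tuple is transparently of simple type, since its shape witnesses the relevant free product decomposition, and each orbifold-type subgraph still carries its given rigid tuple. In particular every normal marked $\mathbb A$-graph is tame and lies in $V\Omega_{[T]}$.

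For step (a), given a representative $\mathbf B$ of $\mathbf b\in V\Omega_{[T]}$, I would choose for each exceptional vertex a set of collapsing edges (in the sense of Subsection~\ref{section_local_picture}) and a maximal $\mathscr C$-subtree of $B$ containing those edges, then reverse the folding process — unfolding peripheral elements into lollipops and non-peripheral elements into loops, exactly as in the proof of Lemma~\ref{lemma:connectedness}, while leaving the orbifold-type subgraphs $\mathcal C$ untouched. Each intermediate marked $\mathbb A$-graph is tame because unfolding only refines the free product decomposition witnessed at each exceptional vertex, and each elementary fold appearing in the forward direction is good, since tameness is preserved at every step and the configurations listed in Subsection~\ref{sec:descriptionfolds} as bad cannot arise from a tame minimal $\mathbf B$. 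Hence $\mathbf b$ projects from a normal vertex, placing the two in the same component of $\Omega_{[T]}$. The case where $\mathbf b$ is represented by a special almost orbifold cover with good marking is handled by Lemma~\ref{lemma:corealsot}: such a representative is the core of a tame marked $\mathbb A$-graph obtained from it by attaching edges with trivial edge groups, and a single good tame elementary fold provides the directed edge in $\Omega_{[T]}$ connecting the cover to that tame vertex, from which the previous argument applies.

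For step (b), any two normal marked $\mathbb A$-graphs $\mathbf B_1,\mathbf B_2$ with $[T_{\mathbf B_1}]=[T_{\mathbf B_2}]=[T]$ differ (up to equivalence) by a finite sequence of elementary Nielsen transformations applied to the generating tuple $T_{u_0}\oplus P_{u_0}$ at the base vertex. Each such transformation — left or right multiplication of one generator by another, or conjugation of a peripheral element — can be realized geometrically by unfolding the loop or lollipop representing the first element along the one representing the second, producing a tame marked $\mathbb A$-graph in $V\Omega_{[T]}$ that folds by good tame elementary folds onto both $\mathbf B_1$ and the appropriately modified normal representative (cf.\ Figures~\ref{unfoldNielsen1} and \ref{unfoldNielsen2}). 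The main obstacle throughout is ensuring that the intermediate marked $\mathbb A$-graphs produced during unfolding and refolding never leave $V\Omega_{[T]}$; that is, that the exceptional partitioned tuples one encounters remain non-critical of simple type. This is controlled vertex-by-vertex by Proposition~\ref{prop:01} and the auxiliary results of the previous subsections, together with the fact that the orbifold-type subgraphs are preserved verbatim throughout the procedure, so no new criticality can be introduced globally.
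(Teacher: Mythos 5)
The paper's proof is considerably shorter and takes a different route from yours: it unfolds each $\mathbf B_i$ all the way down to a marked $\mathbb A$-graph $\mathbf B_i'$ with trivial edge groups (citing \cite[Lemma~3.24]{Dut}), observes that the unfolding consists of good tame elementary folds of type IIA so that $\mathbf b_i'\rightsquigarrow\mathbf b_i$, and then notes that once all edge groups are trivial $\mathbf B_i'$ is a marked $\mathbb A$-graph in the sense of \cite{W2}, so the connectivity argument of \cite[Lemma~7]{W2} applies directly. The essential move is that unfolding dissolves the orbifold-type sub-$\mathbb A$-graphs in $\mathscr C$ entirely (non-degenerate members of $\mathscr C$ have non-trivial edge groups), leaving a free-product picture where all the generating data sits at vertices with trivial adjacent edge groups and no new normal-form bookkeeping is needed.

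Your plan keeps $\mathscr C$ intact, and that is where a genuine gap appears in step (b). Equivalence of marked $\mathbb A$-graphs never changes the underlying graph nor $\mathscr C$, so two normal representatives can only be equivalent if they already agree on $\mathscr C$; but two normal marked $\mathbb A$-graphs representing the same Nielsen class $[T]$ of $\pi_1^o(\mathcal O)$ may carry entirely different orbifold-type sub-$\mathbb A$-graphs, and even when $\mathscr C$ coincides the rigid tuples $T_\mathcal C$ need not be related by moves visible at $u_0$. Nielsen transformations of $[T]$ freely mix elements stored inside some $\mathcal C$ with elements represented by loops and lollipops at $u_0$, and your geometric realization (unfolding one loop along another) only covers those internal to $T_{u_0}\oplus P_{u_0}$. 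To close the gap you would effectively have to unfold through the $\mathcal C$'s anyway, at which point you have reproduced the paper's reduction to \cite{W2} and the normal-form detour becomes redundant. There is also a smaller issue in step (a): your normal form has a single exceptional base vertex, but a general tame representative may have several exceptional vertices, so you would need a further argument to merge them; the trivial-edge-group reduction handles this automatically.
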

\begin{proof}
Let $\mathbf b_1=[\mathbf B_1]$ and $\mathbf b_2=[\mathbf B_2]$ be distinct vertices of $\Omega_{[T]}$. We need to show that $\mathbf{b}_1 $ and $\mathbf{b}_2$ lie in the same component of  $\Omega_{[T]}$.

In \cite[Lemma~3.24]{Dut} it is  observed  that $\mathbf B_i$ can be unfolded until we obtain a  marked $\mathbb A$-graphs $\mathbf B_i'$ with trivial edge groups. Note that $\mathbf B_i'$ is  minimal since otherwise $\mathbf B_i$ would not be minimal. Let $\mathbf b_i'$ be the vertex of $\Omega_{[T]}$ represented by  $\mathbf B_i'$. Since $\mathbf B_i$ is obtained from $\mathbf B_i'$ by finitely many good tame elementary folds of type IIA it follows that $\mathbf b_i' \rightsquigarrow\mathbf b_i$. In particular  $\mathbf b_i' $ and $\mathbf b_i$ lie in the same component of $\Omega_{[T]}$. It therefore suffices to show that $\mathbf b_1'$ and $\mathbf b_2'$ lie in the same component.

Observe now that $\mathbf B_i'$ is a marked $\mathbb A$-graph in the sense of \cite{W2}. Thus the argument used in \cite[Lemma 7]{W2} applies in our case  to show that the vertices $\mathbf b_1'$ and $\mathbf b_2'$ lie in the same component of $\Omega_{[T]}$. Therefore $\mathbf b_1 $ and $\mathbf b_2$ lie in the same component of  $\Omega_{[T]}$.
\end{proof}

The main steps in the proofs of our main results are the following two Propositions whose proofs we postpone to the next section.

\begin{proposition}{\label{lemma:fork2}}
Let $\mathbf b$ and $\mathbf b'$ be vertices of $\Omega_{[T]}$ such that $\mathbf b\mapsto \mathbf b'$. If $\mathbf b$ is pre-bad bad, then $\mathbf b'$ is pre-bad.
\end{proposition}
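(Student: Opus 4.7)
The plan is to mirror the strategy used in the proof of Lemma~\ref{lemma:bad} from the local case, but now at the level of marked $\mathbb A$-graphs and with significantly more case analysis because global badness can manifest in several distinct ways. Given $\mathbf b\mapsto \mathbf b'$, pick representatives $\mathbf B$ of $\mathbf b$ and $\mathbf B'$ of $\mathbf b'$ so that there is a good tame elementary fold $F_1$ carrying $\mathbf B$ to a marked $\mathbb A$-graph $\bar{\mathbf B}$ with $core(\bar{\mathbf B})=\mathbf B'$. Since $\mathbf b$ is pre-bad, some representative of $\mathbf b$ admits a bad tame elementary fold $F_2$. Using auxiliary moves of type A2 (which do not change the equivalence class) together with an analogue of Lemma~\ref{lemma:foldsameedges} for marked $\mathbb A$-graphs, I would first arrange that both $F_1$ and $F_2$ are applicable to the same representative, which we still call $\mathbf B$.

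Next I would split into the four standard configurations for how the pairs of edges involved in $F_1$ and $F_2$ can sit inside $\mathcal B$: (i) $F_1$ and $F_2$ identify the same unordered pair of edges; (ii) both are of type IIIA and identify the inverse pair of edges; (iii) both are type IA based at the same vertex but terminate at distinct pairs of vertices; (iv) the two folds have disjoint support and therefore commute. In configurations (i) and (ii), an argument parallel to that in the proof of Lemma~\ref{lemma:bad} shows that $F_1(\mathbf B)$ and $F_2(\mathbf B)$ are equivalent as marked $\mathbb A$-graphs (differing at most by trivial elements attached to vertex tuples), contradicting the fact that $F_1$ is good while $F_2$ is bad. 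Hence only configurations (iii) and (iv) can occur.

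In configurations (iii) and (iv) the two folds commute up to equivalence: after $F_1$ the fold $F_2$ persists as a fold $F_2'$ applicable to $\bar{\mathbf B}$, and $F_2'(\bar{\mathbf B})$ is equivalent to $F_1'(F_2(\mathbf B))$. I would then show $F_2'$ is still bad. Badness of $F_2$ comes in one of three flavours: (a) it produces a non-trivial vertex group at a vertex or a non-trivial edge group at an edge where the resulting marked $\mathbb A$-graph would violate Definition~\ref{def:marked} or Definition~\ref{def:almostorb}; (b) it induces a critical or non-simple partitioned tuple at an exceptional vertex; (c) it creates a forbidden identification of vertices of orbifold type. Flavour (a) is preserved by $F_1$ because $F_1$ only enlarges groups in a controlled way. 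For flavour (b) I would invoke the local Lemma~\ref{lemma:bad} applied inside the star of the affected exceptional vertex, combined with Corollary~\ref{lemma:sumtuples} to ensure that criticality is inherited when additional generators or peripherals are merged in by $F_1$. For flavour (c) a direct combinatorial check shows the forbidden identification survives.

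Finally, once $F_2'$ is shown to be bad on $\bar{\mathbf B}$, I would pass to $\mathbf B'=core(\bar{\mathbf B})$ via Lemma~\ref{lemma:corerevealsbad}, which ensures that $\mathbf B'$ itself admits a bad tame elementary fold; together with Lemma~\ref{lemma:corealsot} to confirm that coring does not accidentally produce a special almost orbifold cover that would remove the bad fold from the representative of $\mathbf b'$. The main obstacle will be the case analysis in flavour (b) of configuration (iii): the reduction to the local setting requires carefully tracking how the star of the exceptional vertex changes under $F_1$, checking that the extra peripheral element or generator adjoined by $F_1$ keeps the partitioned tuple critical (by Proposition~\ref{prop:02} and Corollary~\ref{lemma:sumtuples}) rather than accidentally restoring simple non-critical type. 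This bookkeeping, rather than any deep new idea, is where the argument will be most delicate.
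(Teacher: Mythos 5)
Your proposal has a genuine gap in the reduction to configurations (iii) and (iv). You propose using ``an analogue of Lemma~\ref{lemma:foldsameedges} for marked $\mathbb A$-graphs'' to arrange that $F_1$ and $F_2$ act on the same representative, and you then assert that in configurations (i) and (ii) the resulting marked $\mathbb A$-graphs must be equivalent, so one of $F_1,F_2$ being good while the other is bad is impossible. Both claims fail in the global setting. The correct statement in the paper is Lemma~\ref{lemma_edges1}: applying the same type-IA or type-IIIA fold to equivalent marked $\mathbb A$-graphs can produce \emph{inequivalent} results whenever $B_{f_1}=B_{f_2}=1$ and the initial/terminal elements differ by specific non-trivial twists by $\alpha_1\neq\alpha_2\in B_{x,1}$ and $c\neq d\in A_e$. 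This is precisely what happens when the vertex group $B_x$ is non-trivial (which can never occur in the local, decorated setting where vertex groups are cyclic pieces of a free-product splitting); the consequence, spelled out in the ``In particular'' clause of Lemma~\ref{lemma_edges1}, is that the image edge $f'$ violates condition (F2) for foldedness, i.e.\ a type-IIA fold becomes applicable. The paper's handling of configurations \textbf{(D1)} and \textbf{(D3)} is exactly the analysis of this situation: one shows the newly available IIA fold on $\mathbf B_1'$ is bad, rather than deriving a contradiction. Your proposal discards these cases as impossible, so the argument as written would not prove the Proposition.

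Relatedly, your four-way case split (i)--(iv), copied from the local proof of Lemma~\ref{lemma:bad}/Lemma~\ref{lemma:bifurcation}, omits type-IIA folds entirely. These are the folds that enlarge edge groups and have no decorated-graph analog, and they generate their own exceptional configurations \textbf{(D4)} (two IIA folds at the same non-peripheral vertex, with no simultaneous simple-type decomposition of $\mathcal P_x$) and \textbf{(D5)} (two IIA folds whose base edges can themselves be folded together, so tameness is lost once one is applied). Your ``disjoint support'' case (iv) cannot subsume \textbf{(D5)}, since there the supports overlap precisely enough to destroy tameness, and your reduction step would never surface \textbf{(D4)} because you never track the structure of the partitioned tuple at a shared non-peripheral base vertex. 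The remaining ingredients you cite --- Lemma~\ref{lemma:corerevealsbad}, Lemma~\ref{lemma:corealsot}, Corollary~\ref{lemma:sumtuples}, Proposition~\ref{prop:02} --- are indeed the right tools for the final step and for the generic commuting case, and that part of your plan matches the paper. But the normalization and classification step needs to be replaced by the paper's Lemmas~\ref{lemma5cases} and \ref{lemma5cases1}, together with explicit treatment of \textbf{(D1)}--\textbf{(D5)} rather than the local (i)--(iv).
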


\begin{proposition}{\label{lemma:fork1}}
Let  $\mathbf b$,   $\mathbf b_1$ and   $\mathbf b_2$ be distinct vertices of $\Omega_{[T]}$. Assume that  $\mathbf b\mapsto \mathbf b_1$ and $\mathbf b\mapsto\mathbf b_2$. Then one of the following holds:
\begin{enumerate}
\item $\mathbf b_1$ and $\mathbf b_2$ are pre-bad vertices.

\item $\mathbf b_1\mapsto \mathbf b_2$ or $\mathbf b_2\mapsto \mathbf b_1$.

\item There is a vertex $\mathbf b' \in \Omega_T$ such that $\mathbf b_i\mapsto \mathbf b'$ for $i=1,2$.
\end{enumerate}
\end{proposition}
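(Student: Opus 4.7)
The plan is to mimic the strategy of Lemma~\ref{lemma:bifurcation} from the local chapter, but upgraded to handle marked $\mathbb{A}$-graphs whose markings carry both collections $\mathscr C$ of sub-$\mathbb{A}$-graphs of orbifold type and partitioned tuples at exceptional vertices. Concretely, choose representatives $\mathbf{B}$ of $\mathbf{b}$ and $\mathbf{B}_i$ of $\mathbf{b}_i$ so that $\mathbf{B}_i = core(\bar{\mathbf{B}}_i)$ where $\bar{\mathbf{B}}_i$ is obtained from $\mathbf{B}$ by a good tame elementary fold $F_i$ ($i=1,2$). By applying auxiliary moves of type A0 and A2 and invoking the marked analogue of Lemma~\ref{lemma:foldsameedges} (which follows, after inspecting the case-by-case definitions in Section~\ref{sec:descriptionfolds}, from the fact that the marking is transported along folds), we may assume that both folds are defined on the same representative $\mathbf{B}$ and that the edges involved have compatible labels. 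After this normalization we are in one of the four standard configurations: (i) $F_1$ and $F_2$ identify the same pair of edges; (ii) both are of type IIIA and the pairs differ by orientation reversal; (iii) both are of type IA and the pairs differ by orientation reversal; (iv) $F_1$ and $F_2$ commute, i.e.\ act on edge-disjoint subgraphs.

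In configurations (i) and (ii) the resulting marked $\mathbb{A}$-graphs $\mathbf{B}_1$ and $\mathbf{B}_2$ are equivalent: in (i) this is immediate, and in (ii) they differ only by the placement of a trivial element in the tuple $T_x$ versus $T_{x'}$, which is absorbed by the elementary move permitting transfer of trivial elements between vertex tuples. Taking cores preserves this equivalence (Lemma~\ref{lemma:equivcore}), so $\mathbf{b}_1 = \mathbf{b}_2$, contradicting our assumption. Thus only configurations (iii) and (iv) remain.

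For these, complete the diamond: let $F_1'$ be the fold in $\bar{\mathbf{B}}_1$ that identifies the pair used by $F_2$, and let $F_2'$ be the analogous fold in $\bar{\mathbf{B}}_2$; call the outputs $\mathbf{B}_1'$ and $\mathbf{B}_2'$. The argument of \cite[Lemma~8]{W2}, adapted as in Lemma~\ref{lemma:bifurcation} and combined with the remark that the marking data (collection $\mathscr C$ and partitioned tuples $\mathcal P_u$) is functorial under the commutation of the two folds, shows that $\mathbf{B}_1' \approx \mathbf{B}_2'$ as marked $\mathbb{A}$-graphs. Now perform a case analysis on the goodness of $F_1'$ and $F_2'$ (as defined in Section~\ref{sec:descriptionfolds}):
\begin{enumerate}
\item If both $F_1'$ and $F_2'$ are good, then $\mathbf{b}' := [core(\mathbf{B}_1')] = [core(\mathbf{B}_2')]$ is a common projection of $\mathbf{b}_1$ and $\mathbf{b}_2$, placing us in case (3).
\item If both are bad, then $\mathbf{B}_1$ and $\mathbf{B}_2$ admit bad tame elementary folds (namely $F_1'$ and $F_2'$), and the core revelation from Lemma~\ref{lemma:corerevealsbad} transports this badness down to $\mathbf{b}_1$ and $\mathbf{b}_2$, giving case (1).
\item If exactly one is bad, say $F_2'$ bad and $F_1'$ good, then we claim case (2) holds. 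Since $\mathbf{B}_1' \approx \mathbf{B}_2'$, the failure of $F_2'$ to be good must be explained on the $\mathbf{B}_1'$ side by the core operation: the edge or vertex responsible for the badness of $F_2'$ is collapsed when passing to $core(\mathbf{B}_1')$. Concretely, one shows via Lemma~\ref{lemma:corealsot} that in that case $core(\mathbf{B}_1')$ represents the same vertex of $\Omega_{[T]}$ as $\mathbf{B}_2$, yielding $\mathbf{b}_1 \mapsto \mathbf{b}_2$ (and symmetrically with roles swapped).
\end{enumerate}

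The main obstacle is case (3) of the analysis above: ruling out the asymmetric situation without forcing a projection. This requires carefully tracking, for each type of tame elementary fold, how the induced marking on $\mathbf{B}_i'$ differs from the one on $core(\mathbf{B}_i')$, particularly when folds of type IIA are involved. Here normalization (Definition~\ref{Def_normalized} and Remark~\ref{remark_preprocessing}) plus the dichotomy of Proposition~\ref{prop:02}, applied to the new partitioned tuple produced at the affected exceptional vertex, are the decisive tools: Proposition~\ref{prop:02} ensures that adding a peripheral element either leaves the tuple non-critical of simple type (corresponding to $F'$ good) or forces criticality (corresponding to $F'$ bad), and Corollary~\ref{lemma:sumtuples} guarantees that criticality is preserved under taking cores and under further good folds, so the goodness/badness pattern in the diamond can only fail to be symmetric when the offending part is cut away by the core operation---exactly the scenario producing case (2).
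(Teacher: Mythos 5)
Your proposal collapses the global proof onto the local template (Lemma~\ref{lemma:bifurcation}), reducing to the four configurations (i)--(iv) of the decorated $\mathbb A$-graph setting. This is the central gap: in the global picture the normalization step fails in a much richer set of configurations than in the local picture, and the paper has to isolate these explicitly as the ``non-normalizable'' cases \textbf{(D1)}--\textbf{(D5)} of Lemmas~\ref{lemma5cases} and~\ref{lemma5cases1}. Several of these have no analogue in the local proof. In particular, \textbf{(D4)} concerns two folds of type IIA at a non-peripheral vertex $x$ that correspond to \emph{distinct} boundary components of $\mathcal O_x'$, where $\mathcal P_{x,1}$ cannot be rewritten as a free-factor decomposition compatible with both peripheral elements simultaneously---no amount of auxiliary moves brings the two folds onto a common normalized representative. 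Likewise \textbf{(D5)} concerns two IIA folds based on edges that can themselves be folded together, so applying one destroys the tameness of the other. Your step that ``by applying auxiliary moves of type A0 and A2 \dots we may assume that both folds are defined on the same representative'' is precisely what fails in these configurations, and these are exactly the cases the paper spends most of its effort on.

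A second problem is your case (3) (``exactly one of $F_1'$, $F_2'$ bad''). The paper does not handle an asymmetric outcome by invoking Lemma~\ref{lemma:corealsot} to show $\mathbf b_1\mapsto\mathbf b_2$. Instead, in each configuration it establishes that $\mathbf B_1''$ and $\mathbf B_2''$ are actually \emph{equivalent} marked $\mathbb A$-graphs, so $F_1'$ is good if and only if $F_2'$ is good, and the asymmetric situation you worry about never arises. This equivalence is nontrivial and rests on a detailed comparison of the induced partitioned tuples (e.g.\ verifying that $\mathcal P_{y,1}''$ and $\mathcal P_{y,2}''$ are equivalent using condition (4) of marked $\mathbb A$-graphs together with Lemmas such as~\ref{lemma:equivalencesimpletype}), not on the core operation cutting out the offending vertex. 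Your appeal to Lemma~\ref{lemma:corealsot} misreads that lemma: it characterizes when $core(\mathbf B)$ is a special almost orbifold cover with a good marking, not when an $\mathbb A$-graph can absorb the obstruction created by a bad fold into its core.

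Finally, while your closing paragraph correctly identifies Proposition~\ref{prop:02} and Corollary~\ref{lemma:sumtuples} as the decisive tools, you deploy them only schematically. The actual argument uses them separately in each of \textbf{(D1)}--\textbf{(D5)} and the generic case, together with the criticality propagation of Propositions~\ref{lem:critical1} and~\ref{prop:00}, the foldability analysis of Lemmas~\ref{lemma_foldedness1}--\ref{lemma:foldalmostorbifol}, and the fine structure of good versus bad folds from Section~\ref{sec:descriptionfolds}. The proof is essentially a large case analysis, not a single diamond-completion argument, and your proposal does not engage that structure.
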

Assuming Proposition~\ref{lemma:fork2} and Proposition~\ref{lemma:fork1} we can finish the argument for our main results.
 
\begin{proposition}\label{propuniqueroot}
If $\Omega_{[T]}$ has a root  represented by a special almost orbifold covering with a good marking, then this root is unique.
\end{proposition}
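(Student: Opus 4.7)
The plan is to argue by contradiction following the template used for Proposition~\ref{prop:01} in the local case. Suppose $\mathbf{r}_1 \neq \mathbf{r}_2$ are two roots of $\Omega_{[T]}$ each represented by a special almost orbifold covering with a good marking, and let $S$ denote the set of \emph{splitters}, that is, vertices projecting onto two distinct good roots. Since $\Omega_{[T]}$ is connected and roots are sinks, the same kind of argument as in \cite{W2} (used in the proof of Proposition~\ref{prop:01} in the local case) shows that $S$ is non-empty. Choose $\mathbf{b}\in S$ of minimal height $h(\mathbf{b})$. Because $\mathbf{b}$ projects onto at least one other vertex, it is not itself a root, so it admits outgoing edges; pick edges $\mathbf{b}\mapsto \mathbf{b}_i$ lying on paths $\mathbf{b}_i \rightsquigarrow \mathbf{r}_i$ to the good roots ($i=1,2$). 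If $\mathbf{b}_1=\mathbf{b}_2$ then this common vertex is itself in $S$ with strictly smaller height, contradicting minimality; hence $\mathbf{b}_1\neq \mathbf{b}_2$, and Proposition~\ref{lemma:fork1} applies.

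I dispose of the three cases of Proposition~\ref{lemma:fork1} as follows. In case (1) both $\mathbf{b}_i$ are pre-bad, so iterating Proposition~\ref{lemma:fork2} along the paths $\mathbf{b}_i \rightsquigarrow \mathbf{r}_i$ forces each $\mathbf{r}_i$ to be pre-bad; but a special almost orbifold covering with a good marking is not tame (its unique exceptional vertex carries a partitioned tuple of almost orbifold covering type rather than the non-critical simple type required for tameness), so no tame elementary fold applies to it and in particular it cannot be pre-bad, a contradiction. In case (2), say $\mathbf{b}_1 \mapsto \mathbf{b}_2$, then $\mathbf{b}_1$ projects onto $\mathbf{r}_2$ through $\mathbf{b}_2$ as well as onto $\mathbf{r}_1$, putting $\mathbf{b}_1\in S$ with $h(\mathbf{b}_1)<h(\mathbf{b})$, again contradicting minimality. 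In case (3) the common projection $\mathbf{b}'$ with $\mathbf{b}_i \mapsto \mathbf{b}'$ must in turn project onto some root $\tilde{\mathbf{r}}$; if $\tilde{\mathbf{r}}=\mathbf{r}_1$ then $\mathbf{b}_2$ projects onto both $\mathbf{r}_1$ (through $\mathbf{b}'$) and $\mathbf{r}_2$ and is a splitter of smaller height, symmetrically if $\tilde{\mathbf{r}}=\mathbf{r}_2$, and if $\tilde{\mathbf{r}}$ is a third distinct good root then $\mathbf{b}_1$ is itself a splitter of smaller height between $\mathbf{r}_1$ and $\tilde{\mathbf{r}}$. In every one of these sub-cases minimality is violated.

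The main obstacle will be the remaining possibility in case (3): that $\tilde{\mathbf{r}}$ is a root which is \emph{not} a special almost orbifold cover with good marking, i.e.\ by Lemma~\ref{lemma:roots} either folded or pre-bad. I plan to rule this out, in the spirit of the first half of the proof of Proposition~\ref{prop:01}, by showing that the existence of a good root in $\Omega_{[T]}$ forces every root to be good. A folded root would, by \cite[Lemma~4.20]{Dut}, imply that $[T]$ is standard or reducible; but the existence of the good root $\mathbf{r}_1$ encodes $[T]$ as $\eta_\ast(T')$ for a special (hence of degree $>1$, hence non-standard) almost orbifold cover with a rigid tuple, and moreover gives irreducibility via Theorem~\ref{thm02}. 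A pre-bad root would by definition admit a bad tame elementary fold, and the case-by-case description in Section~\ref{sec:descriptionfolds} shows that each bad configuration directly exposes $[T]$ as reducible, as having an obvious relation, or as folding peripheral elements, which is again incompatible with being represented by a good marking. Once $\tilde{\mathbf{r}}$ is forced to be good, the bifurcation argument of the preceding paragraph closes and the proposition follows.
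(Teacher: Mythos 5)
Your proposal does not work as written, for two independent reasons, both of which the paper's own proof is carefully constructed to avoid.

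First, you assume the hypothetical second root $\mathbf{r}_2$ is also a special almost orbifold cover with a good marking, and you define the splitter set $S$ as vertices projecting onto two distinct \emph{good} roots. That only addresses the uniqueness of the good root among good roots; the proposition asserts it is the unique root of any kind. Moreover, your non-emptiness claim for $S$ is not ``the same kind of argument'' as in the local Proposition~\ref{prop:01}. There the set $\Omega^*_{[\mathcal P]}$ consists of vertices projecting onto roots of \emph{distinct types}, and the argument that $\Omega^*_{[\mathcal P]}\neq\emptyset$ hinges on the fact that if every vertex had a well-defined single type then $\mathbf b\mapsto\mathbf b'$ would force their types to agree, disconnecting the graph. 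That argument does not transfer to your $S$: if every vertex projects onto at most one good root, $\mathbf b\mapsto\mathbf b'$ only forces the good root of $\mathbf b'$ to be among those of $\mathbf b$, and the graph can still be connected through a non-good root that both sides project onto. So you have not shown $S\neq\emptyset$, and in fact nothing forces $S$ to be non-empty in your setup. The paper sidesteps this entirely by working with an arbitrary root $\mathbf b_2$ (good or not) and with a path from $\mathbf b_1$ to $\mathbf b_2$ chosen to minimize, first, the number of local minima of the height function, and then, the height of the single local maximum; the contradiction is obtained at that maximum via Propositions~\ref{lemma:fork1} and~\ref{lemma:fork2}, with only $\mathbf b_1$ assumed good.

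Second, the fix you sketch for case (3) when $\tilde{\mathbf r}$ is not good is circular. You propose to rule out folded or pre-bad roots by appealing to Theorem~\ref{thm02} (to get irreducibility of $[T]$ from the existence of the good root). But the paper's proof of Theorem~\ref{thm02} invokes Proposition~\ref{propuniqueroot}: it derives a contradiction from the existence of a pre-bad root precisely by citing the uniqueness established here. So Theorem~\ref{thm02} cannot be used as an ingredient. Likewise, the statement that a pre-bad root ``directly exposes $[T]$ as reducible, as having an obvious relation, or as folding peripheral elements'' confuses the local and global levels: the latter two are properties of \emph{partitioned} tuples in a vertex group, not of the Nielsen class $[T]$ in $\pi_1^o(\mathcal O)$, and translating them into reducibility of $[T]$ is essentially the content of Theorem~\ref{thm02} again. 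The correct strategy, as in the paper, is not to show that all roots are good; rather it is to show that \emph{any} other root forces the single good root $\mathbf b_1$ to be pre-bad (via fork1 at the local max of the minimizing path and then propagation by fork2), which is absurd because a special almost orbifold cover with a good marking admits no tame elementary fold at all.
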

\begin{proof}
Let $\mathbf b_1$ be a root represented by a special almost orbifold covering with a good marking. Suppose that $\Omega_{[T]}$ has  a root  distinct from $\mathbf b_1$. We claim that there is a root $\mathbf b_2\neq \mathbf b_1$ such that for some vertex  $\mathbf b$ we have $\mathbf b \rightsquigarrow \mathbf b_1$ and $\mathbf b\rightsquigarrow \mathbf b_2$. 

Choose a root $ \mathbf b_2$ and a path $p$ connecting $\mathbf b_1$ and $\mathbf b_2$ such that the number of local minima with respect to the height function is minimal among all pairs $(\mathbf b_2, p)$.  It clearly suffices to show that $p$ has no local minima besides $\mathbf{b}_1$ and $\mathbf{b}_2$, as $\mathbf{b}$ can then be chosen to be the vertex of $p$ at which this local maximum is attained. If $p$ has another local minimum   at a vertex $\mathbf b^*$,   then $\mathbf b^*$  projects onto either $\mathbf b_1$ or onto some root distinct from $\mathbf b_1$. In both situations we find a pair as above with fewer local minima which is a contradiction.

Choose $(\mathbf b_2, p)$ as above such that the single maximum of $p$ occurs at a minimal height among all such pairs.  Choose $\mathbf b_1'$ and $\mathbf b_2'$ such that $\mathbf b \mapsto \mathbf b_i'$ and that   $\mathbf b_i' \rightsquigarrow  \mathbf b_i$. The minimality assumption implies that there is no vertex $\mathbf b''$ such that $\mathbf b_1'\rightsquigarrow \mathbf b''$ and  $\mathbf b_2'\rightsquigarrow \mathbf b''$.   Proposition~\ref{lemma:fork1}  implies that $\mathbf b_1'$ and $\mathbf b_2'$ are pre-bad. Proposition~\ref{lemma:fork2}  implies that $\mathbf b_1$ is pre-bad, a contradiction since  vertices represented by special  almost orbifold covers with good markings are not pre-bad.  the last claim follows from the fact that special  almost orbifold covers with good markings do not admit tame elementary folds.  
\end{proof}

\begin{proof}[Proof of Theorem~\ref{thm01}.] Let $T$ be a non-standard irreducible generating tuple of $\pi_1^o(\mathcal O)\cong \pi_1(\mathbb A, v_0)$. The main Theorem of  \cite{Dut} says that $[T]$ can be represented by  a special marking  $(\eta:\mathcal O'\rightarrow \mathcal O, [T])$. Let $\mathbf b$ be the corresponding vertex of $\Omega_{[T]}$. It follows from the previous proposition that $\mathbf b$ is the unique root of $\Omega_{[T]}$. Therefore  the special  marking $(\eta:\mathcal O'\rightarrow \mathcal O, [T'])$  representing the Nielsen class of  $T $ is unique.
\end{proof}

\begin{proof}[Proof of Theorem~\ref{thm02}.] Suppose to the contrary that $T:=\eta_{\ast}(T')$ is reducible.  Thus  $\Omega_{[T]}$ contains a pre-bad vertex and therefore, according to Proposition~\ref{lemma:fork1}, $\Omega_{[T]}$  contains  a pre-bad root. This contradicts the uniqueness established in Proposition~\ref{propuniqueroot}.
\end{proof}


\subsection{Edges of $\Omega_{[T]}$}

In this section we study edges in  $\Omega_{[T]}$. The subtle part being that vertices are defined as equivalence classes of marked $\mathbb A$-graphs while edges a defined using representatives.

Throughout this section we assume the following:  $\mathbf B_1=(\mathcal B_1, \mathscr C_1, (\mathcal P_{u,1})_{u\in V_{exc}^{\mathscr C_1}B_1}) $ and  $\mathbf B_2=(\mathcal B_2, \mathscr C_2, (\mathcal P_{u,2})_{u\in V_{exc}^{\mathscr C_2}B_2})$ are   minimal tame marked $\mathbb A$-graphs  which are equivalent  and  tame elementary folds $F_1$ an $F_2$ are applicable to  $\mathcal B_1$ and $\mathcal B_2$ respectively. We denote the resulting $\mathbb A$-graph by $\mathcal B_i'$. 

\begin{convention}\label{conventiontwofolds}
To have a unified notation  we will stick to the following notation: 
\begin{enumerate}
\item[•] The underlying graph of $\mathcal B_1$ (and therefore also of $\mathcal B_2$ since $\mathbf B_1$ and $\mathbf B_2$ are equivalent) will be denoted by $B$ and the underlying graph of $\mathcal B_i'$  will be denoted by $B_i'$.

\item[•] If $x\in VB\cup EB$, then the  group of $x$ in $\mathcal B_i$ will be denoted by $B_{x, i}$.  

\item[•]  If $F_i$ induces a marking on $\mathcal B_i'$,  then the resulting marked $\mathbb A$-graph will be denoted by
 $$\mathbf B_i'=(\mathcal B_i',  \mathscr C_i', (\mathcal P_{u, i}')_{u\in V_{exc}^{\mathscr C_i'}B_i'}) $$
where $\mathcal P_{u, i}'=(T_{u,i}', P_{u,i}')$ for all $u\in V_{exc}^{\mathscr C_i'}B_i'$.

\item[•] If $F_1$ (resp.~$F_2$) is of type IA/IIIA, then it identifies the edges $f_1$ and $f_2$ (resp.~$g_1$ and $g_2$) with  $x:=\alpha(f_1)=\alpha(f_2)\in VB$   (resp.~$w:=\alpha(g_2)=\alpha(g_2)\in VB$). 
 
\item[•] $F_1$ (resp.~$F_2$) is of type IIA, then it is based on $f\in EB$ (resp.~$g\in EB$) with $x:=\alpha(f)\in VB$ and  $y:=\omega(f)\in VB$ (resp.~$w:=\alpha(g)\in VB$ and $z:=\omega(g)\in VB$).   
\end{enumerate} 
\end{convention}

\begin{remark}{\label{remark:fatsequivalent}}
The following remarks, which  follow immediately from the definition of  elementary moves, will be frequently used without further comment: 

\smallskip

\noindent\textbf{(i)}  $y\in VB$ is peripheral (resp.~non-peripheral)  in $\mathbf B_1$ iff $y$ is peripheral (resp.~non-peripheral) in $\mathbf B_2$.  Thus there is no ambiguity  in saying that $y\in VB$ is a  peripheral (resp.~non-peripheral) vertex. 

\smallskip

\noindent\textbf{(ii)} For any peripheral vertex $y\in VB$ (resp.~for any edge $f\in EB$) we have $B_{y, 1}=B_{y, 2}$ (resp.~$B_{f, 1}=B_{f, 2}$). It therefore makes sense to write $B_y$ (resp. $B_f$) instead of $B_{y, 1}$ and $B_{y, 2}$ (resp. $B_{f, 1}$ and $B_{f, 2}$).  In particular,   $E(\mathcal B_1)=E(\mathcal B_2) \subseteq EB.$ 
 
\smallskip 
 
\noindent\textbf{(iii)}  $y\in VB$  is exceptional in $\mathbf B_1$ (resp.~of orbifold type) iff  $y$ is exceptional (resp.~orbifold type) in $\mathbf B_2$. Thus 
 $V_{orb}^{\mathscr C_1}B=V_{orb}^{\mathscr C_2}B$  and $V_{exc}^{\mathscr C_1}B=V_{exc}^{\mathscr C_2}B.$ 
We can  therefore simply say that $y$ is an exceptional vertex (resp.~is a vertex of orbifold type). 

\smallskip

\noindent\textbf{(iv)} If $y\in VB$ is exceptional, then   there is $g\in A_{[y]}$ and a partitioned tuple $(T, P)$ in $A_{[y]}=\pi_1^o(\mathcal O_{[y]})$   equivalent to $\mathcal P_{y, 1}$ such that  $\mathcal P_{y, 2}=g(T, P)g^{-1}.$  This shows that the orbifold covering corresponding  to  $\langle T_{y,1}\oplus P_{y,1}\rangle\le A_{[y]}$   coincides with the orbifold covering corresponding to $\langle T_{y,2}\oplus P_{y,2}\rangle\le A_{[y]} $. It therefore makes sense to denote it simply by $\eta_y:\mathcal O_y'\rightarrow \mathcal O_{[y]}$.   

\smallskip

\noindent\textbf{(v)} Observe that there are exactly  two types of bad tame elementary folds. The first type consists of those folds that do not induce a marking on the resulting $\mathbb A$-graph, which occurs exactly when a vertex of orbifold type is affected. The second type of bad folds consists of those folds that do induce a marking on the resulting $\mathbb A$-graph but the core  fails to be tame or an almost orbifold cover.
\end{remark}

Applying tame elementary folds of the same type to the same edge(s) of equivalent tame marked $\mathbb A$-graphs does not necessarily yield equivalent marked $\mathbb A$-graphs as in the local picture. The following lemma clarifies when this is the case. 
\begin{lemma}\label{lemma_edges1} Assume that  the following hold:
\begin{enumerate}
\item[(a)] $F_i$ induces a marking on $\mathcal B_i'$ for $i=1,2$.

\item[(b)] $F_1$ and $F_2$  are of the same type and affect the same edge(s).
\end{enumerate}

Then the marked $\mathbb A$-graphs  $\mathbf B_1'$ and $\mathbf B_2'$ are equivalent unless $F_1$ and $F_2$ are of type IA or of type IIIA and the following holds:
\begin{enumerate} 
\item[(i)] $B_{f_1}=B_{f_2}=1$.

\item[(ii)] $o_{f_1}^{\mathcal B_2}= g\alpha_1 o_{f_1}^{\mathcal B_1}  \alpha_e(c)$ and $o_{f_2}^{\mathcal B_2}= g\alpha_2  o_{f_2}^{\mathcal B_1}  \alpha_e(d)$ with $g\in A_{[x]}$,  $\alpha_1\neq \alpha_2\in B_{x,1}$ and $c\neq d\in A_e$,  where $e:=[f_1]=[f_2]\in EA$. 
\item[(iii)]
$t_{f_1}^{\mathcal B_2}=\omega_e(c)t_{f_1}^{\mathcal B_1}\beta_1h_1^{-1} $ and $ t_{f_2}^{\mathcal B_2}=\omega_e(d)t_{f_2}^{\mathcal B_1}\beta_2h_2^{-1}$  
 with  $\beta_i\in B_{\omega(f_i),1}$ and  $h_1, h_2\in A_{[\omega(f_1)]}=A_{[\omega(f_2)]}$ such that $h_1=h_2$ if $\omega(f_1)=\omega(f_2)$.   
\end{enumerate}
In particular,   
 $B_{x,1}\cap o_{f_1}^{\mathcal B_1}   \alpha_e( A_e ) (o_{f_1}^{\mathcal B_1})^{-1}\neq 1$  and  $B_{x,2}\cap o_{f_2}^{\mathcal B_2}   \alpha_e( A_e ) (o_{f_2}^{\mathcal B_2})^{-1}\neq 1.$ 
\end{lemma}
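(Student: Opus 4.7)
The plan is to reduce the problem to the case where $\mathbf B_2$ is obtained from $\mathbf B_1$ by a single elementary move and then analyze the interaction of each type of move with the prescribed fold. The equivalence $\mathbf B_1\approx\mathbf B_2$ is witnessed by a finite chain of elementary moves; along this chain the fold(s) $F_1$ can be transported to each intermediate marked $\mathbb A$-graph (because equivalent marked $\mathbb A$-graphs have the same underlying graph and the same edge groups, by Remark~\ref{remark:fatsequivalent}), reducing the argument to a single step $\mathbf B_2=\mu(\mathbf B_1)$. Almost all types of $\mu$ are harmless: Nielsen moves, peripheral moves of type (i) and (ii), and the base-vertex change A0 leave edge labels unchanged or merely permute vertex tuples, so $\mu$ commutes with the fold and $\mathbf B_1'$, $\mathbf B_2'$ are related by the image of $\mu$. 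Auxiliary moves of type A1 affect only a single pendant vertex and can be pushed past the fold (or reappear as a Nielsen move absorbed at the affected vertex after folding).

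The substantive case is an auxiliary move of type A2 at a vertex $u$: at each $h\in Star(u,B)$ it replaces $o_h$ by $\beta\cdot o_h\cdot \alpha_{[h]}(c_h)$ with a fixed $\beta\in B_{u,1}$ but with $c_h\in A_{[h]}$ possibly depending on $h$. For a fold $F_1$ of type IIA based on a normalized edge $f$, the normalization (cases (El.0)--(El.2)) is preserved by A2 (using Proposition~\ref{prop:01} to rule out a type change of $\mathcal P_x$), so after A2 the fold $F_2$ acts on the same combinatorial data and $\mathbf B_1'\approx\mathbf B_2'$ by the obvious post-fold A2 move; no exception arises here. For a fold of type IA or IIIA identifying $f_1,f_2$ at $x$, an A2 move at $x$ with independent parameters $\alpha_i\in B_{x,1}$ and $c_i\in A_e$ on the two edges $f_i$ produces the labels displayed in (ii) and (iii). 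If $\alpha_1=\alpha_2$ and $c_1=c_2$, the A2 action is uniform on $f_1,f_2$ and descends to an A2 move between $\mathbf B_1'$ and $\mathbf B_2'$, giving equivalence; if $\alpha_1\neq\alpha_2$ or $c_1\neq c_2$, the combined move has no coherent counterpart after folding, producing the exceptional configuration of the lemma.

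The ``in particular'' clause then follows by a direct computation. Since $F_2$ is elementary we must have $o_{f_1}^{\mathcal B_2}=o_{f_2}^{\mathcal B_2}$ modulo the indeterminacy by $\alpha_e(A_e)$, and since $F_1$ is elementary $o_{f_1}^{\mathcal B_1}=o_{f_2}^{\mathcal B_1}$. Substituting the expressions from (ii) and cancelling $g$ yields
$$\alpha_2^{-1}\alpha_1 \;=\; o_{f_1}^{\mathcal B_1}\,\alpha_e(d c^{-1})\,(o_{f_1}^{\mathcal B_1})^{-1}\in B_{x,1}\cap o_{f_1}^{\mathcal B_1}\alpha_e(A_e)(o_{f_1}^{\mathcal B_1})^{-1},$$
and this element is non-trivial precisely because $\alpha_1\neq\alpha_2$; the analogous statement for $\mathbf B_2$ is obtained by symmetry. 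The main obstacle in executing this plan will be the bookkeeping needed to show that the normalization of an edge for a type IIA fold is stable under all admissible A2 adjustments without spuriously producing the critical partitioned tuples ruled out by tameness, which is where Proposition~\ref{prop:01} and Proposition~\ref{prop:02} enter in a somewhat delicate way.
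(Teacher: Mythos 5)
Your outline follows the same broad strategy as the paper (commute the fold past auxiliary moves; the danger is moves of type A2 at the relevant vertex), but there is a genuine gap in the type IA/IIIA analysis that makes your characterization of the exceptional configuration incorrect. You assert that whenever the A2/A1 parameters on $f_1$ and $f_2$ disagree ($\alpha_1\neq\alpha_2$ or $c\neq d$), the two folds produce inequivalent marked $\mathbb A$-graphs. This overlooks the case $B_{f_1}\neq 1$, which the paper treats explicitly: when $f_1$ has nontrivial edge group, condition~(4) of Definition~\ref{def:marked} forces $cd^{-1}\in B_{f_1}$, so $b_1^{-1}\omega_e(cd^{-1})b_1\in B_{y,1}$ and the discrepancy between $b_1^{-1}b_2$ and $b_1^{-1}\omega_e(d^{-1}c)b_2$ is absorbed by a Nielsen/peripheral move at $y$ after the fold. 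This is exactly why condition~(i) of the exceptional configuration in the lemma requires $B_{f_1}=B_{f_2}=1$ — without showing the $B_{f_1}\neq 1$ case still gives equivalence, your proof does not establish the ``unless'' clause as stated. A secondary concern is that your description of an A2 move (``a fixed $\beta$ with $c_h$ possibly depending on $h$'') does not match the actual definition in KMW/Dut, where A2 is applied per-edge with a single element of $B_u$; the two parameters $\alpha_1\neq\alpha_2$ in the lemma arise from two separate A2 moves on $f_1$ and $f_2$, and your conflation of A2 with an A1+A2 composite muddies the case analysis. Your derivation of the ``in particular'' clause is fine and essentially fills a step the paper leaves implicit, but the core case analysis needs to be repaired to account for the nontrivial edge group case before the proof is complete.
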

\begin{remark}
As $F_i$  is elementary it holds $o_{f_1}^{\mathcal B_i}=o_{f_2}^{\mathcal B_i}\in A_{[x]}$. Moreover, $t_{f_1}^{\mathcal B_i}=t_{f_2}^{\mathcal B_i}$ if $\omega(f_1)\neq \omega(f_2)$, that if $F_1$ and $F_2$ are of type IA.
\end{remark}}

\begin{proof}
The proof broadly follows the proof of \cite[Lemma 6]{W2}. The main difference is that peripheral moves in general do not commute with tame  auxiliary moves of type A2 that are based on the same vertex.

\smallskip

\noindent{\emph{Folds of type IIA.}} We first deal with the case in which both folds $F_1$ and $F_2$ are of type IIA. Assume that the label of $f=g$ in $\mathcal B_i$ is $(a_i, e, b_i)$.

Assume first that $x$ is exceptional (and hence $y$ is peripheral). In this case  $B_y=1$ as $F_1$ and $F_2$ induce markings on the resulting $\mathbb A$-graphs.  It is not hard to see that $F_1$ commutes with all elementary moves that affect neither   $x$ nor the initial elements of edges staring at $x$. $F_1$ also commutes with all auxiliary moves of type A0 that are based on  $x$ and with all auxiliary moves of type A1 that are based on edges  starting at $x$. Thus we can assume that $\mathbf B_2$ is obtained from $\mathbf B_1$ by tame auxiliary moves of type A2  based on edges  starting at $x$  and Nielsen/peripheral moves based on $x$. Therefore $\mathbf B_1$ and $\mathbf B_2$ differ only at $x$ and $B_{x, 1}=B_{x,2}$.  It follows from \cite[Lemma~4.17(i)]{Dut} that $\mathcal P_{x, 1}$ and $\mathcal P_{x, 2}$ are equivalent. Since $F_1$ and $F_2$ are tame  and  elementary it follows that  one of the following occurs:
\begin{enumerate}
\item[(El.1)]   $\mathcal P_{x,i}=(T_{x, i}\oplus (\gamma_{f, i}'), P_{x, i})$ such that  $B_{x,i}$ splits as
$$B_{x,i}=\langle T_{x,i}\rangle \ast \langle \gamma_{f,i}'\rangle\ast \langle\gamma_{f_1,i}\rangle\ast \ldots\ast \langle \gamma_{f_n, i}\rangle\le A_{[x]}$$
where $\gamma_{f,i}'$ is a generator of  
 $a_i\alpha_e(A_e)a_i^{-1}\cap B_{x, i}$  
and $P_{x,i}=(\gamma_{f_1,i}, \ldots, \gamma_{f_n, i})$. The definition of folds says that   $x$ stays exceptional in $\mathbf B_i'$ and  
 $$\mathcal P_{x, i}'=(T_{x, i}, (\gamma_{f, i}')\oplus   P_{x, i}).$$  
 Proposition~\ref{prop:02}  implies that $\mathcal P_{x,i}'$  is non-critical and  of simple type.    It follows from  Lemmas~\ref{lemma:equivalencesimpletype} (as $P_{x,1}$ and $\mathcal P_{x,2}$ are equivalent) that $\mathcal P_{x, 1}'$ and $\mathcal P_{x, 2}'$ are equivalent.  Lemma~4.17(ii) of \cite{Dut} implies that  $\mathbf B_1'$ and  $\mathbf B_2'$ are equivalent.

\item[(El.2)] $B_{x,i}=\langle T_{x,i}\oplus P_{x,i}\rangle\le A_{[x]}$ is of finite index and  the  elements in $P_{x,i}$ correspond to all but one boundary component of $\mathcal O_{x}'$. In this case $\mathbf B_1'$ and $\mathbf B_2'$ are equivalent as  the new  marked $\mathbb A$-graph of orbifold type that emerges differ  by auxiliary moves and by replacing its generating tuple by a Nielsen equivalent tuple. 
\end{enumerate}

We now deal with the case that $x$ is peripheral. As $F_1$ and $F_2$  yield marked $\mathbb A$-graphs  we conclude that $y$ is exceptional in $\mathbf B_1$ (and therefore also in $\mathbf B_2$). In this case all elementary moves commute with the fold. This is easily verified unless $\mathbf B_2$ is obtained from  $\mathbf B_1$ by a tame auxiliary move of type A2 based on $f^{-1}$. Thus assume that  $\mathbf B_2$ is obtained from $\mathbf B_1$ by a tame auxiliary move of type A2 based on $f^{-1}$ that replaces its label $(a_1, e, b_1)$ by  $(a_2,e, b_2)=(a_1, e, b_1\beta)$  for some $\beta\in T_{y,1}^{\pm 1}\oplus P_{y,1}^{\pm 1}$. Thus 
$$\mathcal P_{y,1}'=(T_{y,1}, (\gamma')\oplus P_{y,1}) \  \  \text{ and }  \ \ \mathcal P_{y,2}'=(T_{y,2}, P_{y,2})=(T_{y,1}, (\beta^{-1}\gamma'\beta)\oplus P_{y,1})$$ 
where $\gamma'$ is the peripheral element added by $F_1$. The A2 move based on $f^{-1}$    is  tame when applied to $\mathbf B_1'$ since $\beta$ lies in $ T_{y, 1}^{\pm 1}\oplus (\gamma')\oplus P_{y,1}.$  This move  replaces  $\mathcal P_{y,1}'$ by $\mathcal P_{y,2}'$. Thus $\mathbf B_1'$ and $\mathbf B_2'$ are equivalent.

\smallskip

\noindent\emph{Folds of type IA/IIIA.} Assume now that  $F_1$ and $F_2$ are of type IA or of type IIIA.  We will give the argument  in the case that $F_1$ and $F_2$ are  of type IIIA  and leave the similar case of folds of type IA to the reader.

We argue in the case that $x$ is peripheral, the case of non-peripheral $x$ is simpler as both $B_{f_1}$ and $B_{f_2}$ are trivial by the fact that $F_1$ and $F_2$ yield marked $\mathbb A$-graphs.  Observe that $y:=\omega(f_1)=\omega(f_2)$ is exceptional as    $F_1$ and $F_2$ yield marked  $\mathbb A$-graphs. Hence,  both 
  $\mathcal P_{y,1}=(T_{y, 1}, P_{y,1}) $  and $  \mathcal P_{y, 2}=(T_{y, 2}, P_{y,2})$  are non-critical and  of simple type. Assume that the label of  $f_1$ (resp.~$f_2$)   in $\mathcal B_1$ is   $(a, e, b_1)$ (resp.~$(a, e, b_2)$). We can also assume that  $B_{f_2}=1$ as $F_1$ is tame.

All elementary moves that  do not affect  $f_1\cup f_2$ clearly  commute with  $F_1$ and $F_2$.  Thus we can restrict our attention to those moves that  do affect $f_1\cup f_2$.  

Auxiliary moves of type A0 that are based on $x$ or on  $y$ as well as  peripheral and  Nielsen moves that are based on $y$ also commute with $F_1$ and $F_2$. If $\mathbf B_2$ is obtained from $\mathbf B_1$  by tame auxiliary moves of type A2  based on  $f_1^{-1}$ and $f_2^{-1}$    then $$\mathcal P_{y,1}'=(T_{y,1}\oplus (b_1^{-1}b_2), P_{y,1}) \ \text{ and  } \ \mathcal P_{y,2}'=(T_{y,2}, P_{y,2})=(T_{y,2}\oplus (\beta_1^{-1}b_1^{-1}b_2\beta_2), P_{y,2}'),$$    
where $\beta_1, \beta_2\in  T_{y,1}^{\pm1}\oplus   P_{y,1}^{\pm 1}$ and  where $P_{y,2}'=P_{y,1}$ if $B_{f_1,1}=1$ and $P_{y,2}'$ is obtained from $P_{y,1}$ by conjugating the peripheral element associated to $f_1^{-1}$  by $\beta_1^{-1}$ if $B_{f_1, 1}\neq 1$. In both cases one easily checks that these partitioned tuples are  equivalent  which implies that $\mathbf B_1$ and $\mathbf B_2$ are equivalent.

Therefore we can assume that $\mathbf B_2$ is obtained from $\mathbf B_1$ by auxiliary moves of type A2 and auxiliary moves of type A1  based on  $f_1$ and $f_2$ as described in Fig.~\ref{fig:equivfoldIIIA}, where $\alpha_1, \alpha_2\in B_x$ and $c, d \in A_e$.  
As fold $F_2$ is elementary we have $\alpha_1 a \alpha_e(c^{-1})=\alpha_2 a \alpha_e(d^{-1}).$  Therefore  $\alpha_2^{-1}\alpha_1= a\alpha_e(cd^{-1})a^{-1}, $  and hence   $a\alpha_e(cd^{-1})a^{-1}\in B_x$.
\begin{figure}[h!]
\begin{center}
\includegraphics[scale=1]{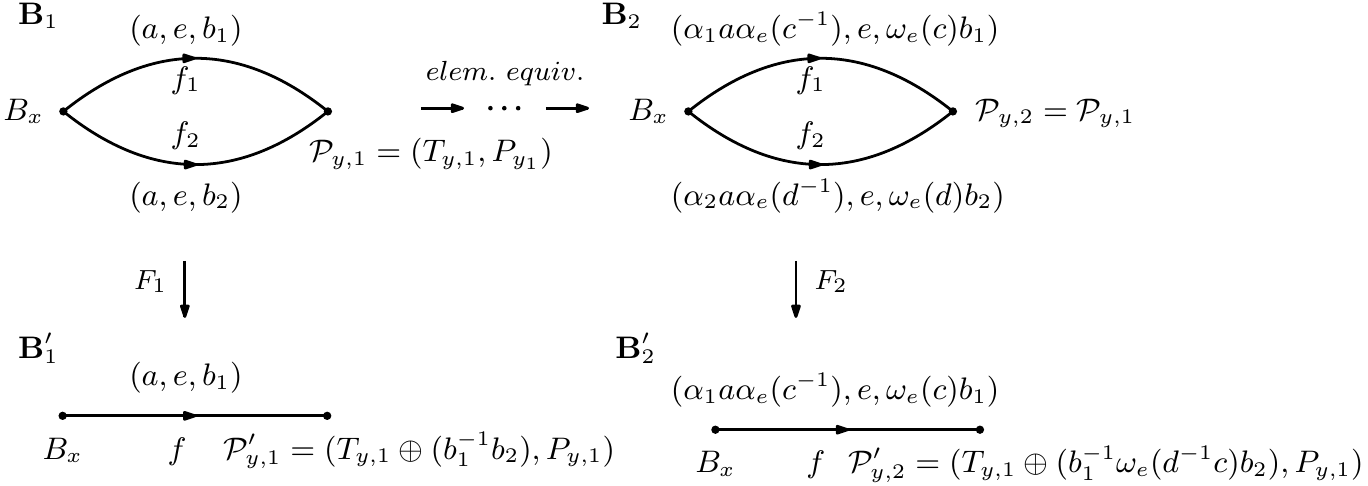}
\caption{Elementary moves  carrying $\mathbf B_1 $ onto $\mathbf B_2$.}{\label{fig:equivfoldIIIA}}
\end{center}
\end{figure}

We will show that $\mathbf{B}_1$ is equivalent to $\mathbf{B}_2$  if either    $c=d$ or $B_{f_1}\neq 1$. In fact,  if $c=d$,  then the assertion is trivial  as in this case the fact that $F_2$ is elementary  implies that   $\alpha_1=\alpha_2$, and hence   the A1 and the A2 moves commute with the fold $F_1$.

If $B_{f_1}\neq 1$ then  condition (4)  of marked $\mathbb A$-graphs  guarantees that  $B_{f_1}=\alpha_e^{-1}(a^{-1}B_xa).$ 
Thus,  $cd^{-1}$ lies in $B_{f_1}$  and therefore  $b_1^{-1}\omega_e(cd^{-1})b_1\in B_y.$ 
 Thus     $$\mathcal P_{y,1}=(T_{y, 1}\oplus (b_1^{-1}b_2), P_{y,1}) \ \ \text{  and   } \ \  \mathcal P_{y,2}=(T_{y,1} \oplus (b_1^{-1}\omega_e(d^{-1}c)b_2), P_{y,1})$$ are equivalent. Therefore  $\mathbf B_2$ is obtained from $\mathbf B_1$  by  first  applying auxiliary moves of type A1 based  $f_1$ and $f_2$ with  $c, d\in A_e$, then applying tame auxiliary moves of type A2 which are also based on  $f_1 $ and $f_2$ with  $\alpha_1,\alpha_2\in B_x$ respectively, and finally applying  finitely many Nielsen and peripheral moves  based on $y$ that replace the element $b_1^{-1}b_2$  by   $$b_1^{-1}\omega_e(d^{-1}c)b_2= b_1^{-1}\omega_e(d^{-1}c)b_1\cdot b_1^{-1}b_2.$$    
\end{proof}

\begin{corollary}{\label{corollary_edges2}} Assume that $F_1$ is of  type IA or IIIA and induces a marking on $\mathcal B_1'$.   If  $B_{f_1}=1$, then there exists a tame marked $\mathbb A$-graph $\mathbf B$  such that the following hold:
\begin{enumerate}
\item $\mathbf B $ is equivalent to $\mathbf B_2$ and is obtained from $\mathbf B_2$ in the following way:
\begin{enumerate}
\item By a (tame) auxiliary move of type A2 affecting the initial element of $f_1$,
 \item followed by an auxiliary move of type A1 applied to $f_1$ 
\item and in the case that $F_1$ is  of type IA, followed by an auxiliary move of type A0 applied to either $\omega(f_1)$ or $\omega(f_2)$. 
\end{enumerate}
\item A tame elementary fold  based on $f_1$ and $f_2$ can be applied to $\mathbf B$ such that the resulting marked $\mathbb A$-graph $\mathbf B'$ is equivalent to $\mathbf B_1'$.
\end{enumerate}
$$\begin{tikzcd}
\mathbf B_1   \arrow[d, "F_1"']   \arrow{r}[description]{\cdots}{\substack{elem. \\ moves }} &  \mathbf B_2 \arrow{r}[description]{\cdots}{\substack{elem. \\ moves }}  & \mathbf B \arrow{dl}{\substack{tame \  elem. \\ fold}} \\
 \mathbf B_1'  &     \arrow{l}[description]{\cdots}{\substack{elem. \\ moves }}  \mathbf B'      &   
\end{tikzcd}
$$
\end{corollary}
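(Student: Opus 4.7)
The approach is to explicitly undo the obstruction identified in Lemma~\ref{lemma_edges1} so that a tame elementary fold identical in type to $F_1$ becomes applicable to a marked $\mathbb A$-graph equivalent to $\mathbf B_2$. Since $F_1$ is a tame elementary fold of type IA or IIIA and $B_{f_1}=1$, tameness also forces $B_{f_2}=1$, placing us squarely in the exceptional case of Lemma~\ref{lemma_edges1}. Writing $(a,e,b_i)$ for the label of $f_i$ in $\mathcal B_1$, the label of $f_i$ in $\mathcal B_2$ then takes the form $(g\alpha_i a\,\alpha_e(c_i),\,e,\,\omega_e(c_i)b_i\beta_i h_i^{-1})$, where $(c_1,c_2)=(c,d)$ and $g\in A_{[x]}$, $\alpha_i\in B_{x,1}$, $\beta_i\in B_{\omega(f_i),1}$, $h_i\in A_{[\omega(f_i)]}$ are as in that lemma, with $h_1=h_2$ whenever $\omega(f_1)=\omega(f_2)$.

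I would then construct $\mathbf B$ from $\mathbf B_2$ as follows. First apply the tame auxiliary move of type A2 based on $f_1$ that left-multiplies $o_{f_1}^{\mathcal B_2}$ by the element $\gamma:=g\alpha_2\alpha_1^{-1}g^{-1}$, which lies in $B_{x,2}=gB_{x,1}g^{-1}$. Next apply an auxiliary move of type A1 to $f_1$ with the unique parameter $k\in A_e$ that transforms the right-hand factor $\alpha_e(c)$ appearing in $o_{f_1}$ into $\alpha_e(d)$, so that after this second step $o_{f_1}^{\mathcal B}=g\alpha_2 a\,\alpha_e(d)=o_{f_2}^{\mathcal B_2}$. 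In the type IA case, apply in addition an auxiliary move of type A0 at $\omega(f_1)$ (or at $\omega(f_2)$) whose conjugator absorbs the discrepancy between the terminal conjugators $h_1$ and $h_2$. By construction $\mathbf B$ is tame, equivalent to $\mathbf B_2$, and admits a tame elementary fold $\bar F$ based on $f_1$ and $f_2$ of the same type as $F_1$.

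Finally, apply $\bar F$ to $\mathbf B$ to obtain $\mathbf B'$ and argue that $\mathbf B'\approx \mathbf B_1'$. Once the steps (a)--(c) have been factored out, the remaining elementary moves needed to relate $\mathbf B$ to $\mathbf B_1$ either avoid $f_1\cup f_2$ entirely or are precisely of the kind shown in the proof of Lemma~\ref{lemma_edges1} to commute with a tame elementary fold based on $f_1,f_2$; pushing them through $\bar F$ identifies $\mathbf B'$ with the output $\mathbf B_1'$ of $F_1$ up to Nielsen and peripheral moves at $\omega(f_1)$, which are themselves elementary equivalences. The main obstacle is verifying that step~(a) really is tame in the sense of Section~\ref{sec:descriptionfolds}, i.e., that the A2 move preserves the simple-type or rigidity conditions at the relevant exceptional vertex or sub-$\mathbb A$-graph of orbifold type; this follows because $\gamma$ only conjugates inside the already-established vertex group $B_{x,2}$ and the elements $\alpha_1,\alpha_2$ themselves originate from tame A2 moves along the equivalence $\mathbf B_1\approx \mathbf B_2$, so no new structural condition in Definition~\ref{def:marked} is imposed.
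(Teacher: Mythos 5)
Your construction is essentially the paper's intended argument (the paper simply says the corollary follows from the proof of Lemma~\ref{lemma_edges1}), and the sequence A2 at $f_1$, then A1 at $f_1$, then A0 at a terminal vertex in the IA case is exactly what the proof of that lemma produces when the A1/A2 moves relating $\mathbf B_1$ to $\mathbf B_2$ are untangled. However, two of your framing claims are incorrect and should be fixed.

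First, the assertion that ``tameness also forces $B_{f_2}=1$'' does not follow. A tame elementary fold of type IA/IIIA requires only that \emph{at least one} of the two edge groups is trivial; the hypothesis $B_{f_1}=1$ already satisfies this, so $B_{f_2}$ may well be nontrivial. Second, and as a consequence, you are not automatically ``squarely in the exceptional case'' of Lemma~\ref{lemma_edges1}: that case requires conditions (i), (ii), and (iii) of the lemma to hold simultaneously, including $B_{f_1}=B_{f_2}=1$. If $B_{f_2}\neq 1$, or if the $\alpha_i,c,d$ obstruction in (ii)--(iii) is absent, Lemma~\ref{lemma_edges1} already gives $\mathbf B_1'\approx\mathbf B_2'$ once the fold on $\mathbf B_2$ along $f_1,f_2$ has been made elementary, so the A2/A1/A0 moves may be trivial or merely cosmetic. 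Fortunately neither slip derails the construction, since your moves (a)--(c) only touch $f_1$ (which does have trivial edge group) and the ``easy'' non-exceptional case is subsumed; but the stated logical dependence on the exceptional case is wrong and the corollary must be seen as covering both situations.
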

\begin{proof}
The proof follows immediately from the proof of the previous lemma. 
\end{proof}

\begin{lemma}\label{lemma:ruledoutcases}
Suppose that at least one of the folds  $F_1$ and  $F_2$ is good. Then the following configurations cannot  occur: 
\begin{enumerate}
\item[(1)]   $F_1$ and $F_2$ are of type IA/IIIA,   $f_1=g_1^{\varepsilon}$ and $f_2\neq g_2^{\varepsilon}$ with $B_{f_2}\neq 1\neq B_{g_2}$ for some $\varepsilon\in \{\pm 1 \}$ (after exchanging the edges $f_1$ and $f_2$ if necessary).

\item[(2)] $F_1$ is of type IA/IIIA  with $B_{f_2}\neq 1$ and $F_2$ is of type IIA  with $g\in \{f_1^{\pm 1}\}$ or vice versa, i.e.~$F_2$ is of type IA/IIIA with $B_{g_2}\neq 1$  and $F_1$ is of type IIA with $f\in \{g_1^{\pm 1}\}$.

\item[(3)] $F_1$ and $F_2$ are of type IIA with $g=f^{-1}$.  
\end{enumerate}
\end{lemma}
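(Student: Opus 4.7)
The plan is to argue by contradiction in each of the three cases, exploiting the normalization conditions (II.1)--(II.3), the goodness/badness dichotomy for tame elementary folds of the various types, and the marking axioms of Definition~\ref{def:marked}. I would use Remark~\ref{remark:fatsequivalent} freely to transport edge groups, vertex-group non-triviality, peripherality, and the orbifold-type/exceptional dichotomy between the equivalent marked $\mathbb A$-graphs $\mathbf B_1$ and $\mathbf B_2$.

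Case (3) I would handle first, as its argument is most transparent. Assuming $F_1$ is good, I run the trichotomy (El.0)/(El.1)/(El.2) at $x=\alpha(f)$. In sub-cases (El.1) and (El.2), the goodness clauses of Fold IIA(El.1) and Fold IIA(El.2) force $B_y=1$ at $y=\omega(f)$, while (II.2) applied to $F_2$ at $f^{-1}$ requires $B_y\cap b_2\omega_{[f]}(A_{[f]})b_2^{-1}\neq 1$ and in particular $B_y\neq 1$: immediate contradiction. In sub-case (El.0), the vertex $x$ is a boundary vertex of some $(\mathcal C,u_\mathcal C,T_\mathcal C)\in\mathscr C_1$, so $B_x=C_x$ has finite index in the non-trivial group $A_{[x]}$ and is itself non-trivial. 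Since $y$ is then exceptional, $F_2$ at $f^{-1}$ must fall into (El.1) or (El.2), and goodness of $F_2$ --- which one may assume by symmetry, since the hypothesis is that at least one of $F_1,F_2$ is good --- would force $B_x=1$, contradicting what we just observed.

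For cases (1) and (2) I would proceed more combinatorially. In case (1), tameness of both folds combined with $B_{f_2},B_{g_2}\neq 1$ forces $B_{f_1}=B_{g_1}=1$, which is consistent with $f_1=g_1^{\varepsilon}$. Elementarity yields $[f_1]=[f_2]$ and $[g_1]=[g_2]$, whence $[f_2]=[g_2]^{\varepsilon}$: the distinct edges $f_2,g_2\in E(\mathcal B)$ are incident to a common vertex and carry the same underlying $\mathbb A$-label up to orientation. If that vertex is peripheral, Definition~\ref{def:marked}(5) is directly violated. If it is non-peripheral, then Lemma~\ref{lemma_edges1} and Corollary~\ref{corollary_edges2} provide a representative of the equivalence class in which $f_2$ and $g_2$ can be folded together by a tame elementary fold of type IA/IIIA, whose existence conflicts with the applicability or normalization of $F_1$ (or $F_2$). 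In case (2), condition (II.1) for the IIA fold $F_2$ at $f_1^{\pm 1}$ forces $B_{f_1}=1$, consistent with tameness of $F_1$; but (II.3) for $F_2$ requires that its base edge cannot be folded with any edge in $E(\mathcal B)$ at the base vertex, while $f_2\neq f_1$ lies in $E(\mathcal B)$, is incident to the same vertex, and satisfies $[f_2]=[f_1]$; after a tame auxiliary move of type A2 provided by Lemma~\ref{lemma_edges1}, the edge $f_2$ becomes foldable with $f_1$, contradicting (II.3).

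The principal obstacle will be case (3) in sub-case (El.0), where no direct numerical contradiction on $B_y$ alone is available. The resolution relies on symmetrizing the goodness hypothesis together with the structural non-triviality of $B_x=C_x$ coming from the orbifold component $\mathcal C$: this non-triviality blocks goodness of the opposite fold, and the symmetric role-swap closes the case. Throughout, the bookkeeping of the various marking axioms and the updated partitioned tuples via Proposition~\ref{prop:01} and Proposition~\ref{prop:02} will demand careful attention, particularly when tracking how the equivalence $\mathbf B_1\approx\mathbf B_2$ composes with Nielsen/peripheral moves and auxiliary moves of types A0, A1, A2.
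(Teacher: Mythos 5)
Your plan correctly identifies the relevant machinery (the (El.0)/(El.1)/(El.2) trichotomy, conditions (II.1)--(II.3), and the marking axioms), and your treatment of the easy sub-cases is sound: case~(3) when $F_1$ falls in (El.1) or (El.2), case~(1) with $\varepsilon=1$ and a peripheral common vertex, and case~(2) with $g=f_1$. However, there are genuine gaps in the three harder sub-cases, and in each one the paper needs a substantive argument that your proposal does not supply.

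For case~(3), sub-case (El.0), you assume $F_1$ is good, observe that $B_x\neq 1$ forces $F_2$ into a bad instance of (El.1)/(El.2), and then say a ``symmetric role-swap closes the case.'' It does not: having shown $F_2$ is bad while assuming $F_1$ is good produces no contradiction, since the hypothesis permits exactly that. Swapping roles only covers the scenario where $F_2$ is the good fold; it does not rule out the scenario where $F_1$ is good and $F_2$ bad. What is actually needed is to show $F_1$ itself is bad: the paper does this by noting that the very applicability of $F_2$ at $g=f^{-1}$ produces a non-zero $z$ with $B_{y,1}\cap b^{-1}\omega_e(A_e)b=\langle b^{-1}\omega_e(a_e^z)b\rangle$, so the element $\gamma'$ added by $F_1$ is a proper power of a boundary element that $B_y$ already intersects, and Proposition~\ref{lemma:IIAbad} then forces $\mathcal P_{y,1}'=(T_{y,1},(\gamma')\oplus P_{y,1})$ to be critical. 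That criticality argument is the crux, and nothing in your sketch replaces it.

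For case~(1) with $\varepsilon=-1$, your claim that ``the distinct edges $f_2,g_2\in E(\mathcal B)$ are incident to a common vertex'' fails: when $f_1=g_1^{-1}$, the fold vertices satisfy $\alpha(f_2)=\alpha(f_1)=\omega(g_1)$ while $\alpha(g_2)=\alpha(g_1)=\omega(f_1)$, so $f_2$ and $g_2$ are in general based at different vertices. The paper instead uses condition~(5) of Definition~\ref{def:marked} applied to $f_2$ and $g_2^{-1}$ (both with non-trivial edge groups and the same $\mathbb A$-label) only to deduce $\omega(g_2)\neq x$, hence that $F_2$ is of type IA; it then runs a non-trivial computation on the peripheral tuples to show that both $F_1$ and $F_2$ are bad (the resulting partitioned tuple is shown to fold peripheral elements). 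Your appeal to Lemma~\ref{lemma_edges1} and Corollary~\ref{corollary_edges2} does not substitute for this; those lemmas concern equivalences between representatives, not the badness of specific folds. Similarly for case~(2): you dispatch $g=f_1$ correctly via (II.3)/tameness, but $g=f_1^{-1}$ is the substantive sub-case and the paper treats it with a long two-case analysis ($F_1$ of type IA versus IIIA, and $x$ peripheral versus non-peripheral), ultimately invoking Proposition~\ref{lemma:IIAbad} and Lemma~\ref{lem:critical1} to show both folds bad. Your proposal omits this case entirely.
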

\begin{remark}
As the underlying graph of the  graph of groups $\mathbb A$ has no loop edges, the configurations $f_1=g_1^{\varepsilon}$ and $f_2=g_2^{-\varepsilon}$  with $\varepsilon\in \{\pm 1\}$ cannot occur.
\end{remark}

\begin{proof}
\noindent{(1)}  Assume first that   $f_1=g_1$ and $f_2\neq g_2$ with $B_{f_2}\neq 1\neq B_{g_2}$. Since $f_2$ can be folded with $f_1$ and $f_1$ can be folded with $g_2$  we conclude that   $f_2$ can be folded with $g_2$. But this  contradicts  \cite[Lemma~4.12(1)]{Dut} as  $B_{f_2}\neq 1\neq B_{g_2}$  and $\mathbf B_1$ is tame.

Assume  now that   $g_1=f_1^{-1}$, $g_2\neq f_2^{-1}$  and $B_{f_2}\neq 1\neq B_{g_2}$. We will get a contradiction  by showing  that both  $F_1$ and $F_2$ are bad. After exchanging the roles of $F_1$ and $F_2$,  if necessary, we can assume that $x\in VB$ is   peripheral.   Since  $B_{f_2}\neq1\neq B_{g_2}$ and $[g_2^{-1}]=[g_1^{-1}]=[f_1]=[f_2]\in EA$  it follows from condition (5) of marked $\mathbb A$-graphs that  $\omega(g_2)\neq x=\omega(g_1)$. In particular $F_2$ is of type IA.

We  give a complete argument in the case that $F_1$ is of type IIIA, the similar and slightly easier case of a fold of type IA is left to the reader.  Denote $y:=\omega(f_1)=\omega(f_2)$. As $F_1$ is elementary, we can assume that  $f_i$ ($i=1,2$) has label $(a, e, b_i)$  in $\mathcal B_1$.  Assume that in $\mathcal B_1$ the edge  $g_i$ ($i=1, 2$) has label $(c_i, e^{-1}, d_i)$.   Observe that,  as $g_1=f_1^{-1}$,   we have $$(c_1, e^{-1}, d_1)=(b_1^{-1}, e^{-1}, a^{-1}).$$

The badness of  $F_2$ follows immediately  from the fact that $x$ and $\omega(g_2)$ are peripheral and  of orbifold type  which puts  $F_2$ in  case FoldIA(1)(1). 

It remains to show that $F_1 $ is bad.  If $y$ is of orbifold type this is immediate since $F_1$ falls into case FoldIIIA(2)(1). Thus we may assume $y$ is  exceptional.  This implies that $F_1$ induces a marking on $\mathcal B_1'$. Observe that the peripheral tuple $P_{y, 1}$   has the form 
$$(\gamma_{f_2^{-1}}, \gamma_{g_2})\oplus \bar{P}_{y, 1}$$
where $\gamma_{f_1^{-1}}$ and $\gamma_{g_2}$ are the peripheral elements associated to $f_2^{-1}$ and $g_2$ respectively. By definition,  $\gamma_{f_2^{-1}}=b_2^{-1} \omega_e(c_{f_2^{-1}})b_2 $  and $ \gamma_{g_2}=c_2 \omega_e(c_{g_2})c_2^{-1}, $   
  where $c_{f_2^{-1}}, c_{g_2}\in A_e$.  Since   $g_1=f_1^{-1}$ and $g_2$ can be folded in $\mathcal B_1$ (not necessarily  an elementary fold),   there are $\beta\in B_{y, 1}$ and  $c\in A_e$ such that   
$$c_2=\beta o_{f_1^{-1}}^{\mathcal B_1} \alpha_{e^{-1}}(c)= \beta b_1^{-1} \omega_e(c).$$
Hence $\gamma_{g_2}=\beta b_1^{-1} \omega_e(c_{g_2}) b_1\beta^{-1}$. 
By definition,   $F_1$  replaces   $\mathcal P_{y, 1}$  by  
$$\mathcal P_{y, 1}' = ( T_{y,1} \oplus (b_1^{-1}b_2) ,  (\gamma_{g_2}, \gamma_{f_2^{-1}})\oplus \bar P_{y,1}).$$
After  applying an elementary move on $\mathcal P_{y,1}'$ that conjugates $\gamma_{g_2}$ by   $b_2b_1^{-1}\beta^{ -1}$,   we obtain   the partitioned tuple
$$(T_{y,1}\oplus (b_1^{-1}b_2) , (b_1^{-1}\omega_{e}(c_{g_2}) b_1, b_1^{-1}\omega_e(c_{f_2^{-1}})b_2)\oplus \bar P_{y,1 }  ) $$
which clearly  folds peripheral elements. Therefore $\mathcal P_{y,1}'$  folds peripheral elements and hence $F_1$ is bad.

\smallskip

\noindent {(2)} After exchanging the roles of $F_1$ and $F_2$ if necessary,   we can assume that $F_1$ is of type IA/IIIA and $F_2$ is of type IIA. Note that  $g =f_1$ cannot occur because this violates the  tameness of  $F_2$ as $g$ can be folded with  $f_2$ and  by hypothesis $B_{f_2}\neq 1$. Thus assume  $g=f_1^{-1}$. We will show that both folds  $F_1$ and $F_2$ are bad which contradicts  the hypothesis that at least one of them is good. 

\noindent{\em Case 1:} $F_1$ is of type IA.  Assume that in $\mathcal B_1$  the labels of $f_1$ and $f_2$ are equal to  $(a, e, b)$  and  that  in $\mathcal B_2$ they are   $(a_1, e, b_1)$ and $(a_2, e, b_2)$ respectively.  We consider two cases according to the type of $x$.

\noindent{\emph{Subcase a:}} $x$ is non-peripheral. Thus  $y_1 $ and $y_2$ are peripheral.  Observe that  $B_{y_1}\neq1\neq B_{y_2}$,  and hence  $y_1$ and $y_2$   are of orbifold type. Thus $F_1$ is bad  as it falls into  case  FoldIA(1)(2).  

We show that $F_2$ is  bad. If $x$ is of orbifold type, the claim is trivial as  $F_2$ adds an element to a vertex of orbifold type, i.e.~$F_2$ is in case FoldIIA(El.0)(1). Thus assume that $x$ is exceptional. The peripheral tuple $P_{x, 2}$ contains the peripheral element $\gamma_{f_2}$ associated to $f_2$ which, by definition, is given by
 $\gamma_{f_2}=a_2\alpha_e(c_{f_2})a_2^{-1}$  for some $c_{f_2}\in A_e$. The fold $F_2$ replaces $\mathcal P_{x, 2}$ by
$$\mathcal P_{x, 2}'=( T_{x, 2} ,  (\gamma')\oplus P_{x, 2})=( T_{x, 2} , (\gamma_{f_2}, \gamma')\oplus  \bar P_{x, 2})$$ 
where $\gamma'=  a_1\alpha_e(c')a_1^{-1}$ with   $c'\in A_e$ such that $\langle c'\rangle=\omega_e^{-1}(B_{y_1, 2})$. 
Since $f_1$ and $f_2$ can be folded in $\mathcal B_2$ (not necessarily an elementary fold), there are $\beta \in B_{x, 2}$ and $d\in A_e$ such that $a_1=\beta a_2 \alpha_e(d)$. Thus   $\gamma'= \beta a_2 \alpha_e(c')\beta^{-1} a_2^{-1}.$  Therefore, after applying an elementary move on $\mathcal P_{x, 2}'$  that conjugates $\gamma'$ by $\beta$ we obtain
$$(T_{x, 2}, ( \underbrace{a_2\alpha_e(c_{f_2})a_2^{-1}}_{ \gamma_{f_2}} , a_2\alpha_e(c')a_2^{-1})\oplus \bar P_{x, 2}).$$
Therefore $\mathcal P_{x, 2}'$ folds peripheral elements and hence $F_2$ is bad.

\smallskip

\noindent{\em Subcase b:} $x$ is peripheral. Since   $B_{f_2}\neq 1$ it follows that   $B_{x}\neq 1$  and therefore $x$  of orbifold type.  Thus  $F_2$ is bad because it adds a peripheral element to a vertex of orbifold type,  i.e.~$F_2$ is as in case  FoldIIA(El.1)(1) or FoldIIA(El.2)(1). 

It remains to show that $F_1$ is bad.  If $y_2$ is of orbifold type,  $F_1$ is bad as we are in case FoldIA(2)(1b).  Thus assume that $y_2$ is exceptional.    The fact that  $F_2$ is tame,  implies that $y_1:=\omega(f_1)$ is  exceptional. The fact that $F_2$ can be applied to $\mathcal B_2$ clearly implies that  $$ \langle  a_e^z \rangle= \omega_e^{-1}( bB_{y_1,1}  b^{-1})$$
for some non-zero integer $z$,  where $a_e$ denotes a generator of $A_e$. By definition,  $F_1$ replaces  $\mathcal P_{y_1, 1}$ and $\mathcal P_{y_2, 1}$ by  
 $$\mathcal P_{y',1} ' =(T_{y_1} \oplus T_{y_2} , P_{y_1}\oplus P_{y_2})$$ 
where $y'\in VB_1'$ denotes the image of $y_1$ and $y_2$  under $F_1$ in $\mathcal B_1'$. The peripheral element associated to $f_2^{-1}$ (in $\mathcal B_1$)  is equal to $b^{-1}\omega_e(a_e^w)b$ for some non-zero integer $w$.  Therefore,  by Proposition~\ref{lemma:IIAbad}
,   $$\mathcal P'':=(T_{y_1}, (\gamma_{f_2^{-1}})\oplus  P_{y_1})$$ is critical.   Since  $\mathcal P''$ is a partitioned  subtuple of $\mathcal P_{y, 1}'$, the latter is critical. Therefore  $F_1$ is bad.   
 
 \smallskip
 
\noindent{\emph{Case 2.}} $F_1$ is of type IIIA.  As $F_1$ is elementary, we can assume that in $\mathcal B_1$ the edges  $f_1$ and $f_2$ have labels of type $(a, e, b_1)$ and $(a, e, b_2)$ respectively. Observe that if $y$ is peripheral,  then   $g=f_1^{-1}$ and $f_2^{-1}$ can be folded since they are of same type. i.e.  $[f_2^{-1}]=[f_2^{-1}]  \in EA.$  But this contradicts the hypothesis  that $F_2$ is tame since $B_{f_2}\neq 1$.   Thus  $y$ is exceptional and hence  $x$ is peripheral. In this case $F_2$ is bad as it  affects a vertex of orbifold type. Lemma~\ref{lemmatypesIIA}  implies that  $\mathcal P_{y, 1}$ is non-critical  of simple type as $\mathbf B_1$ and $\mathbf B_2$ are tame.  

We claim that $F_1$ is also bad. The fact that $F_2$ can be applied to $\mathcal B_2$ clearly  implies that there is a non-zero integer $z$ such that $$\langle b_1^{-1}\omega_e(a_e^z)b_1\rangle=b_1^{-1}\omega_e(A_e)b_1\cap B_{y, 1}.$$  
Put $\gamma':=b_1^{-1}\omega_e(a_e^z)b_1$. 

If $B_{y, 1}=\langle T_{y, 1}\oplus  P_{y, 1}\rangle$ has finite index in $A_{[y]}$,  then the badness of $F_1$  follows from  Lemma~\ref{lem:critical1}.  Thus assume that  $B_{y,1}$ has infinite index in $A_{[y]}$. In this case  $\mathcal P_{y, 1}$ is equivalent to 
$$\mathcal P'=(T'\oplus (\gamma'), (\gamma_{f_2^{-1}})\oplus  P') $$
such that 
$$B_{y,1}=\langle T'\rangle\ast \langle \gamma'\rangle \ast \langle \gamma_{f_2^{-1}}\rangle \ast \langle \gamma_1'\rangle \ast \ldots \ast \langle \gamma_r'\rangle
$$ 
where $P'=(\gamma_1', \ldots, \gamma_r')$.   
By definition,  $F_1$ replaces  $\mathcal P_{y, 1}$  by  $$(T_{y, 1} \oplus (b_1^{-1}b_2) , P_{y,1})$$ 
which is equivalent to $$(T'\oplus (\gamma', b_1^{-1}b_2),(\gamma_{f_2^{-1}})\oplus  P' ).$$
But $\gamma_{f_2^{-1}}=b_2^{-1}\omega_e(a_e^w)b_2$ for some non-zero integer  $w$. Thus $\mathcal P_{y, 1}'$  has an obvious relation (if $z$ is not a multiple of $w$) or $\mathcal P_{y, 1}'$  is reducible (if $z$ is a multiple of $w$). Therefore  $F_1$ is bad. 
  
\smallskip

\noindent{(3)} We can assume without loss of generality that $x$ is peripheral. Let $(a, e, b)$ be the label of $f$ in $\mathcal B_1$.  Since a fold along $f$ is possible we have    
 $B_{x}\neq 1,$  
and hence   $x$ is of orbifold type.  On the other hand, the fact that a tame fold of type IIA based on $f^{-1}$ can be applied to $\mathcal B_2$  implies that $y$  is  exceptional and that there is a non-zero integer  $z$ such that  
$$\langle b^{-1}\omega_e(a_e^z)b\rangle =B_{y, 1} \cap b^{-1}\omega_e(A_e)b.$$
The badness of $F_2$ follows easily since   $F_2$  adds an element to $B_x$.  On the other hand,     $F_1$ replaces   $\mathcal P_{y, 1}$  by 
$$\mathcal P_{y, 1}'=(T_{y, 1}, (b^{-1}\omega_e(a_e^w)b)\oplus P_{y, 1} )$$
for some $w\in \mathbb Z$ such that $\langle a_e^w\rangle=\alpha_e^{-1}(A_e)$.    Proposition~\ref{lemma:IIAbad}  implies that $\mathcal P_{y, 1}'$ is critical which puts $F_1$ in case FoldIIA(El.0)(2.c) and so $F_1$ is   bad.
\end{proof}

The following  configurations \textbf{(D1)-(D4)} play an important role in the proofs of  Proposition \ref{lemma:fork1} and Proposition~\ref{lemma:fork2} because   they cannot be ``normalized"  in the sense of   Lemma~\ref{lemma5cases} below.    
\begin{enumerate}
\item[\textbf{(D1)}] $F_1$ and $F_2$ are of type IA/IIIA and  $\{f_1,f_2\}=\{g_1,g_2\}$  or $F_1$ and $F_2$ are of type IIA and $f=g$.
 
\item[\textbf{(D2)}] $F_1$ and $F_2$ are folds of type IA  and  after  exchanging   $f_1$ and $f_2$ if necessary,  we may assume  $y_1:=\omega(f_1)=\omega(g_1)$ and $y_2:=\omega(f_2)=\omega(g_2)$. All possible configurations are described in Figure~\ref{fig:3optionscase(iii)}.
\begin{figure}[h]
\centering
\includegraphics[scale=1]{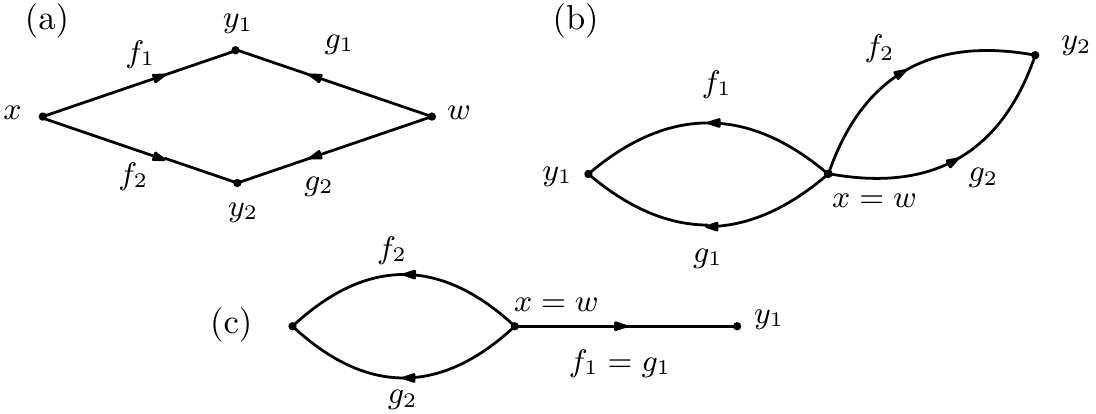}
\caption{The three configurations that can occur in case (2).}
\label{fig:3optionscase(iii)}
\end{figure} 

\item[\textbf{(D3)}]  $F_1$ and $F_2$ are folds of type IIIA such that,     after exchanging $g_1$ and $g_2$ if necessary,  we have $f_1=g_1^{-1}$ and $f_1=g_2^{-1}$.

\item[\textbf{(D4)}] $F_1$ and $F_2$ are of type IIA with $f\neq g$ such that   $x=w$ is non-peripheral,  and $F_1$ and $F_2$ satisfy the elementary condition (El.1) and  correspond to distinct boundary components of the orbifold  $\mathcal O_{x}'$  corresponding to  $B_{x,1}=\langle T_{x,1}\oplus P_{x,1} \rangle\le A_{[x]}$   
and $\mathcal P_{x, 1}$ is not equivalent to a partitioned tuple of the form
 $(\bar{T}\oplus (\bar{\gamma}_f, \bar{\gamma}_g), \bar{P})$  
such that  
$$B_{x,1}=\langle \bar{T}\rangle \ast \langle \bar{\gamma}_f\rangle\ast\langle\bar{\gamma}_g\rangle\ast \langle \bar{\gamma}_1 \rangle\ast \ldots   \ast \langle \bar{\gamma}_r \rangle$$
where   $\bar{\gamma}_f$ and $\gamma_f'$ (resp.~$\bar{\gamma}_g$ and $\gamma_g'$) corresponding to the same boundary of $\mathcal O_x'$ and $\bar{P}=(\bar{\gamma}_1 , \ldots, \bar{\gamma}_r)$, see Figure~\ref{fig:commnonel}. 
\begin{figure}[h!]
\centering
\includegraphics[scale=1]{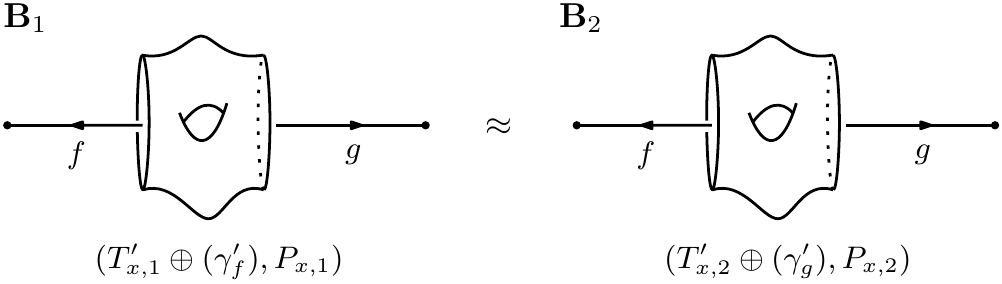}
\caption{$F_1$  and $F_2$ correspond to distinct boundary components of  $\mathcal O_x'$.}
\label{fig:commnonel}
\end{figure}
 
\end{enumerate}

In the set-up it is desirable to have $\mathbf B_1=\mathbf B_2$ so that $F_1$ and $F_2$ are tame elementary folds that are applied to the same marked $\mathbb A$-graph. The following lemma tells us  that, up to equivalence, we can almost always assume that this occurs  without substantially  changing the properties of $F_1$ and $F_2$ unless one of the configurations \textbf{(D1)}-\textbf{(D4)}  occurs.

\begin{lemma}\label{lemma5cases}  
Assume that there is $i\in \{1, 2\}$  such that $F_i$ is good.  If  none of the configurations \textbf{(D1)}-\textbf{(D4)} occurs,  then  there exists a marked $\mathbb A$-graph  $\mathbf B$  that  is  equivalent to $\mathbf B_i$ ($i=1, 2$)  and  admits tame elementary folds $\bar F_1$ and $\bar F_2$  yielding $\mathbb A$-graphs $\bar{\mathcal B}_1$ and $\bar{\mathcal B}_2$  such that the following hold:
\begin{enumerate}
\item[(a)] If $F_j$ induces a marking  on $\mathcal B_j'$, then $\bar F_j$ induces a marking on $\bar{\mathcal B}_j$ and the marked $\mathbb A$-graphs  $\bar{\mathbf B}_j$ and  $\mathbf B_j'$ are equivalent.

\item[(b)] If $F_j$  does not induce a marking on $\mathcal B_j$, then $\bar F_j$ does not induce a marking on $\bar{\mathcal B}_j$.
\end{enumerate}
In particular,  if $F_i$ is good (resp.~bad),   then $\bar F_i$ is good (resp.~bad). 
\end{lemma}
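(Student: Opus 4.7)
The plan is to proceed by case analysis on the types of $F_1$ and $F_2$ and on the relative position in $B$ of the edges they affect. The assertion is essentially a ``commutation/normalization'' statement: the configurations \textbf{(D1)}--\textbf{(D4)} are precisely those situations in which the two folds interact too intimately for a common equivalent representative of $\mathbf B_1$ and $\mathbf B_2$ to serve as a starting point for both. In every other case, I would produce $\mathbf B$ by applying to $\mathbf B_1$ a controlled sequence of auxiliary moves of type A0, A1, A2 and Nielsen/peripheral moves, so that both folds become applicable to a single representative.

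First I would reduce to the case where the supports of $F_1$ and $F_2$ (the edges they fold together with the incident vertices) overlap; when the supports are disjoint, the two folds trivially commute and $\mathbf B:=\mathbf B_1$ works, conclusion (a) being an immediate consequence of Lemma~\ref{lemma_edges1} and conclusion (b) being evident since the obstruction to inducing a marking is local. The overlapping cases split into (i) both folds of type IIA, (ii) one of type IIA and the other of type IA/IIIA, and (iii) both of type IA/IIIA. In (iii), if the edge pairs $\{f_1,f_2\}$ and $\{g_1,g_2\}$ coincide we are in \textbf{(D1)}; if they share exactly one edge with the other two edges terminating at common vertices, we are in \textbf{(D2)}; and if $f_1=g_1^{-1},f_2=g_2^{-1}$ we are in \textbf{(D3)}. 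Outside these configurations any shared edge forces, by Lemma~\ref{lemma:ruledoutcases}(1), one of the non-shared edges to have trivial edge group, and then Corollary~\ref{corollary_edges2} provides an equivalent marked $\mathbb A$-graph $\mathbf B$ obtained from $\mathbf B_2$ by an A2-move on the initial element of the shared edge followed by an A1-move, to which a fold $\bar F_1$ with the same effect as $F_1$ can be applied while $\bar F_2:=F_2$ remains applicable. Case (ii) is ruled out by Lemma~\ref{lemma:ruledoutcases}(2) when both folds are good, and otherwise a similar A1/A2 normalization does the job.

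In case (i) with $f=g$ we are in \textbf{(D1)}; with $f=g^{-1}$ Lemma~\ref{lemma:ruledoutcases}(3) gives a contradiction to the hypothesis that some $F_i$ is good; and if $f$ and $g$ share no endpoint the two folds commute. The remaining, genuinely subtle subcase is $f\neq g$ with $x=w$, and here the vertex $x$ is either peripheral, in which case $F_1$ and $F_2$ act on disjoint parts of the marking of the sub-$\mathbb A$-graph of orbifold type containing $x$ and a direct check produces $\mathbf B$; or $x$ is non-peripheral, in which case Lemma~\ref{lemmatypesIIA} shows that both folds must satisfy (El.1) (situation (El.2) cannot occur twice at the same vertex for different edges with trivial edge group). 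One then has to simultaneously realize the two peripheral generators $\gamma_f'$ and $\gamma_g'$, associated to $B_{x,i}\cap o_{f_i}^{\mathcal B_i}\alpha_e(A_e)(o_{f_i}^{\mathcal B_i})^{-1}$ and $B_{x,i}\cap o_{g_i}^{\mathcal B_i}\alpha_{e'}(A_{e'})(o_{g_i}^{\mathcal B_i})^{-1}$, as \emph{simultaneous} free factors of $B_{x,1}$ appearing in one partitioned tuple equivalent to $\mathcal P_{x,1}$. The failure of \textbf{(D4)} is exactly the statement that this can be done, and the existence of such a decomposition then follows from Lemma~\ref{length2} and the folding-proof-of-Grushko argument used in Lemma~\ref{lemma:equivalencesimpletype} applied to the orbifold covering $\eta_x\colon\mathcal O_x'\to\mathcal O_{[x]}$: since $\gamma_f'$ and $\gamma_g'$ correspond to distinct boundary components of $\mathcal O_x'$ and $\mathcal P_{x,1}$ is non-critical of simple type, we may assume up to elementary equivalence that these generators appear in distinct free factors of the standard decomposition of $B_{x,1}$. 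A single tame A2-move at $x$ then normalizes both $F_1$ and $F_2$ on the resulting $\mathbf B$.

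It remains to verify, in each subcase, that if $F_j$ induces a marking on $\mathcal B_j'$ then so does the corresponding $\bar F_j$ on $\bar{\mathcal B}_j$, and that the resulting marked $\mathbb A$-graphs $\bar{\mathbf B}_j$ and $\mathbf B_j'$ are equivalent (respectively, that the non-inducing case is preserved). Both properties reduce, once the label data have been matched, to Lemma~\ref{lemma_edges1}, since the obstructions to inducing a marking and to tameness are determined by vertex groups and the types of partitioned tuples at the affected vertices, all of which are invariants of the equivalence class. The main obstacle is precisely the bookkeeping in the last subcase of (i): one has to track how the auxiliary A2-move at $x$ interacts with the two peripheral tuples $P_{x,1}$ and $P_{x,2}$ and check that the specific non-criticality of $\mathcal P_{x,1}$ (guaranteed by the tameness of $\mathbf B_1$ and the failure of \textbf{(D4)}) is what allows a single equivalent $\mathbf B$ to support both normalized folds.
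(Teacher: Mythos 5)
Your proposal follows the same skeleton as the paper's proof: a case split on the types of $F_1, F_2$ (both IA/IIIA, one IIA and one IA/IIIA, both IIA), using Corollary~\ref{corollary_edges2} and Lemma~\ref{lemma:ruledoutcases} to normalize shared edges, and appealing to Lemma~\ref{lemma_edges1} to control the output markings. However, there is one genuine misunderstanding in the most delicate subcase, and the mixed IIA/IA-IIIA case is left too vague.

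The misunderstanding is in the both-IIA subcase at a non-peripheral vertex $x=w$. You assert that the simultaneous decomposition of $\mathcal P_{x,1}$ with both $\gamma_f'$ and $\gamma_g'$ as separate free factors ``follows from Lemma~\ref{length2} and the folding-proof-of-Grushko argument used in Lemma~\ref{lemma:equivalencesimpletype}.'' This is false: the decomposition does not exist in general. Each of $\gamma_f'$ and $\gamma_g'$ is a free factor in some equivalent presentation of $\mathcal P_{x,1}$ (this is what (El.1) for $F_1$, resp.\ $F_2$, says), but there need not be a single equivalent presentation exhibiting both as simultaneous free factors together with the peripheral tuple $P_{x,1}$. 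For instance, if $\mathcal O_x'$ is a $4$-holed sphere, then $B_{x,1}\cong F_3$; if $|P_{x,1}|=2$ and $\gamma_f',\gamma_g'$ correspond to the two remaining boundary components, the required decomposition would force $F_3$ to have four free factors of rank one. Configuration \textbf{(D4)} is defined precisely as the case where this simultaneous decomposition fails, and it is a genuine possibility that is treated separately in the proofs of Propositions~\ref{lemma:fork2} and~\ref{lemma:fork1}. In the paper the existence of the decomposition is taken directly as a hypothesis (``\textbf{(D4)} does not occur''), not derived: so your Lemma-\ref{length2} argument is not a proof of the needed fact but rather a circular restatement of the hypothesis, and worse, if your argument were valid it would render \textbf{(D4)} vacuous and make the subsequent handling of that configuration pointless. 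A secondary issue is that the mixed case (one IIA, one IA/IIIA) requires more care than ``ruled out by Lemma~\ref{lemma:ruledoutcases}(2) when both folds are good, and otherwise a similar A1/A2 normalization does the job'': the paper distinguishes whether $w$ is exceptional, whether (El.1) or (El.2) holds, and whether $f_1=g$, and only in the last subcase does Lemma~\ref{lemma:ruledoutcases}(2) enter, to conclude $B_{f_2}=1$ before falling back to the IA/IIIA argument.
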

\begin{remark}{\label{remark:donotinduce}}
It follows  immediately from the description of folds that  $F_1$ (resp.~$F_2$) does not induce a marking on $\mathcal B_1'$  (resp.~$\mathcal B_2'$) if  and only if  $F_1$ (resp.~$F_2$)  changes the group of some vertex of orbifold type. Therefore, if $\mathbf B$ is equivalent to $\mathbf B_1$ (resp.~to $\mathbf B_2$) and if $\bar F_1$ (resp.~$\bar F_2$) is a tame  elementary fold that  that can be applied to $\mathbf B$   and  is based on the same vertex and affects the same edges as $F_1$ (resp. $F_2$),  then $F_1$     does not induce a marking on $\mathcal B_1'$ if and only if $\bar F_1$  does not  induce  a marking on the resulting $\mathbb A$-graph.
\end{remark} 
\begin{proof}
We consider all possible configurations  for $F_1$ and $F_2$. The main idea is to turn the  not necessarily elementary fold that identifies  $f_1$ and $f_2$ in $\mathcal B_2$ (or  that is  based  on  $f$ in  the case $F_1$ is of type IIA)  into an elementary fold  without affecting the labels of  $g_1$  and $g_2$ in $\mathcal B_2$ (or of $g$ in the case $F_2$ is of a type IIA).

\smallskip
 
\noindent{\bf Case 1:} $F_1$ and $F_2$ are of type IA/IIIA. As $F_1$ and $F_2$ are tame,  we can assume that $B_{f_1}=1=B_{g_1}$. Moreover, after  interchanging $F_1$ and $F_2$  if necessary, we may assume that  if $B_{f_2}\neq 1$  then  $B_{g_2}\neq 1$.   

Assume that  $f_1\notin \{g_1^{-1}, g_2^{-1}\}$.     Corollary~\ref{corollary_edges2} implies that there is a marked $\mathbb A$-graph $\mathbf B$  such that the following holds: 
\begin{enumerate} 
\item[(1)] $\mathbf B$ is equivalent to $\mathbf B_2$ and is obtained from $\mathbf B_2$ in the following way:
\begin{enumerate}
\item By a tame auxiliary move of type A2 applied to $f_1$, 

\item followed by an auxiliary move of type A1 based on $f_1$,  

\item and in the case $F_1$ is of type IA, followed by an auxiliary move of type A0 applied to either $\omega(f_1)$ or to $\omega(f_2)$. 
\end{enumerate}

\item[(2)]   A tame elementary fold $\bar F_1$   based on $f_1$ and $f_2$  can be applied to $\mathcal B$ yielding an $\mathbb A$-graph $\bar{\mathcal B}_1$  such that:

\begin{enumerate}
\item if $F_1$ induces a marking on $\bar{\mathcal B}_1$,  then $\bar F_1$ induces a marking on $\bar{\mathcal B}_1$  and  the resulting marked $\mathbb A$-graphs $\mathbf B_1'$ and $\bar{\mathbf B}_1$ are equivalent.

\item If $F_1$ does not induce a marking on $\mathcal B_1'$,  then $\bar F_1$ does not induce a marking on $\bar{\mathcal B}_1$. 
\end{enumerate} 
\end{enumerate}

Since configuration \textbf{(D2)} is excluded, we conclude that if $F_1$ is of type IA, then  there is $k\in \{1, 2\}$ such that $\omega(f_k)\notin\{\omega(g_1), \omega(g_2)\}$.  Thus  in going from $\mathbf B_2$ to $\mathbf B$  we can arrange the auxiliary moves (a)-(c) so that  the labels of $g_1$ and $g_2$ are not affected,  and hence  the fold $\bar F_2$ that identifies $g_1$ and $g_2$ in $\mathcal B$  is also  elementary.

By  Remark~\ref{remark:donotinduce},  $F_2$   induces a marking on $\mathcal B_2'$  if  and only if $\bar F_2$  induces a marking on $\bar{\mathcal B}_2$.  In the affirmative case, Lemma~\ref{lemma_edges1}  implies that the the marked $\mathbb A$-graph $\bar{\mathbf B}_2$, which is obtained from $\mathbf B$ by $\bar F_2$,  is equivalent to $\mathbf B_2'$.

Now assume that  $f_1\in\{g_1^{\pm 1}, g_2^{\pm 1}\}$. We claim that   $f_2\notin \{g_1^{\pm 1}, g_2^{\pm 1}\}$ and $B_{f_2}=1$. Therefore   we can apply the same argument from the previous paragraph with  $f_2$ playing the role of $f_1$.

If $f_1=g_i$ for some $i$,  then   $f_2\neq g_j$  ($i\neq j\in \{1, 2\}$) because configuration \textbf{(D1)} is excluded. If $f_1=g_i^{-1}$, then $f_2\neq g_j^{-1}$ ($i\neq j\in \{1, 2\}$)  because configuration \textbf{(D3)} is excluded.  This proves the first part of the claim.  

It remains to show that $B_{f_2}=1$ if $f_1\in \{g_1^{\pm 1}, g_2^{\pm1}\}$.  The claim is trivial if $f_1=g_2$ or  $f_1=g_2^{-1}$ because we assumed that $B_{f_2}=1$ if $B_{g_2}=1$.  Thus assume  that $f_1=g_1$. Since  $f_2$ and $g_2$  can be  folded in  $\mathcal B_2$, it follows from \cite[Lemma~4.12]{Dut} that $B_{f_2}=1$ or $B_{g_2}=1$. Therefore  $B_{f_2}=1$. Now assume that  $f_1=g_1^{-1}$. The previous lemma and the hypothesis  that at least one of the folds  $F_1$ or $F_2$ is good, implies that $B_{f_2}=1$. 

\smallskip

\noindent{\bf Case 2.}   $F_1$ is of type IA/IIIA and $F_2$ is of type IIA. Note that  $F_2$ stays tame elementary in any marked $\mathbb A$-graph equivalent to $\mathbf B_2$ unless $F_2$ satisfies the elementary condition (El.1) and   $w=\alpha(g)$ is affected  either by peripheral/Nielsen moves or  $g$ is affected by auxiliary moves of type A2.  Thus if  either $w$ is not exceptional or $w$ is exceptional but $F_2$ satistfies the elementary condition (El.2),  then the lemma holds with $\mathbf B=\mathbf B_1$ and $\bar F_1=F_1 $ and $\bar F_2$ the tame elementary fold of type IIA  based on $g$.

Thus we assume that $w$ is exceptional and  $F_2$ satisfies the elementary condition (El.1).  Hence, $\mathcal P_{w, 2}=(T_{w,2}'\oplus(\gamma_g'),  P_{w,2})$  such that 
$$B_{w, 2}=\langle T_{w,2}'\rangle\ast \langle \gamma_{g}'\rangle  \ast \langle \gamma_1\rangle\ast \ldots   \ast \langle \gamma_r \rangle $$ 
where $\gamma_g'$ is the peripheral element corresponding to $F_2$ and $P_{w,2}=(\gamma_1, \ldots, \gamma_r)$.

As  $F_1$ is tame   we can assume that $B_{f_1}=1$. If $f_1\neq g$, then  we  can apply the same argument as in the previous case as the A2 move applied to $f_1$ does not affect $w$. If $f_1 =g$ then it follows from Lemma~\ref{lemma:ruledoutcases}(2) that $B_{f_2}=1$.  Therefore,  we can apply the argument from Case 1 to $f_2$ instead of $f_1$.

\smallskip

\noindent {\bf Case 3.} $F_1$ and $F_2$ are of type IIA. If $x\neq w$,  then the result follows easily as the elementary moves necessary to make the folds elementary  affect distinct vertices. Thus assume that $x=w$.    The result is also  trivial  if $x$  is peripheral since in this case we can take $\mathbf B=\mathbf B_1$ and $\bar{F}_1=F_1$ and $\bar{F}_2$ the tame elementary fold of type IIA based on $g$. 

Assume that $x$ is non-peripheral. Lemma~\ref{lemmatypesIIA} implies that $x$ is exceptional.   As  configuration \textbf{(D1)} is excluded, $f\neq g$.  Assume that the labels of $f$ and $g$ in $\mathcal B_2$ are $(a, e, b)$  and $(c, e', d)$ respectively. The fact that $F_2$ is elementary implies that $\mathcal P_{x, 2}= (T_{x, 2}'\oplus (\gamma_g'), P_{x,2})$ such that 
$$B_{w, 2}=\langle T_{w,2}'\rangle\ast \langle \gamma_{g}'\rangle  \ast \langle \gamma_1\rangle\ast \ldots   \ast \langle \gamma_r \rangle $$ 
where $P_{x,2}=(\gamma_1, \ldots, \gamma_r)$  and  $\gamma_g'$ is the peripheral element  corresponding  to the fold $F_2$, i.e. 
 $\langle\gamma_g'\rangle = c\alpha_{e'}(A_{e'})c^{-1}\cap B_{x,2}.$   
If $F_1$ and $F_2$ correspond to the same boundary component of the orbifold $\mathcal O_x'$ corresponding to $B_{x,2}$ then $e=e'$ and $a=\beta c \alpha_{e'}(d)$ for some $\beta \in B_{x,2}$ and some $d\in A_{e'}$. In this case   $\mathbf B$ is  defined as  the marked $\mathbb A$-graph that is obtained from $\mathbf B_2$ by an auxiliary move  of type A2  based on $f$  followed by an A1 move based on $f$ that  makes the label of $f$ equal to $(c, e, b')$,  see  Figure~\ref{fig:commnonel2}. This clearly  makes  both folds  elementary.  The result now follows from Lemma~\ref{lemma_edges1}. 
\begin{figure}[h!]
\centering
\includegraphics[scale=1]{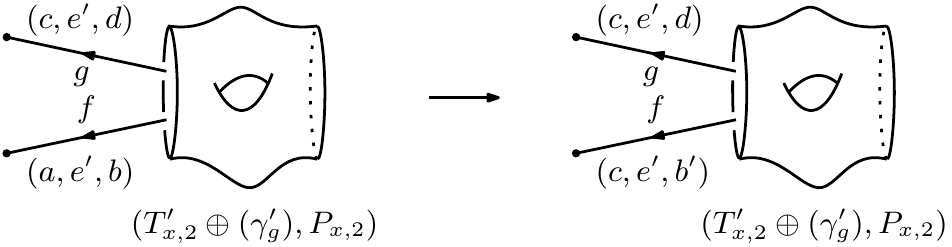}
\caption{$b'=\omega(d)b\in A_{[\omega(f)]}$. }
\label{fig:commnonel2}
\end{figure}

Assume now that $F_1$ and $F_2$  correspond to distinct boundary components of $\mathcal O_x'$. As  configuration \textbf{(D4)} is excluded, $\mathcal P_{x,2}$  is equivalent to $\mathcal P_x':=(T''\oplus (\bar \gamma_f, \bar \gamma_g), P'') $ such that 
 $$B_{x,1}=\langle T''\rangle \ast \langle \bar{\gamma}_f\rangle\ast\langle\bar{\gamma}_g\rangle\ast \langle \gamma_1''\rangle\ast \ldots   \ast \langle \gamma_r''\rangle$$
 where   $\bar{\gamma}_f$ and $\gamma_f'$ (resp. $\bar{\gamma}_g$ and $\gamma_g'$) corresponding to the same boundary of the orbifold  $\mathcal O_x'$ and $P''=(\gamma_1'', \ldots, \gamma_r'')$. Let $a_f, c_g\in B_{x,2}$ such that 
 $\langle \bar{\gamma}_f\rangle =  a_g \alpha_e(A_e)a_g^{-1}$ and $\langle\bar{\gamma}_g \rangle =c_g \alpha_{e'}(A_{e'})c_g^{-1}\cap B_{x,2}.$  
In this case $\mathbf B$ is the marked $\mathbb A$-graph  that is obtained from   $\mathbf B_2$  by elementary moves   based on $x$  that replaces $\mathcal P_{x, 2}$ by  $\mathcal P_x'$,  followed  by auxiliary moves of type A2  based on $f$ and $g$   making the labels of $f$ and $g$ equal to $(a_f, e, b')$ and $(c_g, e', d')$ respectively, see Figure~\ref{fig:commnonel3}.  In $\mathbf B$   the folds along $f$ and $g$ are elementary.  The result now follows from Lemma~\ref{lemma_edges1}.  
 \begin{figure}[h!]
\centering
\includegraphics[scale=1]{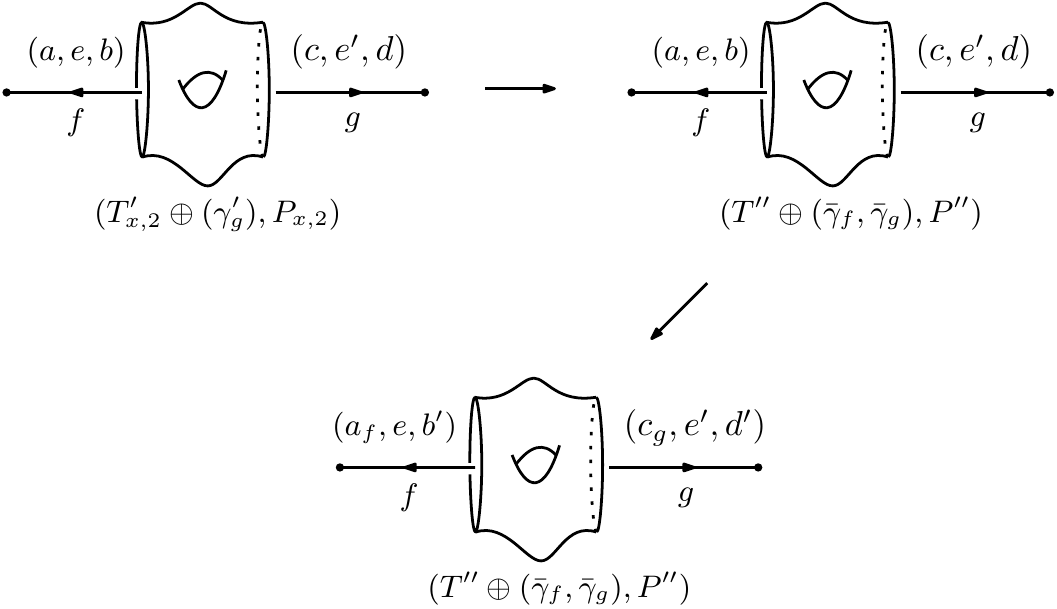}
\caption{The marked $\mathbb A$-graph $\mathbf B$. }
\label{fig:commnonel3}
\end{figure}
\end{proof}

When   $\mathbf B_1=\mathbf B_2$ (so that $F_1$ and $F_2$ are tame elementary folds that are applied to the same marked $\mathbb A$-graph) is it possible that one fold is no longer tame after the other is applied.   The following lemma tells us  when this occur.

\begin{lemma}{\label{lemma5cases1}}
Assume that $\mathbf B:=\mathbf B_1=\mathbf B_2$. Then $F_i$ stays tame after $F_j$ is applied unless:
\begin{enumerate}
\item[\textbf{(D5)}] $F_1$ and $F_2$ are of type IIA   with $f\neq g$ and $\alpha(f)=\alpha(g)$ such that  $f$ and $ g$ can be folded in $\mathcal B:=\mathcal B_1=\mathcal B_2$ by an elementary fold.
\end{enumerate} 
\end{lemma}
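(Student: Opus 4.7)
My plan is to prove the lemma by a careful case analysis on the types of the two folds $F_1$ and $F_2$, using the key observation that tameness of a fold depends only on explicit conditions on edge groups and on local foldability at the base vertex. These conditions change in a very controlled manner when the other fold is applied: a tame fold of type IA/IIIA merges two edges, one of which has trivial group, and only modifies vertex groups at the terminal endpoints; a tame fold of type IIA changes the edge group of its base edge from trivial to non-trivial and modifies the vertex group at the terminal endpoint.

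First, consider the case in which $F_1$ and $F_2$ are both of type IA or IIIA. Tameness of $F_i$ requires that at least one of its two identified edges has trivial edge group. Since a tame fold of type IA/IIIA never promotes an edge from $EB\setminus E(\mathcal B)$ into $E(\mathcal B)$, the triviality of the edge groups of $f_1,f_2$ is preserved by $F_j$ unless some $f_i$ coincides with a $g_k$; in that case one checks directly that either both folds act on shared edges with trivial groups (so tameness is preserved), or the tameness of $F_j$ in $\mathbf B$ would force $B_{f_{3-i}}=1$, again preserving tameness of $F_i$.

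Next consider the mixed case $F_i$ of type IA/IIIA and $F_j$ of type IIA based on $g$. If $g\notin\{f_1,f_2\}^{\pm 1}$ then $B_{f_1},B_{f_2}$ are unaffected and tameness of $F_i$ is preserved. If $g=f_i^{\pm 1}$ for some $i$, WLOG $B_{f_i}=1$, then condition (II.3) on $F_j$ applied in $\mathbf B$ prevents $g=f_i$ from being foldable with any edge of $Star(\alpha(f_i),B)\cap E(\mathcal B)$. But $f_{3-i}\in Star(\alpha(f_i),B)$ can be folded with $f_i$ (since $F_i$ identifies them by an elementary fold), and hence $B_{f_{3-i}}=1$; after $F_j$ promotes $B_{f_i}$ to non-trivial, $F_i$ remains tame because $B_{f_{3-i}}$ is still trivial. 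Conversely, when $F_i$ is IIA and $F_j$ is IA/IIIA, the only threat to tameness of $F_i$ is that $F_j$ grows the vertex group $B_{\alpha(f)}$ and thereby enlarges the equivalence classes of $Star(\alpha(f),B)$; I will check directly that the new element added to $B_{\alpha(f)}$ does not create a new foldability partner for $f$ inside $E(\mathcal B)$, again using (II.3) applied to $F_j$ in $\mathbf B$.

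The final case is $F_1$ and $F_2$ both of type IIA, based on distinct edges $f$ and $g$. Conditions (II.1) and (II.2) for $F_j$ are unaffected by $F_i$, so the only way tameness of $F_j$ can fail in $\mathcal B'$ is through (II.3). The only new edge in $E(\mathcal B')\setminus E(\mathcal B)$ is $f$, so (II.3) fails for $F_j$ after $F_i$ if and only if $\alpha(f)=\alpha(g)$ and $f$ is equivalent to $g$ at this common vertex, which (since both were originally tame elementary folds with aligned initial elements) means precisely that $f$ and $g$ can be folded in $\mathcal B$ by an elementary fold, i.e.\ configuration \textbf{(D5)} occurs. I expect the main obstacle to be the bookkeeping in the mixed IA/IIIA$+$IIA case when $\alpha(g)\in\{\omega(f_1),\omega(f_2)\}$, ruling out that the newly added element in the vertex group silently creates a new equivalence between $g$ and an edge of $E(\mathcal B)$; this will be handled by invoking the precise description of the elementary conditions (El.0)--(El.2) from Lemma~\ref{lemmatypesIIA} together with the normalization in Definition~\ref{Def_normalized}.
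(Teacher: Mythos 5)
Your proposal takes essentially the same approach as the paper: the paper's proof is the single sentence ``the claim follows by an inspection of the various cases,'' and you are filling in that inspection. You correctly isolate the essential failure mode as configuration \textbf{(D5)}, and the general observation that tame folds of the various types only promote trivial edge groups to non-trivial ones in very restricted ways is the right organizing principle.

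However there are two sub-cases where the inspection as you have sketched it does not close.

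First, in the mixed case where $F_i$ is of type IA (so $\omega(f_1)\neq\omega(f_2)$) and $F_j$ is of type IIA based on $g=f_k^{-1}$, your argument invokes (II.3) for $F_j$ to deduce $B_{f_{3-k}}=1$, but (II.3) for the edge $g=f_k^{-1}$ constrains $Star(\omega(f_k),B)\cap E(\mathcal B)$, not $Star(x,B)\cap E(\mathcal B)$, and $f_{3-k}$ sits in $Star(x,B)$. So the deduction $B_{f_{3-k}}=1$ does not follow, and if $B_{f_{3-k}}\neq 1$ then after $F_j$ both edges of $F_i$ have non-trivial group. (Your phrasing ``$g=f_i^{\pm 1}$'' followed by an argument that only applies to $g=f_i$ signals that this case slipped.)

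Second, in the both-IIA case your sentence ``Conditions (II.1) and (II.2) for $F_j$ are unaffected by $F_i$'' is false when $g=f^{-1}$: then $F_i$ promotes $B_f=B_g$ to a non-trivial group, so (II.1) for $F_j$ fails afterwards, and this is not captured by \textbf{(D5)} since $\alpha(g)=\omega(f)\neq\alpha(f)$.

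Both of these are precisely configurations (2) and (3) of Lemma~\ref{lemma:ruledoutcases}, which are excluded in the paper's applications because there at least one of $F_1,F_2$ is good. Lemma~\ref{lemma5cases1} as stated does not carry that hypothesis, so an honest inspection must either invoke goodness, or argue separately that in these two configurations the claim still holds (or concede that they are additional exceptional configurations). Your write-up should flag this explicitly rather than assert that (II.1)--(II.2) are untouched; otherwise the argument, like the paper's one-line proof, is silently relying on Lemma~\ref{lemma:ruledoutcases}.
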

\begin{proof}
The claim follows by an inspection of the various cases.
\end{proof}

Observe that if (\textbf{D5}) occurs then $F_i$ is not tame after $F_j$  is applied since $f$ and $g$ can be folded  and one of them has non-trivial group.


\subsection{Proof of Proposition~\ref{lemma:fork2}}
This section is dedicated to the proof of Proposition~\ref{lemma:fork2}. We assume the setup of the previous section as  established in Convention~\ref{conventiontwofolds}. Let $i\neq j\in \{1, 2\}$. According to Lemma~\ref{lemma:equivcore}  and  Lemma~\ref{lemma:corerevealsbad} it suffices to show that if $F_i$ is good and $F_j$ is bad, then there exists a tame marked $\mathbb A$-graph $\mathbf B_i''$ that is  equivalent to $\mathbf B_i$ and  admits a bad  tame elementary fold. We first consider the ``non-normalizable" configurations  \textbf{(D1)}-\textbf{(D4)} and configuration \textbf{(D5)}. We then consider generic case, that is,  the case when the conclusion of Lemmas~\ref{lemma5cases} and \ref{lemma5cases1} hold.

\smallskip

\noindent \textbf{(D1)} Possibly after exchanging the roles of $F_1$ and $F_2$ we can assume that $F_1$ is good and $F_2$ is bad. It is not hard to see that  $F_1$ induces a marking on  $\mathcal B_1'$ iff   $F_2$ induces a marking on  $\mathcal B_2'$. As one of the folds is good,  we conclude that both  $F_1$ and $F_2$ induce markings on the resulting  $\mathbb A$-graphs.   According to  Lemma~\ref{lemma_edges1}, $\mathbf B_1'$ and $\mathbf B_2'$ are equivalent   if $F_1$ and $F_2$ are of type IIA, and hence $F_1$ is good iff $F_2$ is good.   Consequently $F_1$ and $F_2$ are of type IA/IIIA as $F_2$ is bad.

\begin{claim}{\label{claim:01}}
The vertices $\omega(f_1)$ and $\omega(f_2)$ (which may coincide if $F_1$ is of type IIIA)  are exceptional and therefore non-critical and  of simple type. 
\end{claim}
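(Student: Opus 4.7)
The plan is a straightforward case analysis built on the detailed classification of tame elementary folds given in Section~\ref{sec:descriptionfolds}. I first observe that both $F_1$ and $F_2$ must induce markings on the resulting $\mathbb A$-graphs: whether a fold induces a marking depends only on the types of the affected vertices and on the edge groups around the edges being identified, and all of this data is preserved under the equivalence $\mathbf B_1\approx \mathbf B_2$ by Remark~\ref{remark:fatsequivalent}(ii),(iii). Since $F_1$ is good it necessarily induces a marking, and so does $F_2$.

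Next I rule out, case by case, every configuration in which one of the vertices $\omega(f_1),\omega(f_2)$ fails to be non-peripheral and exceptional. Suppose first that $F_1,F_2$ are of type IIIA, so $y:=\omega(f_1)=\omega(f_2)$ is a single vertex. If $y$ is peripheral we are in case IIIA(1), where the description of the fold shows that a marking is induced only in sub-case~(3), which is automatically good; and if $y$ is non-peripheral of orbifold type we are in IIIA(2)(1), where no marking is induced at all. Now suppose $F_1,F_2$ are of type IA, and set $y_i:=\omega(f_i)$. If some $y_i$ is peripheral we are in case IA(1), in which a marking is induced precisely when $B_{y_1}$ or $B_{y_2}$ is trivial, and in that case the fold is good; and if some $y_i$ is non-peripheral of orbifold type we are in IA(2)(1), where inducing a marking forces the good sub-case~(a). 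In each of these scenarios the joint hypothesis "$F_j$ induces a marking" together with the preservation of vertex types and edge groups under equivalence forces both $F_1$ and $F_2$ to be good, contradicting the standing assumption that $F_2$ is bad.

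The only surviving possibilities are therefore IA(2)(2) and IIIA(2)(2), and in both of them $\omega(f_1)$ and $\omega(f_2)$ are non-peripheral and exceptional, which is the first assertion of the claim. The conclusion that the associated partitioned tuples are non-critical and of simple type is then immediate from the definition of tameness of $\mathbf B_1$ (equivalently $\mathbf B_2$). I do not foresee a real obstacle: the entire argument reduces to bookkeeping on the case list for tame elementary folds, and the only conceptual point to keep in mind is that vertex types and edge groups are genuinely invariant under the equivalence $\mathbf B_1\approx\mathbf B_2$, which is exactly the content of Remark~\ref{remark:fatsequivalent}.
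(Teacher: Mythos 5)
Your proof follows essentially the same route as the paper's: a case analysis on the vertex types of $\omega(f_1),\omega(f_2)$, using the preservation of types and groups under equivalence and the pre-established fact that both $F_1$ and $F_2$ induce markings. The only organizational difference is that you handle the orbifold-type case uniformly via "both induce markings, hence both land in the marking-inducing (and good) sub-case," whereas the paper handles it by the dual observation that the badness of $F_2$ forces $B_{\omega(f_2)}\neq 1$, which then forces $F_1$ to be bad as well.

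One small but genuine inaccuracy deserves a flag: you assert that "whether a fold induces a marking depends only on the types of the affected vertices and on the edge groups around the edges being identified." This is not quite true for folds of type IIIA(1): there, $B_y=1$ covers both the bad sub-case $(2)$ (where $b_1=b_2$, no marking) and the good sub-case $(3)$ (where $b_1\neq b_2$, marking defined), and the distinction is a \emph{label} condition, not a type or vertex-group condition. So your offered justification does not by itself yield the biconditional "$F_1$ induces a marking iff $F_2$ does." That biconditional is asserted (without proof) in the paper in the paragraph immediately preceding the claim, so you can safely quote it as a standing hypothesis rather than re-derive it; but as written, your justification understates the subtlety. The rest of the bookkeeping on the fold cases and the final appeal to tameness of $\mathbf B_1$ are correct.
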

\begin{proof}  
Assume that $F_1$ and $F_2$ are of type IA, that is,   $\omega(f_1)\neq \omega(f_2)$. The case where $F_1$ and $F_2$ are of type IIIA is handled similarly. 

If  $\omega(f_1)$ or $\omega(f_2)$, say $\omega(f_1)$,  is of orbifold type then the badness of $F_2$ implies that $B_{\omega(f_2), i}\neq 1$. But then $F_1$ is also bad, a contradiction.   Thus neither $\omega(f_1)$ nor $\omega(f_2)$ is of orbifold type. 
 
If  $\omega(f_1)$ and $\omega(f_2)$ are peripheral, then the previous paragraph implies that both have trivial group.  But this implies that   $F_1$ and $F_2$ are good which is a  contradiction.  

Therefore $\omega(f_1)$ and $\omega(f_2)$ are exceptional vertices.
\end{proof}

The previous claim implies that $F_1$ and $F_2$ induce markings on $\mathcal B_1'$ and $\mathcal B_2'$ respectively. Moreover,  as $F_2$ is bad and $F_1$ is good,  we conclude that  $\mathbf B_1'$ is not equivalent to $\mathbf B_2'$. As $\omega(f_i)$  is non-peripheral  it follows that   $x$ is peripheral and so $B_{x,1}=B_{x,2}$.  
Lemma~\ref{lemma_edges1} implies that the following holds: 
\begin{enumerate}

\item[(i)] $B_{f_1}=B_{f_2}=1$.

\item[(ii)] there are $g\in A_{[x]}$, $\alpha_1\neq \alpha_2\in B_x$ and $c\neq d\in A_e$ such that 
$$o_{f_1}^{\mathcal B_2}=  g\alpha_1  o_{f_1}^{\mathcal B_1}  \alpha_e(c^{-1}) \ \ \text{ and } \ \ \ o_{f_2}^{\mathcal B_2}=g \alpha_2  o_{f_2}^{\mathcal B_1}  \alpha_e(d^{-1}).$$ 
In particular, 
$$B_{x}\cap o_{f_1}^{\mathcal B_1}   \alpha_e( A_e ) (o_{f_1}^{\mathcal B_1})^{-1}\neq 1 \ \ \text{ and } \ \  B_{x}\cap o_{f_1}^{\mathcal B_2}   \alpha_e( A_e ) (o_{f_1}^{\mathcal B_2})^{-1}\neq 1.$$
Moreover, 
 $$t_{f_1}^{\mathcal B_2}=\omega_e(c)t_{f_1}^{\mathcal B_1}\beta_1h_1^{-1} \ \ \text{ and } \ \  t_{f_2}^{\mathcal B_2}=\omega_e(d)t_{f_2}^{\mathcal B_1}\beta_2h_2^{-1}$$  
with  $\beta_i\in B_{\omega(f_i),1}$ and  $h_1, h_2\in A_{[\omega(f_1)]}=A_{[\omega(f_2)]}$ such that $h_1=h_2$ if $\omega(f_1)=\omega(f_2)$.   
\end{enumerate}

Assume that $F_1$ (and therefore  $F_2$) is of type IIIA, the case of a fold of type IA is similar.  Possibly after applying an A0 move based on $y=\omega(f_1)=\omega(f_2)$  with conjugating element $h_1=h_2\in A_{[y]}$ followed by A2 moves based on $f_1$ and $f_2$ with elements $\beta_1^{-1}, \beta_2^{-1}\in B_{y,2}$  respectively    (since both moves commute with $F_1$ and $F_2$) we can  assume that $t_{f_1}^{\mathcal B_2}=\omega_e(c)t_{f_1}^{\mathcal B_1}$  and $t_{f_2}^{\mathcal B_2}=\omega_e(d)t_{f_2}^{\mathcal B_1}$. In particular,   $\mathcal P_{y,1}=(T_{y,1}, P_{y,1})$ and $\mathcal P_{y,2}=(T_{y,2}, P_{y,2})$ are equivalent.

By definition, $F_1$ (resp. $F_2$)  replaces   $ \mathcal P_{y, 1}$ (resp. $\mathcal P_{y,2}$)
by 
$$\mathcal P_{y, 1}'= (T_{y, 1}\oplus (b_1^{-1}b_2), P_{y, 1}) \ \  (\text{resp. }    \mathcal P_{y, 2}'= (T_{y,2}\oplus (b_1^{-1}\omega_e(c^{-1}d)b_2), P_{y,2})), $$
where $b_i:=t_{f_1}^{\mathcal B_1}$. The fact that  $F_1$ is good (resp.~$F_2$ is bad) combined with the fact that  $core( \mathcal B_1')$  and $core(\mathcal B_2')$  cannot be  almost  orbifold cover with good marking as $f$ lies in the core of $\mathcal B_i'$ and $B_{f,1}'=B_{f,2}' =1$, implies that  $\mathcal P_{y,1}'$ is non-critical of simple type (resp.~$\mathcal P_{y,2}'$ cannot be non-critical of simple type).  Recall that $f$ denotes  the image of $f_1$ and $f_2$ under $F_1$ and $F_2$.

Item (iii) means that  $f'\in EB'$ violates condition (F2) of folded $\mathbb A$-graph.   Let $F_1'$ be the fold of type IIA based on $f$ that replaces  $B_{f,1}'=1$ by  $$B_{f, 1}''=\langle c_{f'}\rangle$$ where $c_f\in A_e$ such that $B_{x, 1}=\langle\alpha_e(c_{f'})\rangle$. Assume first that $f$ is normalized in $\mathbf B_1'$ (and hence $F_2'$ is tame and elementary). As $x$ is peripheral this means that $f$ cannot be folded with any edge from $Star(x, B')\cap E(\mathcal B_1')$.    By definition,  $F_2'$ replaces  
$$\mathcal P_{y, 1}'=(T_{y,1} \oplus (b_1^{-1}b_2), P_{y,1})$$  
by 
$$\mathcal P_{y,1}''=(T_{y, 1} \oplus (b_1^{-1}b_2), ( b_1^{-1}\omega_e(c_{f'})b)\oplus P_{y,1})$$ 
which is clearly equivalent to 
$$(T_{y, 2} \oplus (b_1^{-1}\omega_e(c^{-1}d)b_2),  (b_1^{-1}\omega_e(c_{f'})b_1)\oplus P_{y,2})$$
because  condition (4) of marked $\mathbb A$-graphs implies that $$b_1^{-1}\omega_e(c^{-1}d) b_1\in \langle b_1^{-1}\omega_e(c_{f'})b_1\rangle.$$
The badness of $F_2'$ now follows from Corollary~\ref{lemma:sumtuples}. 

Assume now that $f$ can be folded with some edge in $Star(x, B')\cap E(\mathcal B_1')$ so that $f$ is not normalized in $\mathcal B_1'$ and hence $F_1'$ is not tame and elementary. In this case the argument used to prove item (2) of  Lemma~\ref{lemma:ruledoutcases}   shows that the fold that identifies $f$ with an edge in $Star(x, B')\cap E(\mathcal B_1')$ is bad.

\smallskip

\noindent \textbf{(D2)} We may   without  loss of generality  assume that $F_2$ is bad.  By Corollary~\ref{corollary_edges2}, there is a marked $\mathbb A$-graph  $\bar{\mathbf B}_1$    equivalent to $\mathbf B_1$ such that the following hold:
\begin{enumerate}
\item[(i)] the labels of $f_1$ and $f_2$ are  equal to $(a, e, b)$   and the labels of $g_1$ and $g_2$ are  equal to $(c, \bar e, d_1)$ and $(c, \bar e, d_2)$ respectively.

\item[(ii)] the marked $\mathbb A$-graph  $\bar{\mathbf C}_1$ that is obtained from $\bar{\mathbf B}_1$ by the fold $\bar F_1$ that identifies the edges  $f_1$ and $f_2$ is equivalent to $\mathbf B_1'$. 

\item[(iii)] if $\bar{\mathbf B}_2$ denotes the marked $\mathbb A$-graph that is obtained from $\bar{\mathbf B}_1$ by an A0 moves that makes the labels of $g_1$ and $g_2$ equal to $(c, \bar e, d)$, then  the fold  $\bar{F}_2$ that identifies $g_1$ and $g_2$  is  bad.   
\end{enumerate} 
Therefore  there is no loss  if we assume that $\mathbf B_1=\bar{\mathbf B}_1$ (resp.~$\mathbf B_2=\bar{\mathbf B}_2$) and $\bar{F}_1=F_1$ (resp.~$F_2=\bar{F}_2$).
\begin{figure}[h]
\centering
\includegraphics[scale=1]{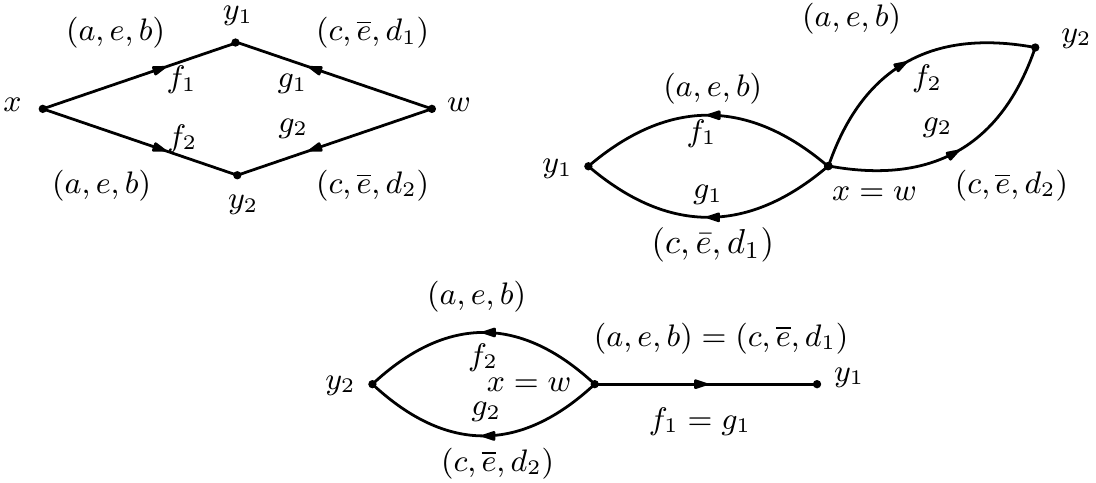}
\caption{The marked $\mathbb A$-graph $\bar{\mathbf B}_1$ in the three configurations that can occur in case (2).}
\label{fig:3optionscase(i)}
\end{figure}

\begin{claim}
The vertices  $y_1=\omega(f_1)=\omega(g_1)$ and  $y_2=\omega(f_2)=\omega(g_2)$ are exceptional.
\end{claim}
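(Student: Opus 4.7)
The plan is to argue by contradiction in two stages, first ruling out that either $y_i$ is of orbifold type and then ruling out that either is peripheral, so that only the exceptional case survives. The crucial observation, used throughout, is that by Remark~\ref{remark:fatsequivalent} the vertex types of $y_1, y_2$ and their vertex groups $B_{y_1}, B_{y_2}$ coincide in $\mathbf B_1$ and $\mathbf B_2$ (they are preserved by the A0 move connecting the two). Hence any classification of ``good/bad'' that depends only on this data must give the same verdict for $F_1$ and $F_2$, producing a contradiction with $F_1$ good and $F_2$ bad.

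First I would show that neither $y_1$ nor $y_2$ is of orbifold type. Suppose $y_1$ is of orbifold type. For each $F_i$, either the base vertex ($x$ for $F_1$, $w$ for $F_2$) is non-peripheral, placing the fold in case FoldIA(1) where goodness requires $B_{y_1}=1$ or $B_{y_2}=1$ — and since $y_1$ is of orbifold type we have $B_{y_1}\ne 1$, so the fold is good exactly when $B_{y_2}=1$; or the base vertex is peripheral, placing the fold in FoldIA(2)(1) with $i=1$, $j=2$, where again the fold is good exactly when $B_{y_2}=1$. Either way the good/bad status of both $F_1$ and $F_2$ reduces to the single condition $B_{y_2}=1$, so they must share the same status, contradicting the assumption. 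A symmetric argument rules out $y_2$ being of orbifold type.

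Next I would rule out peripheral $y_i$. Condition (1) of Definition~\ref{def:marked} forces any peripheral vertex with non-trivial group to be of orbifold type, so after the previous step a peripheral $y_i$ must carry the trivial group. Since FoldIA(2) requires $y_1, y_2$ to be non-peripheral, peripherality of some $y_i$ would push both $F_1$ and $F_2$ into case FoldIA(1), where the trivial group at $y_i$ puts both folds unconditionally into the good subcase FoldIA(1)(1) — again a contradiction. Combining the two steps, $y_1$ and $y_2$ are non-peripheral and not of orbifold type, hence exceptional; tameness of $\mathbf B_1, \mathbf B_2$ then guarantees that the partitioned tuples $\mathcal P_{y_1, i}$ and $\mathcal P_{y_2, i}$ are automatically non-critical of simple type. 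The only subtle point to verify is that the split FoldIA(1) vs.~FoldIA(2)(1) genuinely reduces the good/bad verdict to the same invariant (the vertex group at the opposite endpoint), which is what allows the contradiction to propagate uniformly across all possible configurations in Figure~\ref{fig:3optionscase(iii)}.
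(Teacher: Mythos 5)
Your proof is correct and takes essentially the same route as the paper, which handles this claim by declaring it ``entirely analogous'' to Claim~\ref{claim:01} and using the same two-step elimination (orbifold type, then peripheral); your write-up simply makes the implicit case-bookkeeping via \textbf{FoldIA(1)}/\textbf{FoldIA(2)(1)} and the invariance from Remark~\ref{remark:fatsequivalent} explicit.
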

\begin{proof} The argument is entirely analogous to the argument for Claim~\ref{claim:01}.    
\end{proof}

\begin{figure}[h!]
\centering
\includegraphics[scale=1]{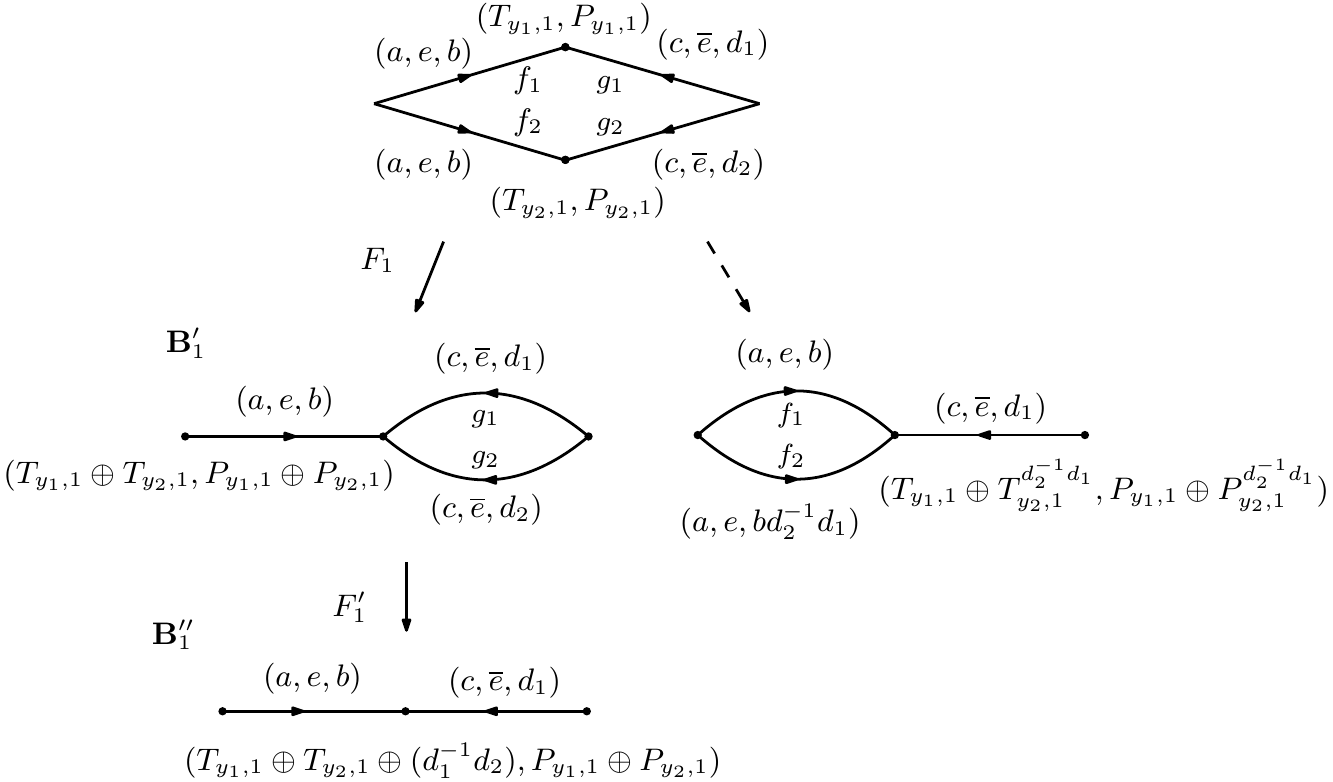}
\caption{The bad tame elementary  fold $F_1'$.}
\label{fig:commIAbad}
\end{figure}

The assumption that $\mathbf B_i=\bar{\mathbf B}_i$ clearly implies that $$\mathcal P_{y_1, 2}=\mathcal P_{y_1, 1} \ \ \text{ and }  \ \ \mathcal P_{y_2, 2}=\mathcal P_{y_2,1}^{d_2^{-1}d_1}= (T_{y,1}^{d_2^{-1}d_1} , P_{y,1}^{d_2^{-1}d_1}).$$ 
See Fig.~\ref{fig:commIAbad}.  The cores of $\mathcal B_1'$ and $\mathcal B_2'$ cannot be  almost orbifold covers   as they contain the subgraphs $f_1\cup f_2$ and  $g_1\cup g_2$  respectively. 
The  badness of $F_2$ therefore implies that 
$$\mathcal P_{y, 2}'= (T_{y_1, 1}\oplus T_{y_2, 1}^{d_2^{-1}d_1} ,   P_{y_1, 1}\oplus P_{y_1, 1}^{d_2^{-1}d_1})$$ 
is either critical or of almost orbifold covering type,   where for simplicity  $y$ denotes the image of $y_1$ and $y_2$ under both $F_1$ and  $F_2$.

Let   $F_1'$ denote  the tame elementary fold of type IIIA that identifies the edges $g_1$ and $g_2$ in ${\mathcal B}_1'$ and let ${\mathcal B}_1''$  denote the resulting  $\mathbb A$-graph. As $y$ is exceptional  it follows that $F_1'$ induces a marking on $\mathcal B_1''$ yielding a marked $\mathbb A$-graph $\mathbf B_1''$.  We will show that $F_1'$ is bad.  By definition, $F_1'$ replaces $\mathcal P_{y,1}'$ by
$$\mathcal P_{y,1}''= (T_{y_1} \oplus T_{y_2}\oplus (d_1^{-1}d_2),  P_{y_1}\oplus P_{y_2})$$
which is equivalent 
$$(T_{y_1}\oplus T_{y_2}^{d_2^{-1}d_1}\oplus (d_1^{-1}d_2),  P_{y_1}\oplus P_{y_2}^{d_2^{-1}d_1}).$$
the latter is the sum of two partitioned tuple one of which is either critical or of almost orbifold covering type. The result thus follows from  Lemma~\ref{lemma:sumtuples}.

\smallskip

\noindent \textbf{(D3)} Without loss of generality  assume that $F_1$ is good. It follows from Corollary~\ref{corollary_edges2} that $\mathbf B_1$ can be replaced by an equivalent marked $\mathbb A$-graph $\bar{\mathbf B}_1$ such that the following hold:
\begin{enumerate}
\item[(i)] the label of $f_1$ is $(a, e, b)$  and the label of  $f_2$ is $(a, e, \omega_e(c)b)$ for some non-trivial element  $c\in A_e$.

\item[(ii)]  the marked $\mathbb A$-graph that is obtained from $\bar{\mathbf B}_1$ by the fold $\bar{F}_1$ that identifies  the edges $f_1$ and $f_2$ is equivalent to $\mathbf B_1'$.    

\item[(iii)]  if $\bar{\mathbf B}_2$ denotes the marked $\mathbb A$-graph that is obtained from  $\bar{\mathbf B}_1$ by an A1 move that makes the label  of $f_2$ equal to $(a\alpha_e(c), e, b)$, then  the fold $\bar{F}_2$ that identifies $f_1^{-1}$ and $g_2^{-1}$   is bad. 
\end{enumerate}

We can therefore assume  that $\mathbf B_1=\bar{\mathbf B}_1$ (resp. $\mathbf B_2=\bar{\mathbf B}_2$)   and that $F_1=\bar{F}_1$ (resp. $F_2=\bar{F}_2$). 
Denote  $\gamma:= b^{-1}\omega_e(c)b\in A_{[w]}$ and $\gamma':=a^{-1}\alpha_e(c)a\in A_{[x]}$ where $w=\omega(g_1)=\omega(g_2)\in EB$.  There are two cases depending on the type of $x=\alpha(f_1)=\alpha(f_2)$.

\noindent{\it Case 1.} $x$ is peripheral (and hence $w$ is non-peripheral). This case is illustrated in Figure~\ref{fig:commIIIAbadperipheral}.   The assumptions that $\mathbf B_i=\bar{\mathbf B}_i$ implies that $\mathcal P_{w,1}=\mathcal P_{w,2}$. 

The badness of $F_2$ means  that $B_x\neq 1$. On the other hand, as  $F_1$ is good,    $w$ is exceptional (and hence  $\mathcal P_{w,1}$  is non-critical of simple type).  By definition,  $F_1$ replaces $\mathcal P_{w,1}$ by
 $\mathcal P_{w,1}'=(T_{w,1}\oplus (\gamma), P_{w,1}) $ 
which is non-critical.  In particular,    $B_{f_1}$ and $B_{f_2}$ must be trivial. For if some of them, say $B_{f_1}$, is non-trivial, then $\mathcal P_{w,1}'=(T_{w,1}\oplus ( \gamma) , P_{w,1})$  is reducible or has an obvious relation  as  $\gamma_{f_1^{-1}}$ and $ \gamma$ lie in $ b^{-1}\omega_e(A_e)b$.  In particular, the  image $f'$ of $f_1$ and $f_2$  under $F_1$ has trivial group in $\mathcal B_1'$.  This implies that the core of $\mathbf B_1'$ cannot be an almost orbifold cover. Thus, $\mathcal P_{w,1}'$ is non-critical of simple type and hence   
\begin{figure}[h]
\centering
\includegraphics[scale=1]{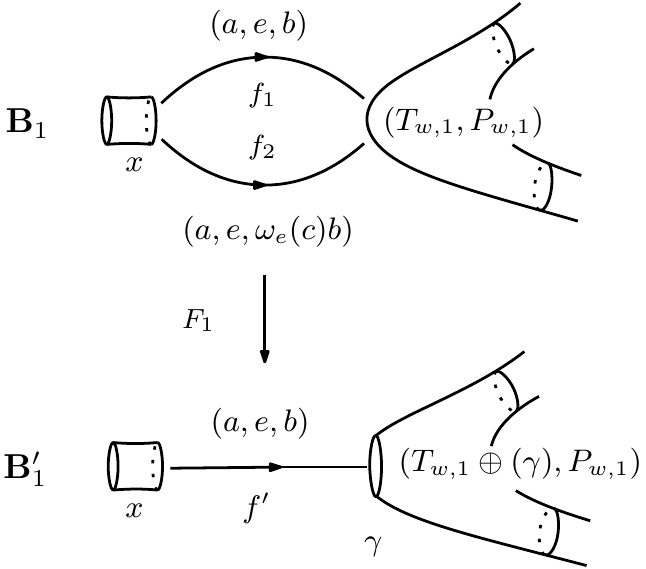}
\caption{The fold along $f^{-1}$ is bad.}
\label{fig:commIIIAbadperipheral}
\end{figure} 
$$ b^{-1}\omega_e(A_e)b\cap B_{w,1}'=b^{-1}\omega_e(A_e)b\cap \langle T_{w, 1}\oplus  (\gamma)\oplus  P_{w, 1}\rangle=\langle \gamma\rangle$$ 
This shows that $(f')^{-1}$ violates condition (F2) of folded $\mathbb A$-graphs.  Let $F_1'$ denote the fold of type IIA  that is based on $(f')^{-1}\in EB_1'$.  We claim that $F_1'$ is tame and  elementary.  
In fact, we only need to show that $(f')^{-1}$ cannot be folded with any edge in $Star(w, B_1')\cap E(\mathcal B_1')$. But this follows as $(\gamma)\oplus  P_{w, 1}$ is part of a minimal generating set of  $B_{w, 1}'$.
 As the vertex group at $x$ is non-trivial we are in case FoldIIA(El.1)(1), thus $F_1'$ is bad.

\noindent{\it Case 2.}  $x$ is non-peripheral (and hence $w$ is peripheral). This case is illustrated in Figure~\ref{fig:commIIIAbad1}.  The hypotheses that $F_1$ is good implies that  $B_{w}=1$. In particular, $B_g=1$ for all $g\in Star(w, B)$. By definition,  $F_1$ replaces   $B_w$ by   $B_{w,1}'=\langle \gamma\rangle\leq A_{[w]}.$   On the other hand, as  $F_2$ is bad,   one of the following occurs:
\begin{enumerate} 
\item[(1)]  $x$ is  of orbifold type.

\item[(2)]  $x$  is exceptional  and  $\mathcal P_{x, 2}'=(T_{x,2} \oplus (\gamma'), P_{x,2})= (T_{x,1} \oplus (\gamma'), P_{x,1}) $ cannot be non-critical of simple type.
\end{enumerate}

Let $F_1'$ be the  fold of type IIA based on $(f')^{-1}\in EB_1'$.  We will show that $F_1'$ is   bad tame elementary.  The fact  that $w$ is peripheral combined with  $Star(w, B_1')\cap E(\mathcal B_1')=\emptyset$ imply that $F_1'$ is tame elementary since $(f')^{-1}$ satisfies conditions (II.1)-(II.3) and (El.0). It remains to show that $F_1'$ is bad. There are two cases:

\noindent{\it Subcase a.}  $x$  is of orbifold type.   In this case   $F_1'$ is bad as we are in case FoldIIA(El.0)(1).

\noindent{\it Subcase b.}  $x$ is exceptional (and therefore $\mathcal P_{x,1}'=\mathcal P_{x,1}$ is non-critical  of simple type). By definition, $F_1'$ replaces  $\mathcal P_{x,1}'=\mathcal P_{x,1}=(T_{x,1},  P_{x,1})$  by 
 $\mathcal P_{x,1}'':=(T_{x,1},(\gamma')\oplus  P_{x,1}).$ 
Observe that $\mathbf B_1''$ cannot be an almost orbifold covering with a good marking  since $w$ is a free boundary vertex in $\mathbf B_1''$.  Therefore $F_1'$ is bad unless $\mathcal P_{x, 1}''$ is non-critical of simple type.  But  $\mathcal P_{x, 1}''$ cannot be non-critical of simple type since then   $\mathcal P_{x,1}'=(T_{x,1}\oplus(\gamma'),  P_{x,1})$  would also be   non-critical of simple  which contradicts (2).
\begin{figure}[h]
\centering
\includegraphics[scale=1]{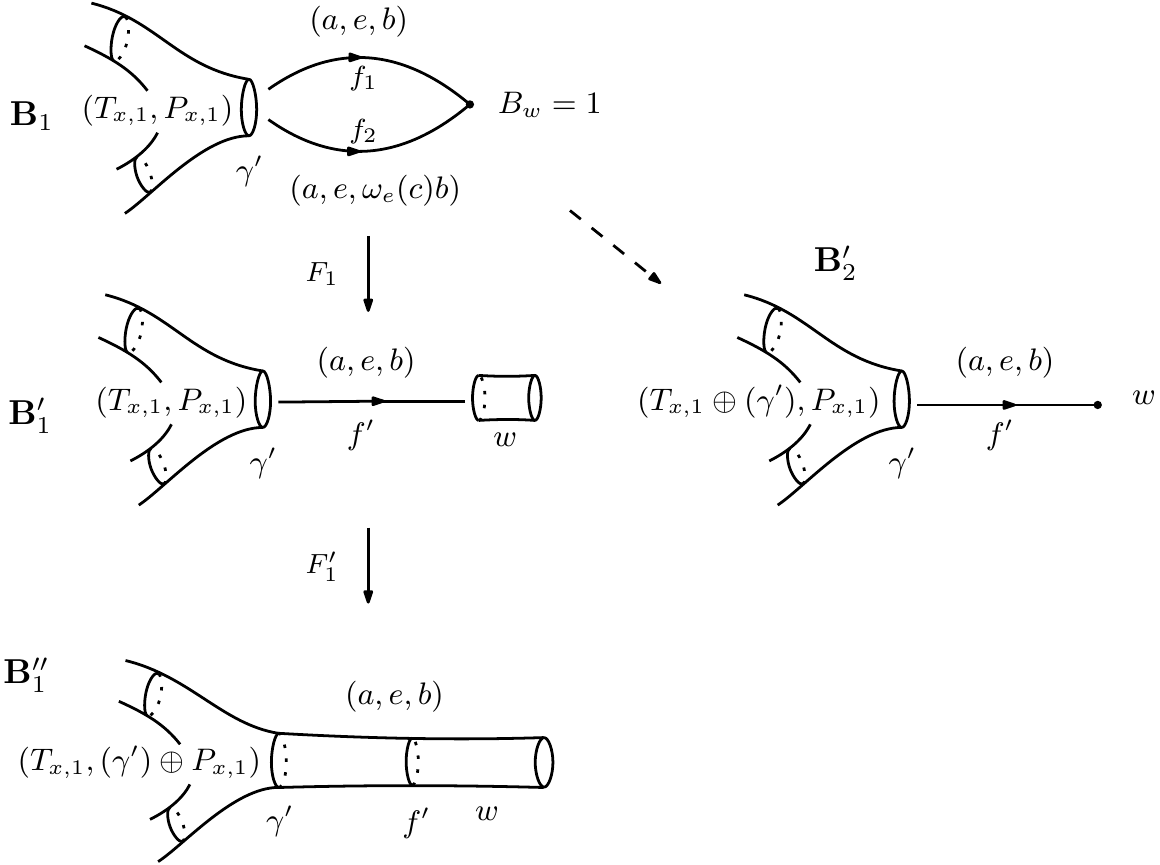}
\caption{The fold $F_1'$ is bad.}
\label{fig:commIIIAbad1}
\end{figure}

\smallskip

\noindent \textbf{(D4)}  We may assume without loss of generality that  $F_2$ is bad, so that the peripheral vertex  $z:=\omega(g)$  has non-trivial group. This case is illustrated in Figure~\ref{fig:commnone4}.

Let $F_2'$ be  the fold  of type IIA based on the edge $g$ of $\mathcal B_1'$. Observe that  $g$ cannot be folded with any edge in $Star(x, B_1')\cap E(\mathcal B_1')$ because  $\gamma_g'$ is part of a minimal generating set of $B_{x, 1}'=B_{x, 1}=B_{x, 2}$.  Moreover, $F_2'$  is elementary because it satisfies  condition (El.2).  
  The badness of $F_2'$  follows from the  fact that the vertex $z$ has non-trivial group in $\mathcal B_1'$ since $B_{z, 1}'=B_{z, 1}=B_{z,2}$.  
\begin{figure}[h!]
\centering
\includegraphics[scale=1]{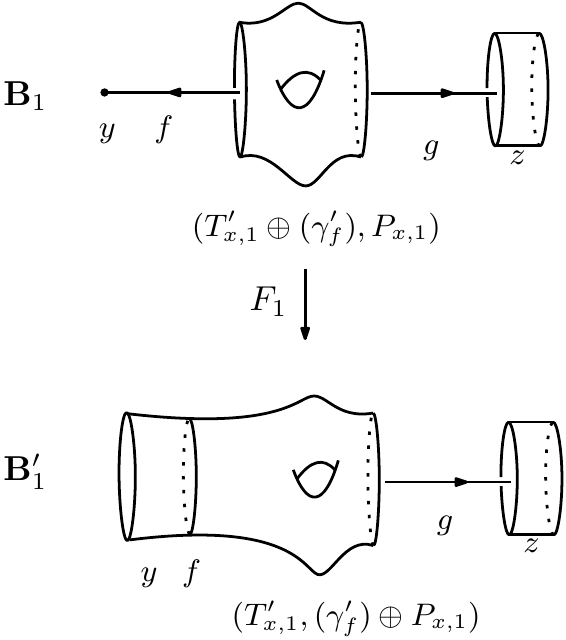}
\caption{The folds $F_2$ and $F_2'$  along   $g$ are bad.}
\label{fig:commnone4}
\end{figure}

\smallskip

\noindent \textbf{(D5)} $\mathbf B:=\mathbf B_1=\mathbf B_2$ and     $F_1$ and $F_2$ are of type IIA   with $f\neq g$ such that  $f$ and $ g$ can be folded in $\mathcal B:=\mathcal B_1=\mathcal B_2$ by an elementary fold.  Note that  $F_i$  is not tame after  $F_j$ is applied. 

We may assume that $F_1$ is good and $F_2$ is bad.  Let $F$ denote  the fold that identifies $f$ and $g$ in $\mathbf B_1$. Note that since $F_1$ and $F_2$ are tame elementary     the labels of $f$ and $g$ are not affected   and therefore $F$ is tame elementary.  Let $\gamma_f'$ (resp.~$\gamma_g'$) be the element added by $F_1$ (resp.~$F_2$) to $B_{y,1}$ (resp.~$B_{z,2}$).

\noindent{\emph{Case 1}.}  $z =\omega(g)$ is of orbifold type (either peripheral or non-peripheral). Since $F_1$ is good it follows that  $y=\omega(f)\neq \omega(g)=z$. In particular, $F$ is of type IA.   Note that  $B_{y,1}'\neq 1$ since $F_1$ adds $\gamma_f'$ to $B_{y,1}$. Therefore $F$ is bad as it identifies the vertex of orbifold type  $z$  with the vertex $y$ that has non-trivial group.

\smallskip

\noindent{\emph{Case 2.}}  $z =\omega(g)$ is exceptional (and hence $x$ is peripheral of orbifold type).  In this case $F_1$ and $F_2$ satisfy the elementary  condition (El.0).  Note also that  $y$ is non-peripheral as $[y]=[z]\in VA$. Thus  $y$  is exceptional   because $F_1$ is good. 

We give a complete argument for   the case  $y\neq z$. The case $y=z$ is analogous. By definition $F_1$ replaces $\mathcal P_{y, 1}=(T_{y,1}, P_{y,1})$ by  
 $\mathcal P_{y, 1}'=(T_{y,1}, (\gamma_f')\oplus P_{y,1})$  and $F_2$ replaces $\mathcal P_{z, 1}=(T_{z,1}, P_{z,1})$ by 
 $\mathcal P_{z, 2}'=(T_{z,1}, (\gamma_g')\oplus  P_{z,1})$ 
As $F_2$ is bad it follows that $\mathcal P_{z,2}'$ cannot be non-critical of simple type.  

On the other hand, $F$ replaces the partitioned tuples  $\mathcal P_{y, 1}'$ and $\mathcal P_{z,1}'=\mathcal P_{z,1}=\mathcal P_{z,2}$ by the partitioned tuple
 $\mathcal P: =(T_{y,1}\oplus T_{z,1}, (\gamma_f')\oplus P_{y,1}\oplus P_{z,1}).$ Since $F$ is elementary it follows that $\gamma_g'=\gamma_f'$.  The badness of $F$ now follows from Corollary~\ref{lemma:sumtuples}.

 \smallskip

\noindent \textbf{Generic case.} Thus the conclusion of Lemmas~\ref{lemma5cases} and \ref{lemma5cases1} holds. Thus   we may assume that  $\mathbf B:=\mathbf B_1=\mathbf B_2$ and that $F_i$ stays tame after $F_j$ is applied.  Assume  that $F_1$ is good. We will show that  $\mathbf B_1'$ admits a bad tame elementary fold that is  induced by $F_2$.

\smallskip

\noindent{\emph{Case 1.}} $F_2$ is of type IA/IIIA. As $F_2$ is tame elementary we can assume that: 
\begin{enumerate}
\item[(i)] there is $k\in \{1, 2\}$ such that   $B_{g_k}=1$.

\item[(ii)] if $F_2$ is of type  IA, then   $g_1$ and $g_2$ are labeled $(c, \bar e, d)$. 

\item[(ii)'] if $F_2$ is of type IIIA then $g_1$ and $g_2$ are labeled $(c, \bar e, d_1)$ and $(c, \bar e , d_2)$.
\end{enumerate}
 
\begin{claim} $F_1$ maps the   subgraph $g_1\cup g_2$ of $ B$    isomorphically   into $B_1'$.
\end{claim}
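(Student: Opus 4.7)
The plan is to analyze directly what $F_1$ does to the graph $B$ and observe that failure of injectivity on the subgraph $g_1\cup g_2$ can only happen via one of the excluded configurations or via the existence of a loop edge in $\mathcal{B}$. Since $F_1$ is an elementary fold, it only identifies the edge pair $\{f_1,f_2\}$ (together with $\{f_1^{-1},f_2^{-1}\}$) and, if $F_1$ is of type IA, identifies the terminal vertices $\omega(f_1),\omega(f_2)$; every other cell of $B$ is left alone. So I need to rule out (a) that two edges of $g_1\cup g_2$ get collapsed, and (b) that two distinct vertices of $g_1\cup g_2$ get collapsed.

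For (a), an identification of two distinct edges of $g_1\cup g_2$ forces $\{f_1,f_2\}$ to meet $\{g_1^{\pm 1},g_2^{\pm 1}\}$ in at least two edges. The coincidence $\{f_1,f_2\}=\{g_1,g_2\}$ is exactly configuration \textbf{(D1)} and is excluded. The coincidence $\{f_1,f_2\}=\{g_1^{-1},g_2^{-1}\}$ is, up to relabeling of $g_1,g_2$, configuration \textbf{(D3)} and is also excluded. A mixed case such as $f_1=g_1$, $f_2=g_2^{-1}$ would require $\alpha(f_2)=\omega(g_2)$; combined with $x=\alpha(f_1)=\alpha(f_2)$ and $\alpha(g_1)=\alpha(g_2)=w$, together with $f_1=g_1$ giving $x=w$, this forces $w=\omega(g_2)=\alpha(g_2)$, i.e.\ $g_2$ is a loop edge. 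Since the underlying graph of $\mathbb{A}$ has no loops and hence neither does any $\mathbb{A}$-graph, this is impossible. The remaining mixed configurations are handled in the same way.

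For (b), if $F_1$ is of type IIIA then $\omega(f_1)=\omega(f_2)$ already in $B$, so $F_1$ identifies no pair of distinct vertices and there is nothing to check. If $F_1$ is of type IA, an identification of two distinct vertices of $g_1\cup g_2$ requires $\{\omega(f_1),\omega(f_2)\}$ to contain two distinct vertices from $\{w,\omega(g_1),\omega(g_2)\}$. When $F_2$ is of type IA, the configuration $\omega(f_1)=\omega(g_1)$ and $\omega(f_2)=\omega(g_2)$ (after possibly reindexing the pairs $\{f_1,f_2\}$ and $\{g_1,g_2\}$) is precisely \textbf{(D2)} and is excluded; every other mixed possibility forces one of $f_i$ or $g_j$ to be a loop edge as in the previous paragraph. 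When $F_2$ is of type IIIA, the only way two distinct vertices of $g_1\cup g_2$ lie in $\{\omega(f_1),\omega(f_2)\}$ is $\{\omega(f_1),\omega(f_2)\}=\{w,z\}$, which together with $\alpha(f_1)=\alpha(f_2)=x$ and the absence of loops rules out the edges $f_1,f_2$ from carrying the required labels. Hence no vertex identification occurs.

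Combining (a) and (b) shows that $F_1$ is injective on the vertex set and on the edge set of $g_1\cup g_2$, hence is a graph isomorphism onto its image in $B_1'$. The main obstacle is not conceptual but combinatorial: one must run through each pairing of the fold types of $F_1$ and $F_2$ and each possible edge overlap, and verify in each instance that it either matches one of the excluded configurations \textbf{(D1)}, \textbf{(D2)}, \textbf{(D3)} or forces a loop edge in $\mathcal{B}$.
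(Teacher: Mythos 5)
Your argument takes the same basic approach as the paper's: enumerate the ways $F_1$ could fail to be injective on $g_1\cup g_2$ and show each either matches an excluded configuration (\textbf{(D1)}, \textbf{(D2)}, \textbf{(D3)}) or is structurally impossible. Your treatment is somewhat more explicit than the paper's, which simply asserts that failure requires either (a) both folds type IIIA with $\{f_1,f_2\}=\{g_1^{\pm1},g_2^{\pm1}\}$, giving \textbf{(D1)} or \textbf{(D3)}, or (b) both folds type IA with $\{\omega(f_1),\omega(f_2)\}=\{\omega(g_1),\omega(g_2)\}$, giving \textbf{(D1)} or \textbf{(D2)}.

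One small imprecision in your part (b): when you consider, say, $\{\omega(f_1),\omega(f_2)\}=\{w,\omega(g_j)\}$ with $F_1$ of type IA, you say this ``forces a loop edge.'' It does so only in the degenerate subcase $x=\omega(g_j)$. In the generic subcase $w\neq\omega(g_j)$ with $x\neq\omega(g_j)$, the actual obstruction is the bipartite structure of $\mathbb{A}$: $F_1$ elementary gives $[f_1]=[f_2]\in EA$ and hence $[\omega(f_1)]=[\omega(f_2)]$, while $w=\alpha(g_j)$ and $\omega(g_j)$ are the two endpoints of the edge $g_j$ and so lie in different parts of $V A$ (peripheral versus non-peripheral), contradicting $[w]=[\omega(g_j)]$. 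This is the same style of argument you correctly invoke for the $F_2$ type IIIA case via ``required labels''; it should be used here as well, rather than the loop-edge argument. The conclusion is unaffected.
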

\begin{proof}
Note that  $g_1\cup g_2$ is mapped isomorphically into $B_1'$ unless one of the following holds: 
\begin{enumerate}
\item[(a)] $F_1$ and $F_2$ are of type IIIA and $\{f_1, f_2\}=\{g_1^{\pm 1}, g_2^{\pm 1}\}$.

\item[(b)] $F_1$ and $F_2$ are of type IA and $\{\omega(f_1), \omega(f_2)\}=\{\omega(g_1), \omega(g_2)\}$. 
\end{enumerate}
Item (a) does not occurs because configurations \textbf{(1)} and \textbf{(3)} are excluded while item (b) does not occur because  configurations \textbf{(1)} and \textbf{(2)} are excluded. Therefore $g_1\cup g_2$ is mapped isomorphically into $B_1'$.
\end{proof}

We will denote the image of  $g_1$ and $g_2$ (resp.~$z_i=\alpha(g_i) \in VB$ for $i=1,2$)  under $F_1$ by $g_1'$  and $g_2'$ (resp.~$z_i'$ for $i=1,2$).  If $F_2$ is of type IIIA then $z:=z_1=z_2$ and $z':=z_1'=z_2'$. The previous claim therefore says that $g_1\cup g_2$ is isomorphic to $g_1'\cup g_2'$.

\begin{claim}
$B_{g_1',1}'=1$ or $B_{g_2',1}'=1$ (in $\mathcal B_1'$).
\end{claim}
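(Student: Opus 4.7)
The strategy is to note that a tame elementary fold $F_1$ modifies only the edge groups of the edges it actually touches, so $B_{g_i',1}' = B_{g_i}$ for every $g_i$ that is not among the edges affected by $F_1$. The hypothesis (i) that at least one of $B_{g_1}, B_{g_2}$ is trivial will then finish the argument immediately in all cases in which the intersection between $\{g_1,g_2\}$ and the edges moved by $F_1$ is either empty or ``large'', with the excluded configurations and Lemma~\ref{lemma:ruledoutcases} handling the intermediate case.

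If $F_1$ is of type IIA based on $f$, then the only edge group altered is $B_f$, so $B_{g_i',1}'=B_{g_i}$ whenever $g_i\neq f^{\pm 1}$. If neither $g_i$ equals $f^{\pm 1}$, hypothesis (i) finishes. Otherwise, after relabeling we may assume $f = g_1^{\pm 1}$, and the ``vice versa'' part of Lemma~\ref{lemma:ruledoutcases}(2) combined with the goodness of $F_1$ forces $B_{g_2}=1$, so $B_{g_2',1}'=B_{g_2}=1$.

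Now suppose $F_1$ is of type IA/IIIA identifying $f_1,f_2$. Here $F_1$ modifies only the edge group of the image $f$ of $f_1, f_2$ (to $\langle B_{f_1}, B_{f_2}\rangle$) and the vertex group at its terminal vertex, so $B_{g_i',1}'=B_{g_i}$ whenever $g_i\notin\{f_1^{\pm 1}, f_2^{\pm 1}\}$. If $\{g_1,g_2\}\cap\{f_1^{\pm 1}, f_2^{\pm 1}\}=\emptyset$, hypothesis (i) finishes. If both $g_1, g_2$ lie in $\{f_1^{\pm 1}, f_2^{\pm 1}\}$ then either $\{g_1,g_2\}=\{f_1, f_2\}$ (configuration \textbf{(D1)}, excluded), or $\{g_1,g_2\}=\{f_1^{-1}, f_2^{-1}\}$ (which forces configuration \textbf{(D3)} in the IIIA case and is impossible for type IA), or one obtains a mixed configuration $\{f_1^\varepsilon, f_2^{-\varepsilon}\}$ ruled out by the remark after \textbf{(D1)} since $\mathbb A$ has no loop edges.

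The remaining and most delicate subcase is when exactly one of the $g_i$ lies in $\{f_1^{\pm 1}, f_2^{\pm 1}\}$. After swapping $f_1\leftrightarrow f_2$ and orientations if necessary, we may assume $g_1 = f_1^{\varepsilon}$ and $g_2\notin\{f_1^{\pm 1}, f_2^{\pm 1}\}$, so that $B_{g_2',1}'=B_{g_2}$. If $B_{g_2}=1$ we are done; otherwise hypothesis (i) forces $B_{g_1}=B_{f_1}=1$. Since $g_2\neq f_2^{\pm 1}$, the hypotheses $f_1=g_1^{\varepsilon}$, $f_2\neq g_2^{\varepsilon}$, and $B_{g_2}\neq 1$ are all in place, and the contrapositive of Lemma~\ref{lemma:ruledoutcases}(1) — available because $F_1$ is good — yields $B_{f_2}=1$. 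Hence $B_{g_1',1}'=\langle B_{f_1}, B_{f_2}\rangle=1$, proving the claim. The main bookkeeping issue is keeping track of the signs $\varepsilon$ when invoking Lemma~\ref{lemma:ruledoutcases}, but no genuinely deeper argument is required beyond that.
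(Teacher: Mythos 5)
Your argument is correct and follows the paper's approach: both identify the only ways $F_1$ could promote a trivial edge group to a nontrivial one (namely $F_1$ of type IIA with $f=g_k^{\pm 1}$, or $F_1$ of type IA/IIIA with $f_j=g_k^{\pm 1}$ and $B_{f_i}\neq 1$) and rule them out via Lemma~\ref{lemma:ruledoutcases}. Your direct case split is more verbose than the paper's argument by contradiction but is mathematically equivalent; the only slip is that the mixed-orientation sub-case should cite the remark following Lemma~\ref{lemma:ruledoutcases} (about $\mathbb A$ having no loop edges) rather than ``the remark after \textbf{(D1)}''.
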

\begin{proof}
Suppose on the contrary  $B_{g_1',1}'\neq 1\neq B_{g_2',1}'$. Thus $F_1$ affects the edge group  of $g_k\in EB$ (which is trivial). Observe that  this occurs only when one of the following holds:
\begin{enumerate}
\item[(1)] $F_1$ is of type IA/IIIA, $B_{f_i}\neq 1$,  and $f_j=g_k^{\pm 1}$ for $i\neq j\in \{1, 2\}$. 

\item[(2)] $F_1 $ is of type IIA and $f=g_k^{\pm 1}$.  
\end{enumerate}
Both cases are ruled out by Lemma~\ref{lemma:ruledoutcases}.
\end{proof}
 
Next observe that the fold  that identifies  $g_1'$ and $g_2'$  in $\mathcal B_1'$, which we denote by $F_2'$, is elementary  unless the following holds:
\begin{enumerate}
\item[(a)] $F_1$ is of type IIIA   
 and  replaces the edges $f_1$ and $f_2$   labeled $(a, e, b_1)$ and $(a, e, b_2)$  respectively by a single edge $f'$ labeled $(a, e, b_1)$.
 
\item[(b)] $f_2= g_j^\varepsilon$ for some $j\in \{1, 2\}$ and some $\varepsilon\in \{\pm 1\}$.
\end{enumerate}

To overcome this issue we can  assume  that the label of $f'$  is $(a, e, b_2)$ so that $F_2'$ is elementary. Observe that this  makes no difference in our argument since  instead of adding the element $b_1^{-1}b_2$ to the vertex group $B_y$ we add $b_2^{-1}b_1$  which clearly  gives equivalent marked $\mathbb A$-graphs. In particular,  we can assume that $g_i'$ and $g_i$  have the same label.
 
\smallskip

Therefore $F_2'$ is tame elementary. We  can finally show that $F_2'$ is bad. We assume that $F_2$ is of type IA. The situations in which $F_2$ is of type IIIA is entirely analogous.

If  one of the vertices $z_1$ or $z_2$ is of orbifold type, say $z_1$,  then the badness of $F_2$ means that $B_{z_2}\neq 1$. Since the image of a vertex of orbifold type under a marking preserving fold is still a vertex of orbifold type,  $z_1'$ is of orbifold type in $\mathbf B_1'$.  By the definition of folds,     $B_{z_i}$ is a subgroup of $B_{z_i',1}'$. Thus  $B_{z_2',1}'\neq 1$. Therefore,  $F_2'$ is bad as it identifies $z_1'$ and $z_2'$.

Assume now that  $z_1$ and $z_2$ are not of orbifold type.  The badness of $F_2$ implies that  $z_1$ and $z_2$ are exceptional. In fact,  if $z_1$ (and hence $z_2$) were peripheral then the assumption that they are not of orbifold type implies that $B_{z_1}=B_{z_2}=1$. But then $F_2$ would be good.  Thus $z_1$ and $z_2$ are non-peripheral and therefore exceptional.   

By definition, $F_2$ replaces  $\mathcal P_{z_1}=(T_{z_1}, P_{z_1})$ and  $\mathcal P_{z_2}=(T_{z_2}, P_{z_2})$  by 
$$\mathcal P_{\bar{z}, 2}':=(T_{z_1}\oplus T_{z_2}, P_{z_1}\oplus P_{z_2}).$$
where $\bar{z}:=F_2(z_1)=F_2(z_2)\in VB_2'$.  As $F_2$ is bad  $\mathcal P_{\bar{z}, 2}'$  cannot be non-critical of simple type. 

On the other hand, observe that $\mathcal P_{z_i}=(T_{z_i}, P_{z_i})$ is a partitioned subtuple of  $\mathcal P_{z_i', 1}'=(T_{z_i', 1}', P_{z_i', 1}')$ and by definition,  $F_2'$ replaces $\mathcal P_{z_1', 1}'$ and $\mathcal P_{z_2', 1}'$ by 
$$\mathcal P':=(T_{z_1', 1}'\oplus T_{z_2', 1}', P_{z_1', 1}'\oplus P_{z_2', 1}').$$ 
The badness of $F_2'$ follows from Lemma~\ref{lemma:sumtuples}  as $\mathcal P_{z, 2}'$ is a partitioned subtuple of  $\mathcal P'$.

\medskip
 
\noindent{\emph{Case 2.}}  $F_2$ is of type IIA. We denote the image   of   $g \in EB$  (resp.~$w=\alpha(g)\in VB$ and $z=\omega(g)\in VB$) under  $F_1$   by  $g'\in EB_1'$  (resp.~$w'\in VB_1'$ and $z'\in VB_1'$).   

\begin{claim} 
$F_1$ does not change the group of $g$, i.e. $B_{g',1}'=1$.
\end{claim}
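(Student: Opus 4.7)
My plan is to pin down the precise mechanism by which a fold can modify an edge group, and then invoke the exclusion lemmas of this subsection to rule out every configuration in which $F_1$ could touch $g$. First I would record that, because $F_2$ is a tame elementary fold of type IIA based on $g$, condition (II.1) in the definition of a normalized edge forces $B_g = 1$ in $\mathcal{B}$. Since a fold only alters the edge group of an edge it explicitly touches, it suffices to check the cases where $g$ or $g^{-1}$ appears among the edges involved in $F_1$.

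The next step is a short case analysis on the type of $F_1$. If $F_1$ is of type IA or IIIA, identifying $f_1$ and $f_2$ into a single edge with group $\langle B_{f_1}, B_{f_2}\rangle$, then $B_{g',1}'$ can differ from $B_g$ only if $g^{\pm 1}\in\{f_1,f_2\}$. Tameness of $F_1$ lets us assume $B_{f_1}=1$; if also $B_{f_2}=1$, the image edge has trivial group and the claim follows. The only remaining possibility is $g^{\pm 1}\in\{f_1,f_2\}$ with $B_{f_2}\neq 1$, and this is exactly the configuration ruled out in Lemma~\ref{lemma:ruledoutcases}(2), bearing in mind the standing assumption of this subsection that at least one of $F_1, F_2$ is good (indeed $F_1$ is good).

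If on the other hand $F_1$ is of type IIA based on an edge $f$, then $F_1$ alters only $B_f$, so one would need $f=g$ or $f=g^{-1}$. The first is precisely configuration \textbf{(D1)}, which is excluded in the generic case we are in; the second is ruled out by Lemma~\ref{lemma:ruledoutcases}(3). In all surviving cases the edge group of $g$ is unchanged, so $B_{g',1}'=B_g=1$.

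I do not anticipate any real obstacle — the claim is essentially a bookkeeping exercise that cashes in the careful exclusion work already performed in Lemma~\ref{lemma:ruledoutcases} and Lemma~\ref{lemma5cases}. The only point requiring care is remembering to use the tameness of $F_1$ in the IA/IIIA case, which is precisely what guarantees that at most one of the two folded edges can carry a non-trivial group, and hence that the exclusion in Lemma~\ref{lemma:ruledoutcases}(2) applies as stated rather than in some symmetric variant.
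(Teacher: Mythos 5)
Your proof is correct and follows essentially the same route as the paper: identify the only configurations in which $F_1$ could alter the edge group of $g$, and then eliminate them via Lemma~\ref{lemma:ruledoutcases} (and, for the IIA case $f=g$, the standing exclusion of configuration \textbf{(D1)} in the generic case). Your version is slightly more careful than the paper's in cleanly separating the $f=g$ case (covered by the exclusion of \textbf{(D1)}) from the $f=g^{-1}$ case (Lemma~\ref{lemma:ruledoutcases}(3)), but the decomposition into cases and the lemmas invoked are the same.
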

\begin{proof} Indeed,   $B_{g',1}'= 1$ unless one of the following holds:  
\begin{enumerate}
\item[(C1)] $F_1$ is of type IA/IIIA such that $B_{f_i}\neq 1$ and $f_j=g^{\pm1}$ for $i\neq j$.

\item[(C2)] $F_1$  is of type IIA based and $f= g^{\pm 1}$.
 \end{enumerate}  
Both configurations are ruled out by  Lemma~\ref{lemma:ruledoutcases}. Therefore $B_{g',1}'=1$  as claimed. 
\end{proof}

Therefore, as  $B_w$ is a subgroup of $B_{w', 1}'$, a fold of type IIA along $g'$ can be applied to $\mathcal B_1'$. Denote this fold by $F_2'$.

\begin{claim}
$F_2'$ is tame elementary.
\end{claim}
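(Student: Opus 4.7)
The plan is to verify that the fold $F_2'$ of type IIA based on $g' \in EB_1'$ satisfies conditions (II.1)--(II.3) of Lemma~\ref{lemmatypesIIA} and that $g'$ is normalized in $\mathbf B_1'$ in the sense of Definition~\ref{Def_normalized}. Condition (II.1), namely $B_{g',1}'=1$, is precisely the content of the previous claim. For condition (II.2), the applicability of $F_2$ to $\mathbf B$ guarantees
$$B_w \cap o_g^{\mathcal B}\alpha_{[g]}(A_{[g]})(o_g^{\mathcal B})^{-1} \neq 1,$$
and since $B_w \leq B_{w',1}'$ and the label of $g$ is carried to the label of $g'$ in a way compatible with the action of $F_1$ (either unchanged, or conjugated by the same element that is added to the vertex group at $w'$), the intersection $B_{w',1}' \cap o_{g'}^{\mathcal B_1'}\alpha_{[g']}(A_{[g']})(o_{g'}^{\mathcal B_1'})^{-1}$ remains non-trivial.

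The main obstacle is condition (II.3): no edge $h' \in Star(w', B_1')\cap E(\mathcal B_1')$ can be folded with $g'$. Every such $h'$ is either the $F_1$-image of a single edge $h \in Star(w, B) \cap E(\mathcal B)$ (possibly with $w$ and $h$ left untouched by $F_1$), or it arises as the identification of two edges under $F_1$. In the first case, the tame elementarity of $F_2$ in $\mathbf B$ implies that $h$ cannot be folded with $g$, and the label change induced by $F_1$ (conjugation by the same element at $w$ if $w$ was affected, identity otherwise) preserves this non-foldability in $\mathcal B_1'$. In the second case, an identification producing such an $h'$ would require one of the configurations \textbf{(D1)}--\textbf{(D5)} to occur (for instance, $F_1$ of type IIA based on an edge in $Star(w,B)$ equivalent to $g$ would force \textbf{(D5)}; a fold of type IA/IIIA identifying two edges in $Star(w, B) \cap E(\mathcal B)$ equivalent to $g$ would force one of \textbf{(D1)}--\textbf{(D4)}), each of which is excluded by hypothesis in the generic case. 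An appeal to Lemma~\ref{lemma:ruledoutcases} takes care of the remaining configurations where $f$ or one of the $f_i$ interacts with edges equivalent to $g$.

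Finally, normalization of $g'$ is checked case by case against the elementary condition (El.0), (El.1) or (El.2) holding for $g$ in $\mathbf B$. If (El.0) holds, then $w$ is a boundary vertex of some orbifold-type piece of $\mathscr C$; since $F_1$ carries members of $\mathscr C$ isomorphically to members of $\mathscr C_1'$ (in particular it does not destroy boundary vertices), $w'$ is a boundary vertex of the corresponding piece and (El.0) holds for $g'$. If (El.1) holds, we can apply Remark~\ref{remark_preprocessing} and \cite[Lemma~4.17]{Dut} to normalize $\mathcal P_{w',1}'$ by elementary moves that commute with (or precede) $F_1$; and if (El.2) holds, the finite-index/free-decomposition condition is preserved because the orbifold covering $\eta_{w'}$ coincides with $\eta_w$ by Remark~\ref{remark:fatsequivalent}(iv). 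In each case $g'$ is normalized, so $F_2'$ is a tame elementary fold of type IIA. The hard step is the verification of (II.3), which crucially uses the generic-case exclusion of the configurations \textbf{(D1)}--\textbf{(D5)} together with Lemma~\ref{lemma:ruledoutcases}.
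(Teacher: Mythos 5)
Your verification of (II.1) and (II.2) is fine, and your handling of the case where an identification under $F_1$ produces a new candidate edge $h'$ via the exclusion of configurations \textbf{(D1)}--\textbf{(D5)} is broadly in the right spirit (indeed the paper uses the exclusion of \textbf{(D5)} to rule out precisely the problematic identification in the subcase $B_{w',1}' = B_w$). However, your treatment of condition (II.3) when $F_1$ affects $w$ has a genuine gap: you describe the effect of $F_1$ on $w$ as "conjugation by the same element" and assert this preserves non-foldability, but this is false in the crucial subcase where $F_1$ strictly enlarges $B_w$. For instance, when $F_1$ is of type IIIA with $\omega(f_1)=\omega(f_2)=w$, the labels of $g$ and of any $h\in Star(w,B)\cap E(\mathcal B)$ are left untouched, but $B_w$ is replaced by $B_{w',1}' = \langle B_w, b_1^{-1}b_2\rangle$. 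Since foldability of two edges at $w$ is a condition modulo the vertex group, enlarging $B_w$ weakens the relation and can create foldable pairs that did not exist in $\mathcal B$. The tame elementarity of $F_2$ in $\mathbf B$ therefore does \emph{not} automatically transfer to $\mathcal B_1'$, and no amount of appealing to label compatibility fixes this.

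The paper handles exactly this subcase by a structural argument that your proof lacks: since $F_1$ is good, Proposition~\ref{lem:critical1} forces $B_w$ to have infinite index in $A_{[w]}$ (so $F_2$ satisfies (El.1) rather than (El.2)); one then writes $\mathcal P_w = (T_w'\oplus(\gamma_g'),P_w')$ with the corresponding free factor decomposition, notes that $F_1$ transforms this into $\mathcal P_{w',1}'=(T_w'\oplus(\gamma_g',b_1^{-1}b_2),P_w')$, and uses the goodness of $F_1$ together with Proposition~\ref{prop:02} to conclude that $\mathcal P_{w',1}'$ is non-critical of simple type. It is this last fact — that $\gamma_g'$ remains part of a minimal generating set of the enlarged $B_{w',1}'$ — which shows $g$ cannot be folded with any edge of non-trivial group at $w'$. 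Your appeal to Remark~\ref{remark_preprocessing} and \cite[Lemma~4.17]{Dut} for the (El.1) normalization does not substitute for this: those results let you normalize a partitioned tuple that already has the required free-product structure, but they do not establish that the structure survives after $F_1$ enlarges $B_w$, which is precisely what needs proof.
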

\begin{proof}
Assume first that $B_{w', 1}'=B_w\le A_{[w]}$, i.e.~$F_1$ does not affect the vertex $w$.  Then  $F_2'$ is tame elementary unless one of the following holds:
\begin{enumerate}
\item[(a)] $g'$ can be folded with some $h'\in Star(w', B_1')\cap E(\mathcal B_1')$, i.e.~$g'$ is not normalized in $\mathbf B_1'$.

\item[(b)] $F_2$ satisfies condition (El.1) and $F_1$  changes the label of $g$.    
\end{enumerate}
 
Note that if (a) occurs then  $F_1$ is of type IIA  based on $h$.  But then $h$ can be folded with $g$ in $\mathcal B$ what does not occur because we assumed that  \textbf{(D5)} does not occur. 
If (b) occurs then we argue as in the previous  paragraph.
This shows that  $F_2'$ is tame elementary  when  $F_1$ does not affect the vertex $w$. 

\smallskip

Assume now that $B_w\neq B_{w', 1}'$. First observe that $w$ is exceptional since good folds do not  affect  vertices  of orbifold type.  

We will give a complete argument for the case $F_1$ is of type IIIA  with $\omega(f_1)=\omega(f_2)=w=\alpha(g)$.  In this case $F_1$ replaces $B_w$ by 
 $B_{w',1}'=\langle B_w,  b_1^{-1}b_2 \rangle\le A_{[w]}.$   

As $F_1$ is good  it follows from   Proposition~\ref{lem:critical1} that   $B_w$ is of infinite index in $A_{[w]}$, and hence  $F_2$ satisfies the elementary condition (El.1). Thus,   $\mathcal P_w$ has the form $(T_w'\oplus (\gamma_g'), P_w')$ such that 
$$B_w=\langle T_w'\rangle\ast \langle \gamma_g'\rangle\ast \langle \gamma_1'\rangle \ast \ldots \ast \langle \gamma_r'\rangle$$
 where $P_w=(\gamma_1', \ldots, \gamma_r')$  and $\gamma_g'$ corresponds to the fold $F_2$, i.e.~
 $c\alpha_{\bar{e}}(A_{\bar{e}})c^{-1}\cap B_w=\langle \gamma_g'\rangle.$ 
By definition  $F_1$ replaces $\mathcal P_w$ by 
 $\mathcal P_{w',1}'=(T_w'\oplus (\gamma_g', b_1^{-1}b_2), P_w').$  Since $F_1$ is good it follows that $\mathcal P_{w', 1}'$ is either non-critical of simple type or of almost orbifold covering type.  But  Proposition~\ref{prop:02}  rules out the second alternative. Thus $\mathcal P_{w', 1}'$ is non-critical of simple type. 
Therefore,   as $\gamma_g'$ is part of a minimal generating set of $B_{w,1}'$,  it follows that   $g$ cannot be folded with any edge with  non-trivial group starting at $h'$. This shows that $F_2'$ is tame and  satisfies the elementary condition (El.1).
\end{proof}

We now show that $F_2'$ is bad. If $z'$ is  of orbifold type it is clear. Assume that $z$ is exceptional.    The badness of $F_2$ means that the partitioned tuple $(T_z, (\gamma')\oplus  P_z)$ is not non-critical of simple  type. Since $(T_z, P_z)$ is a partitioned sub-tuple of $\mathcal P_{z', 1}'$ and  since $F_2'$ simply adds a peripheral element to $\mathcal P_{z', 1}'$,   it follows from Corollary~\ref{lemma:sumtuples} that  the resulting partitioned tuple is critical. Therefore $F_2'$ is bad.


\subsection{Proof of Proposition~\ref{lemma:fork1}}
The proof of this proposition will follow along the  lines of the proof of Proposition~\ref{lemma:fork2}. Thus we consider the ``non-normalizable''  configurations  \textbf{(D1)}-\textbf{(D5)} and then we consider the  generic case  where we can assume that $\mathbf B=\mathbf B_1=\mathbf B_2$ and  each of the folds    stays tame after   the other is applied.

Assume that $F_1$ and $F_2$ are good and  that the marked $\mathbb A$-graphs $core(\mathbf B_1')$ and $core(\mathbf B_2')$ are not equivalent. Hence, by Lemma~\ref{lemma:equivcore}, $\mathbf B_1'$ and $\mathbf B_2'$ are also  not equivalent.
$$\begin{tikzcd}
   & \mathbf B_1    \arrow[dr, "F_2" ] \approx \mathbf B_2   \arrow[dl, "F_1"'] &  \\
 \mathbf B_1' &  & \mathbf B_2'  
\end{tikzcd}$$ 
It follows from  Lemma~\ref{lemma:corerevealsbad} that  to   establish Proposition~\ref{lemma:fork1}  it suffices  to verify, in all possible scenarios, that one of the following occurs:
\begin{enumerate}
\item[(A)] There exist  marked $\mathbb A$-graphs $\mathbf B_1''$ and $\mathbf B_2''$   such that $\mathbf B_i''$ is equivalent to   $\mathbf B_i'$ and $\mathbf B_i''$  admits a  bad tame elementary fold for $i\in\{1,2\}$.

\item[(B)]  There exist marked $\mathbb A$-graphs $\mathbf B_1''$ and $\mathbf B_2''$ such that $\mathbf B_i''$ is  equivalent to $\mathbf B_i'$  for $i\in\{1,2\}$ and that  $\mathbf B_1''$ and $\mathbf B_2''$ admit good tame elementary  folds  that yield equivalent marked $\mathbb A$-graphs.  
\end{enumerate}


\smallskip

\noindent \textbf{(D1)} As $\mathbf B_1'$ and $\mathbf B_2'$ are not equivalent,   Lemma~\ref{lemma_edges1} implies that the following holds: 
\begin{enumerate}
\item[(i)] $F_1$ (and therefore $F_2$) is   of type IA/IIIA.

\item[(ii)] $B_{f_1}=B_{f_2}=1$.

\item[(iii)] $o_{f_1}^{\mathcal B_2}= g\alpha_1  o_{f_1}^{\mathcal B_1}  \alpha_e(c)$ and $o_{f_2}^{\mathcal B_2}= g\alpha_2  o_{f_2}^{\mathcal B_1}  \alpha_e(d)$ with  $g\in A_{[x]}$,  $\alpha_1\neq \alpha_2\in B_{x,1}$ and $c\neq d\in A_e$ where $e:=[f_1]=[f_2]$.  In particular,   $$B_{x,1}\cap o_{f_1}^{\mathcal B_1}   \alpha_e( A_e ) (o_{f_1}^{\mathcal B_1})^{-1}\neq 1  \ \ \text{ and  }  \ \ B_{x,2}\cap o_{f_2}^{\mathcal B_2}   \alpha_e( A_e ) (o_{f_2}^{\mathcal B_2})^{-1}\neq 1.$$
Moreover, $$t_{f_1}^{\mathcal B_2}=\omega_e(c)t_{f_1}^{\mathcal B_1}\beta_1h_1^{-1} \  \ \text{ and } \  \ t_{f_2}^{\mathcal B_2}=\omega_e(d)t_{f_2}^{\mathcal B_1}\beta_2h_2^{-1}$$  
with  $\beta_i\in B_{\omega(f_i),1}$ and  $h_1, h_2\in A_{[\omega(f_1)]}=A_{[\omega(f_2)]}$ such that $h_1=h_2$ if $\omega(f_1)=\omega(f_2)$.   
\end{enumerate}

Before proceeding with the argument we fix some notation. Observe that the  graphs $B_1'$ and $B_2'$ underlying $\mathbf B_1'$ and $\mathbf B_2'$ coincide.  We  denote $B':=B_1'=B_2'$. Moreover, for all $f'\in EB'$ we have $B_{f',1}'=B_{f',2}'$  which implies that   $E:=E(\mathcal B_1')=E(\mathcal B_2').$   The image of $x=\alpha(f_1)=\alpha(f_2)$ under $F_i$ will be denoted by $x'$. 

We give a complete argument in the case $F_1$ and  $F_2$  are of type IIIA. The case of folds of type IA  is similar and  left to the reader. Denote the label of $f_1$ and $f_2$  in $\mathcal B_1$ by $(a, e, b_1)$ and $(a, e, b_2)$ respectively.    After aplyig auxilairy moves of type A0 based on $x$ and $y=\omega(f_1)=\omega(f_2)$ to $\mathbf B_2$  with conjugating element $g$ and $h_1=h_2$ respectively followed by auxiliary moves of type A2 based on $f_1^{-1}$ and $f_2^{-1}$ with elements $\beta_1$ and $\beta_2$ respectively (which does not affect the equivalence class of $\mathbf B_2'$)  we can assume that  the label of $f_1$ and $f_2$ in $\mathcal B_2$ are given by 
 $(\alpha_1 a \alpha_e(c^{-1}), e, \omega_e(c)b_1)$ and  $(\alpha_2 a \alpha_e(d^{-1}), e, \omega_e(d)b_2)$ 
respectively.  Since $F_1$ and $F_2$ are good, $b_1\neq b_2$ and $\omega_e(c)b_1\neq \omega_e(d)b_2$. We distinguish two cases according to the type of $x$.

\noindent{\emph{Case 1.}} The vertex $x$ is peripheral. Note that 
$$B_x:=B_{x,1}=B_{x,2}=B_{x', 1}'=B_{x', 2}'.$$ 
As $F_1$ and $F_2$ are good,  $y$ is an exceptional vertex. Hence the partitioned tuples $\mathcal P_{y,1}$ and $\mathcal P_{y, 2}$ are non-critical of simple type.  As   $B_{f_1}=B_{f_2}=1$, we can assume that $(T, P):=\mathcal P_{y, 1}=\mathcal P_{y, 2}$ as the  elementary moves  needed to turn $\mathbf B_1$ into $\mathbf  B_2$  do not affect $\mathcal P_{y,1}$.

We now look at the marked $\mathbb A$-graphs $\mathbf B_1'$ and $\mathbf B_2'$. Item (iii) combined with $B_x=B_{x', i}'$ implies that a fold of type IIA based on  $f'\in EB'$    can be applied to $\mathcal B_i'$. We denote this fold by  $F_i'$.  Moreover, since all edges  in $Star(x', B')\setminus\{f'\}$  have the same label in $\mathcal B_1'$ and in $\mathcal B_2'$   it follows that  $f'$ can be folded with some edge in $Star(x, B_1')\cap E(\mathcal B_1')$ if, and only if, $f'$ can be folded with some edge in $Star(x, B_2')\cap E(\mathcal B_2')$.

\noindent{\emph{Sub-case a.}} $f'$ cannot be folded with an edge in $Star(x, B_1')\cap E(\mathcal B_1')$.  Since $x$ is peripheral it follows that  $F_1'$ and $F_2'$  are tame.  

Let $\mathbf B_i''$ be the marked  $\mathbb A$-graph that is obtained from $\mathbf B_i'$ by the fold $F_i'$. Therefore the partitioned tuple at $y'=\omega(f')$  in $\mathbf B_1''$  is  equal to  
 $$\mathcal P_{y', 1}''= (T_y \oplus( b_1^{-1}b_2), (\gamma)\oplus  P_y)$$ 
and   in $\mathbf B_2''$ is equal to 
$$\mathcal P_{y', 2}'':=(T_y\oplus (b_1^{-1}\omega_e(c^{-1}d)b_2, (\gamma)\oplus  P_y )$$ 
where $\gamma:=b_1^{-1}\omega_e(a_e^z)b_1$ and  $z\in \mathbb Z$  such that  
 $\langle a_e^z\rangle =\alpha_e^{-1}(a^{-1}B_xa)=\alpha_e^{-1}(B_x).$ It follows from condition (4) of marked $\mathbb A$-graphs that $b_1^{-1}\omega_e(c^{-1}d)b_1\in \langle \gamma \rangle$  which implies that    
$\mathcal P_{y',1}'' $ and $\mathcal P_{y',2}'' $ 
are equivalent.   \cite[Lemma~4.17]{Dut} implies that  $\mathbf B_1''$ and $\mathbf B_2''$  are equivalent.  Therefore $F_1'$ is bad if, and only if, $F_2'$ is bad.  This shows that (A) occurs if  $F_1'$  and $F_2'$ are bad and that (C) occurs  if $F_1'$ and $F_2'$ are good. 

\smallskip

\noindent{\emph{Sub-case b.}}  there is $h'\in Star(x', B')\cap E$ that can be folded with  $f'$.  An argument similar to that of sub-case (a) shows   $\mathbf B_1'$ is equivalent to $\mathbf B_2'$ if $\omega(h')=y'$.    Thus $\omega(h')$ and $y$ must be distinct. 

Let $F_i''$ denote the fold that identifies $f'$ and $h'$ and let $u''$ denote the common image of the vertices $y'$ and $\omega(h')$. As the group of $f'$ is trivial, $F_1''$ and $F_2''$ are tame.  Moreover, by Lemma~\ref{lemma_edges1}, we may assume that $F_1''$ and  $F_2''$ are elementary since the group of  $B_{h',1}'\neq 1\neq B_{h', 2}'$.

If   $\omega(h')$ is of orbifold type,  then $F_1''$ and $F_2''$ are bad since $B_{y',1}'\neq 1\neq B_{y',2}'$. Thus  assume that $\omega(h')$ is exceptional. Observe that we may without loos of generality  assume that 
$(T,P):=\mathcal P_{ \omega(h'), 1}'=\mathcal P_{\omega(h'), 2}'.$
Hence the partitioned tuple at $u'' $  in $\mathbf B_1''$ is equal to
 $$\mathcal P_{u'' , 1}''=(T\oplus T_y\oplus (b_1^{-1}b_2), (\gamma)\oplus   P\oplus P_y )$$
while   the partitioned tuple at $u$  in  $\mathbf B_2''$ is equal to
 $$\mathcal P_{u'' ,2}''=(T\oplus T_y\oplus (b_1^{-1}\omega_e(c^{-1}d)b_2), (\gamma)\oplus   P\oplus P_y).$$
 It is not hard to see that these partitioned tuples are equivalent which implies that $\mathcal B_1''$ is equivalent to $\mathcal B_2''$.  This shows that  $F_1''$ is good if, and only if, $F_2''$ is good.

\smallskip

\noindent{\emph{Case 2.}} $x=\alpha(f_1)=\alpha(f_2)$ is non-peripheral.  In this case  the group of  $y=\omega(f_1)=\omega(f_2)$  in $\mathcal B_i$ ($i=1,2$) is trivial  since $F_1$ and $F_2$ are good. In particular,   the group  of   edges staring at $y$  is also  trivial. 
 
By definition,  $F_1$  and $F_2$ replace  the trivial group   $B_y=B_{y,1}=B_{y,2} $ by the non-trivial groups  
 $B_{y,1}'=\langle b_1^{-1}b_2\rangle $  and $ B_{y,2}'=\langle b_1^{-1}\omega_e(c^{-1}d)b_2\rangle$  
respectively.  Thus a fold of type IIA based on $(f')^{-1}$ can be applied to $\mathbf B_1'$ and to $\mathbf B_2'$.  On the other hand,   (iii) implies that 
$$B_{x',1}' \cap a\alpha_e(A_e)a^{-1} \neq 1 \ \ \text{ and } \ \  B_{x',2}' \cap  \alpha_1  a \alpha_e(c^{-1})\alpha_e(A_e)\alpha_e(c)a^{-1}\alpha_1^{-1} \neq 1.$$ 
Lemma~\ref{lemma:IIAbad} therefore implies that   $F_1'$ and $F_2'$ are bad. 

 
\smallskip

\noindent \textbf{(D2)}  It follows from Corollary~\ref{corollary_edges2} that $\mathbf B_1$ can be replaced by an equivalent marked $\mathbb A$-graph $\bar{\mathbf B}_1$  such that the following hold:
\begin{enumerate}
\item[(i)] the labels of $f_1$ and $f_2$  do not change     and the labels of $g_1$ and $g_2$ are of type $(c, \bar{e}, d_1)$ and $(c, \bar{e}, d_2)$, see Figure~\ref{fig:3optionscase(i)}.

\item[(ii)] the marked  $\mathbb A$-graph $\bar{\mathbf B}_1'$ that is obtained from $\bar{\mathbf B}_1$ by the fold $\bar{F}_1$ that identifies $f_1$ and $f_2$ is equivalent to $\mathbf B_1'$. 

\item[(iii)] if $\bar{\mathbf B}_2$ denotes the marked $\mathbb A$-graph that is obtained from $\bar{\mathbf B}_1$ by an A0 move that makes the labels of $g_1$ and $g_2$ coincide, then  the marked  $\mathbb A$-graph that is obtained from $\bar{\mathbf B}_2$ by the fold $\bar{F}_2$ that identifies $g_1$ and $g_2$  is equivalent to $\mathbf B_2'$. 
\end{enumerate} 
\begin{figure}[h]
\centering
\includegraphics[scale=1]{3optionscasei.pdf}
\caption{The marked $\mathbb A$-graph $\bar{\mathcal B}_1$ in the three configurations that can occur).}
\label{fig:3optionscase(i)}
\end{figure}

Therefore we can assume that  $\mathbf B_1=\bar{\mathbf B}_1$ (resp. $\mathbf B_2=\bar{\mathbf B}_2$) and $\bar{F}_1=F_1$ (resp. $F_2=\bar F_2$).

We first consider the case that $y_1$ and $y_2$ are peripheral. As $F_1$ and $F_2$ are good,  at most one one of them has non-trivial vertex group. Thus there are two cases: If $B_{y_1, 1}=B_{y_2, 1}=1$,  then  we are in case (B) since the marked $\mathbb A$-graph that is obtained from $\mathbf B_1'$ by folding the edges $g_1$ and $g_2$ is equivalent to the marked $\mathbb A$-graph that is obtained from $\mathbf B_2'$ by folding the edges $f_1$ and $f_2$. If either $B_{y_1, 1}\neq 1$ or $B_{y_2,1}\neq 1$, then we are in case (A).  Indeed,  the fold that identifies the edges $g_1$ and $g_2$ (resp. $f_1$ and $f_2$) in $\mathbf B_1'$ (resp. in $\mathbf B_2'$) is bad.

\smallskip

We now consider the case that $y_1$ and $y_2$ are non-peripheral. If one of the vertices $y_1$ or $y_2$ is of orbifold type, then  the fact that $F_1$ and $F_2$ are good  implies that the other  vertex has trivial group.   In this case the same argument as in the  previous paragraph shows that (A) holds.

\smallskip 

Finally assume that $y_1$ and $y_2$ are exceptional vertices, see Figure~\ref{fig:commIA}. Note that the graph does not always look exactly as in the figure, as the other two configurations of Figure~\ref{fig:3optionscase(i)} might also occur. However, in these situations exactly the same arguments can be applied.
\begin{figure}[h!]
\centering
\includegraphics[scale=1]{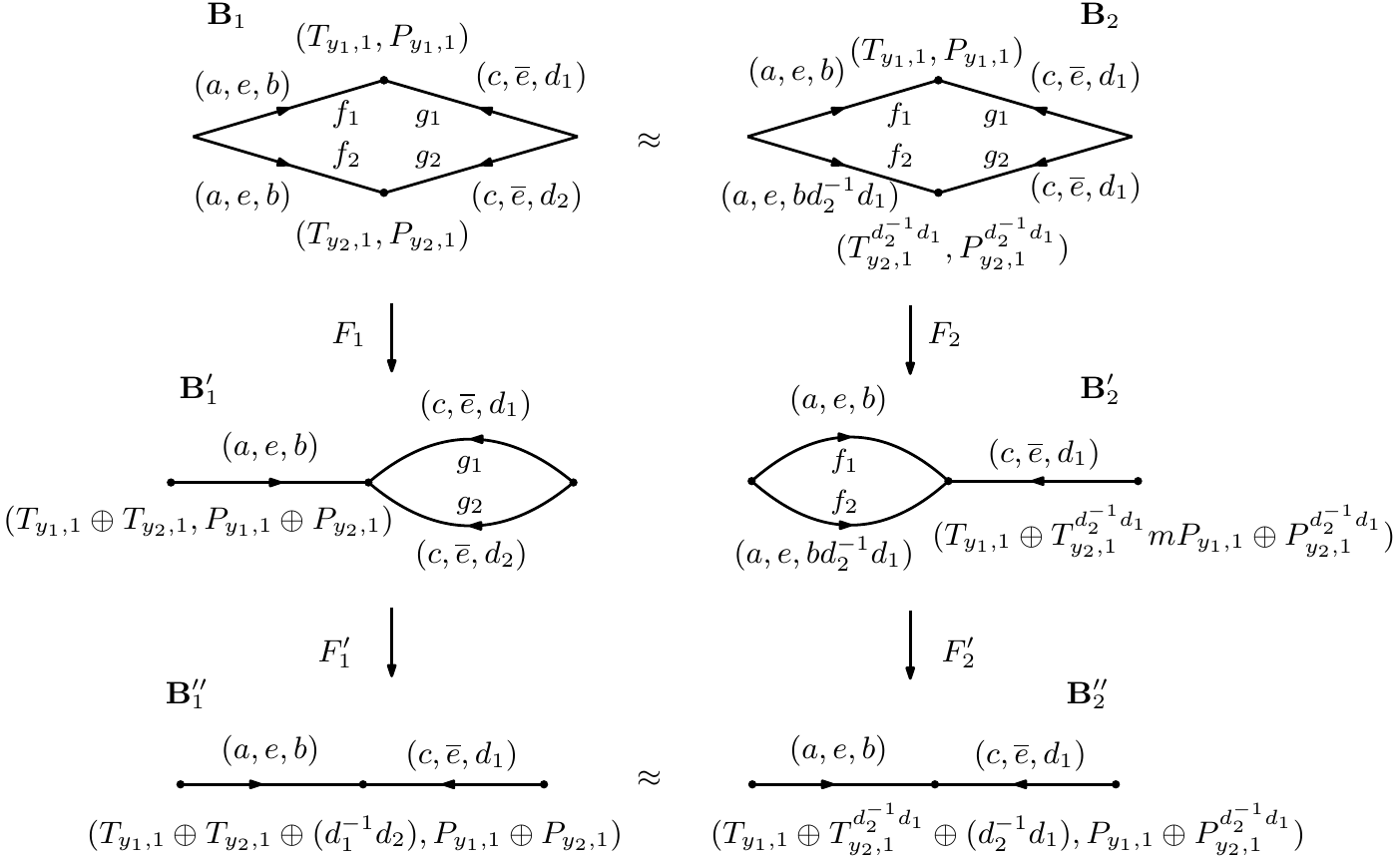}
\caption{ ${F}_1'$ and $F_2'$  produce equivalent marked $\mathbb A$-graphs.}
\label{fig:commIA}
\end{figure}

The loop $g_1\cup g_2$ lies in the core of  the $\mathbb A$-graph $ \mathcal B_1'$ underlying $\mathbf B_1'$.  As  $F_2$ is tame,  the group of at least  one of the edges $g_1$ and $g_2$ is trivial in $\mathcal B_1$, and therefore also in $ {\mathcal B}_1'$. Putting these facts together we conclude that  $core(\mathbf B_1')$ cannot be a special almost orbifold covering with a good marking. Therefore, as $F_1$ is good,  the partitioned tuple  
 $$\mathcal P_{y, 1}':=(T_{y_1, 1}\oplus  T_{y_2, 1} , P_{y_1, 1}\oplus P_{y_2, 1})$$    
is non-critical  of simple type, where $y$ denotes the common image of $y_1$ and $y_2$ in $\mathcal B_1'$. The same argument shows that the partitioned tuple  $$\mathcal P_{y, 2}':=(T_{y_1, 2}\oplus  T_{y_2, 2} , P_{y_1, 2}\oplus P_{y_2, 2})=(T_{y_1, 1}\oplus T_{y_2, 1}^{d_2^{-1}d_1}, P_{y_1, 1}\oplus P_{y_2, 1}^{d_2^{-1}d_1})$$ is of simple type.

Let $F_1'$ denote  the tame elementary fold of type IIIA that identifies the edges $g_1$ and $g_2$ in ${\mathbf B}_1'$ and let ${\mathbf B}_1''$  denote the resulting marked  $\mathbb A$-graph. Similarly,  let  $F_2'$ denote  the  tame elementary fold of type IIIA that identifies the edges $f_1$ and $f_2$  in  $\mathbf B_2'$ and let  $\mathbf B_2''$  denote the resulting  marked $\mathbb A$-graph, see Figure~\ref{fig:commIA}.  Note that  $\mathbf B_1''$ and $\mathbf B_2''$ only differ at  the vertex $y:=\omega(f)=\omega(g)$ where the partitioned tuples  are given by
$$
\mathcal P_{y,1}'':=(T_{y_1, 1}\oplus T_{y_2, 1}\oplus (d_1^{-1}d_2),  P_{y_1, 1}\oplus P_{y_2, 1} )
$$
and 
$$
\mathcal P_{y,2}'':=(T_{y_1, 1}\oplus T_{y_2, 1}^{d_1^{-1}d_2}\oplus (d_1^{-1}d_2), P_{y_1,1}\oplus P_{y_2, 1}^{d_2^{-1}d_1}), 
$$
respectively. One easily verifies that $\mathcal P_{y,1}''$ and $\mathcal P_{y,2} ''$ are equivalent.  Therefore, by \cite[Lemma~3.16]{Dut},  ${\mathbf B}_1''$ and ${\mathbf B}_2''$ are equivalent. 

The equivalence between $\mathbf B_1''$ and $\mathbf B_2''$   implies that the fold  $F_1'$ is good if, and only if, the fold $F_2'$ is good. Therefore we are in case (A) if $F_1'$ is good and we are in case (B) if $F_1'$ is bad.


\smallskip

\noindent \textbf{(D3)} $F_1$ and $F_2$ are folds of type IIIA  with $f_1=g_1^{-1}$ and $f_2=g_2^{-1}$.  We may assume without loss of generality that  $x=\alpha(f_1)=\alpha(f_2)$ is peripheral (and hence $w=\omega(f_1)=\omega(f_2)$ is non-peripheral). 

It follows from Corollary~\ref{corollary_edges2} that $\mathbf B_1$ can be replaced by an equivalent marked $\mathbb A$-graph $\bar{\mathbf B}_1$ such that the following hold:
\begin{enumerate}
\item[(i)] the label of $f_1$ is $(a, e, b)$  and the label of  $f_2$ is $(a, e, \omega_e(c)b)$ for some non-trivial element  $c\in A_e$.

\item[(ii)] the marked $\mathbb A$-graph that is obtained from $\bar{\mathbf B}_1$ by the fold $\bar{F}_1$ that identifies $f_1$ and $f_2$ is equivalent to $\mathbf B_1'$. 

\item[(iii)] if $\bar{\mathbf B}_2$ denotes the marked $\mathbb A$-graph that is obtained from  $\bar{\mathbf B}_1$ by an A1 move that makes the label  of $f_2$ equal to $(a\alpha_e(c), e, b)$, then  the marked  $\mathbb A$-graph that is obtained from $\bar{\mathbf B}_2$ by the fold $\bar{F}_2$ that identifies $f_1^{-1}$ and $g_2^{-1}$  is equivalent to $\mathbf B_2'$. 
\end{enumerate}
Therefore  we can assume that $\mathbf B_1=\bar{\mathbf B}_1$ (resp. $\mathbf B_2=\bar{\mathbf B}_2$)   and that $F_1=\bar{F}_1$ (resp. $F_2=\bar{F}_2$), see Fig.~\ref{fig:commIIIA}. 
Denote  $\gamma:= b^{-1}\omega_e(c)b\in A_{[w]}$ and $ \gamma':=a^{-1}\alpha_e(c)a=\alpha_e(c)\in A_{[x]}.$  
\begin{figure}[h]
\centering
\includegraphics[scale=1]{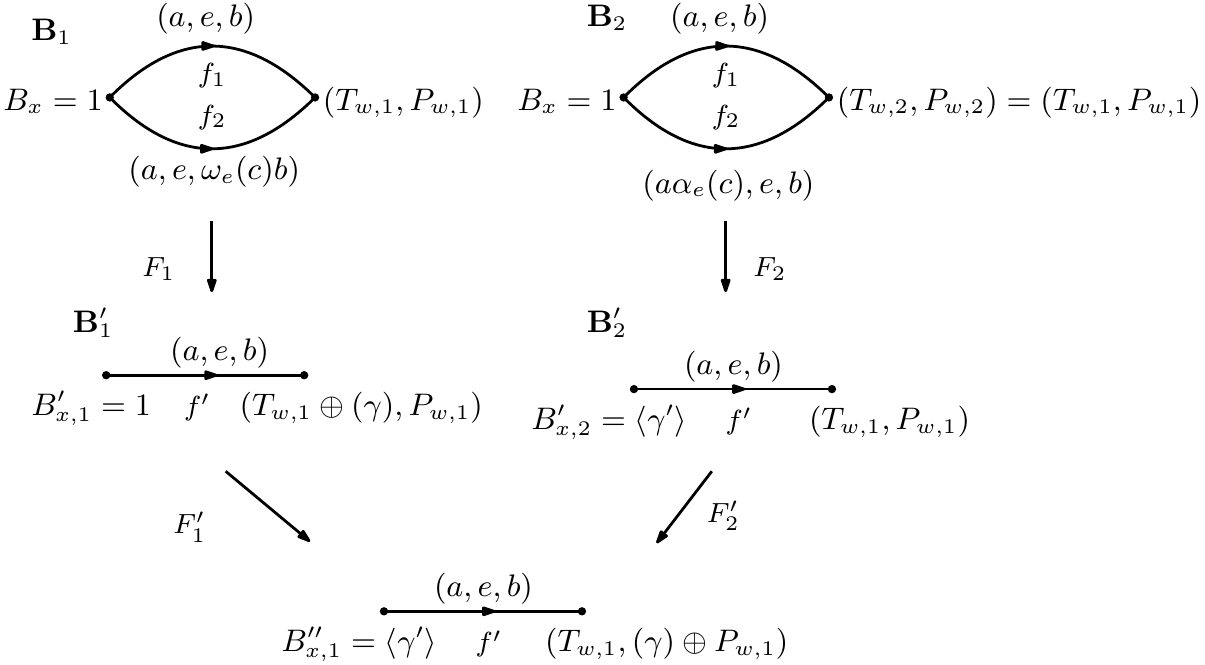}
\caption{$F_1$ and $F_2$ are of type IIIA and $F_1'$ and $F_2'$ are of type IIA.}
\label{fig:commIIIA}
\end{figure}

Since $F_1$ is good, $w$ is exceptional  and 
$\mathcal P_{w, 1}=(T_{w, 1},  P_{w, 1})$ is non-critical of simple type. On the other hand, since $F_2$ is good,  $B_x=1$. Thus $B_g=1$ for all $g\in Star(x, B)$. In particular, $B_{f_1}=B_{f_2}=1$.

We first look at $\mathbf B_1'$. It follows from  Proposition~\ref{prop:02} that  $(T_{w, 1}\oplus (\gamma),  P_{w, 1})$ cannot be of almost orbifold covering type. Thus, as $F_1$ is good,  
$$\mathcal P_{w, 1}'=(T_{w, 1}\oplus (\gamma),  P_{w, 1})$$  is non-critical of simple type.  Observe that the group of $(f')^{-1}$ is trivial (in $\mathcal B_1'$)   and $(f')^{-1}$ violates condition (F2) of folded $\mathbb A$-graphs since the peripheral element  $\gamma $ lies in  $B_{w, 1}'$. 

Let $F_1'$ denote  the type IIA fold based on $(f')^{-1}$. As $\gamma$ is part of a minimal generating tuple of $B_{y, 1}'$ it follows that
 $b^{-1}\omega_e(A_e)b \cap  B_{y,1}'=\langle \gamma\rangle$  and that $(f')^{-1}$ cannot be folded with any edge in $Star(w, B_1')\cap E(\mathcal B_1')$. Thus $F_1'$ is tame elementary.

We now look at $\mathbf B_2'$. Let   $F_2'$ be the type IIA fold based on the edge $f'$.  Note that $B_{h,2}'=1$ for all $h\in Star(w, B_2')$  since $B_h=1$ for all $h\in Star(w, B)$. Thus $F_2'$  is  tame elementary.

It is not hard to see that the marked $\mathbb A$-graph that is obtained from $\mathbf B_1'$ by the fold  $F_1'$ coincides with the marked $\mathbb A$-graph that is obtained from $\mathbf B_2'$ by the fold $F_2'$. Hence $F_1'$ is good if,  and only if,  $F_2'$  is good.  Therefore (A) holds if $F_1'$ and $F_2'$ are bad and  (B) occurs otherwise.

\begin{remark} Observe that $F_2'$ (and hence $F_1'$) is good exactly when   $(T_{w, 1}, (\gamma)\oplus  P_{w, 1}  )$ is non-critical and  of simple type. This follows from the fact that the resulting marked $\mathbb A$-graph cannot be a special almost orbifold cover with a good marking as the vertex $x$ becomes a boundary vertex of a degenerate sub-$\mathbb A$-graph of orbifold type   such that  there is  only one edge  incident at $x$ that has non trivial group, namely the edge $f$. 
\end{remark}

 
\noindent \textbf{(D4)} As $F_1$ and $F_2$ are good it follows that $B_y=B_z=1$ where  $y=\omega(f)$ and $z=\omega(g)$.  This case is illustrated in Figure~\ref{fig:commnone4ss} in the case that $y=z$.

Let $F_1'$ (resp. $F_2'$) be the fold  of type IIA based on   $g $ (resp. $f)$  of $\mathbf B_1'$ (resp. $\mathbf B_2'$).  Observe that  both folds are tame because the  corresponding peripheral elements  $\gamma_f'$ and $\gamma_g'$ are  part of a minimal generating set of 
 $B_{x, 1}'=B_{x, 2}'=B_{x, 1}=B_{x, 2}\leq A_{[x]}.$  Moreover, both are elementary   because they satisfy  condition (El.2). It is not hard to see that $F_1'$ and $F_2'$ are good (resp.~bad) if and only if $y\neq z$ (resp.~$y=z$), see Fig.~\ref{fig:commnone4}. 
\begin{figure}[h!]
\centering
\includegraphics[scale=1]{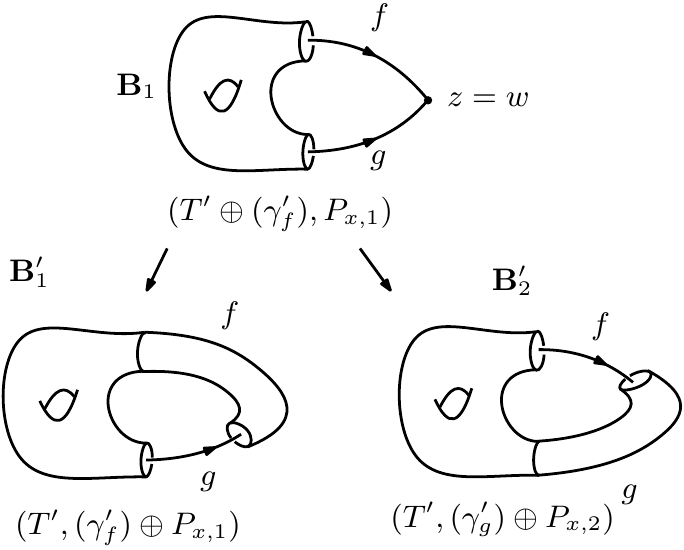}
\caption{$F_1'$ and $F_2'$ are bad tame elementary folds of type   IIA.}
\label{fig:commnone4ss}
\end{figure}


\smallskip
 
\noindent \textbf{(D5)} $\mathbf B:=\mathbf B_1=\mathbf B_2$ and     $F_1$ and $F_2$ are of type IIA   with $f\neq g$ such that  $f$ and $ g$ can be folded in $\mathcal B:=\mathcal B_1=\mathcal B_2$ by an elementary fold.  Note that  $F_i$  is not tame after  $F_j$ is applied.  Let $F_i'$ denote the tame elementary fold that identifies $f$ and $g$  in $\mathcal B_i'$.     
Since $F_1$ and $F_2$ are good,   $\omega(f)$ and $\omega(g)$ are  either both peripheral  with trivial group or both  exceptional.   We will give a complete argument for the case they are exceptional and distinct so that $F_i'$ are of type IA.   

The fact that $f$ and $g$ have the same label  implies that the peripheral element $\gamma_{f^{-1}}'$   added  by $F_1$ to the partitioned tuple $\mathcal P_{\omega(f)} $ coincides with the  peripheral element $\gamma_{g^{-1}}'$  added  by $F_2$ to the partitioned tuple $\mathcal P_{\omega(g)}$.   Therefore the marked $\mathbb A$-graph that is obtained from $\mathbf B_1'$ by   $F_1'$  coincides with the marked $\mathbb A$-graph that is obtained from $\mathbf B_2'$ by   $F_2'$.  This  implies that $F_1'$ is good if and  only if $F_2'$ is good. The hole argument is illustrated in Figure~\ref{fig:commnone5s}.
\begin{figure}[h!]
\centering
\includegraphics[scale=1]{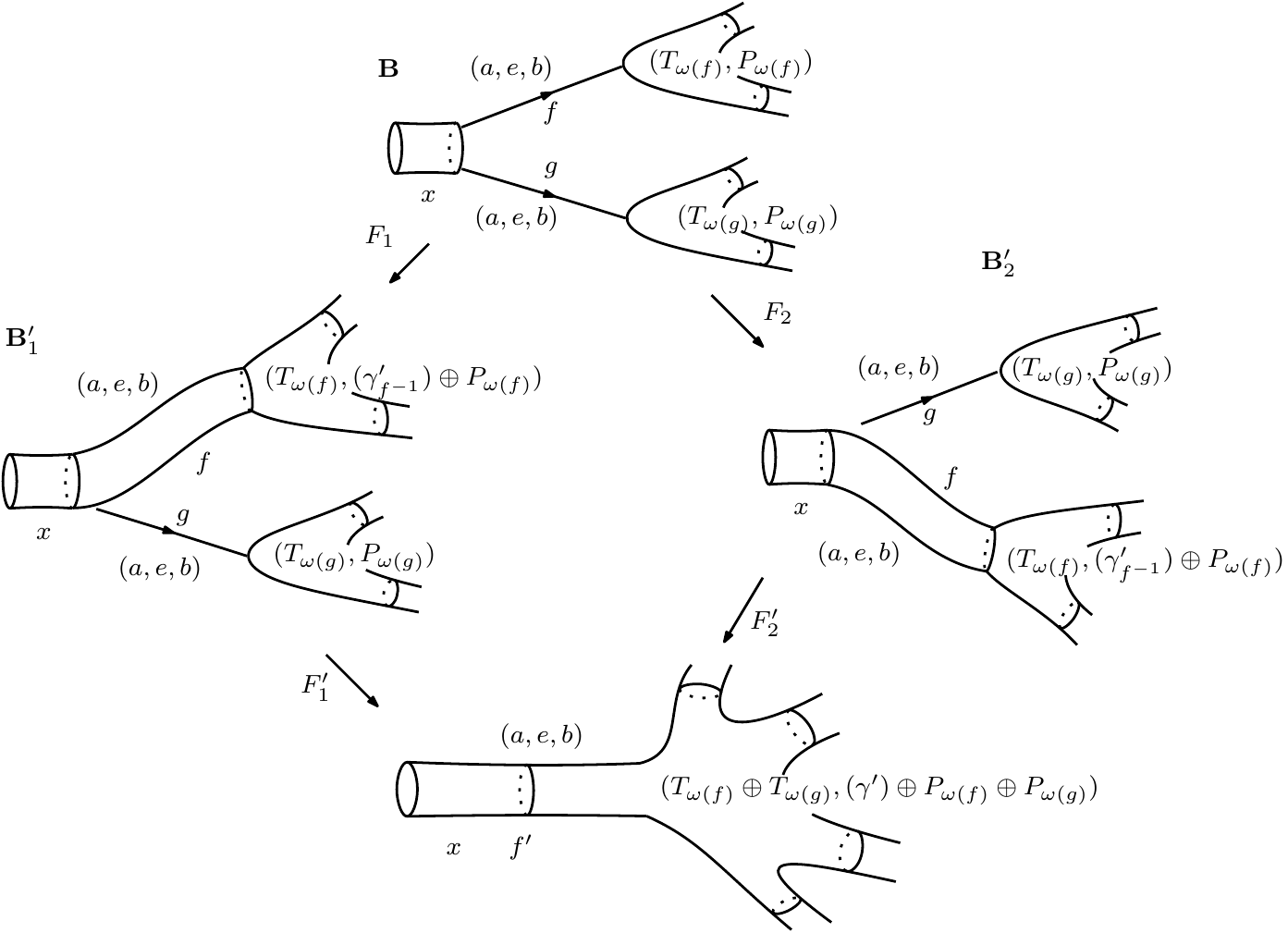}
\caption{$\gamma'=\gamma_{f^{-1}}'=\gamma_{g^{-1}}'$.}
\label{fig:commnone5s}
\end{figure}


\smallskip

\noindent \textbf{Generic case.}    Assume that the conclusions of Lemmas~\ref{lemma5cases} and \ref{lemma5cases1}  hold. 
Since $F_1$ and $F_2$ are good, the argument used in the   generic case when  one of the folds is bad   shows that  a fold $F_2'$ (resp. $F_1'$)  that is essentially the same as $F_2$ (resp. $F_1$) can be applied to $\mathbf B_1' $ (resp. $\mathbf B_2'$) and $F_1'$ and $F_2'$ yield the same $\mathbb A$-graph $\mathcal B''$, i.e. $F_1$ and $F_2$ commute.  

Therefore to complete the proof it suffices to  show that $F_1'$  is good if and only if $F_2'$ is good.  This follows by inspecting the various cases.  We will give a sample argument for the case $F_1$ and $F_2$ are of type IIA, and so $F_1'$ and $F_2'$ are also of type IIA.   If $\omega(f)\neq \omega(g)$ then claim is trivial. Thus assume that $y =\omega(f)=\omega(g)=z$.  As $F_1$ and $F_2$ are good, $y$ is either exceptional or peripheral with trivial group.  If $x$ is exceptional then the claim is also trivial.   If $x$ is peripheral, then $F_1$ and  $F_2$  turn  $x$ into a vertex of orbifold type.   Consequently $F_1'$ and $F_2'$  do not induce markings  on $\mathcal B'' $ and therefore are bad.


\section{Horizontal Heegaard splittings}\label{section_heegaard}

Throughout this section we assume familiarity with horizontal Heegaard splittings of Seifert manifolds as introduced by Moriah and Schultens \cite{MS}.

Let $M$ be an orientable Seifert 3-manifold over the base orbifold $\mathcal O$, let $\pi:M\to\mathcal O$ be the canoncial map. Given a horizontal Heegaard splitting we obtain the following:
\begin{enumerate}
\item  a surface $\Sigma\subset M$ with a single boundary component, the regular neighborhood of this surface can be thought of as one of the handlebodies of the splitting.
\item An open disk $D\subset\mathcal O$ containing at most one cone point such that $\pi(\Sigma)=\mathcal O\setminus D$ and the map $\pi|_\Sigma:\Sigma\to\mathcal O\setminus D$ is an orbifold cover. 
\end{enumerate}

It follows in particular that the map $\pi|_\Sigma:\Sigma\to\mathcal O$ is an almost orbifold cover. Note that this is an amost orbifold cover of a very special type as $\pi^{-1}(D)$ contains no disks but consists of a single 1-sphere.

\begin{theorem}[Theorem~\ref{thm03}] Let $M$ be a non-small orientable Seifert 3-manifold with orientable base space and $T$ be a tuple corresponding to a horizontal Heegaard splitting. Then  $T$ is irreducible iff the Heegaard splitting is irreducible.
\end{theorem}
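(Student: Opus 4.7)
The plan is to deduce the theorem from Theorem~\ref{thm02}. Let $\pi\colon M\to \mathcal{O}$ be the Seifert fibration and let $\eta\colon \Sigma\to \mathcal{O}$ denote the almost orbifold cover $\pi|_\Sigma$ coming from the horizontal Heegaard splitting, as described in the setup preceding the theorem. Because $\Sigma$ has non-empty boundary, $\pi_1^o(\Sigma)$ is a free group, and the inclusion $\Sigma\hookrightarrow M$ induces a surjection $\iota_{\ast}\colon \pi_1^o(\Sigma)\twoheadrightarrow \pi_1(M)$ whose kernel is normally generated by the boundary element $\partial\Sigma$. The first step is to verify that the Heegaard tuple $T$ is Nielsen equivalent in $\pi_1(M)$ to $\iota_{\ast}(T_\Sigma)$ for some free basis $T_\Sigma$ of $\pi_1^o(\Sigma)$, so that $\pi_{\ast}(T)=\eta_{\ast}(T_\Sigma)$ in $\pi_1^o(\mathcal{O})$ and this tuple still generates $\pi_1^o(\mathcal{O})$.

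For the easy direction I would show that a reducible Heegaard splitting forces $T$ to be reducible: a reducing sphere meets $\Sigma$ in a curve bounding a compressing disk in the handlebody $N(\Sigma)\cong \Sigma\times I$, i.e.~in an essential properly embedded arc $\alpha\subset \Sigma$ with both endpoints on $\partial\Sigma$. Cutting $\Sigma$ along $\alpha$ and choosing an adapted basis exhibits $T_\Sigma$ as Nielsen equivalent to a tuple containing $\partial\Sigma$. Since $\partial\Sigma$ is trivial in $\pi_1(M)$, the image tuple $T=\iota_{\ast}(T_\Sigma)$ is then Nielsen equivalent to one ending in $1$, hence reducible.

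For the hard direction I would argue by contradiction. Assume the splitting is irreducible but $T$ is reducible; applying $\pi_{\ast}$ shows $\eta_{\ast}(T_\Sigma)$ is reducible in $\pi_1^o(\mathcal{O})$. To derive a contradiction I would verify that $(\eta,[T_\Sigma])$ is a special marking in the sense of the paper, so that Theorem~\ref{thm02} applies. Specialness of $\eta$: the exceptional disk $D\subset\mathcal{O}$ contains the cone point $x$ of the exceptional fiber associated to the horizontal splitting, and $\deg(\eta|_S)$ fails to divide the order $m$ of $x$ under the non-smallness hypothesis (the remaining case in which $\deg(\eta|_S)\mid m$ forces $\eta$ to extend to a genuine orbifold cover, and corresponds to a splitting that can be analysed directly). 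Rigidity of $T_\Sigma$: any Nielsen equivalence of $T_\Sigma$ to a tuple containing $\partial\Sigma$ would, by the converse of the argument of the previous paragraph, yield an essential arc in $\Sigma$ and hence a compressing disk in $N(\Sigma)$; pairing this with the meridian disk of the second handlebody over $D$ produces a reducing sphere, contradicting irreducibility of the splitting. Theorem~\ref{thm02} then forces $\eta_{\ast}(T_\Sigma)$ to be irreducible, contradicting our assumption.

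The main obstacle in the plan is making precise the two-way correspondence between Nielsen moves on $T_\Sigma$ that introduce the boundary element and essential arcs (compressing disks) in $\Sigma$ --- equivalently, between algebraic rigidity of $T_\Sigma$ and geometric irreducibility of the horizontal splitting. This is essentially the content of Sedgwick's Theorem~8.1 cited in the introduction, and carrying it out rigorously, together with handling the exceptional case in which $\eta$ extends to an orbifold cover, will constitute the bulk of a complete proof.
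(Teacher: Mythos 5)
Your overall strategy — reduce the claim to Theorem~\ref{thm02} by verifying that $\eta=\pi|_\Sigma$ is a special almost orbifold cover and that the basis $T_\Sigma$ of $\pi_1(\Sigma)$ is rigid — is exactly the paper's route, and your specialness discussion is essentially correct. (One small correction of emphasis: $\deg(\eta|_S)\nmid m$ does not come from non-smallness; as in the paper, $m>d=\deg(\eta|_S)$ follows from irreducibility of the splitting via Sedgwick, and $d\nmid m$ follows because otherwise $\eta$ would extend to a genuine orbifold cover of finite degree, contradicting surjectivity of $\eta_*\circ (\iota_*)^{-1}$.)

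The real gap is your rigidity argument, and it is a gap of both approach and completeness. You try to derive rigidity of $T_\Sigma$ from irreducibility of the Heegaard splitting by a geometric correspondence: a Nielsen equivalence introducing $\partial\Sigma$ should produce an essential arc and hence a compressing disk, whence a reducing sphere. You then identify that ``algebraic Nielsen move $\Rightarrow$ essential arc'' direction as the main obstacle of your plan. But this detour is unnecessary, and the converse you need is genuinely nontrivial (it is in effect the hard part of Sedgwick's Theorem~8.1, which the paper sets out to \emph{reprove}, not to use). The paper's observation is that rigidity of $T_\Sigma$ is automatic and holds unconditionally: $\Sigma$ is a compact surface with a single boundary component, and the boundary element lies in the commutator subgroup of the free group $\pi_1(\Sigma)$, hence is trivial in $H_1(\Sigma)$ and so can never be a member of a free basis. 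No input from the Heegaard splitting is required. Replacing your circular arc/compressing-disk step with this two-line homology argument removes the obstacle you flagged and leaves a complete proof following the same lines as the paper.
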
 

\begin{proof} As reducible Heegard splitting always define reducible generating tuples it suffices to show that the generating tuples corresponding to irreducible Heegaard splitting are irreducible. Thus we may assume that the horizontal Heegaard splitting under consideration is irreducible.

Let $\Sigma$ and $D$ be as above and let $T:=(x_1,\ldots ,x_n)$ a basis of the free group $\pi_1(\Sigma)$, thus $$i_*(T)=(i_*(x_1),\ldots ,i_*(x_n))$$ is a generating set of $\pi_1(M)$ if $i:\Sigma\to M$ is the inclusion map. We need to show that $i_*(T)$ is irreducible. To do so it suffices to show that $$\pi_*\circ i_*(T)=(\pi|_{\Sigma})_*(T)$$ is irreducible.

This claim follows from Theorem~\ref{thm02} once we have verified that the almost orbifold covering $\pi|_{\Sigma}:\Sigma\to \mathcal O$ is special and that $T$ is rigid. 

The rigidity of $T$ is trivial as a simple homology argument shows that the element of surfact group corresponding to the single boundary component of a surface is never part of a basis.

Let now $m=1$ if $D$ contains no cone point, otherwise let $m$ be the order of the single cone point contained in $D$. Let $m_1,\ldots ,m_k$ be the orders of the cone points in $\Sigma\setminus D$ and $d=lcm(m_1,\ldots ,m_k)$. It follows from the construction of horizontal Heegaard splittings that $d$ is the degree of the almost orbifold covering $\pi|_{\Sigma}$, see chapter 4 of \cite{Se}. The irreducibility of the Heegaard splitting morever implies that $m>d$ unless $\mathcal O$ is a surface, a case when the claim is trivial. This follows from \cite{Se}.

To see that $\pi|_{\Sigma}$ is special it remains to show that $d$ does not divide $m$. If $d$ divides $m$ the $\pi|_{\Sigma}$ factors through a finite sheeted cover of $\mathcal O$ which contradicts the assumtion that $(\pi|_{\Sigma})_*(T)$ is a generating tuple of $\pi_1(\mathcal O)$. Thus $d$ divides $m$ and the Theorem is proven.
\end{proof}

Note that the proof actually implies that  Heegaard splittings are irreducible. Thus we reprove Theorem~8.1 of \cite{Se} in the case of non-small Seifert manifolds.


\bibliographystyle{amsplain}

\end{document}